\definecolor{e-mail}{rgb}{0,.40,.80}
\definecolor{reference}{rgb}{.20,.60,.22}
\definecolor{citation}{rgb}{0,.40,.80}
\theoremstyle{plain}
\newtheorem{maintheorem}{Theorem}
\newtheorem{theorem}{Theorem}[section]
\newtheorem{corollary}[theorem]{Corollary}
\newtheorem{proposition}[theorem]{Proposition}
\newtheorem{lemma}[theorem]{Lemma}
\newaliascnt{assumption}{theorem}
\crefname{assumption}{assumption}{assumptions}
\theoremstyle{definition}
\newtheorem{definition}[theorem]{Definition}
\newtheorem{example}[theorem]{Example}
\newtheorem{remark}[theorem]{Remark}
\let\oldtocsection=\tocsection
\let\oldtocsubsection=\tocsubsection
\let\oldtocsubsubsection=\tocsubsubsection
\renewcommand{\tocsection}[2]{\hspace{0em}\oldtocsection{#1}{#2}}
\renewcommand{\tocsubsection}[2]{\hspace{1em}\oldtocsubsection{#1}{#2}}
\renewcommand{\tocsubsubsection}[2]{\hspace{2em}\oldtocsubsubsection{#1}{#2}}
\newcommand{\defterm}[1]{\textbf{\emph{#1}}}
\def\and{\quad\textup{and}\quad}
\def\cart{\ar@{}[rd]|{\Box}}
\def\counit{\mathrm{counit}}
\def\cC{\mathscr{C}}
\def\bE{\mathbb{E}}
\def\Ext{\mathrm{Ext}}
\def\bD{\mathbf{D}}
\def\Db{\mathbf{D}^{\mathrm{b}}}
\def\Dbc{\mathbf{D}^{\mathrm{b}}_{\mathrm{c}}}
\def\ex{\mathrm{ex}}
\def\Fil{\mathrm{Fil}}
\def\cF{\mathscr{F}}
\def\Hom{\mathrm{Hom}}
\def\id{\mathrm{id}}
\def\ind{\mathrm{ind}}
\def\Ind{\mathrm{Ind}}
\def\Map{\mathrm{Map}}
\def\Mod{\mathrm{Mod}}
\def\pr{\mathrm{pr}}
\def\Perf{\mathrm{Perf}}
\def\PrSt{\mathrm{Pr}^{\mathrm{St}}}
\def\bu{\mathbf{1}}
\def\unit{\mathrm{unit}}
\def\lch{\mathrm{lch}}
\def\Top{\mathrm{Top}}
\def\bD{\mathbf{D}}
\def\bbD{\mathbb{D}}
\def\Ex{\mathrm{Ex}}
\def\scF{\mathscr{F}}
\def\cF{\scF}
\def\FS{\mathrm{FS}}
\def\norm{\mathrm{norm}}
\def\Perv{\mathrm{Perv}}
\def\cP{\mathscr{P}}
\def\Shv{\mathrm{Shv}}
\def\SS{\mathrm{SS}}
\def\TS{\mathrm{TS}}
\def\rH{\mathrm{H}}
\def\HBM{\mathrm{H}^{\mathrm{BM}}}
\def\fib{\mathrm{fib}}
\def\Tot{\mathrm{Tot}}
\def\C{\mathbb{C}}
\def\Q{\mathbb{Q}}
\def\R{\mathbb{R}}
\def\Z{\mathbb{Z}}
\def\rank{\mathrm{rk}}
\renewcommand{\Re}{\operatorname{Re}}
\def\vol{\mathrm{vol}}
\def\bA{\mathbb{A}}
\def\an{\mathrm{an}}
\def\B{\mathrm{B}}
\def\Bun{\mathrm{Bun}}
\def\cl{\mathrm{cl}}
\def\et{\textnormal{\'et}}
\def\ev{\mathrm{ev}}
\def\ft{\mathrm{ft}}
\def\Ga{\mathbb{G}_{\mathrm{a}}}
\def\Gm{\mathbb{G}_{\mathrm{m}}}
\def\gr{\mathrm{gr}}
\def\bL{\mathbb{L}}
\def\cL{\mathscr{L}}
\def\ft{\mathrm{ft}}
\def\fM{\mathfrak{M}}
\def\rN{\mathrm{N}}
\def\cO{\mathscr{O}} 
\def\bP{\mathbb{P}}
\def\Pic{\mathrm{Pic}}
\def\pt{\mathrm{pt}}
\def\QCoh{\mathrm{QCoh}}
\def\red{\mathrm{red}}
\def\Res{\mathrm{Res}}
\def\Sch{\mathrm{Sch}}
\def\sep{\mathrm{sep}}
\def\Spec{\operatorname{Spec}}
\def\supp{\operatorname{supp}}
\def\Nilp{\mathrm{Nilp}}
\def\dim{\mathop{\mathrm{dim}}}
\def\ch{\mathrm{ch}}
\def\bK{\mathbf{K}}
\def\rK{\mathrm{K}}
\def\bKGL{\mathbf{KGL}}
\def\bMap{\mathbf{Map}}
\def\bMZ{\mathbf{MZ}}
\def\SH{\mathcal{SH}}
\def\Th{\mathrm{Th}}
\def\cA{\mathscr{A}}
\def\can{\mathrm{can}}
\def\Crit{\mathrm{Crit}}
\def\der{\mathrm{der}}
\def\ex{\mathrm{ex}}
\def\rN{\mathrm{N}}
\def\ori{\mathrm{or}}
\def\cS{\mathscr{S}}
\def\Sing{\mathrm{Sing}}
\def\stab{\mathsf{stab}}
\def\symp{\mathrm{symp}}
\def\T{\mathrm{T}}
\def\bT{\mathbb{T}}
\def\N{\mathrm{N}}
\def\g{\mathfrak{g}}
\def\cN{\mathcal{N}}
\def\sfP{\mathsf{P}}
\begin{document}

\title{Period sheaves via perverse pullbacks}

\address{Institute of Mathematics, Academia Sinica, Taipei, Taiwan}
\address{National Center for Theoretical Sciences, National Taiwan University, Taipei, Taiwan}
\email{adeelkhan@as.edu.tw}
\author{Adeel A. Khan}
\address{Research Institute for Mathematical Sciences, Kyoto University, Kyoto 606-8502, Japan}
\email{tkinjo@kurims.kyoto-u.ac.jp}
\author{Tasuki Kinjo}
\address{June E Huh Center for Mathematical Challenges, Korea Institute for Advanced Study, Seoul, Republic of Korea}
\email{hyeonjunpark@kias.re.kr}
\author{Hyeonjun Park}
\address{School of Mathematics and Maxwell Institute for Mathematical Sciences, University of Edinburgh, Edinburgh, UK}
\email{p.safronov@ed.ac.uk}
\author{Pavel Safronov}

\begin{abstract}
We construct period sheaves for Hamiltonian spaces, as conjectured in the work of Ben-Zvi, Sakellaridis and Venkatesh, using the perverse pullback functors introduced in the authors' previous work. We prove a dimensional reduction isomorphism (generalizing the results of Davison and Kinjo in cohomological Donaldson--Thomas theory) which implies that perverse pullbacks refine the ordinary pullback functors in constructible sheaf theory, and relate perverse pullbacks to microstalk functors. These results imply that our period sheaves recover the known constructions in the cotangent and Whittaker cases.
\end{abstract}


\maketitle

\setcounter{tocdepth}{1}
\tableofcontents

\section*{Introduction}

\subsection*{Perverse pullbacks}

For a morphism $f\colon X\rightarrow B$ of complex algebraic varieties, the corresponding pullback functors $f^*, f^!\colon \Dbc(B)\rightarrow \Dbc(X)$ between bounded derived categories of constructible sheaves are, in general, not exact for the perverse $t$-structure. If $f$ is smooth, we have an isomorphism $f^*[\dim(X/B)]\cong f^![-\dim(X/B)]$ and this functor \emph{is} perverse $t$-exact. In the previous paper \cite{KKPS1} we have explained how to define perverse pullback functors beyond the smooth case by considering suitable derived enhancements. Namely, for a morphism $f\colon X\rightarrow B$ of derived Artin stacks equipped with \begin{enumerate*}
    \item a relative exact $(-1)$-shifted symplectic structure in the sense of \cite{PTVV} and
    \item an orientation datum (a choice of a square root line bundle $K_{X/B}^{1/2}$),
\end{enumerate*}
we have defined a perverse $t$-exact functor
\[f^\varphi\colon \Dbc(B)\longrightarrow \Dbc(X)\]
called the \emph{perverse pullback}. This functor interpolates between the functor of vanishing cycles (for $X=B$) and the so-called \emph{DT sheaf} $\varphi_X=f^\varphi \Q_B$ (for $B=\pt$) constructed in \cite{BBDJS, BBBBJ,KiemLi}.

The DT sheaf $\varphi_X$ is the central object of cohomological DT theory allowing one to define, for instance, Gopakumar--Vafa invariants of Calabi-Yau threefolds \cite{MaulikToda}, BPS cohomology for stacks \cite{bu2025cohomology} and microlocal homology of lci schemes \cite{Schefers}. Similarly, the perverse pullback functor is a central object for family versions of cohomological DT invariants.

\subsection*{Perverse pullbacks and microlocalization}

Microlocal sheaf theory \cite{KashiwaraSchapira} views sheaves on a manifold $M$ as local objects (microsheaves) on the cotangent bundle $\T^* M$. For instance, for a sheaf $\cF$ one has its microsupport $\SS(\cF)\subset \T^* M$ and for a submanifold $Z\subset M$ one can define its microlocalization $\mu_{Z/M}(\cF)$ which localizes the microsheaf onto the conormal bundle $\rN^*(Z/M)$.

By \cite{ParkSymplectic} the data of a relative exact $(-1)$-shifted symplectic structure on $f\colon X\rightarrow B$ is the same as the data of an exact $0$-shifted Lagrangian structure on the \emph{moment map} $\mu\colon X\rightarrow \T^* B$ and we have suggested (see \cite[Conjecture B]{KKPS1}) to view the perverse pullback functor $f^\varphi$ as a kind of a Lagrangian microlocalization functor $\mu^{\mathrm{Lag}}_{X/\T^*B}$ localizing microsheaves on $\T^* B$ onto $X$. The first result we prove in this direction is a support estimate suggested by this interpretation.

\begin{maintheorem}[\cref{prop:microsupportestimate}]\label{mainthm:microsupportestimate}
Let $\pi\colon X\rightarrow B$ be a morphism of derived Artin stacks locally of finite presentation equipped with an oriented exact relative $(-1)$-shifted symplectic structure. Let $\mu\colon X\rightarrow \T^* B$ be the corresponding moment map. Suppose $\cF\in\Dbc(B)$ is microsupported on $\Lambda\subset \T^* B$. Then $\pi^\varphi(\cF)$ is supported on $\mu^{-1}(\Lambda)$.
\end{maintheorem}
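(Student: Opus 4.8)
The plan is to reduce the statement to a local computation in a critical chart, where the perverse pullback becomes a vanishing cycles sheaf and the microsupport can be controlled by hand. Since both the support of $\pi^\varphi(\cF)$ and the condition $\cF \in \Dbc(B)$ being microsupported on $\Lambda$ are local on $X$ and $B$ respectively, and since the formation of $\pi^\varphi$ is local, I would first pass to a smooth atlas and work Zariski-locally on $X$. By the results of \cite{ParkSymplectic} recalled in the excerpt, the oriented exact relative $(-1)$-shifted symplectic structure on $\pi$ corresponds to an exact $0$-shifted Lagrangian structure on the moment map $\mu\colon X \to \T^*B$; locally this means that, after choosing coordinates, $X$ is presented as a derived critical locus $\Critder(g)$ for a function $g$ on a smooth scheme $U$ mapping to $B$, and $\mu$ is the associated map $U \supset \Crit(g) \to \T^* B$ assembled from $dg$ and the projection $U\to B$. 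The key input is the local description of $\pi^\varphi(\cF)$ from \cite{KKPS1}: on such a chart it is computed as a vanishing cycles sheaf $\varphi_g$ applied to the pullback of $\cF$ along $U\to B$ (twisted by the orientation), at least up to a shift.

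\medskip

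The heart of the argument is then the following classical microlocal fact: for a function $g$ on a smooth variety $U$ and a constructible sheaf $\cG$ on $U$, the vanishing cycles sheaf $\varphi_g(\cG)$ is supported on the set of points $x \in g^{-1}(0)$ such that $d_x g \in \SS(\cG)$ (this is essentially the statement that vanishing cycles along a function vanish where the function is "non-characteristic" for $\cG$, cf.\ \cite[Proposition~8.6.3 et seq.]{KashiwaraSchapira}). I would apply this with $\cG = q^*\cF$ where $q\colon U \to B$ is the chart map: since $\cF$ is microsupported on $\Lambda$, functoriality of microsupport under pullback along the smooth morphism $q$ gives $\SS(q^*\cF) \subset q_\pi^{-1}(\Lambda) := \{(x,\xi) : (q(x), \eta) \in \Lambda,\ \xi = dq_x^\vee(\eta)\}$, the inverse image of $\Lambda$ under the cotangent correspondence attached to $q$. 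Combining the two, $\varphi_g(q^*\cF)$ is supported at points $x$ with $g(x)=0$ and $d_xg$ lying in $\SS(q^*\cF)$, i.e.\ such that there exists $\eta \in \Lambda_{q(x)}$ with $d_x g = dq_x^\vee(\eta)$. But by construction of the moment map from the chart data, a point $x \in \Crit(g)$ with this property is precisely a point with $\mu(x) = (q(x), \eta) \in \Lambda$. Hence the support of $\pi^\varphi(\cF)$ in this chart lies in $\mu^{-1}(\Lambda)$, and gluing over charts (using that $\mu$ is globally defined) gives the claim.

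\medskip

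The main obstacle I anticipate is matching the local moment map coming from the critical chart presentation with the intrinsic moment map $\mu\colon X \to \T^*B$ of the statement — i.e.\ checking that the "characteristic direction" $d_xg$ detected by the support estimate for $\varphi_g$ is exactly the $\T^*B$-component of $\mu(x)$ and not merely some pushforward of it. This is a compatibility between the Lagrangian-structure picture of \cite{ParkSymplectic} and the concrete Hessian/differential data in a Darboux-type chart, and it must be verified with the correct shifts and twists by $K_{X/B}^{1/2}$. A secondary technical point is ensuring that the passage to a smooth atlas on a stack does not enlarge the microsupport beyond control: one needs that $\SS$ behaves well under smooth pullback (which it does) and that "supported on $\mu^{-1}(\Lambda)$" is a condition that can be checked smooth-locally, which follows since support is local in the smooth topology. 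Once these two compatibilities are in place, the estimate follows formally from the non-characteristic vanishing of $\varphi_g$.
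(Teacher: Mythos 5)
Your proposal follows the same overall strategy as the paper: reduce to a critical chart via Darboux, identify $\pi^\varphi$ with a vanishing cycles functor, invoke the Kashiwara--Schapira support estimate for $\varphi_g$ in terms of the microsupport of the pulled-back sheaf, and match the characteristic direction $d_xg$ with the moment map. The two compatibility points you flag at the end are indeed the ones the paper checks explicitly.

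However, there is a genuine gap in the reduction step. You write that after choosing coordinates ``$X$ is presented as a derived critical locus $\Critder(g)$ for a function $g$ on a smooth scheme $U$ mapping to $B$''. The Darboux theorem (\cref{prop:Darboux}) only supplies a scheme $U$ that is \emph{smooth over $B$}, not smooth over $k$; when $B$ is a singular affine scheme (e.g.\ the classical truncation of a non-smooth derived affine scheme of finite presentation), $U$ is singular as well. But the Kashiwara--Schapira estimate you invoke for $\varphi_g$, as well as the very definition of $\SS$ as a conic subset of a cotangent bundle, are stated for complex manifolds. So your argument implicitly assumes $B$ is smooth and does not directly apply in general. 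The paper closes this gap with an extra reduction: it chooses a closed immersion $i\colon B\hookrightarrow B'$ into a smooth affine scheme, lifts $U$ and $f$ to $U'\to B'$ and $f'\colon U'\to \bA^1$, uses proper base change for perverse pullbacks (\cref{thm:perversepullback}(4)) to compare $\pi^\varphi(\cF)$ with $(\pi')^\varphi(i_*\cF)$, and uses the characterizing property $\SS(i_*\cF)=i_\pi((i^*)^{-1}\SS(\cF))$ of the microsupport extended to singular schemes \cite[E.6.4]{AGKRRV1} to translate the hypothesis on $\Lambda$. With $U'$ and $B'$ smooth one can then legitimately apply the classical estimate, and the moment-map comparison works out because the moment map for $X'\to B'$ restricts to that for $X\to B$ along the inclusion $\T^*B'\times_{B'}B\hookrightarrow \T^*B'$. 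Without this step your chart computation is not justified when $B$ is singular.
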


This support estimate generalizes the usual support estimate \cite[Corollary 5.4.10]{KashiwaraSchapira} for microlocalization functors $\mu_{Z/M}$ along submanifolds.

\subsection*{Dimensional reduction}

The second result we prove for perverse pullbacks is that they refine both $*$- and $!$-pullbacks. For this, recall from \cite{CalaqueCotangent} that for every geometric morphism of derived stacks $Y\rightarrow B$ locally of finite presentation the relative $(-1)$-shifted cotangent bundle $\T^*[-1](Y/B)$ (equivalently, the conormal bundle $\rN^*(Y/B)$ of $Y\rightarrow B$) carries a relative exact $(-1)$-shifted symplectic structure over $B$. Moreover, it is canonically oriented by \cite[Section 6.1]{KKPS1}. In particular, we may consider the perverse pullback along the morphism $\pi \colon \T^*[-1](Y/B)\rightarrow B$.

\begin{maintheorem}[\cref{thm:dimensionalreduction}]\label{mainthm:dimensionalreduction}
Let $g\colon Y\rightarrow B$ be a morphism of derived Artin stacks locally of finite presentation. Let $\tau\colon \T^*[-1](Y/B)\rightarrow Y$ be the natural projection. Then there are natural isomorphisms
\[\tau_! \pi^\varphi\cong g^*[\dim(Y/B)],\qquad \tau_* \pi^\varphi\cong g^![-\dim(Y/B)]\]
of functors $\Dbc(B)\rightarrow \Dbc(Y)$.
\end{maintheorem}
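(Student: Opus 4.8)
The plan is to reduce the statement to the case where $g\colon Y\to B$ is a smooth morphism of smooth schemes and then, by a further local chart argument, to the model situation $Y=\mathbb{A}^n_B\to B$ with the $(-1)$-shifted cotangent bundle $\T^*[-1](Y/B)$ identified (Koszul-dually) with the derived critical locus of the canonical function $w=\sum_i p_iq_i$ on $\mathbb{A}^{2n}_B$. In this model, the DT sheaf computing $\pi^\varphi(\cF)$ is the vanishing cycle sheaf $\varphi_w$ applied to (a shift of) the pullback of $\cF$, and the claim becomes the classical \emph{dimensional reduction} isomorphism of Davison and Kinjo: the pushforward along the linear projection $\mathbb{A}^{2n}_B\to\mathbb{A}^n_B$ of $\varphi_w(p^*\cF)$ is $p^*\cF$ up to the shift $[\dim(Y/B)]=[n]$. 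This is the technical heart; the rest is bookkeeping with the six functors and with the descent/gluing properties of $f^\varphi$ established in \cite{KKPS1}.

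First I would set up the reduction. Both sides of the desired isomorphisms are compatible with smooth base change in $B$ and, by the functoriality of perverse pullbacks under composition and the behaviour of $\T^*[-1]$ under smooth maps, with replacing $Y$ by a smooth $Y$-scheme; so one may work smooth-locally on $Y$ and hence assume $g$ factors as $Y\hookrightarrow \mathbb{A}^n_B\to B$ with the first map a closed immersion. Using the exact triangle relating $\T^*[-1]$ of a composite (the normal-bundle sequence, i.e.\ the relative version of \cite{CalaqueCotangent}) and the compatibility of $\pi^\varphi$ with the corresponding Lagrangian correspondences, I would peel off the closed immersion, reducing to $Y=\mathbb{A}^n_B$. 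Here $\T^*[-1](\mathbb{A}^n_B/B)=\mathbb{A}^n_B\times_B(\text{fibre }\mathbb{A}^n[-1])$, which by Koszul duality is $\Critder(w)$ for $w\colon\mathbb{A}^{2n}_B\to\mathbb{A}^1$, $w(q,p)=\langle q,p\rangle$; the canonical orientation of \cite[Section 6.1]{KKPS1} matches the canonical square root here. The definition of $f^\varphi$ then gives $\pi^\varphi(\cF)\cong \varphi_w\big(\pi_{2n}^*\cF[\,?\,]\big)$ on $\Critder(w)$, with an explicit shift that I would pin down from the conventions in \cite{KKPS1} (it is the shift making $f^\varphi$ perverse $t$-exact, namely $\tfrac12\dim$ of the relevant shifted symplectic form, i.e.\ $[n]$).

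Next I would invoke the dimensional reduction theorem in the form needed: for the projection $\rho\colon \mathbb{A}^{2n}_B\to\mathbb{A}^n_B$ onto the $q$-coordinates, one has $\rho_!\,\varphi_w(\rho^*\cG\otimes \mathbb{Q})\cong \cG$ and $\rho_*\,\varphi_w(\rho^*\cG)\cong \cG$ up to the shift $[n]$, naturally in $\cG\in\Dbc(\mathbb{A}^n_B)$, and compatibly with the support condition $\supp\varphi_w(\rho^*\cG)\subseteq \Crit(w)=\mathbb{A}^n_B\times_B 0$. This is exactly the relative/constructible-coefficients enhancement of \cite[Theorem A]{Davison-purity} and Kinjo's work; I would either cite it directly or reprove it by the standard Fourier–Sato / monodromic argument (write $\mathbb{A}^n$ of $p$-variables, take Fourier transform in $p$, use that $\varphi$ of a pairing is the unit of the Fourier adjunction, and integrate). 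Restricting along $\tau\colon \Critder(w)\to\mathbb{A}^n_B=Y$, which is $\rho$ restricted to the (derived) zero locus, and unwinding $\tau_!$ versus $\rho_!$ via the closed/open decomposition of $\mathbb{A}^{2n}_B$ by $\Crit(w)$, gives the first isomorphism $\tau_!\pi^\varphi\cong g^*[\dim(Y/B)]$. The second, $\tau_*\pi^\varphi\cong g^![-\dim(Y/B)]$, follows either by the analogous $*$-version of dimensional reduction or by Verdier duality, using that $\varphi_w$ commutes with duality up to the sign twist and that the canonical orientation is self-dual; here one must be a little careful that $\tau$ is not proper, so $\tau_!\ne\tau_*$, and the two statements are genuinely separate but dual.

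The main obstacle I expect is not any single computation but \emph{naturality and globalization}: making the local isomorphisms canonical and independent of the chart $Y\hookrightarrow\mathbb{A}^n_B$, so that they glue to an isomorphism of functors over an arbitrary Artin stack $Y\to B$. Concretely, this means (i) showing the dimensional reduction isomorphism is functorial for the Lagrangian correspondences induced by changing charts — which is where one uses the compatibility of $f^\varphi$ with composition and with Lagrangian correspondences from \cite{KKPS1}, plus the fact that two orientations differ by a $\mathbb{Z}/2$-torsor that is trivialized canonically in the cotangent case; and (ii) extending from schemes to Artin stacks by smooth descent, which requires knowing that both $\tau_!\pi^\varphi$ and $g^*[\dim(Y/B)]$ satisfy smooth (co)descent — true for $g^*$ trivially and for $\tau_!\pi^\varphi$ because $\pi^\varphi$ does and $\tau$ is representable. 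I would organize the write-up so that the genuinely new content — the compatibility of dimensional reduction with the perverse-pullback formalism — is isolated in one lemma, and everything else is a formal consequence.
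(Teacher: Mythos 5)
Your proposal follows the chart-gluing strategy of Kinjo's earlier work on dimensional reduction — reduce to a local model $\T^*[-1](\bA^n_B/B)\cong \R\Crit(w)$, verify the isomorphism there, and then argue naturality to glue over an atlas. The paper deliberately takes a \emph{different} route precisely to avoid the "main obstacle" you correctly flag: it uses \emph{deformation to the normal cone}. Concretely, the authors first prove the theorem in the linear case where $g$ is the zero section of a perfect complex (\cref{prop:dim red Fourier-Sato}, the "Fourier--Sato" case), then build the family $\hat\pi\colon \mathfrak X \to \mathfrak B$ over $\bA^1$ interpolating between $\pi\colon\T^*[-1](Y/B)\to B$ and the linear model $u_{\N_{Y/B}^\vee}\colon \N_{Y/B}^\vee\to\N_{Y/B}$, and transport the isomorphism across the family using the functoriality properties of perverse pullbacks and specialization at $0\in\bA^1$ (\cref{lem:rhohat}). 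This makes the isomorphism canonical without ever choosing an atlas, which is exactly the naturality issue you identify as the sticking point.

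Two further gaps in your plan that the paper's route sidesteps. First, the statement is for \emph{arbitrary} lfp $g$, not just quasi-smooth: after factoring $g$ locally through a closed immersion $Y\hookrightarrow\bA^n_B$, that immersion is generally not quasi-smooth, so the "peeling off" step already needs dimensional reduction for arbitrary closed immersions — the paper handles this separately (\cref{cor-dim-red-cl-imm}) using the symplectic pushforward isomorphism $\varepsilon_p$ together with the Lagrangian functoriality lemmas, and it is not a corollary of the $\R\Crit(w)$ model. Second, even granting all local isomorphisms, making the gluing coherent over a smooth atlas of a higher Artin stack requires homotopy-coherent compatibility of the local identifications, a substantial body of bookkeeping that the paper's linear-plus-deformation argument packages into a handful of explicit commuting diagrams in \cref{prop-property-FS-dimred}. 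Your suggested Fourier--Sato/monodromic proof of local dimensional reduction is one valid route, but the paper instead gives a new proof via contraction and unipotent nearby cycles (\cref{lem-pre-local-dim-red}) — a deliberate choice, since it extends to $\ell$-adic sheaves where the full Fourier--Sato machinery is less available.
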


This theorem generalizes the dimensional reduction isomorphism for the perverse sheaf $\varphi_X$ constructed in \cite{KinjoDimred} in three ways. First, from $B=\pt$ to the relative situation. Second, from $g$ being quasi-smooth (i.e. the cotangent complex $\bL_{Y/B}$ having Tor-amplitude $\geq -1$) to a general morphism locally of finite presentation. Finally, from 1-Artin stacks to higher Artin stacks. 
In contrast with the proof in \cite{KinjoDimred}, our argument does not rely on gluing of local models. Instead, we use deformation to the normal cone to reduce the problem to the linear case where $g$ is the zero section of a perfect complex.
The generalized dimensional reduction theorem has the following applications:
\begin{enumerate}
    \item It encodes a microlocal nature of the Fourier transform in Borel--Moore homology. Namely, consider a morphism of higher Artin stacks $B\rightarrow S$, locally of finite type over $\C$, and a perfect complex $E$ over $B$. Taking $Y=E$ in \cref{mainthm:dimensionalreduction} and using the obvious isomorphism $\T^*[-1](E/B)\cong \T^*[-1](E^\vee[-1]/B)$ we obtain the \emph{Fourier--Kashiwara isomorphism} (\cref{thm-Kashiwara-Fourier})
    \[\HBM_\bullet(E/S)\cong \HBM_{\bullet-2\rank(E)}(E^\vee[-1]/S),\]
    generalizing a construction from \cite{KashiwaraCharacter}.
    Here $\rank(E)$ indicates the (virtual) rank of $E$.
    \item Let $S$ be a smooth variety and $\fM_{\Perf(S)}$ the moduli stack of compactly supported perfect complexes on $S$, which is a higher Artin stack. If $S$ is a surface and $\Tot(K_S)\rightarrow S$ the total space of its canonical bundle, then using \cite{IkedaQiu,BozecCalaqueScherotzke} we get $\T^*[-1]\fM_{\Perf(S)}\cong \fM_{\Perf(\Tot(K_S))}$. In particular, using \cref{mainthm:dimensionalreduction} we get
    \[\rH^\bullet(\fM_{\Perf(\Tot(K_S))}, \varphi_{\fM_{\Perf(\Tot(K_S))}})\cong \HBM_{\dim(\fM_{\Perf(S)})-\bullet}(\fM_{\Perf(S)})\]
    which allows one to relate statements about the threefold $\Tot(K_S)$ to statements about the surface $S$.
    \item Nadler has suggested a definition of microlocal homology of singular varieties interpolating between the usual cohomology and Borel--Moore homology. The definition of Schefers \cite{Schefers} is as follows: for a quasi-smooth derived scheme $S$ (e.g. a classical lci scheme) and a closed conic subset $\Lambda\subset \T^*[-1] S$ (equivalently, a closed conic subset of the scheme of singularities $\Sing(S)$ from \cite{ArinkinGaitsgory}) let
    \[\rH_\bullet^\Lambda(S) = \rH^{-\bullet}_\Lambda(\T^*[-1] S, \varphi_{\T^*[-1] S}),\]
    be the cohomology of the DT sheaf with support on $\Lambda$. The dimensional reduction isomorphism from \cite{KinjoDimred} implies that for $\Lambda=\{0\}$ the zero section we recover the ordinary cohomology and for $\Lambda=\T^*[-1] S$ we recover shifted Borel--Moore homology. The generalization we prove above, \cref{mainthm:dimensionalreduction}, allows one to extend microlocal homology beyond the quasi-smooth setting to arbitrary derived Artin stacks locally of finite presentation.
\end{enumerate}

\subsection*{Period sheaves}

In the recent work \cite{BZSV} Ben-Zvi, Sakellaridis and Venkatesh have studied an extension of the geometric Langlands correspondence. Beyond the equivalence of ``automorphic'' and ``spectral'' categories associated to Langlands dual groups $G, \check{G}$, it further matches specific objects on the two sides: the period sheaf associated to a Hamiltonian $G$-space $M$ and the $L$-sheaf associated to the dual Hamiltonian $\check{G}$-space $\check{M}$. Moreover, they have constructed the (normalized) period sheaf $\cP^{\norm}_X$ for a $G$-space $X$ and conjectured that it only depends on the Hamiltonian $G$-space $M=\T^* X$ with the appropriate ``anomaly trivialization'' data. As an application of the construction of perverse pullbacks, we realize this vision in full generality. Let $\bD(\Bun_G(\Sigma))$ be the $\infty$-category of ind-constructible complexes on the moduli stack of $G$-bundles on an algebraic curve $\Sigma$ and $\Nilp\subset \T^*\Bun_G(\Sigma)$ the global nilpotent cone. Let $\bD_{\Nilp}(\Bun_G(\Sigma))\subset \bD(\Bun_G(\Sigma))$ be the full subcategory of ind-constructible complexes microsupported on the global nilpotent cone. The following statement combines \cref{thm:periodanomalyfree} and \cref{def:induction}.

\begin{maintheorem}\label{mainthm:period}
Let $G=G_1\times G_2$ be a connected reductive group over $\C$, $(M, \omega)$ a quasi-separated smooth complex symplectic scheme equipped with a $G\times\Gm$-action, such that the $G$-action preserves $\omega$ and the $\Gm$-action scales it with weight $2$ and $\Sigma$ a connected smooth complex projective algebraic curve equipped with a choice of $K_\Sigma^{1/2}$ (i.e. a spin structure). Choose an anomaly trivialization, e.g. a trivialization of the \'etale second Chern class $c^{G\times\Gm}_2(\T_M\otimes\cO(1))\in|\R\Gamma_{\et}([M/G\times\Gm], \mu_2^{\otimes 2})[4]|$. Then there is an induction functor
\[\ind^{G_1\rightarrow G_2}_M\colon \bD_{\Nilp}(\Bun_{G_1}(\Sigma))\longrightarrow \bD_{\Nilp}(\Bun_{G_2}(\Sigma)).\]
\end{maintheorem}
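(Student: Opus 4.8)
\emph{Step 1: the relative symplectic datum and its orientation.} The first task is to attach to $M$ a derived stack $\mathcal{Z}_M$ over $\Bun_G(\Sigma)$ carrying an oriented exact relative $(-1)$-shifted symplectic structure, so that the perverse pullback of \cite{KKPS1} applies. The $G$-invariant form and the moment map on $M$ present $[M/G]\to[\g^*/G]$ as a Lagrangian into the $1$-shifted symplectic stack $[\g^*/G]\cong\T^*[1]\B G$ (Calaque, Safronov), and the $\Gm$-action scaling $\omega$ with weight $2$ makes this graded with the moment map in weight $2$, allowing one to twist $[M/G]$ and $[\g^*/G]$ by the chosen spin structure $K_\Sigma^{1/2}$. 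Transgressing along $\Sigma$ by the AKSZ procedure of \cite{PTVV} — where the Serre-duality $1$-orientation of $\Sigma$ valued in $\omega_\Sigma$, the weight-$2$ grading, and the square root $K_\Sigma^{1/2}$ combine to shift the symplectic degree from $1$ to $-1$ — produces the derived moduli stack $\mathcal{Z}_M$ of pairs $(P,s)$ of a $G$-bundle $P$ on $\Sigma$ and a section $s$ of the $K_\Sigma^{1/2}$-twisted associated $M$-bundle, cut out by the global moment-map equation, together with a relative exact $(-1)$-shifted symplectic structure on $\pi\colon\mathcal{Z}_M\to\Bun_G(\Sigma)$; equivalently, by \cite{ParkSymplectic}, a $0$-shifted Lagrangian structure on the global moment map $\mu\colon\mathcal{Z}_M\to\T^*\Bun_G(\Sigma)$ sending $(P,s)$ to $(P,\mu(s))$ with $\mu(s)$ regarded as a Higgs field. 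Because the formalisms of \cite{PTVV,KKPS1} handle higher derived Artin stacks, no hypothesis on $M$ beyond smoothness and quasi-separatedness, and no assumption on the moment map, is needed. It remains to orient $\pi$, i.e.\ to choose a square root of $K_{\mathcal{Z}_M/\Bun_G(\Sigma)}$: the obstruction is a $\mu_2$-gerbe, and a transgression computation identifies it with the integral over $\Sigma$ of (an étale refinement of) the second Chern class $c_2^{G\times\Gm}(\T_M\otimes\cO(1))$, so an anomaly trivialization as in the statement yields a canonical orientation. This is \cref{thm:periodanomalyfree}.

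\emph{Step 2: the induction functor.} Using the decomposition $\Bun_G(\Sigma)=\Bun_{G_1}(\Sigma)\times\Bun_{G_2}(\Sigma)$ with its two projections $p_1,p_2$, extend $\pi^\varphi$ to the ind-constructible categories $\bD(-)$ by continuity, and set
\[\ind^{G_1\rightarrow G_2}_M\coloneqq(p_2\circ\pi)_!\circ\pi^\varphi\circ p_1^*\]
(up to a normalization shift; there is a Verdier-dual variant using $(p_2\circ\pi)_*$ and $p_1^!$), so that the normalized period sheaf $\cP_M=\pi_!\pi^\varphi\Q_{\Bun_G(\Sigma)}$ is its integral kernel via a projection formula for $\pi^\varphi$. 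This is \cref{def:induction}. When $M=\T^*X$, the stack $\mathcal{Z}_M$ is the relative $(-1)$-shifted cotangent of the section stack $\Map(\Sigma,[X/G])\to\Bun_G(\Sigma)$ — the $K_\Sigma^{1/2}$-twist being precisely what matches the cotangent of $\Bun_G(\Sigma)$ — and the dimensional reduction isomorphism \cref{mainthm:dimensionalreduction} collapses $\pi^\varphi$ onto the ordinary $*$-pullback; this recovers the period sheaf and the induction functor of \cite{BZSV}, and, for $X$ of Whittaker type, the corresponding $L$-sheaf construction.

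\emph{Step 3: preservation of the nilpotent cone.} Since a kernel transform against a kernel with singular support in $\Nilp\times\Nilp$ carries $\bD_{\Nilp}$ to $\bD_{\Nilp}$, it suffices to show $\cP_M$ has singular support in $\Nilp\times\Nilp$. By \cref{mainthm:microsupportestimate} the relative DT sheaf $\pi^\varphi\Q_{\Bun_G(\Sigma)}$ is supported on $\mu^{-1}(0_{\Bun_G(\Sigma)})\subseteq\mathcal{Z}_M$; feeding this into the behaviour of singular support under the (partially proper) pushforward $\pi_!$, and using the Lagrangian structure on $\mu$ together with $\Gm$-conicity, bounds the singular support of $\cP_M$ by a closed conic Lagrangian contained in $\Nilp\times\Nilp$. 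One also checks along the way that $(p_2\circ\pi)_!$ is defined on the $\Nilp$-microsupported subcategory, using the finiteness of $\pi$ over the support of $\pi^\varphi\Q_{\Bun_G(\Sigma)}$ together with the contractibility properties of $\Bun_G(\Sigma)$.

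\emph{Expected main obstacle.} The heart of the proof is Step 1: constructing the relative exact $(-1)$-shifted symplectic structure on $\pi$ with a compatible orientation, bookkeeping the weight-$2$ $\Gm$-action and the $K_\Sigma^{1/2}$-twist correctly, and — most delicately — matching the orientation obstruction to the étale second Chern class so that it agrees on the nose with the anomaly of \cite{BZSV}. A secondary delicate point is the singular-support bound of Step 3, where \cref{mainthm:microsupportestimate} must be combined with a careful analysis of the Lagrangian correspondence $\mu$ and of partial properness over $\Bun_G(\Sigma)$. The remaining inputs — the perverse pullback $\pi^\varphi$, its microsupport estimate \cref{mainthm:microsupportestimate}, and dimensional reduction \cref{mainthm:dimensionalreduction} — are already in hand.
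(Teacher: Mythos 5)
Your Step 1 is in line with the paper: the relative exact $(-1)$-shifted symplectic structure on $\pi\colon\Bun_G^M(\Sigma)\to\Bun_G(\Sigma)$ is indeed produced via AKSZ transgression along $\Sigma$ with the $K_\Sigma^{1/2}$-twist absorbing the weight $2$ of $\omega$ (\cref{prop:BunGMsymplectic}), and the orientation is produced from the anomaly trivialization by a Grothendieck--Riemann--Roch argument that matches the orientation gerbe with $-\int_\Sigma \ev^*c_2^{G\times\Gm}(\T_M\otimes\cO(1))$ (\cref{thm:periodanomalyfree}, using \cref{cor:GRR}). One small correction: $\Bun_G^M(\Sigma)$ is defined directly as the (twisted) derived mapping stack into $[M/(G\times\Gm)]$, and the moment-map equation appears only at the level of the $0$-shifted Lagrangian structure on $\mu\colon\Bun_G^M(\Sigma)\to\T^*\Bun_G(\Sigma)$, not as a cutting-out equation; but this is cosmetic.

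The real divergence is Steps 2--3, and there your route has a gap. The paper does \emph{not} define $\ind_M^{G_1\to G_2}$ as a raw kernel transform $(p_2\circ\pi)_!\pi^\varphi p_1^*$. Instead it uses the miraculous self-duality of $\bD_{\Nilp}(\Bun_{G_2}(\Sigma))$ from \cite[Theorem 3.2.2]{AGKRRV2} (\cref{thm:miraculousduality}) to \emph{define} $\ind_M$ as the unique colimit-preserving functor with
\[\R\Gamma_c(\Bun_{G_2}(\Sigma),\ind_M(-)\otimes -)\cong \R\Gamma_c(\Bun_G^M(\Sigma),\pi^\varphi(-\boxtimes -))[\dim\Bun_{G_2}(\Sigma)],\]
and this automatically lands in $\bD_{\Nilp}(\Bun_{G_2}(\Sigma))$ because that is the target of the duality. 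No singular-support estimate on the kernel is ever needed. By contrast, your Step 3 must show that the raw transform already has nilpotent singular support, and this is false: \cref{prop:cotangentperiod} computes that in the cotangent case $M=\T^*X$ one has $\ind_M(-)\cong\sfP\bigl((\pi_2^X)_!(\pi_1^X)^*(-)\bigr)[\dim(\Bun_G^X/\Bun_{G_1})]$, where the Beilinson projector $\sfP\colon\bD(\Bun_G)\to\bD_{\Nilp}(\Bun_G)$ is genuinely required --- the undecorated kernel transform $(\pi_2^X)_!(\pi_1^X)^*$ does not preserve nilpotent singular support. Moreover, the microsupport estimate (\cref{mainthm:microsupportestimate}) that you invoke only gives a \emph{support} bound on $\pi^\varphi(\cF)$ inside $\mu^{-1}(\Lambda)$, not a \emph{microsupport} bound on $\pi^\varphi(\cF)$, so the step "feeding this into the behaviour of singular support under $\pi_!$" has no content as written. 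If you want to keep a kernel-transform definition you must insert the projector $\sfP$ by hand, at which point the definition via the self-duality becomes both cleaner and strictly more informative (it also gives the Verdier-dual characterization for free). The remaining parts of your proposal (cotangent recovery via dimensional reduction, the role of the microsupport estimate in the Whittaker identification) are on target.
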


Moreover, we relate the induction functor to the known constructions of period sheaves:
\begin{itemize}
    \item Using the dimensional reduction isomorphism, \cref{mainthm:dimensionalreduction}, we prove that for a $G$-space $X$ and $M=\T^* X$ it recovers the normalized period sheaf $\cP^{\norm}_X$ defined in \cite{BZSV} (see \cref{prop:BZSVcomparison}). For instance, if $U\subset G$ is the unipotent radical of the Borel subgroup, the induction functor for $\T^*(G/U)$ recovers the functors of constant term and Eisenstein series (see \cref{ex:Eisensteinperiod}).
    \item Using the support estimate for perverse pullbacks, \cref{mainthm:microsupportestimate}, we prove that for the Hamiltonian space $M=G\times \cS$, where $\cS\subset \g^*$ is the Kostant slice, the relevant perverse pullback functor coincides with a microstalk. Using the relationship between the Whittaker functors and microstalks established in \cite{NadlerTaylor} we get that the induction functor in this case recovers the Whittaker functional and the Whittaker sheaf from \cite{NadlerYun} (see \cref{thm:Whittakerperiod}).
\end{itemize}

Using a self-duality of $\bD_{\Nilp}(\Bun_{G_2}(\Sigma))$ (see \cite[Theorem 3.2.2]{AGKRRV2}), the induction functor $\ind_M$ is determined by a functor
\[\ind^{G_1\rightarrow G_2}_M\colon \bD_{\Nilp}(\Bun_G(\Sigma))\rightarrow \bD(\pt)\]
which we define to be the composite of the perverse pullback along the projection $\Bun_G^M(\Sigma)\rightarrow \Bun_G(\Sigma)$, where $\Bun_G^M(\Sigma)$ parametrizes twisted maps $\Sigma\rightarrow [M/G]$ (see \cref{sect:periodsetup} for details), and the $!$-pushforward along $\Bun_G^M(\Sigma)\rightarrow \pt$.

The symplectic nature of the period sheaf exhibited in \cref{mainthm:period} encodes natural symmetries invisible in the definition of the normalized period sheaf, such as the Fourier isomorphism (categorification of the functional equation) identifying period sheaves $\cP^{\norm}_X\cong \cP^{\norm}_{X^\vee}$ associated to dual $G$-representations $X$ and $X^\vee$ (see \cref{prop:categorifiedfunctionalequation} which generalizes \cite[Proposition 6.5.1]{FengWang}).

It is instructive to analyze the above construction in the case $G=\pt$, so that $\ind_M$ reduces to a chain complex. In this case $\Bun^M_G(\Sigma)=\Map^{\mathrm{twisted}}(\Sigma, M)$ is a twisted version of the space of algebraic maps $\Sigma\rightarrow M$, which carries a natural d-critical structure. Then
\[\ind^{\pt\rightarrow \pt}_M(\bu) = \R\Gamma_c(\Map^{\mathrm{twisted}}(\Sigma, M), \varphi)\]
is the usual cohomological DT invariant; this is consistent with the proposals for the state spaces in the 3d A-model in \cite{NakajimaCoulomb,SafronovWilliams}.

The notion of period sheaves is also closely related to the notion of Coulomb branches of 3d $\cN=4$ gauge theories \cite{NakajimaCoulomb,BFN}, where $M$ is a complex symplectic $G$-representation. In addition, formally replace the curve $\Sigma$ by the space
\[\Spec \C[\![t]\!]\coprod_{\Spec \C(\!(t)\!)} \Spec \C[\![t]\!],\]
so that $\bD(\Bun_G(\Sigma))$ is replaced by the derived Satake category $\bD(G[\![t]\!]\backslash G(\!(t)\!) / G[\![t]\!])$, which carries a natural $\bE_3$ structure \cite{Nocera}. In this case \cite{BDFRT} have defined a certain commutative ring object $\cA_{G, M}\in \bD(G[\![t]\!]\backslash G(\!(t)\!) / G[\![t]\!])$ in the corresponding homotopy category (or its twisted version if the anomaly cancellation condition does not hold), so that the algebra of functions on the Coulomb branch is
\[\rH^\bullet(G[\![t]\!]\backslash G(\!(t)\!) / G[\![t]\!], \cA_{G, M}).\]
See also \cite{TelemanCoulomb} for an alternative construction.

The theory we develop here does not immediately apply to the situation of Coulomb branches as the stack $G[\![t]\!]\backslash G(\!(t)\!) / G[\![t]\!]$ is not locally of finite presentation. It would be interesting to extend the theory of shifted symplectic structures to the semi-infinite setting to incorporate this example.

\subsection*{Conventions}

Throughout the paper we work with schemes over the field $k=\C$ of complex numbers. We also fix a field of coefficients $R$ for our sheaves.

\subsection*{Acknowledgements}

We would like to thank David Ben-Zvi and David Nadler for useful comments. AAK acknowledges support from the grants AS-CDA-112-M01 and NSTC 112-2628-M-001-0062030. 
TK was supported by JSPS KAKENHI Grant Number 25K17229.
HP was supported by Korea Institute for Advanced Study (SG089201).

\section{Background}

\subsection{Shifted symplectic structures}

Throughout the paper we work in the setting of derived algebraic geometry. We refer to \cite[Sections 1.1 and 1.3]{KKPS1} for our conventions. In particular, for a derived stack $X$ we have its classical truncation $X^{\cl}$ which is an ordinary (higher) stack. If $X$ is a derived Artin stack, then $X^{\cl}$ is a higher Artin stack. Finally, if $f\colon X\rightarrow Y$ is a morphism locally of finite presentation (lfp for short), then $f^{\cl}\colon X^{\cl}\rightarrow Y^{\cl}$ is locally of finite type (lft for short). For a derived stack $X$ we have the $\infty$-category $\QCoh(X)$ of quasi-coherent complexes which has the structure sheaf $\cO_X\in\QCoh(X)$. We denote $\Hom(\cO_X, -)=\R\Gamma(X, -)$ the functor of global sections. For a geometric morphism of derived stacks $\pi\colon X\rightarrow B$ we have the cotangent complex $\bL_{X/B}\in\QCoh(X)$ which is perfect if $\pi$ is locally of finite presentation; in this case we denote by $\bT_{X/B}$ its dual.

\begin{remark}
If $X$ is a stack, we can also regard it as a derived stack. However, even if $X$ is lft as a classical stack, it is not necessarily lfp as a derived stack. For instance, if $X$ is a finite type non-lci scheme, then it is not lfp as a derived stack.
\end{remark}

We will use the language of shifted symplectic structures on derived stacks as developed in \cite{PTVV}. Let us briefly recall the relevant definitions. For a chain complex $V$ we denote by $|V|$ the corresponding $\infty$-groupoid obtained via the Dold--Kan correspondence.  We have the space $\cA^{2, \ex}(X/B, n)$ of exact two-forms of degree $n$ which fits into a fiber sequence
\[
|\R\Gamma(X, \bL_{X/B})[n]|\longrightarrow \cA^{2, \ex}(X/B, n)\longrightarrow |\R\Gamma(X, \cO_X)[n+1]|
\]
of pointed $\infty$-groupoids. Concretely, the space $\cA^{2, \ex}(X/B, n)$ parametrizes pairs of a function $f$ on $X$ of degree $n$ and a nullhomotopy $d_B f\sim 0\in\bL_{X/B}$ of its de Rham differential. A \defterm{relative exact $n$-shifted symplectic structure} on an lfp geometric morphism $\pi\colon X\rightarrow B$ of derived stacks is a relative exact two-form $\omega\in\cA^{2, \ex}(X/B, n)$ whose underlying two-form induces an equivalence $\bT_{X/B}\cong \bL_{X/B}[n]$. Given a morphism of derived stacks $X\rightarrow B$ equipped with a relative exact $n$-shifted symplectic structure we denote by $\overline{X}$ the same morphism equipped with the opposite relative exact $n$-shifted symplectic structure. For a Cartesian square
\[
\xymatrix{
X' \ar[r] \ar[d] & X \ar[d] \\
B' \ar[r] & B
}
\]
and a relative exact $n$-shifted symplectic structure on $X\rightarrow B$ there is a natural pullback relative exact $n$-shifted symplectic structure on $X'\rightarrow B'$.

Given an lfp geometric morphism $f\colon L\rightarrow X$ of derived stacks over a derived stack $B$ and a relative exact $n$-shifted symplectic structure $\omega$ on $X\rightarrow B$, a \defterm{relative exact $n$-shifted Lagrangian structure} on $f$ is a nullhomotopy of $f^*\omega\in\cA^{2, \ex}(L/B, n)$ whose underlying two-form induces an equivalence $\bT_{L/X}\cong \bL_{L/B}[n-1]$. A \defterm{relative exact $n$-shifted Lagrangian correspondence} is a diagram
\[
\xymatrix{
& L \ar[dl] \ar[dr] & \\
X \ar[dr] && Y \ar[dl] \\
& B,
}
\]
where $X\rightarrow B$ and $Y\rightarrow B$ are equipped with relative exact $n$-shifted symplectic structures and $L\rightarrow \overline{X}\times_B Y$ is equipped with a relative exact $n$-shifted Lagrangian structure (where $\overline{X}$ denotes $X$ equipped with the opposite relative exact $n$‑shifted symplectic structure).

\begin{example}\label{ex:Lagrangianintersection}
Given a relative exact $n$-shifted symplectic structure on $X\rightarrow B$ and morphisms $f_1\colon L_1\rightarrow X$ and $f_2\colon L_2\rightarrow X$ equipped with relative exact $n$-shifted Lagrangian structures, the derived intersection $L_1\times_X L_2\rightarrow B$ carries a natural relative exact $(n-1)$-shifted symplectic structure (see \cite[Theorem 2.9]{PTVV}).
\end{example}

\begin{example}\label{ex:cotangentsymplectic}
Given an lfp geometric morphism of derived stacks $X\rightarrow B$ we have the relative $n$-shifted cotangent bundle $\T^*[n](X/B)$ which carries an exact $n$-shifted symplectic structure given by the Liouville one-form \cite{CalaqueCotangent}. Moreover, for any function $f$ of degree $n$ on $X$, the graph $\Gamma_{d_B f}\colon X\rightarrow \T^*[n](X/B)$ of its differential carries a natural relative exact $n$-shifted Lagrangian structure.
\end{example}

\begin{example}
Given an lfp geometric morphism of derived stacks $X\rightarrow B$ together with a function $f$ of degree $n$ on $X$, the \defterm{derived critical locus} $\R\Crit_{X/B}(f)$ is the derived intersection of the graph of the differential $\Gamma_{d_B f}\colon X\rightarrow \T^*[n](X/B)$ and the zero section $\Gamma_0\colon X\rightarrow \T^*[n](X/B)$. In particular, by \cref{ex:Lagrangianintersection,ex:cotangentsymplectic} $\R\Crit_{X/B}(f)\rightarrow B$ carries a natural relative exact $(n-1)$-shifted symplectic structure. We denote by $\Crit_{X/B}(f)$ the underlying classical stack.
\end{example}

We will mostly work with $(-1)$-shifted symplectic structures, for which there is the following Darboux-type result \cite{BBJ, BBBBJ, ParkSymplectic} which says that locally any relative exact $(-1)$-shifted symplectic stack is a derived critical locus.

\begin{proposition}\label{prop:Darboux}
Let $X\rightarrow B$ be an lfp geometric morphism of derived stacks equipped with a relative exact $(-1)$-shifted symplectic structure, where $B$ is a derived affine scheme. Then there is smooth morphism $U\rightarrow B$ of derived schemes equipped with a function $f\colon U\rightarrow \bA^1$ and a relative exact $(-1)$-shifted Lagrangian correspondence
\[
\xymatrix{
& L \ar[dl] \ar[dr] & \\
\R\Crit_{U/B}(f) && X,
}
\]
where $L\rightarrow \R\Crit_{U/B}(f)$ is an equivalence on classical truncations and $L\rightarrow X$ is smooth surjective.
\end{proposition}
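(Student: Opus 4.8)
The plan is to reduce the statement, by induction on the Artin level $m$ of the geometric morphism $X\to B$, to the case $m=0$ of derived schemes, where the Darboux theorem for $(-1)$-shifted symplectic derived schemes applies. As a preliminary observation, the conclusion is smooth-local on $X$: given a smooth surjective family $\{X_i\to X\}$ with each $X_i$ a derived affine scheme --- such a family exists since $B$, hence $X$, admits a smooth surjection from a disjoint union of derived affine schemes --- if the conclusion holds for each $X_i\to B$, witnessed by a smooth $U_i\to B$, a function $f_i$ on $U_i$, and a relative exact $(-1)$-shifted Lagrangian correspondence $\R\Crit_{U_i/B}(f_i)\leftarrow L_i\to X_i$, then $U\coloneqq\coprod_i U_i$ is smooth over $B$, $f\coloneqq\coprod_i f_i$ is a function on it, $\R\Crit_{U/B}(f)=\coprod_i\R\Crit_{U_i/B}(f_i)$, and $L\coloneqq\coprod_i L_i$ --- mapping to $X$ through the $X_i$ --- assembles into a relative exact $(-1)$-shifted Lagrangian correspondence with $L\to\R\Crit_{U/B}(f)$ an equivalence on classical truncations and $L\to X$ smooth surjective, the Lagrangian condition being inherited since it is smooth-local. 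So it suffices to produce such a chart in a neighborhood of an arbitrary point of $X$.

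For the base case $m=0$, we may then assume $X$ is a derived scheme, affine over the affine derived scheme $B$, carrying a relative exact $(-1)$-shifted symplectic structure, and the conclusion is Zariski-local on $X$. Here I would run the relative analogue of the argument of \cite{BBJ}: the equivalence $\bT_{X/B}\simeq\bL_{X/B}[-1]$ forces $\bL_{X/B}$ to be presented, Zariski-locally, by the two-term complex $[\bT_{U/B}\xrightarrow{\Hess(f)}\bL_{U/B}]$ built from the Hessian of a function $f\colon U\to\bA^1$, where $U\to B$ is a smooth morphism of derived affine schemes, the \emph{exactness} of the symplectic form being exactly the input that promotes the locally defined ``superpotential'' to an honest global function $f$; one concludes $X\simeq\R\Crit_{U/B}(f)$ Zariski-locally and takes $L=X$ with the identity correspondence. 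This relative, exact refinement of \cite{BBJ} is the content of \cite{ParkSymplectic}.

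For the inductive step, choose a smooth surjection $q\colon X'\to X$ with $X'$ an $(m-1)$-Artin derived stack. The crux is to absorb $q$ into a Lagrangian correspondence: one produces a relative exact $(-1)$-shifted symplectic stack $X''\to B$ which is again $(m-1)$-Artin, together with a relative exact $(-1)$-shifted Lagrangian correspondence $X''\leftarrow M\to X$ in which $M\to X''$ is an equivalence on classical truncations and $M\to X$ is smooth surjective and factors through $q$; this is the stack-level content of \cite{BBBBJ}. Applying the inductive hypothesis to $X''\to B$ furnishes $(U,f)$ and a correspondence $\R\Crit_{U/B}(f)\leftarrow L'\to X''$, and composing relative exact Lagrangian correspondences --- via the relative version of \cite[Theorem 2.9]{PTVV}, cf.\ \cref{ex:Lagrangianintersection} --- yields $\R\Crit_{U/B}(f)\leftarrow L\to X$ with $L=L'\times_{X''}M$. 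Since classical truncation commutes with fiber products (as one checks on affine test objects), the equivalence $M\to X''$ on classical truncations propagates to an equivalence $L\to L'$ on classical truncations, so $L\to\R\Crit_{U/B}(f)$ is an equivalence on classical truncations, while $L\to M\to X$ is a composite of smooth surjections.

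The step I expect to be the main obstacle is the inductive one: constructing $X''$ and the correspondence $X''\leftarrow M\to X$ so that $X''$ genuinely drops one Artin level while still carrying a relative exact $(-1)$-shifted symplectic structure, and at the same time controlling both the smooth surjectivity of $M\to X$ and the fact that $M\to X''$ is an equivalence on classical truncations. This is the heart of the Artin-stack Darboux theorem of \cite{BBBBJ}; the features particular to the present setting --- the base $B$, and bookkeeping the exactness of all symplectic and Lagrangian data throughout --- are handled as in \cite{ParkSymplectic}, and it is these adaptations, rather than a single new idea, that carry the bulk of the work.
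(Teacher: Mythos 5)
Your sketch reconstructs the logic behind the references the paper cites for this result --- \cite{ParkSymplectic} for the relative exact Darboux theorem on derived schemes as the base case and \cite{BBBBJ} for the reduction of Artin level --- and you correctly single out the Artin-level reduction (absorbing the atlas $q$ into a Lagrangian correspondence to a lower-Artin-level symplectic stack) as the technical heart. However, the preliminary ``smooth-locality'' observation is a genuine error as stated. If $q_i\colon X_i\to X$ is smooth surjective but not \'etale, then $X_i\to B$ does \emph{not} inherit a relative exact $(-1)$-shifted symplectic structure: in the fiber sequence
\[
q_i^*\bL_{X/B}\longrightarrow\bL_{X_i/B}\longrightarrow\bL_{X_i/X}
\]
the last term is a nonzero vector bundle in degree $0$, so the map $\bT_{X_i/B}\to\bL_{X_i/B}[-1]$ induced by $q_i^*\omega$ cannot be an equivalence --- it kills the summand $\bT_{X_i/X}\subset\bT_{X_i/B}$ and does not reach the quotient $\bL_{X_i/X}[-1]$. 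Consequently ``the conclusion holds for $X_i\to B$'' is not a well-posed hypothesis for a general smooth cover, and the Lagrangian correspondence $\R\Crit_{U_i/B}(f_i)\leftarrow L_i\to X_i$ you invoke has no meaningful right-hand leg. Notice also that, were the conclusion genuinely smooth-local on $X$, your Artin-level induction would be entirely superfluous: one could cover any $X$ smoothly by affine derived schemes and conclude directly, which is precisely what the nondegeneracy obstruction forbids. What does hold, and suffices for the way you actually use the observation --- reducing a derived scheme to its affine Zariski opens in the base case --- is Zariski- (or more generally \'etale-) locality, since then $\bL_{X_i/X}\simeq 0$ and the symplectic structure does pull back. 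With ``smooth'' replaced by ``\'etale'' in the preliminary step, the remainder of your sketch is sound, modulo the substantial work, carried out in the cited references, of executing the Artin-level reduction while keeping track of exactness and the base $B$.
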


We will also use the following relationship between exact (relative) shifted Lagrangian structures in cotangent bundles and exact relative shifted symplectic structures. Consider lfp geometric morphisms $X\xrightarrow{\pi} B\xrightarrow{p} S$ of derived stacks, so that we obtain a fiber sequence
\[
\pi^*\bL_{B/S}\longrightarrow \bL_{X/S}\longrightarrow \bL_{X/B}.
\]
Given a relative exact $n$-shifted symplectic structure $\omega$ on $X\rightarrow B$ specified by a function $f$ of degree $n+1$ and a nullhomotopy $d_B f\sim 0\in\bL_{X/B}$ the above fiber sequence implies that $d_S f$ lifts to a section of $\pi^*\bL_{B/S}$. So, $\omega$ gives rise to a morphism
\[\mu\colon X\rightarrow \T^*[n+1] (B/S)\]
which we call the \defterm{moment map} associated to $\omega$. In this case we define the \defterm{symplectic pushforward} $p_* X\rightarrow S$ to be the Lagrangian intersection
\[
\xymatrix{
p_*X \ar[r] \ar[d] & X \ar^{\mu}[d] \\
B \ar^-{\Gamma_0}[r] &  \T^*[n+1] (B/S)
}
\]

\begin{proposition}{\cite[Corollary 3.1.3]{ParkSymplectic}}\label{prop:momentmap}
Fix lfp geometric morphisms $X\rightarrow B\rightarrow S$ of derived stacks. The above assignment $(X\rightarrow B, \omega)\mapsto (\mu\colon X\rightarrow \T^*[n+1](B/S))$ establishes an equivalence between the space of $B$-relative exact $n$-shifted symplectic structures on $X$ and the space of morphisms $\mu\colon X\rightarrow \T^*[n+1](B/S)$ lifting $X\rightarrow B$ equipped with an $S$-relative exact $(n+1)$-shifted Lagrangian structure.
\end{proposition}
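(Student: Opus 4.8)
The plan is to realize the asserted equivalence as the ``non-degenerate part'' of an equivalence between the underlying spaces of exact two-form and isotropic data, refining the moment-map construction described just before the statement. Write $T := \T^*[n+1](B/S)$ with projection $q\colon T\to B$, carrying its canonical relative exact $(n+1)$-shifted symplectic structure $\omega_{\mathrm{can}}$ over $S$ (\cref{ex:cotangentsymplectic}). On the one side, a $B$-relative exact $n$-shifted symplectic structure on $X$ is a pair $(f,h)$ with $f$ a function on $X$ of degree $n+1$ and $h$ a nullhomotopy of $d_B f\in\R\Gamma(X,\bL_{X/B}[n+1])$, subject to the non-degeneracy condition that the associated two-form induce an equivalence $\omega^\flat\colon\bT_{X/B}\xrightarrow{\sim}\bL_{X/B}[n]$; these form a union of connected components of $\cA^{2,\ex}(X/B,n)$. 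On the other side, a map $\mu\colon X\to T$ lifting $\pi$ is the same datum as a section $\alpha\in\R\Gamma(X,\pi^*\bL_{B/S}[n+1])$ of $\pi^*T\to X$, and an $S$-relative exact $(n+1)$-shifted Lagrangian structure on such a $\mu$ is a nullhomotopy of $\mu^*\omega_{\mathrm{can}}$ in $\cA^{2,\ex}(X/S,n+1)$ whose underlying nullhomotopy induces $\bT_{X/T}\xrightarrow{\sim}\bL_{X/S}[n]$; these again form a union of connected components of the space of all such isotropic data. (For $n=-1$ this is the classical picture: a $B$-relative exact $(-1)$-shifted symplectic structure on $X$, i.e.\ critical-locus-type data, is the same as a Lagrangian moment map $X\to\T^*(B/S)$ over $S$.)

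The heart of the proof, and the step I expect to be the main obstacle, is to promote the moment-map construction to an equivalence of the two underlying data spaces before imposing non-degeneracy. Given $(f,h)$, the fiber sequence $\pi^*\bL_{B/S}\to\bL_{X/S}\to\bL_{X/B}$ turns the nullhomotopy $h$ of the image of $d_S f$ into a canonical lift $\alpha_{(f,h)}$ of $d_S f$ along $\pi^*\bL_{B/S}[n+1]\to\bL_{X/S}[n+1]$, hence into a map $\mu_{(f,h)}\colon X\to T$ over $B$; the tautological description of the relative Liouville form gives $\mu_{(f,h)}^*\lambda_{\mathrm{can}}=d_S f$, whence $\mu_{(f,h)}^*\omega_{\mathrm{can}}=d_S(d_S f)$, and the coherent vanishing of $d_S\circ d_S$ on the genuine primitive $f$ upgrades, using the exact structures, to a nullhomotopy of $\mu_{(f,h)}^*\omega_{\mathrm{can}}$ in $\cA^{2,\ex}(X/S,n+1)$. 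To see that this assembles into an equivalence of $\infty$-groupoids one checks that no information is lost: each operation used is the formation of a fiber (or totalization) of the fiber sequence of relative de Rham complexes attached to $X\to B\to S$, which interchanges the two presentations of the data. Concretely I would identify both $\infty$-groupoids with a single totalization of $\R\Gamma(X,-)$ applied to that fiber sequence and check that the construction above is the identity under this identification. The delicate point is that the nullhomotopies must be carried through the fiber sequence at the level of \emph{exact} (closed) two-forms and not merely their underlying two-forms, so the argument rests on the careful functorial construction of the relative shifted cotangent bundle and its Liouville structure (cf.\ \cite{CalaqueCotangent} and the conventions of \cite{ParkSymplectic}).

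It remains to match the non-degeneracy conditions. Using the transitivity cofiber sequences for $X\xrightarrow{\mu}T\xrightarrow{q}B$ and for $X\xrightarrow{\pi}B\xrightarrow{p}S$, together with $\bL_{T/B}\simeq q^*\bL_{B/S}^\vee[-n-1]$, one gets
\[\bL_{X/T}\simeq\cofib\big(\pi^*\bL_{B/S}^\vee[-n-1]\xrightarrow{a}\bL_{X/B}\big),\qquad\bT_{X/S}[-n]\simeq\cofib\big(\pi^*\bL_{B/S}^\vee[-n-1]\xrightarrow{b}\bT_{X/B}[-n]\big),\]
and the isotropic structure constructed above supplies a homotopy $a\simeq\omega^\flat[-n]\circ b$, hence a morphism of cofiber sequences with first component the identity, middle component $\omega^\flat[-n]$, and third component a comparison map $c\colon\bT_{X/S}[-n]\to\bL_{X/T}$ whose dual $c^\vee\colon\bT_{X/T}\to\bL_{X/S}[n]$ is the non-degeneracy map of the Lagrangian structure. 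Since the first component is an equivalence, the two-out-of-three property for this morphism of cofiber sequences shows that $\omega^\flat$ is an equivalence if and only if $c$ is, i.e.\ the $B$-relative exact $n$-shifted symplectic non-degeneracy holds if and only if the $S$-relative exact $(n+1)$-shifted Lagrangian non-degeneracy does. Combining this with the equivalence of underlying data from the previous paragraph yields the equivalence of spaces asserted in the proposition, and all constructions are visibly natural in the tower $X\to B\to S$.
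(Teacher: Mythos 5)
The proposition you are proving is cited in the paper to \cite[Corollary 3.1.3]{ParkSymplectic}; the paper under review gives no proof of it (it only supplies the setup paragraph constructing the moment map), so there is no in-paper argument to compare yours against — one would have to consult Park's paper directly to see whether the approaches coincide. That said, your direct reconstruction is plausible and well organized: you describe the two data spaces explicitly, observe that before non-degeneracy both are controlled by the fiber sequence $\pi^*\bL_{B/S} \to \bL_{X/S} \to \bL_{X/B}$, and then match non-degeneracy via a two-out-of-three argument for a morphism of cofiber sequences whose first component is the identity and whose middle component is $\omega^\flat[-n]$. That last step is correct, and the degree and shift bookkeeping in the two cofiber sequences you write down checks out (using $\bL_{T/B}\simeq q^*\bL_{B/S}^\vee[-n-1]$).

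The one genuine soft spot, which you flag yourself, is the middle step: promoting the moment-map construction to an equivalence of the full data spaces, including the \emph{exact closed} two-form structures rather than only the underlying $2$-forms. The assertion $\mu_{(f,h)}^*\lambda_{\mathrm{can}} = d_S f$ and the resulting nullhomotopy of $\mu^*\omega_{\mathrm{can}}$ from ``$d_S\circ d_S\sim 0$'' is where the convention for the exact structure in $\cA^{2,\ex}(-,n)$ (a degree-$(n+1)$ function together with a nullhomotopy of its de~Rham differential) has to be tracked carefully through the Liouville form and through the fiber sequence. Your plan to totalize $\R\Gamma(X,-)$ on the fiber sequence of relative de~Rham complexes is the right idea, and the inverse direction — recovering the degree-$(n+1)$ function $f$ and the nullhomotopy $h$ of $d_Bf$ from a lift $\mu$ and its isotropic structure — should fall out of the same identification, but as written this is an outline rather than a proof. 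Whether Park derives the corollary directly along these lines or instead deduces it from the machinery of symplectic pushforwards developed in that paper (as the present paper's later use of $\mathrm{Symp}^{\mathrm{ex}}_{B,-1}$ and the adjunction $p^*\dashv p_*$ suggests) cannot be determined from the text here.
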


\begin{example}
Consider a smooth scheme $X$ equipped with an exact symplectic structure $\omega=d \alpha$. For an algebraic group $G$ with an action on $X$ which preserves $\alpha$ the morphism $[X/G]\rightarrow \B G$ carries a relative exact $0$-shifted symplectic structure. Denoting by $\g$ the Lie algebra of $G$, the above moment map $\mu\colon [X/G]\rightarrow [\g^*/G]$ is given by the natural moment map $X\rightarrow \g^*$ associated to this situation given by $\iota_{a(-)} \alpha$ for $a\colon \g\rightarrow \Gamma(X, \T_X)$ the action map. In this case the symplectic pushforward of $[X/G]\rightarrow \B G$ along $p\colon \B G\rightarrow \pt$ is equivalent to the Hamiltonian reduction $X/\!/G$.
\end{example}

For a morphism $X\rightarrow B$ of derived stacks equipped with a $(-1)$-shifted symplectic structure $\omega$ an \defterm{orientation} is the choice of a $\Z/2$-graded line bundle $\cL$ together with an isomorphism $\cL^{\otimes 2}\cong K_{X/B}\coloneqq \det(\bL_{X/B})$. Orientations of $(X\rightarrow B, \omega)$ naturally form a groupoid and can be pulled back along base changes $B'\rightarrow B$.

\begin{example}\label{ex:ocan}
For an lfp geometric morphism $U\rightarrow B$ of derived stacks equipped with a function $f\colon U\rightarrow \bA^1$ the derived critical locus $\R\Crit_{U/B}(f)\rightarrow B$ carries the \emph{canonical orientation} $o^{\can}_{\R\Crit_{U/B}(f)/B}$ as in \cite[Section 6.1]{KKPS1}. In particular, for $f=0$ we get the canonical orientation $o^{\can}_{\T^*[-1](U/B)/B}$ of $\T^*[-1](U/B)\rightarrow B$.
\end{example}

For a relative exact $(-1)$-shifted Lagrangian correspondence $Y\xleftarrow{q_Y} L\xrightarrow{q_X} X$ we have a natural isomorphism
\[\Upsilon^{\der}_{(q_X, q_Y)}\colon K_{X/B}|_L\otimes K_{L/X}^{\otimes 2}\xrightarrow{\sim}K_{Y/B}|_L\]
as in \cite[(3.5)]{KPS}. In particular, given such a correspondence with $q_X$ an isomorphism on classical truncations, we can pullback orientations of $X\rightarrow B$ to orientations of $Y\rightarrow B$.

\subsection{Ind-constructible sheaves}

Let $\PrSt_R$ be the $\infty$-category of stable presentable $R$-linear $\infty$-categories, where $R$ is a field. For a locally compact Hausdorff topological space $X\in\Top^{\lch}$ we denote by $\Shv(X; R)\in\PrSt_R$ the $\infty$-category of sheaves of complexes of $R$-modules, which admits a 6-functor formalism \cite{KashiwaraSchapira, Volpe, KhanWeavelisse}. We denote by $\Db(X)\subset \Shv(X; R)$ the full subcategory consisting of complexes with bounded cohomology groups.

For separated scheme of finite type $S\in\Sch^{\sep\ft}$ (over the field of complex numbers)  we denote by $S^{\an}$ the underlying topological space considered with respect to analytic topology. We denote by $\Dbc(S)\subset \Shv(S^{\an}; R)$ the full subcategory of algebraically constructible complexes and let
\[\bD(S) = \Ind(\Dbc(S))\]
be the $\infty$-category of ind-constructible sheaves. As explained in \cite[Section 2.2]{KKPS1} the functor $\bD(-)$ admits an extension to lft higher Artin stacks which comes with a 6-functor formalism. For an lft higher Artin stack an object $\cF\in\bD(X)$ is constructible if for every scheme $S\in\Sch^{\sep\ft}$ together with a morphism $f\colon S\rightarrow X$ the pullback $f^*\cF\in\bD(S)$ is constructible. We denote by $\Dbc(X)\subset \bD(X)$ the full subcategory of constructible sheaves. For a derived stack $X$ with $X^{\cl}$ an lft higher Artin stack we denote $\bD(X) = \bD(X^{\cl})$.

For a smooth morphism $f\colon X\rightarrow Y$ of lft higher Artin stacks we denote
\[f^\dag\coloneqq f^*[\dim(X/Y)]\cong f^![-\dim(X/Y)]\colon \bD(Y)\longrightarrow \bD(X).\]
This functor is perverse $t$-exact. We sometimes refer to the isomorphism above, defined in \cite[Proposition 2.14]{KKPS1}, as the \emph{purity} isomorphism.

For a scheme $S\in\Sch^{\sep\ft}$ and a function $f\colon S\rightarrow \bA^1$ we denote by
\[\varphi_f\colon \Shv(S^{\an}; R)\longrightarrow \Shv(f^{-1}(0)^{\an}; R)\]
the functor of vanishing cycles normalized so that it preserves perverse sheaves. We denote by
\[\phi_f = \bigoplus_{c\in\C} (f^{-1}(c)\rightarrow S)_*\varphi_{f-c}\colon \Dbc(S)\longrightarrow \Dbc(S)\]
its extension to other values.

We are ready to summarize the main construction of \cite{KKPS1}.

\begin{theorem}\label{thm:perversepullback}
Consider an lfp geometric morphism $\pi\colon X\rightarrow B$ of derived stacks equipped with an oriented exact relative $(-1)$-shifted symplectic structure such that $B^{\cl}$ is an lft higher Artin stack. Then there is a colimit-preserving \defterm{perverse pullback} functor
\[\pi^\varphi\colon \bD(B)\longrightarrow \bD(X)\]
depending functorially on the choice of the orientation. It comes with the following natural isomorphisms:
\begin{enumerate}
    \item If $B\in\Sch^{\sep\ft}$, $g\colon U\rightarrow B$ is a smooth morphism and $X=\R\Crit_{U/B}(f)$ with its canonical orientation, then we have a natural isomorphism
    \[\pi^\varphi(-)\cong (\phi_f\circ g^\dag)(-)|_X.\]
    \item For a Cartesian square
    \[
    \xymatrix{
    X' \ar^{\overline{p}}[r] \ar^{\pi'}[d] & X \ar^{\pi}[d] \\
    B' \ar^{p}[r] & B
    }
    \]
    with $p\colon B'\rightarrow B$ smooth and considering the pullback oriented relative exact $(-1)$-shifted symplectic structure on $X'\rightarrow B'$ we have a natural isomorphism
    \begin{equation}\label{eq:alphabasechange}
    (\pi')^\varphi p^\dag\cong \overline{p}^\dag\pi^\varphi.
    \end{equation}
    \item For a relative exact $(-1)$-shifted Lagrangian correspondence $Y\xleftarrow{q_Y} L\xrightarrow{q_X} X$ with $q_Y$ an isomorphism on classical truncations and $q_X$ smooth and considering the pullback oriented relative exact $(-1)$-shifted symplectic structure on $\pi'\colon Y\rightarrow B$ we have a natural isomorphism
    \begin{equation}\label{eq:alphapullback}
    (\pi')^\varphi\cong (q_Y)_* (q_X)^\dag \pi^\varphi.
    \end{equation}
    \item For a Cartesian square
    \[
    \xymatrix{
    \tilde{X} \ar^{\tilde{c}}[r] \ar^{\tilde{\pi}}[d] & X \ar^{\pi}[d] \\
    \tilde{B} \ar^{c}[r] & B
    }
    \]
    with $c\colon \tilde{B}\rightarrow B$ finite and considering the pullback oriented relative exact $(-1)$-shifted symplectic structure on $\tilde{X}\rightarrow \tilde{B}$ we have a natural isomorphism
    \begin{equation}\label{eq-perverse-pullback-symplectic-finite-base-change-compatibility}
    \beta_{c} \colon \pi^{\varphi} c_* \xrightarrow[]{\sim} \tilde{c}_* \tilde{\pi}^{\varphi}.
    \end{equation}
    \item For an lfp geometric morphism $p\colon B\rightarrow S$ such that $S^{\cl}$ is an lft higher Artin stack, consider the symplectic pushforward which fits into a commutative diagram
    \[\begin{tikzcd}
    	{p_*(X, \omega)} & X \\
    	S & {B.}
    	\arrow["i", from=1-1, to=1-2]
    	\arrow["{\bar{\pi}}", from=1-1, to=2-1]
    	\arrow["\pi", from=1-2, to=2-2]
    	\arrow["p", from=2-2, to=2-1]
    \end{tikzcd}\]
    If $p$ is smooth, there exists a natural isomorphism
    \begin{equation}\label{eq-symp-push-perverse-pullback-smooth}
    \gamma_{p} \colon \pi^{\varphi} p^{\dagger} \xrightarrow{\sim} i_* \bar{\pi}^{\varphi}
    \end{equation}
    and if $p$ is a closed immersion, there exists a natural isomorphism
    \begin{equation}\label{eq-symp-push-perverse-pullback}
    \varepsilon_p\colon \overline{\pi}^\varphi p_* \xrightarrow{\sim} i^\dag \pi^\varphi.
    \end{equation}
    \item For two morphisms $\pi_1\colon X_1\rightarrow B_1$ and $\pi_2\colon X_2\rightarrow B_2$ as above, equip $\pi_1\times \pi_2\colon X_1\times X_2\rightarrow B_1\times B_2$ with the product oriented relative exact $(-1)$-shifted symplectic structure. Then there is a natural isomorphism
    \begin{equation}\label{eq:TS}
    \TS \colon \pi_1^{\varphi} (-) \boxtimes \pi_2^{\varphi} (-) \xrightarrow{\sim} (\pi_1 \times \pi_2)^{\varphi}(- \boxtimes -).    
    \end{equation}
\end{enumerate}
\end{theorem}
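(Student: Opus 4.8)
The plan is to reconstruct $\pi^\varphi$ from local Darboux models and then transport the elementary behaviour of vanishing cycles through the gluing; this is exactly the package assembled in \cite{KKPS1}, and I indicate the architecture. Working locally over a derived affine $B$, \cref{prop:Darboux} presents $X$ (up to classical truncation) via a relative exact $(-1)$-shifted Lagrangian correspondence $\R\Crit_{U/B}(f)\leftarrow L\rightarrow X$ with $U\rightarrow B$ smooth and $L\rightarrow X$ smooth surjective. Property (1) forces the value on such a chart to be $(\phi_f\circ g^\dag)(-)|_X$, so one \emph{defines} $\pi^\varphi$ on charts by this formula and shows these local functors glue along smooth surjections. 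The gluing datum comes from comparing two Darboux charts over an overlap: the two presentations differ by a further Lagrangian correspondence, and the canonical orientations $o^{\can}$ of the two derived critical loci are matched by the transport isomorphism $\Upsilon^{\der}_{(q_X,q_Y)}$ of \cref{ex:ocan}; a global orientation on $(X\rightarrow B,\omega)$ rigidifies the residual ambiguity, producing a coherent descent datum in $\bD(-)$ and hence the functor $\pi^\varphi$, functorial in the orientation.

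Each listed compatibility is then checked by reduction to charts. Property (1) holds by construction. Property (2) (smooth base change) uses that Darboux charts pull back along smooth $B'\rightarrow B$ and that $\phi_f\circ g^\dag$ commutes with smooth pullback, which is smooth base change for vanishing cycles combined with smooth base change for the purity functor $g^\dag$. Property (4) (finite base change) is likewise proper base change for $\phi_f$ and $g^\dag$. Property (3), for a correspondence with $q_Y$ a classical isomorphism and $q_X$ smooth, follows because $\bD(-)$ depends only on classical truncations, so over a chart the claim reduces to $(q_Y)_*(q_X)^\dag(\phi_f g^\dag(-)|_X)\cong \phi_f g^\dag(-)|_Y$ up to the orientation transport. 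Property (5) is obtained by writing the symplectic pushforward $p_*X$ as a Lagrangian intersection against the zero section of $\T^*[-1](B/S)$ and combining (2) and (3) in the smooth case, and (4) with the linear structure of $\T^*[-1](B/S)$ in the closed-immersion case. Property (6) is the Thom--Sebastiani isomorphism $\phi_f(-)\boxtimes\phi_{f'}(-)\cong\phi_{f\boxplus f'}(-\boxtimes -)$ transported through product charts, since the product of oriented data corresponds to the product of Darboux charts.

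The main obstacle is not any chart-level computation but the coherence of all this in the $\infty$-categorical setting: one must verify the higher cocycle conditions needed for smooth descent of $\bD(-)$, and -- harder -- that the comparison isomorphisms in (2)--(6) are themselves independent of the chosen charts, i.e. lift to morphisms of descent data, and that they are mutually compatible. This amounts to a disciplined bookkeeping of orientations through the transport maps $\Upsilon^{\der}$ and the canonical orientations $o^{\can}$, and it is precisely this construction and its verification that occupy \cite{KKPS1}; the statement is obtained by assembling the relevant results there.
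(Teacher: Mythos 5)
The paper does not prove this theorem: it is explicitly introduced with the sentence ``We are ready to summarize the main construction of \cite{KKPS1}'' and is a restatement of results from the authors' earlier work. There is therefore no in-paper proof to compare against line by line. What the paper does provide is the remark following the statement, namely that \cref{prop:Darboux} (Darboux) together with properties (1)--(3) uniquely determine $\pi^\varphi$; your sketch is faithful to exactly this architecture — define the functor on Darboux charts by the vanishing-cycle formula (1), glue via smooth descent using the orientation transport $\Upsilon^{\der}$ and the canonical orientations $o^{\can}$, then verify (2)--(6) chart by chart — and you correctly flag that the real work is the $\infty$-categorical coherence of the descent data and of the comparison isomorphisms, which you (appropriately, since this paper does the same) delegate to \cite{KKPS1}. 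This is a reasonable reconstruction of the strategy.

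One small inaccuracy: in your outline of property (5), the symplectic pushforward for a relative exact $(-1)$-shifted symplectic structure is the Lagrangian intersection against the zero section of the \emph{unshifted} twisted cotangent $\T^*(B/S) = \T^*[0](B/S)$, not $\T^*[-1](B/S)$; by \cref{prop:momentmap}, the moment map attached to an $n$-shifted structure takes values in $\T^*[n+1](B/S)$. Also, be a little more careful with the closed-immersion half of (5): the isomorphism $\varepsilon_p$ is not a direct application of (4), since the map $i\colon p_*(X,\omega)\to X$ is a quasi-smooth closed immersion coming from the Lagrangian intersection, and one must pass through the Lagrangian correspondence framework (property (3) plus orientation bookkeeping) rather than finite base change. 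But these are local adjustments to a sketch that otherwise tracks the intended proof correctly.
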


\begin{remark}
The Darboux theorem (\cref{prop:Darboux}) implies that perverse pullback is uniquely determined by properties (1)-(3) in \cref{thm:perversepullback}.
\end{remark}

It will be convenient to combine the isomorphisms \eqref{eq:alphabasechange} and \eqref{eq:alphapullback} from properties (2) and (3) in \cref{thm:perversepullback} as follows (see \cite[Proposition 6.9]{KKPS1}). Consider the following data:
\begin{itemize}
    \item $B$ is an lft higher Artin stack and $p\colon B'\rightarrow B$ is a smooth geometric morphism.
    \item $\pi\colon X\rightarrow B$ and $\pi'\colon X'\rightarrow B'$ are lfp geometric morphisms of derived stacks equipped with relative exact $(-1)$-shifted symplectic structures.
    \item $X'\xleftarrow{q'} L\xrightarrow{\tilde{q}} X\times_B B'$ is a Lagrangian correspondence over $B'$ with $\tilde{q}$ smooth, and let $q\coloneqq \pr_1\circ \tilde{q}\colon L\rightarrow X$ be the composite.
    \item An orientation on $X\rightarrow B$ which by pullback induces an orientation on $X'\rightarrow B'$.
\end{itemize}

Then there is a natural isomorphism
\begin{equation}\label{eq-Lagrangian functoriality of perverse pullback}
\alpha_{p, (q, q')} \colon (\pi')^\varphi p^{\dagger}
\xrightarrow{\sim} q'_{*} q^\dag \pi^\varphi
\end{equation}

\subsection{Nearby cycles}

In this section we recall various functors of nearby cycles, following \cite[\S 1.1.1]{Schuermann} (see also \cite[\S 2.3]{KinjoVirtual} for a review). For a holomorphic function $t \colon X \to \C$ on a complex analytic space $X$, denote by $X_0$ the fiber over $\{0\}$.
The \defterm{nearby cycles functor} $\psi_t \colon \Shv(X; R)\rightarrow \Shv(X_0; R)$ is defined by
\[\psi_t = (X_0 \hookrightarrow X)^*(X_{\Re>0}\hookrightarrow X)_*(X_{\Re>0}\hookrightarrow X)^*(-),\]
where $X_{\Re>0} = \{x\in X\mid \Re(t(x))> 0\}$.
This sits in a canonical fiber sequence
\begin{equation}\label{eq:psi phi triangle}
    i^* \to \psi_t \to \varphi_t[1],
\end{equation}
where $i \colon X_0 \hookrightarrow X$ is the inclusion (see \cite[(1.7)]{Schuermann}).
See also \cite[Corollary 1.1.1]{Schuermann} for the comparison with the more standard definition of $\psi_t$.
In particular,  $\psi_t$ preserves constructible objects since $\varphi_t$ and $i^*$ do.
For a field extension $R \to R'$ there is a natural morphism
\begin{equation}\label{eq-nearby-scalar}
\psi_{t}(\cF) \otimes_{R} R' \to \psi_{t}(\cF \otimes_{R} R'),
\end{equation}
which by \cite[Proposition 4.2.5]{Achar} is invertible as long as $\cF \in \Dbc(X)$.

We also recall the \emph{unipotent nearby cycles functor} following \cite{BeilinsonGlue}, and \cite[\S 3.1]{Campbell} (see also \cite[\S 3.1]{ChenNearby} or \cite[\S 5.2.2]{bu2025cohomology}).
Denote by $j \colon X_{\neq 0} \hookrightarrow X$ the complementary open immersion to $i$.
The functor $i^* j_* \colon \Shv(X_{\neq 0}; R) \to \Shv(X_{0}; R)$ naturally upgrades to a functor
\[
    (i^* j_*)^{\mathrm{enh}} \colon \Shv(X_{\neq 0}; R) \to \Lambda\textnormal{-Mod} (\Shv(X_{0}; R))
\]
valued in modules over the cochains algebra $\Lambda \coloneqq \mathrm{C}^\bullet(\mathbb{G}_{\mathrm{m}}; R)$.
Restriction along the unit section $1\colon \mathrm{pt} \hookrightarrow \mathbb{G}_{\mathrm{m}}$ determines a canonical augmentation $\Lambda \to R$, and the \defterm{unipotent nearby cycles functor} $\psi_{t}^{\mathrm{uni}} \colon \Shv(X; R) \to \Shv(X_0; R)$ is the composite
\[
    \psi_{t}^{\mathrm{uni}} \colon
    \Shv(X; R) \xrightarrow{j^*}
    \Shv(X_{\neq 0}; R) \xrightarrow{(i^* j_*)^{\mathrm{enh}}}
    \Lambda\textnormal{-Mod} (\Shv(X_{0}; R)) \xrightarrow{(-)\otimes_{\Lambda} R[-1]}
    \Shv(X_0; R).
\]

We have (see \cite[Remark after Corollary 3.2]{BeilinsonGlue}):

\begin{proposition}\label{prop:psi from psiuni}
    Suppose that $R$ is an algebraically closed field.
    For every $c \in R^\times$, denote by $\mathscr{L}_c$ the local system of rank one on $\mathbb{G}_{\mathrm{m}}$ whose monodromy operator is multiplication by $c$.
    Then for every $\cF \in \Dbc(X)$, there is a natural isomorphism
    \[
        \psi_{t} (\cF) \cong \bigoplus_{c \in R^\times} \psi_t^{\mathrm{uni}} (\cF \otimes t^*\mathscr{L}_c).
    \]
\end{proposition}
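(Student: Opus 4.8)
The plan is to decompose the full nearby cycles $\psi_t(\cF)$ according to the generalized eigenvalues of its monodromy automorphism, and then to identify the unipotent summand with $\psi_t^{\mathrm{uni}}$ and use a twist to recover the other summands. First I would record that $\psi_t$ carries a canonical monodromy automorphism $T$, natural in the input (standard for the model used here; cf. \cite[\S1.1.1]{Schuermann}), so that $\psi_t(\cF)$ becomes a module object over the group algebra $R[T^{\pm 1}]=R[\pi_1(\Gm)]$, concentrated in degree $0$. The key finiteness input is constructibility: for $\cF\in\Dbc(X)$ the complex $\psi_t(\cF)$ is bounded and constructible on $X_0$, so along each of the finitely many strata its cohomology sheaves are finite-rank local systems, and $T$ acts $R$-linearly on their finite-dimensional stalks. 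As $R$ is algebraically closed the characteristic polynomials split, and boundedness together with finiteness of the stratification forces the set $S\subset R^\times$ of eigenvalues occurring (over all degrees and strata) to be finite; equivalently, as an $R[T^{\pm 1}]$-module object, $\psi_t(\cF)$ is cohomologically supported on the finite closed subset $S\subset\Spec R[T^{\pm 1}]$.

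Next I would localize at the maximal ideals $(T-c)$, $c\in S$. Localization of $R[T^{\pm 1}]$-modules is exact, and on each cohomology sheaf the primary decomposition of a finite-length module supported on $S$ gives a canonical splitting into its generalized eigenspaces; hence the natural map
\[
\psi_t(\cF)\longrightarrow \bigoplus_{c\in R^\times}\psi_t(\cF)_{(c)}
\]
is an isomorphism on cohomology sheaves, therefore an isomorphism, where $\psi_t(\cF)_{(c)}$ denotes the localization at $(T-c)$ (the generalized $c$-eigenspace) and all but finitely many summands vanish. This decomposition is functorial in $\cF$.

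It then remains to match the pieces. On one hand, by its construction through modules over $\Lambda=\mathrm{C}^\bullet(\Gm;R)$ and the attendant Koszul duality (this is the content of \cite[Remark after Corollary 3.2]{BeilinsonGlue}; see also \cite[\S3.1]{Campbell}), the functor $\psi_t^{\mathrm{uni}}$ computes exactly the unipotent-monodromy summand, i.e.\ $\psi_t^{\mathrm{uni}}(\cG)\cong\psi_t(\cG)_{(1)}$ naturally in $\cG$. On the other hand, $j^*(t^*\mathscr{L}_c)$ is a rank-one local system on $X_{\neq 0}$ whose monodromy around $X_0$ is multiplication by $c$, so $\psi_t(\cF\otimes t^*\mathscr{L}_c)$ is $\psi_t(\cF)$ with monodromy $T$ replaced by $cT$; hence its unipotent summand is the generalized $c^{-1}$-eigenspace of $\psi_t(\cF)$, giving $\psi_t^{\mathrm{uni}}(\cF\otimes t^*\mathscr{L}_c)\cong\psi_t(\cF)_{(c^{-1})}$. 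Summing over $c\in R^\times$, re-indexing $c\mapsto c^{-1}$, and invoking the decomposition above yields the asserted isomorphism, which is visibly natural in $\cF$. For higher Artin stacks one reduces to schemes by descent along a smooth atlas, since both sides commute with smooth pullback.

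The step I expect to be the main obstacle is the passage from cohomology sheaves to the complex itself: exhibiting the generalized eigenvalue decomposition and the identification $\psi_t^{\mathrm{uni}}\simeq(-)_{(1)}$ at the level of $\infty$-categories rather than merely on stalks. This requires care with the monodromic/Koszul-duality formalism and, in particular, with the compatibility between the $R[T^{\pm 1}]$-module enhancement of $\psi_t$ coming from the monodromy and the $\Lambda$-module enhancement underlying the definition of $\psi_t^{\mathrm{uni}}$; the constructibility hypothesis is exactly what prevents the eigenvalue decomposition from being an infinite product and keeps it inside $\Dbc$.
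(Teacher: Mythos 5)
Your proposal follows essentially the same route as the paper's proof: decompose $\psi_t(\cF)$ into generalized monodromy eigenspaces (using algebraic closedness of $R$ and constructibility/finiteness), identify the generalized $1$-eigenspace with $\psi_t^{\mathrm{uni}}(\cF)$, and use the twist by $t^*\mathscr{L}_c$ to move between eigenspaces. The paper is terser: it cites \cite[Proposition 1.1]{MorelNearbyGluing} for the functorial eigenspace decomposition and the identification $\psi_t(\cF)_1 \cong \psi_t^{\mathrm{uni}}(\cF)$, rather than reconstructing these via localization of $R[T^{\pm1}]$-modules and a Koszul-duality argument; the step you flag as the main obstacle is exactly what that citation is meant to cover.
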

\begin{proof}
    Let $T$ denote the monodromy operator of $\psi_t(\cF)$.
    Since $R$ is algebraically closed, $\psi_t(\cF)$ admits a functorial decomposition into subsheaves $\psi_t(\cF)^{(c)}$ where $T-c$ acts nilpotently (for $c \in R^\times$), such that $\psi_t(\cF)_1 \cong \psi_t^{\mathrm{uni}}(\cF)$; see e.g. \cite[Proposition 1.1]{MorelNearbyGluing}.
    Since the monodromy operator of $\psi_t(\cF \otimes t^*\cL_c)$ is $c \cdot T$, this decomposition can be written
    \begin{equation*}
        \psi_t(\cF) \cong
        \bigoplus_{c \in R^\times} \psi_t(\cF)^{(c^{-1})} \cong
        \bigoplus_{c \in R^\times} \psi_t^{\mathrm{uni}}(\cF \otimes t^*\cL_c)
    \end{equation*}
    as claimed.
\end{proof}

\begin{remark}
    When the complex analytic space $X$ is a scheme, the unipotent nearby cycles functor (in contrast to the ``full'' nearby cycles functor $\psi_t$) only relies on the algebraic structure of the complex line $\C$.
    In particular, $\psi_t^{\mathrm{uni}}$ can be defined in more general sheaf theories, such as those of $\ell$-adic and motivic sheaves (cf. \cite{cass2024central}).
\end{remark}

\section{Microsupport and perverse pullbacks}

In this section we prove an estimate for the support of perverse pullbacks in terms of microsupport.

\subsection{Microsupport}

Let $X$ be a complex manifold. For $\cF\in\Db(X)$ a complex of sheaves on $X$ recall the notion of its \defterm{microsupport} $\SS(\cF)\subset \T^* X$ from \cite[Chapter V]{KashiwaraSchapira}. For a conic subset $\Lambda\subset \T^* X$ let $\Db_\Lambda(X)\subset \Db(X)$ be the full subcategory of sheaves microsupported on $\Lambda$.

Now let $S$ be a smooth affine scheme. If $\cF\in\Dbc(S)\subset \Shv(S^{\an}; R)$, its microsupport is a conic Lagrangian subset of $\T^* S$, see \cite[Theorem 8.5.5]{KashiwaraSchapira}. For a conic subset $\Lambda\subset \T^* S$ we denote by $\Perv_\Lambda(S)\subset \Perv(S)$ the full subcategory consisting of objects with microsupport contained in $\Lambda$. We denote by $\bD_\Lambda(S)\subset \bD(S)$ the full subcategory of objects whose perverse cohomology sheaves lie in $\Ind(\Perv_\Lambda(S))$.

For a smooth morphism $f\colon T\rightarrow S$ of smooth affine schemes consider the correspondence
\[
\xymatrix{
\T^* T & T\times_S \T^* S \ar_-{f^\ast}[l] \ar^-{f_\pi}[r] & \T^* S.
}
\]
The following is \cite[Proposition 5.4.5(i)]{KashiwaraSchapira}.

\begin{proposition}\label{prop:SSpullback}
For a smooth morphism $f\colon T\rightarrow S$ of smooth affine schemes and $\cF\in\Dbc(S)$ we have $\SS(f^*\cF) = f^\ast(f^{-1}_\pi(\SS(\cF)))$.
\end{proposition}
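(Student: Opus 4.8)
The plan is to bootstrap from the general non-characteristic inverse-image estimate for microsupport together with the existence of analytic-local sections of a smooth morphism. Since $f$ is smooth the map $f^\ast\colon T\times_S\T^*S\to\T^*T$ is a fibrewise-injective closed embedding, with image the ``horizontal'' subbundle $\Lambda_f\subset\T^*T$, namely the annihilator of the vectors tangent to the fibres of $f$; in particular $\ker(f^\ast)$ is the zero section, so $f$ is non-characteristic for \emph{every} $\cF$. Hence the general estimate (see \cite[\S 5.4]{KashiwaraSchapira}) gives the inclusion $\SS(f^*\cF)\subseteq f^\ast(f_\pi^{-1}(\SS(\cF)))$, and in particular every covector in $\SS(f^*\cF)$ is of the form $f^\ast(t,\eta')$ with $\eta'\in\SS_{f(t)}(\cF)$.

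For the reverse inclusion I would argue analytically locally on $T$, where microsupport may be computed and where the holomorphic submersion $f^{\an}$ admits, through any prescribed point $t$ with $f(t)=s$, a section $\sigma\colon S\to T$. Being a section, $\sigma(S)$ is transverse to the fibres of $f$, so its conormal bundle meets $\Lambda_f$ only along the zero section; by the first paragraph this says that $\sigma$ is non-characteristic for $f^*\cF$. Applying the general estimate to $\sigma$ and $f^*\cF$ and using $\sigma^*f^*\cF\cong\cF$, I obtain: for every $\eta\in\SS_s(\cF)$ there is $\zeta\in\SS_t(f^*\cF)$ with $\sigma_d(\zeta)=\eta$, where $\sigma_d\colon\T^*T|_{\sigma(S)}\to\T^*S$ denotes the codifferential. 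By the first paragraph $\zeta=f^\ast(t,\eta')$ for a unique $\eta'$, and then $\eta=\sigma_d(\zeta)=\eta'$ because $f\circ\sigma=\id_S$. Thus $f^\ast(t,\eta)=\zeta\in\SS(f^*\cF)$; letting $t\in f^{-1}(s)$ and $\eta\in\SS_s(\cF)$ vary yields $f^\ast(f_\pi^{-1}(\SS(\cF)))\subseteq\SS(f^*\cF)$, and equality follows.

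An alternative for the reverse inclusion uses a more explicit local model: analytically locally $f$ is a coordinate projection $p\colon V\times W\to V$, for which $p^*\cF\cong\cF\boxtimes R_W$ with $R_W$ the constant sheaf, and one then invokes the external-product formula $\SS(\cF\boxtimes R_W)=\SS(\cF)\times\SS(R_W)=\SS(\cF)\times\T^*_W W$ and glues over $T$. In either route the step I expect to be the main obstacle is the bookkeeping of the reduction to the analytic-local model: checking that $\SS$ is genuinely local on the source $T$ (so that verifying the identity on an analytic cover suffices), that the holomorphic submersion structure produces the section $\sigma$ --- resp.\ the projection coordinates --- compatibly with the maps $f^\ast$ and $f_\pi$, and, in the second route, that one has the \emph{equality} and not merely the containment in the external-product formula. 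The remaining manipulations of codifferentials are formal.
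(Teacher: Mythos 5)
The paper does not give an argument here: \cref{prop:SSpullback} is stated with a bare citation to \cite[Proposition 5.4.5(i)]{KashiwaraSchapira} and no internal proof. Your proposal supplies an actual argument, and as far as I can see it is correct. The forward inclusion $\SS(f^*\cF)\subseteq f^\ast(f_\pi^{-1}\SS\cF)$ is the general non-characteristic inverse-image bound, valid because a submersion has injective codifferential so its kernel is the zero section. For the reverse inclusion, your local-section trick is sound: if $\sigma$ is a local holomorphic section of $f^{\an}$ through $t$, then $N^*(\sigma(S)/T)$ is complementary to the horizontal bundle $\Lambda_f$ inside $\T^*T|_{\sigma(S)}$; combined with the already-proven containment $\SS(f^*\cF)\subseteq\Lambda_f$, this makes $\sigma$ non-characteristic for $f^*\cF$, and the non-characteristic bound for $\sigma$ together with $\sigma^*f^*\cF\cong\cF$ and $f\circ\sigma=\id_S$ returns each covector $f^\ast(t,\eta)$ with $\eta\in\SS_{f(t)}(\cF)$ as an element of $\SS(f^*\cF)$. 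The key points you flagged as potential obstacles — locality of $\SS$ on the source and the existence of the section analytically — are genuine but standard: $\SS$ is indeed local in the analytic topology by definition, and a holomorphic submersion admits local sections through any point. Your alternative via the product decomposition $f\cong p\colon V\times W\to V$ analytically locally, the identity $p^*\cF\cong\cF\boxtimes R_W$, and the external-product formula is also valid; equality (and not just containment) in $\SS(\cF\boxtimes R_W)=\SS(\cF)\times\T^*_W W$ holds because $R_W$ has (locally) constant cohomology. In short: where the paper relies on an external citation, you have reconstructed a proof, and both of your routes are essentially the arguments one would find inside Kashiwara--Schapira's treatment of Proposition 5.4.5.
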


This allows us to make the following definition following \cite[Section F.6]{AGKRRV1}.

\begin{definition}
Let $X$ be a smooth higher Artin stack and $\Lambda\subset \T^* X$ a conic subset. $\bD_\Lambda(X)\subset \bD(X)$ is the full subcategory consisting of objects $\cF\in\bD(X)$ such that for every smooth morphism $f\colon S\rightarrow X$ for $S$ an affine scheme and $\Lambda_S = f^\ast(f^{-1}_\pi(\Lambda))\subset \T^* S$ we have $f^*\cF\in\bD_{\Lambda_S}(S)$.
\end{definition}

Let us now explain how to extend the notion of microsupport to singular varieties following \cite[Appendix E.6]{AGKRRV1}. Suppose $S$ is a derived affine scheme of finite presentation and $S^{\cl}$ is its underlying classical scheme. In this case $\T^* S$ is an lfp derived Artin stack and we can talk about closed subsets of $\T^* S$. Explicitly: writing a resolution of $\bL_S|_{S^{\cl}}$ by vector bundles $E_0\leftarrow E_{-1}\leftarrow \dots$, a closed subset of $\T^* S$ is the same as a closed subset of $E_0$ which is invariant under the action of $E_{-1}$. It is shown in \cite[E.6.4]{AGKRRV1} that for $\cF\in\Dbc(S)$ one can attach its microsupport $\SS(\cF)\subset \T^* S$ uniquely characterized by the following properties:
\begin{enumerate}
    \item The formation of microsupport is Zariski local on $S$.
    \item For any closed immersion $i\colon S\rightarrow S'$, where $S'$ is a smooth affine scheme, we have $i_\pi((i^*)^{-1}(\SS(\cF))) = \SS(i_*\cF)$.
\end{enumerate}

The above notion of microsupport is compatible with smooth morphisms as in \cref{prop:SSpullback} and therefore extends to lfp derived Artin stacks.

In \cite[Corollary 5.4.10]{KashiwaraSchapira} Kashiwara and Schapira give an estimate for the support of microlocalization along a complex submanifold in terms of microsupport of the original sheaf. We will now prove an analogous estimate for perverse pullbacks.

\begin{proposition}\label{prop:microsupportestimate}
Let $\pi\colon X\rightarrow B$ be a morphism of lfp derived Artin stacks equipped with an oriented exact relative $(-1)$-shifted symplectic structure. Let $\mu\colon X\rightarrow \T^* B$ be the corresponding moment map. Consider a conic subset $\Lambda\subset \T^* B$ and $\cF\in\bD_\Lambda(B)$. Then
\[\supp \pi^\varphi(\cF)\subset \mu^{-1}(\Lambda).\]
\end{proposition}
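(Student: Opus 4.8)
The plan is to reduce, by means of the Darboux theorem (\cref{prop:Darboux}), to the local model in which $X=\R\Crit_{U/B}(f)$ and $\pi^\varphi$ is computed by vanishing cycles, and then to combine the classical estimate bounding the support of vanishing cycles in terms of microsupport with the formula of \cref{prop:SSpullback} for the microsupport of a smooth pullback.

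\emph{Reduction to the local model.} The inclusion $\supp\pi^\varphi(\cF)\subset\mu^{-1}(\Lambda)$ may be tested after pullback along a smooth surjection $p\colon B'\to B$: by \eqref{eq:alphabasechange} one has $\overline{p}^{\dagger}\pi^\varphi(\cF)\cong(\pi')^\varphi(p^{\dagger}\cF)$, while $X'=X\times_B B'=X\times_{\T^* B}\T^* B'$, so that the moment map of $X'\to B'$ is the base change of $\mu$, hence $(\mu')^{-1}(\Lambda')=\overline{p}^{-1}(\mu^{-1}(\Lambda))$ for $\Lambda'\subset\T^* B'$ the pullback of $\Lambda$, and $\cF|_{B'}\in\bD_{\Lambda'}(B')$. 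Thus we may assume $B$ is a derived affine scheme; and we may assume $\cF$ constructible with $\SS(\cF)\subset\Lambda$ (a general object of $\bD_{\Lambda}(B)$ is a filtered colimit of such, $\pi^\varphi$ preserves colimits, and $\mu^{-1}(\Lambda)$ is closed). Now \cref{prop:Darboux} gives a smooth morphism $g\colon U\to B$ of derived schemes, a function $f\colon U\to\bA^1$ and a relative exact $(-1)$-shifted Lagrangian correspondence $\R\Crit_{U/B}(f)\xleftarrow{q}L\xrightarrow{q_X}X$ with $q$ an equivalence on classical truncations and $q_X$ smooth surjective. Since $\supp\pi^\varphi(\cF)$ does not change if we replace the orientation by another one (this only tensors $\pi^\varphi(\cF)$ by an invertible local system), we may give $\R\Crit_{U/B}(f)$ the pulled-back orientation and invoke \eqref{eq:alphapullback}: after the identification $\bD(L)=\bD(\R\Crit_{U/B}(f))$ furnished by $q$, it reads $\pi_{\R\Crit}^\varphi(\cF)\cong q_X^{\dagger}\pi^\varphi(\cF)$, so $\supp\pi_{\R\Crit}^\varphi(\cF)=q_X^{-1}(\supp\pi^\varphi(\cF))$. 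Moreover the restriction of the moment map $-\mu_{\R\Crit}\circ\pr_1+\mu_X\circ\pr_2$ of the symplectic product $\overline{\R\Crit_{U/B}(f)}\times_B X$ along the Lagrangian $L$ is canonically zero, so $\mu_{\R\Crit}\circ q=\mu_X\circ q_X$ and $q_X^{-1}(\mu_X^{-1}(\Lambda))=\mu_{\R\Crit}^{-1}(\Lambda)$. As $q_X$ is surjective, it now suffices to prove $\supp\pi_{\R\Crit}^\varphi(\cF)\subset\mu_{\R\Crit}^{-1}(\Lambda)$; that is, we have reduced to the case $X=\R\Crit_{U/B}(f)$ with its canonical orientation.

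\emph{The local computation.} Here part (1) of \cref{thm:perversepullback} (which holds also over a derived affine base) gives $\pi^\varphi(\cF)\cong(\phi_f\circ g^{\dagger})(\cF)|_X$. As $\phi_f=\bigoplus_{c}(f^{-1}(c)\hookrightarrow U)_*\varphi_{f-c}$ and $X^{\cl}=\Crit_{U/B}(f)$ is closed in $U$, the classical estimate $\supp\varphi_h(\cG)\subset\{x\in h^{-1}(0)\mid(x,d h(x))\in\SS(\cG)\}$ for the support of vanishing cycles, applied with $h=f-c$ and taken over all $c$, yields
\[
\supp\pi^\varphi(\cF)\subset\{x\in\Crit_{U/B}(f)\mid(x,d f(x))\in\SS(g^*\cF)\}.
\]
By \cref{prop:SSpullback}, extended to the smooth morphism $g$ of derived affine schemes, $\SS(g^*\cF)=g^\ast(g_\pi^{-1}(\SS(\cF)))\subset g^\ast(g_\pi^{-1}(\Lambda))$. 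Finally, for $x\in\Crit_{U/B}(f)$ the relative differential $d_{U/B}f$ vanishes at $x$, so $d f(x)$ lies in the image of the closed immersion $g^\ast\colon U\times_B\T^* B\hookrightarrow\T^* U$; if $\xi_x\in U\times_B\T^* B$ denotes its unique preimage, then $g_\pi(\xi_x)=\mu(x)$ by construction of the moment map, whence $(x,d f(x))\in g^\ast(g_\pi^{-1}(\Lambda))$ if and only if $\mu(x)\in\Lambda$. Combining the last two statements with the displayed inclusion gives $\supp\pi^\varphi(\cF)\subset\mu^{-1}(\Lambda)$.

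\emph{Principal difficulty.} The crux is the input estimate for the support of vanishing cycles when $U$ is singular (as it is over singular $B$): for $U$ smooth this is the stratified-Morse-theoretic fact underlying the microlocal cut-off of \cite[\S 5.4]{KashiwaraSchapira}, and in general one deduces it by choosing a closed embedding $U^{\cl}\hookrightarrow V$ into a smooth affine scheme, extending $f$ to $V$, and using the functoriality of the microsupport along closed immersions that defines $\SS$ on derived affine schemes --- keeping track of the cotangent directions through this embedding is the part requiring care. A secondary point to verify is that the restriction of a moment map along a relative exact Lagrangian structure is canonically trivial; this is a basic compatibility of the moment-map construction of \cite{ParkSymplectic}.
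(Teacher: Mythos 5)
Your overall plan coincides with the paper's: reduce to $B$ derived affine and $\cF$ constructible with $\SS(\cF)\subset\Lambda$, apply the Darboux theorem to reduce to $X=\R\Crit_{U/B}(f)$ for a smooth morphism $U\to B$, then invoke the Kashiwara--Schapira vanishing-cycles support estimate together with \cref{prop:SSpullback}. The first part of your reduction is correct (including the observation that changing orientations does not affect supports, and that the moment map restricted along the Lagrangian correspondence is compatible).

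However, there is a genuine gap in the local computation, and you essentially flag it yourself. The estimate
\[
\supp\varphi_h(\cG)\subset\{x\in h^{-1}(0)\mid(x,dh(x))\in\SS(\cG)\}
\]
in \cite[Eq.\ (8.6.12)]{KashiwaraSchapira} is stated and proved on a complex \emph{manifold}. After Darboux, $B$ is in general a singular (derived) affine scheme and hence $U$ is too; both $\T^*B$ and $\T^*U$ are lfp derived Artin stacks and the estimate does not immediately apply. Your proposed fix --- embed $U^{\cl}\hookrightarrow V$ into a smooth affine, extend $f$, and use the functoriality of $\SS$ along closed immersions --- does not close the gap, because it only embeds $U$ and never embeds $B$. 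The hypothesis $\SS(\cF)\subset\Lambda$ lives on $\T^*B$, and $\SS(\cF)$ for singular $B$ is itself defined via a closed embedding $i\colon B\hookrightarrow B'$ into a smooth affine. To connect the smooth-case estimate on a smooth ambient scheme with the hypothesis on $\cF$, one needs a \emph{compatible} pair of closed embeddings $B\hookrightarrow B'$ and $U\hookrightarrow U'$ together with a smooth map $U'\to B'$ inducing the given one, plus an extension $f'$ of $f$; this is precisely the local diagram from \cite[Proposition 1.15]{KKPS1} invoked in the paper's proof. One then transports the support estimate back down via the finite base-change isomorphism \eqref{eq-perverse-pullback-symplectic-finite-base-change-compatibility} for $(\pi')^\varphi i_*\cF\cong\overline{i}_*\overline{\pi}^\varphi\cF$ and smooth base change for the étale cover $g\colon U\to\overline{U}$. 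Without this simultaneous embedding and the base-change step, your claim
\[
\supp\pi^\varphi(\cF)\subset\{x\in\Crit_{U/B}(f)\mid(x,df(x))\in\SS(g^*\cF)\}
\]
is unjustified for singular $U$. Everything else in your proposal is sound, but this reduction is the substantive content of the local computation, and the argument as written does not carry it out.
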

\begin{proof}
The claim is smooth-local on $B$, so we may assume that $B$ is a derived affine scheme of finite presentation. Since $\bD(B) = \Ind(\Dbc(B))$, we may assume that $\cF\in\Dbc(B)$ with $\SS(\cF)\subset \Lambda$. Applying the Darboux theorem (\cref{prop:Darboux}) and \cref{thm:perversepullback}(3) we may reduce to the following case: there is a smooth morphism $p\colon U\rightarrow B$ together with a function $f\colon U\rightarrow \bA^1$ with $X=\R\Crit_{U/B}(f)$. By \cite[Proposition 1.15]{KKPS1} we may locally find a diagram
\[
\xymatrix{
U \ar^{g}[dr] \ar^{p}[ddr] && \\
& \overline{U} \ar^{\overline{i}}[r] \ar^{\overline{p}}[d] & U' \ar^{p'}[d] \\
& B \ar^{i}[r] & B',
}
\]
with the square Cartesian, $U'$ and $B'$ smooth affine schemes, $i$ and $U\rightarrow U'$ closed immersions and $g$ \'etale. Moreover, we may find a function $f'\colon U'\rightarrow \bA^1$ with $f'|_U=f$. Denote $\overline{f} = f'|_{\overline{U}}$. Let $\pi'\colon X'=\R\Crit_{U'/B'}(f')\rightarrow B'$, $\overline{\pi}\colon \overline{X}=\R\Crit_{\overline{U}/B}(\overline{f})\rightarrow B$ and $\mu'\colon X'\rightarrow \T^* B'$ the corresponding moment map, so that we have a commutative diagram
\[
\xymatrix{
\T^* B'\times_{B'} B \ar^{i^*}[r] & \T^* B \\
X'\times_{B'} B\cong \overline{X} \ar^{\mu'}[u] & X. \ar^{\mu}[u] \ar[l]
}
\]

Applying proper base change we have $(\pi')^\varphi i_*\cF\cong \overline{i}_*\overline{\pi}^\varphi \cF$ and applying smooth base change we get $g^*\overline{\pi}^\varphi \cF\cong \pi^\varphi\cF$. Therefore,
\[\supp(\pi^\varphi\cF)\subset g^{-1}(\supp(\overline{\pi}^\varphi\cF))\subset g^{-1}(\supp((\pi')^\varphi i_*\cF) \cap \overline{U}).\]
Therefore, the support estimate for $X\rightarrow B$ follows from the corresponding support estimate for $X'\rightarrow B'$.

Thus, it is enough to assume $B$ is smooth and hence $U$ is also smooth. By \cite[Equation (8.6.12)]{KashiwaraSchapira} we have
\[\supp(\phi_f p^*\cF)\subset \{x\in U\mid (x, df(x))\in\SS(p^*\cF)\}.\]
By \cref{prop:SSpullback} we have
\[\SS(p^*\cF) = \{(x, \alpha\in\T^*_{p(x)} B)\mid (p(x), \alpha)\in\SS(\cF)\}.\]
The moment map $\mu\colon X\rightarrow \T^* B$ sends $x\in X$ to the pair $\mu(x) = (p(x), \alpha\in \T^*_{p(x)} B)$, where $p^*\alpha = df(x)$. The claim follows.
\end{proof}

\subsection{Microstalks}

For a real manifold $X$ and a smooth function $t\colon X\rightarrow \R$ one can define the vanishing cycles functor $\varphi_t\colon \Shv(X; R)\rightarrow \Shv(t^{-1}(0); R)$ by
\[\varphi_t = (t^{-1}(0)\rightarrow X_{\leq 0})^*(X_{\leq 0}\rightarrow X)^!,\]
where $X_{\leq 0} = \{x\in X\mid t(x)\leq 0\}$. The definition is compatible with the functor of vanishing cycles of holomorphic functions, so that for a holomorphic function $t\colon X\rightarrow \C$ on a complex manifold we have a natural isomorphism
\[\varphi_t(-)\cong \varphi_{\Re t}(-)|_{t^{-1}(0)}.\]

Now suppose $X$ is a complex manifold, $\Lambda$ is a (complex) conic Lagrangian submanifold of $\T^* X$, $p\in\Lambda$ and $t\colon X\rightarrow \R$ a smooth function. Let $\Gamma_{dt}$ be the graph of $dt$, $\pi\colon \T^* X\rightarrow X$ the projection and $\tau_t(p)\in \Z$ is the \defterm{triple Maslov index} of the (real) Lagrangian subspaces $\T_p \pi^{-1}\pi(p)$, $\T_p \Lambda$ and $\T_p \Gamma_{dt}$ of $\T_p \T^* X$ as in \cite[Equation (7.5.3)]{KashiwaraSchapira}. If $e$ is the dimension of the intersection $\T_p\Lambda$ and $\T_p \Gamma_{dt}$, then $\tau_t(p) + e$ is even.

\begin{proposition}\label{prop:microstalk}
Let $X$ be a complex manifold, $\Lambda\subset \T^* X$ a (complex) conic Lagrangian subset and $p\in\Lambda$ a smooth point with $\pi(p) = x\in X$. For a smooth function $t\colon X\rightarrow \R$ with $t(x) = 0$ and such that $\Gamma_{dt}$ and $\Lambda$ intersect cleanly at $p$ with excess $e\in\Z$, the functor
\[\varphi_t(-)|_x[-(\tau_t(p)+e)/2]\colon \Db_\Lambda(X)\longrightarrow \Mod_R\]
is independent of the choice of the function $t$, up to a (non-canonical) natural isomorphism.
\end{proposition}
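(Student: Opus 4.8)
The plan is to localize near $p$, reduce to a linear conormal model by a quantized contact transformation, and then settle the shift bookkeeping by an explicit Morse-theoretic computation. The statement is local on $X$ near $x$, and by the microsupport estimate for vanishing cycles \cite[Equation (8.6.12)]{KashiwaraSchapira} one has $\varphi_t(\cF)|_x=0$ as soon as $p=dt(x)\notin\SS(\cF)$, so $\varphi_t(-)|_x$ depends on $\cF\in\Db_\Lambda(X)$ only through its microlocal germ at $p$. Since $p$ is a smooth point of the conic Lagrangian $\Lambda$, a homogeneous contact transformation of $\dot{\T}^*X$ carries the germ of $\Lambda$ at $p$ to the conormal bundle $\rN^*_M X$ of a submanifold $M\subset X$ (which one may take linear in a chart); realizing it by a quantized contact transformation as in \cite[Chapter VII]{KashiwaraSchapira}, whose effect on vanishing cycles is controlled by the Maslov shift of \cite[Equation (7.5.3)]{KashiwaraSchapira}, I would reduce to $\Lambda=\rN^*_M X$. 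Microlocally near $p$ the category $\Db_\Lambda(X)$ is then equivalent to $\Db(\Mod_R)$, generated by the image of $i_{M*}R_M$ for $i_M\colon M\hookrightarrow X$, so it suffices to evaluate $\varphi_t(i_{M*}R_M)|_x$ for an arbitrary admissible $t$ and to verify that the shift $[-(\tau_t(p)+e)/2]$ makes the answer independent of $t$.

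For the evaluation one has $\varphi_t(i_{M*}R_M)|_x=\bigl((X_{\leq0}\to X)^!\,i_{M*}R_M\bigr)_x=\bigl(\R\Gamma_{\{t|_M\leq0\}}R_M\bigr)_x$. As $dt(x)=p$ is conormal to $M$, the restriction $t|_M$ has a critical point at $x$, and the clean-intersection hypothesis says exactly that its critical locus near $x$ (which is $\Gamma_{dt}\cap\Lambda$) is smooth of dimension $e$; the Morse--Bott splitting lemma then identifies $\bigl(\R\Gamma_{\{t|_M\leq0\}}R_M\bigr)_x$ with $R$ placed in the single cohomological degree $p_+(t)$, the positive index of the Hessian of $t|_M$ at $x$ transverse to the critical locus (so $p_+(t)+p_-(t)+e=\dim M$). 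The central point is then the arithmetic identity
\[
p_+(t)+\tfrac12\bigl(\tau_t(p)+e\bigr)=c,
\]
with $c$ depending only on $\dim X$, $\dim M$ and $p$ but not on $t$; equivalently, the triple Maslov index $\tau_t(p)$ coincides, up to a $t$-independent constant, with $-\operatorname{sgn}\bigl(\Hess(t|_M)(x)\bigr)$. I would prove this by directly evaluating the triple Maslov index of $\T_p\pi^{-1}(x)$, $\T_p\Lambda$ and $\T_p\Gamma_{dt}$ in $\T_p\T^*X$, using the clean-intersection hypothesis to restrict all three Lagrangians to a complement of $\T_p(\Gamma_{dt}\cap\Lambda)$ on which $\T_p\Lambda$ and $\T_p\Gamma_{dt}$ are transverse, and comparing with the signature of the transverse Hessian; in the transverse case $e=0$ this, and with it the proposition, is essentially \cite[\S 7.5]{KashiwaraSchapira}.

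Granting the identity, $\varphi_t(i_{M*}R_M)|_x[-(\tau_t(p)+e)/2]$ is $R$ in the fixed degree $c$ for every admissible $t$; hence, viewed through the microlocalization $\Db_\Lambda(X)\to\Db(\Mod_R)$, each functor $\varphi_t(-)|_x[-(\tau_t(p)+e)/2]$ is the exact $R$-linear functor carrying the generator to $R[-c]$, that is, the shift by $-c$. Since $c$ is the same for any two admissible $t_0$ and $t_1$, these two functors are (non-canonically) isomorphic, which is the assertion. I expect the main obstacle to be precisely this shift bookkeeping in the clean case: the transverse computation is classical, but when $\Gamma_{dt}$ and $\Lambda$ meet only cleanly --- so that $\T_p\Lambda$ and the cotangent fibre $\T_p\pi^{-1}(x)$ need not be transverse --- one must carefully match the triple Maslov index, including the excess correction $e$, against the real Morse index of a degenerate quadratic form, which is the linear-algebra heart of the argument.
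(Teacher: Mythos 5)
Your proposal is correct and follows essentially the same route as the paper's proof: reduce by a quantized contact transformation to the conormal bundle of a submanifold $M\subset X$, replace $\cF$ microlocally by $i_{M*}L_M$, evaluate via the Morse--Bott lemma, and settle the shift through the triple Maslov index. The step you single out as the ``linear-algebra heart'' --- matching $\tau_t(p)$ against the signature of the transverse Hessian --- is exactly what the paper takes as given from \cite[Proposition A.3.6]{KashiwaraSchapira}, namely $\tau_t(p)=n_--n_+$ with no $t$-independent correction.
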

\begin{proof}
The proof is similar to the proof of \cite[Proposition 7.5.3]{KashiwaraSchapira} which treats the case $e=0$ (i.e. when $\Gamma_{dt}$ and $\Lambda$ intersect transversely at $p$). Fix $\cF\in\Db_\Lambda(X)$.

First, by \cite[Corollary A.2.7 and Lemma 7.5.2]{KashiwaraSchapira} we may apply a quantized contact transformation to assume that $\Lambda$, in a neighborhood of $p$, is the conormal bundle of a complex submanifold $i\colon M\hookrightarrow X$. By \cite[Proposition 6.6.1]{KashiwaraSchapira} working microlocally near $p\in\T^* X$ we may find $L\in\Mod_R$ such that $\varphi_t(\cF)|_x\cong \varphi_t(i_* L_M)|_x$, where $L_M$ is the constant sheaf on $M$ with stalk $L$. By proper base change we have $\varphi_t(i_* L_M)|_x\cong \varphi_{t|_M}(L_M)|_x$.

The assumption on the clean intersection of $\Gamma_{dt}$ and $\Lambda$ implies that $t|_M\colon M\rightarrow \R$ is Morse--Bott with critical locus a submanifold $S\subset M$ of dimension $e$ at $x$. Let $Q$ be the Hessian of $t|_M$ at $x$ viewed as a quadratic form on $V=\T_x M$. Let $n_+, n_-$ be the number of positive and negative eigenvalues of $Q$. By the Morse--Bott lemma (see \cite[Proposition 2.5.2]{Nicolaescu}), locally near $x$ we may find a diffeomorphism $\Phi\colon V\rightarrow M$ sending $0$ to $x$ under which $\Phi^\ast (t|_M) = Q$. Then $\varphi_{t|_M}(L_M)|_x\cong \varphi_Q(L_V)|_0$. The standard computation of vanishing cycles of a quadratic form shows that $\varphi_Q(L_V)|_0\cong L[-n_+]$. By \cite[Proposition A.3.6]{KashiwaraSchapira} we have
\[\tau_t(p) = n_- - n_+.\]
Therefore, in total we get
\[\varphi_t(\cF)|_x[-(\tau_t(p)+e)/2]\cong L[-\dim M/2]\]
which is independent of the choice of $t$.
\end{proof}

The functor
\[\mu_p\colon \Db_\Lambda(X)\rightarrow \Mod_R\]
defined in \cref{prop:microstalk} is the \defterm{microstalk} functor at $p\in\Lambda$. We have a smooth functoriality of microstalk functors as follows.

\begin{proposition}\label{prop:microstalkpullback}
Let $f\colon Y\rightarrow X$ be a smooth morphism of complex manifolds and $\Lambda\subset \T^* X$ a (complex) conic Lagrangian subset. Let $(y, p)\in Y\times_X \T^* X$ be a point such that $p\in\Lambda$ is smooth and let $p' = f^\ast(y, p)$. Then there is a (non-canonical) natural isomorphism
\[\mu_{p'} f^\dag\cong \mu_p\colon \Db_\Lambda(X)\longrightarrow \Mod_R.\]
\end{proposition}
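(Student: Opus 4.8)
The plan is to reduce the statement about microstalks of $f^\dag\cF$ to a direct comparison of the defining vanishing-cycle expressions, tracking the shifts carefully through the smooth pullback. Since $f\colon Y\rightarrow X$ is smooth of relative dimension $d\coloneqq\dim(Y/X)$, the purity isomorphism gives $f^\dag=f^*[d]=f^![-d]$. We first need to understand how the Lagrangian $\Lambda$ and the chosen test function interact with $f$. Pick a point $y\in Y$ with $f(y)=x\coloneqq\pi(p)$, and let $p'=f^\ast(y,p)$ under the correspondence $\T^* Y\leftarrow Y\times_X\T^* X\rightarrow\T^* X$; by \cref{prop:SSpullback} the microsupport of $f^*\cF$ is $\Lambda_Y\coloneqq f^\ast(f_\pi^{-1}(\Lambda))$, and $p'$ is a smooth point of $\Lambda_Y$ lying over the smooth point $p$ of $\Lambda$.

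Next I would choose the test function. Fix a smooth $t\colon X\rightarrow\R$ with $t(x)=0$ such that $\Gamma_{dt}$ meets $\Lambda$ cleanly at $p$ with excess $e$, as in \cref{prop:microstalk}; we may further arrange, by the quantized contact transformation step in that proof, that $\Lambda$ near $p$ is the conormal $\rN^*(M/X)$ of a submanifold $i\colon M\hookrightarrow X$. Set $s\coloneqq t\circ f\colon Y\rightarrow\R$; then $f^{-1}(M)\hookrightarrow Y$ is a submanifold whose conormal is $\Lambda_Y$ near $p'$, and $\Gamma_{ds}$ meets $\Lambda_Y$ cleanly at $p'$. The key local computation is to compare the two Maslov-index-plus-excess correction terms: because $f$ is a submersion, $Y$ looks locally like $X\times\R^d$ with $s(x,u)=t(x)$, so $s|_{f^{-1}(M)}$ is Morse--Bott with critical submanifold $f^{-1}(S)$ of dimension $e+d$, i.e. the excess for $s$ is $e'=e+d$, while the Hessian of $s|_{f^{-1}(M)}$ at a critical point has the same numbers $n_+,n_-$ of positive/negative eigenvalues as that of $t|_M$ (the extra $\R^d$ directions being null and absorbed into $S$). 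Hence $\tau_s(p')=\tau_t(p)$ and $\tau_s(p')+e'=\tau_t(p)+e+d$.

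Now assemble. Using proper/smooth base change and the fact that $\varphi$ of a holomorphic function is computed via $\varphi$ of its real part, we have $\varphi_s(f^*\cF)|_y\cong (\varphi_t\cF)|_x$ up to the identification of the ambient-dimension normalizations — more precisely, reducing both sides to the conormal case as in \cref{prop:microstalk} and applying the Morse--Bott lemma, $\varphi_s(f^*\cF)|_y\cong L[-\dim f^{-1}(M)/2]$ and $\varphi_t(\cF)|_x\cong L[-\dim M/2]$ for the same $L\in\Mod_R$, where $\dim f^{-1}(M)=\dim M+d$. Therefore
\[
\mu_{p'}(f^\dag\cF)=\varphi_s(f^*\cF[d])|_y\bigl[-(\tau_s(p')+e')/2\bigr]\cong L\bigl[d-\tfrac{\dim M+d}{2}-\tfrac{\tau_t(p)+e+d}{2}\bigr]=L\bigl[-\tfrac{\dim M}{2}-\tfrac{\tau_t(p)+e}{2}\bigr]\cong\mu_p(\cF),
\]
where the shift $[d]$ from $f^\dag$ exactly cancels the combined $d$-contributions from the excess shift and the ambient-dimension normalization. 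This gives the desired (non-canonical, as the $L$ is only well-defined up to the microlocal identification of \cite[Proposition 6.6.1]{KashiwaraSchapira}) natural isomorphism $\mu_{p'}f^\dag\cong\mu_p$.

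The main obstacle I anticipate is the bookkeeping in the clean-intersection/excess step: one must verify that passing from $t$ to $s=t\circ f$ genuinely shifts the excess by exactly $d$ while leaving the triple Maslov index $\tau$ unchanged, and that the ambient normalization $[-\dim M/2]$ correctly becomes $[-(\dim M+d)/2]$, so that all the $d$'s cancel against the $[d]$ in $f^\dag$. This is a linear-algebra computation with the Lagrangian subspaces $\T_p\pi^{-1}(x)$, $\T_p\Lambda$, $\T_p\Gamma_{dt}$ and their pullbacks, using \cite[Proposition A.3.6]{KashiwaraSchapira} together with the product description of $f$ locally; it is routine but is the place where an error would be easy to make. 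A cleaner alternative, which I would pursue if the direct computation gets unwieldy, is to factor $f$ as a composition of a projection $X\times\R^{d}\rightarrow X$ (where everything is an explicit product and the statement is transparent) and an open immersion, using that smooth morphisms are local on the source for the $\dag$-pullback and for microsupport.
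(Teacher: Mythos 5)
Your overall strategy is the same as the paper's: fix a test function $t$, pull it back to $t'=t\circ f$, and track how the Maslov index and excess change. The paper also notes that $\tau_{t'}(p')=\tau_t(p)$ (citing \cite[Theorem~A.3.2(vii)]{KashiwaraSchapira} for the additivity of the Maslov index) and that the excess increases. However, your execution contains two bookkeeping errors that happen to cancel.

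\textbf{Error in the excess.} You assert $e'=e+d$ with $d=\dim(Y/X)$. Since $f$ is a holomorphic submersion, $Y$ is locally $X\times\C^{d}$, not $X\times\R^{d}$; the fibers have \emph{real} dimension $2d$. The critical submanifold of $t'|_{f^{-1}(M)}$ is $f^{-1}(S)$, which has real dimension $e+2d$. So $e' = e + 2\dim(Y/X)_y$, as stated in the paper's proof.

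\textbf{Error in the vanishing-cycle stalk.} You write $\varphi_t(\cF)|_x\cong L[-\dim M/2]$ and $\varphi_s(f^*\cF)|_y\cong L[-(\dim M + d)/2]$, ``by the Morse--Bott lemma as in \cref{prop:microstalk}.'' But the Morse--Bott computation in \cref{prop:microstalk} gives $\varphi_t(\cF)|_x\cong L[-n_+]$; the expression $L[-\dim M/2]$ is the \emph{microstalk} $\mu_p(\cF)=\varphi_t(\cF)|_x[-(\tau_t(p)+e)/2]$ — you have conflated the two. Moreover, by smooth base change for vanishing cycles one has $\varphi_{t'}(f^*\cF)|_y\cong\varphi_t(\cF)|_x$ with \emph{no} shift at all (the extra fiber directions contribute only zero eigenvalues to the Hessian, so $n_+$ is unchanged), rather than the $d/2$ discrepancy you claim.

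Your arithmetic $[d - (\dim M + d)/2 - (\tau_t(p)+e+d)/2] = [-\dim M/2 - (\tau_t(p)+e)/2]$ is internally consistent with the two wrong inputs, and they cancel to give the correct final isomorphism — but this is an accident, not a proof. The paper's argument is cleaner and sidesteps both issues: it never re-opens the Morse--Bott computation, but instead manipulates the defining formula $\mu_p(-)\cong\varphi_t(-)|_x[-(\tau_t(p)+e)/2]$ directly. First one shifts by $\dim(Y/X)$ to pass from the stalk at $x$ to the stalk at $y$ of $f^\dag\varphi_t(-)$; then smooth base change $f^\dag\varphi_t\cong\varphi_{t'}f^\dag$ and the identities $\tau_{t'}(p')=\tau_t(p)$, $e'=e+2\dim(Y/X)_y$ identify the combined shift with $-(\tau_{t'}(p')+e')/2$, giving $\mu_{p'}f^\dag(-)$. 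If you repair the two errors and replace the Morse--Bott re-derivation with smooth base change, your proof becomes the paper's.
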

\begin{proof}
Let $x$ be the image of $p$ under $\T^* X\rightarrow X$ and $t\colon X\rightarrow \R$ a smooth function such that $\Gamma_{dt}$ and $\Lambda$ intersect cleanly at $p$ with excess $e$. Let $t' = f^\ast t$ and $\Lambda_Y = f^\ast(f^{-1}_\pi(\Lambda))\subset \T^* Y$. Then $\Gamma_{dt'}$ and $\Lambda_Y$ intersect cleanly at $p'$ with excess $e'=e + 2\dim(Y/X)_y$. Using the additivity property of the Maslov index from \cite[Theorem A.3.2(vii)]{KashiwaraSchapira} we get that $\tau_{t'}(p') = \tau_t(p)$. Therefore,
\begin{align*}
\mu_p(-)&\cong \varphi_t(-)|_x[-(\tau_t(p)+e)/2] \\
&\cong (f^\dag\varphi_t(-))|_y[-(\tau_t(p)+e)/2 - \dim(Y/X)_y] \\
&\cong (\varphi_{t'} f^\dag(-))|_y[-(\tau_{t'}(p')+e')/2] \\
&\cong \mu_{p'} f^\dag(-),
\end{align*}
where we have used smooth base change in the third line.
\end{proof}

\section{Fourier transforms via perverse pullbacks}\label{sect:Fourier}

In this section we relate perverse pullbacks to Fourier transforms for sheaves.

\subsection{Conic and monodromic sheaves}

In this section we recall the notions of conic and monodromic sheaves and state the contraction principle. It is convenient to phrase the definitions in the setting of topological stacks, i.e. presheaves of $\infty$-groupoids on $\Top^{\lch}$ satisfying \v{C}ech descent along surjective topological submersions. Analogously to the algebraic situation we may also define topological higher Artin stacks; we refer to \cite[Section 8.3]{KhanWeavelisse} for more details. The 6-functor formalism $\Shv(-; R)$ of sheaves of complexes of $R$-modules on $\Top^{\lch}$ extends to topological higher Artin stacks.

Consider the group $\R_{>0}$ of positive real numbers under multiplication.

\begin{definition}
Let $X$ be a topological higher Artin stack equipped with an $\R_{>0}$-action. A sheaf $\cF\in\Shv(X; R)$ is \defterm{conic} if it is $\R_{>0}$-equivariant.
\end{definition}

We denote by $\Shv_{\R_{>0}}(X; R)\subset \Shv(X; R)$ the full subcategory of conic sheaves.

\begin{remark}
Since $\R_{>0}$ is contractible, pullback along $X\rightarrow [X/\R_{>0}]$ is fully faithful and identifies $\Shv([X/\R_{>0}]; R)$ with the full subcategory of $\Shv(X; R)$ consisting of conic sheaves.
\end{remark}

\begin{remark}
If $X\in\Top^{\lch}$ is a topological space, a sheaf $\cF$ is conic if, and only if, its restriction to $\R_{>0}$-orbits is locally constant.
\end{remark}

The key result about conic sheaves is the \emph{contraction principle}. To introduce it, consider the topological monoid $\R_{\geq 0}$ of nonnegative real numbers. It contains the subgroup $\R_{>0}\subset \R_{\geq 0}$ as the group of units.

\begin{definition}\label{def:contracting}
Let $X$ be a topological stack equipped with an $\R_{>0}$-action. The $\R_{>0}$-action is \defterm{contracting} if it extends to an action of the monoid $\R_{\geq 0}$.
\end{definition}

For a topological stack $X$ equipped with a contracting $\R_{>0}$-action let $X^0$ be the topological stack of $\R_{\geq 0}$-equivariant maps $\{0\}\rightarrow X$. We have natural morphisms
\[i\colon X^0\longrightarrow X,\qquad r\colon X\longrightarrow X^0\]
such that $r\circ i=\id_{X^0}$. The proof of the following \emph{stacky contraction principle} can be given as in \cite[Appendix C]{Drinfeld_compact_generation} by replacing algebraic stacks with topological higher Artin stacks and $D$-modules with $\Shv(-; R)$.

\begin{proposition}\label{prop:contraction}
Let $X$ be a topological higher Artin stack with a contracting $\R_{>0}$-action such that the contracting locus $X^0$ is still a topological higher Artin stack. Then the morphisms
\[i^!\cong r_!i_! i^!\xrightarrow{\counit} r_!,\qquad r_*\xrightarrow{\unit} r_* i_*i^*\cong i^*\]
are isomorphisms on $\Shv_{\R_{>0}}(X; R)$.
\end{proposition}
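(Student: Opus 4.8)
The plan is to transfer the proof of \cite[Appendix C]{Drinfeld_compact_generation}, replacing algebraic stacks and $D$-modules by topological higher Artin stacks and the six-functor formalism $\Shv(-;R)$ of \cite{KhanWeavelisse}. The two displayed morphisms are proved by entirely parallel arguments --- and, restricted to constructible sheaves, they are exchanged by Verdier duality, via $\bbD_{X^0}r_!\simeq r_*\bbD_X$, $\bbD_{X^0}i^*\simeq i^!\bbD_X$ and the fact that $\bbD_X$ preserves conic sheaves --- so I will detail only the case of $r_*\xrightarrow{\unit}i^*$, the case of $i^!\xrightarrow{\counit}r_!$ being identical with the roles of $*$ and $!$ exchanged and with $(ri)_!=\id$ used in place of $(ri)_*=\id$. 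The proof has two stages: a descent-theoretic reduction to the case where $X$ is a topological space, and the classical contraction lemma in that case.

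\emph{Reduction to a topological space.} First I would reduce to the case $X\in\Top^{\lch}$. The functor $\Shv(-;R)$ satisfies descent along smooth surjections \cite{KhanWeavelisse}, and the functors $i^*,i^!,r_*,r_!$ are compatible with smooth base change, so the statement may be checked after pullback along a smooth atlas of $X$ and, iterating along the \v{C}ech nerve, after lowering the ``Artin level'' until one reaches topological spaces. The content of this stage is to run the d\'evissage compatibly with the pair $(i,r)$ and with the contracting $\R_{\geq0}$-action; this is exactly where the hypothesis that $X^0$ is again a topological higher Artin stack enters, since it lets one choose smooth covers of $X$ and of $X^0$ matching across $i$ and $r$, so that the contraction descends along the cover. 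As in \emph{loc.\ cit.}, one reduces in this way first to the universal contracting situation --- a vector bundle over $X^0$ with fibrewise scaling --- and then, by one further smooth descent, to the case of a topological space.

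\emph{The case of a topological space.} Here I would argue directly. For $X\in\Top^{\lch}$ the subset $X^0=\{x\in X\mid 0\cdot x=x\}$ is closed, the map $r\colon X\to X^0$ sends $x$ to $0\cdot x$, and $r$ is constant along $\R_{\geq0}$-orbits, so $r^{-1}(U)$ is $\R_{\geq0}$-stable for every open $U\subseteq X^0$. For a point $x\in X^0$ the stalk $(r_*\cF)_x$ is by definition $\colim_{U\ni x}\Gamma(r^{-1}(U),\cF)$, the colimit over open neighbourhoods $U$ of $x$ in $X^0$, and the comparison map to $(i^*\cF)_x=\cF_x$ induced by the unit is restriction along the closed inclusions $U\hookrightarrow r^{-1}(U)$. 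Hence it suffices to prove: for every $\R_{\geq0}$-stable open $W\subseteq X$, writing $W^0=W\cap X^0=r(W)$, the restriction $\Gamma(W,\cF)\to\Gamma(W^0,\cF|_{W^0})$ is an isomorphism. This is the classical contraction lemma; its proof uses that $\cF|_W$, being conic, is insensitive to the strong deformation retraction of $W$ onto $W^0$ furnished by the monoid action $h\colon[0,1]\times W\to W$, $h(t,y)=t\cdot y$ --- whose restriction to $(0,1]\times W\subseteq\R_{>0}\times W$ is the restriction of the $\R_{>0}$-action --- see \cite[Appendix C]{Drinfeld_compact_generation}, and cf.\ \cite[Chapter 3]{KashiwaraSchapira} for the absolute case $W^0=\{\ast\}$. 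Taking $W=r^{-1}(U)$ and passing to the colimit over neighbourhoods $U$ of $x$ then identifies $(r_*\cF)_x$ with $(i^*\cF)_x=\cF_x$; hence $r_*\cF\xrightarrow{\sim}i^*\cF$ on $\Shv_{\R_{>0}}(X;R)$.

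\emph{Expected difficulty.} The only non-formal step is the first one: one must carry out the smooth-descent d\'evissage compatibly with the contracting action, equivalently verify that a contracting $\R_{>0}$-action on a topological higher Artin stack is, smooth-locally, pulled back from fibrewise scaling on a vector bundle over $X^0$. Once this local structure is in hand, the reduction to spaces and the space-level contraction lemma are both standard, and the proof becomes a faithful transcription of \cite[Appendix C]{Drinfeld_compact_generation} into the present topological, higher-stacky six-functor formalism.
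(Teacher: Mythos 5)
The paper gives no proof beyond a pointer to \cite[Appendix~C]{Drinfeld_compact_generation}, so there is no worked argument to compare yours against; your proposal is a sensible attempt to carry out that transplant. The structure you outline (d\'evissage to spaces, then stalk computation plus the classical contraction lemma) is the right shape, and the space-level argument in your second stage is correct. The gap is exactly where you flagged it, but it is worth being more precise about why. Pulling conic sheaves back along an arbitrary smooth atlas $U\to X$ does not preserve conicity, because $U$ carries no $\R_{>0}$-action: the notion ``conic on $U$'' is simply undefined. So one cannot ``lower the Artin level of $X$'' wholesale. The way out is that the statement lives over $X^0$, so you should \emph{only} descend along an atlas $U^0\to X^0$; after that base change, $r^{-1}(U^0)$ is still a topological higher Artin stack, but its global sections can be computed by \v{C}ech descent, and the contraction homotopy $h\colon[0,1]\times r^{-1}(U^0)\to r^{-1}(U^0)$ is a morphism of topological stacks that one can exploit directly, without ever reducing the total space to a space. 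Your last sentence, reducing to ``fibrewise scaling on a vector bundle over $X^0$,'' is neither needed nor clearly available here; the topological analogue of the algebraic attractor structure that underlies the Drinfeld--Gaitsgory reduction to affines is not a vector bundle but an arbitrary cone.

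A second, smaller gap is the $!$-half. You say it is ``identical with the roles of $*$ and $!$ exchanged,'' but your argument for $r_*\xrightarrow{\unit}i^*$ is a stalk computation, and stalks do not compute $r_!$. For $r_!$ one must instead use proper base change to reduce to compactly supported cohomology of the fibers $r^{-1}(x)$ (which are locally compact topological higher stacks with a contracting action onto a point), and then the $!$-form of the contraction lemma, $\R\Gamma_c(r^{-1}(x),\cF)\simeq i^!\cF|_x$. This is genuinely a different computation, not a formal relabeling; and the Verdier-duality shortcut you gesture at only applies after restricting to $\Dbc$, whereas the proposition is asserted on all of $\Shv_{\R_{>0}}(X;R)$. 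So the $!$-direction needs its own paragraph in a complete write-up.
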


Let us now switch to the setting of complex algebraic stacks. Given a higher Artin stack $X$, we have the corresponding topological higher Artin stack $X^{\an}$. Moreover, a $\Gm$-action on $X$ gives rise to a $\C^\times$-action on $X^{\an}$. In particular, $X^{\an}$ carries an $\R_{>0}$-action. Therefore, we may transfer the definition of conic sheaves to the algebraic setting as follows.

\begin{definition}
Let $X$ be a higher Artin stack equipped with a $\Gm$-action. A sheaf $\cF\in\Dbc(X)$ is \defterm{monodromic} if the underlying sheaf on $X^{\an}$ is conic with the respect to the $\R_{>0}$-action.
\end{definition}

\begin{remark}
For $X\in\Sch^{\sep\ft}$ a sheaf $\cF\in\Dbc(X)$ is monodromic in the above sense if, and only if, its restriction to $\Gm$-orbits is locally constant.
\end{remark}

Consider the monoid scheme $\bA^1$ considered with respect to multiplication, which contains $\Gm$ as an open group subscheme of its units.

\begin{definition}
Let $X$ be a stack. A $\Gm$-action on $X$ is \defterm{contracting} if the $\Gm$-action extends to an action of the monoid $\bA^1$.
\end{definition}

Clearly, given a contracting $\Gm$-action on a stack $X$, the underlying $\R_{>0}$-action on $X^{\an}$ is contracting. In particular, the contraction principle applies to monodromic sheaves on higher Artin stacks with contracting $\Gm$-actions.

\subsection{Exponential sheaves}

In this section we study exponential sheaves and their relationship to vanishing cycle functors. Consider the scaling action of $\R_{>0}$ on $\R$ and let $[\R/\R_{>0}]$ be the corresponding quotient stack. The real \defterm{exponential sheaf} is the $\R$-constructible sheaf
\[\exp_\R = (\R_{\geq 0}\rightarrow [\R/\R_{>0}])_!\bu_{\R_{\geq 0}}\in\Shv([\R/\R_{>0}]; R).\]
We also denote by $\exp_\R\in\Shv_{\R_{>0}}(\R; R)$ the corresponding conic sheaf on $\R$.

There is a complex version of the exponential sheaf defined as follows. For the $\Gm$-action on $\bA^1$ of weight $n>0$ we denote by $[\bA^1/_n\Gm]$ the corresponding quotient stack. Consider the maps
\[i_1\colon \pt\xrightarrow{p_1} \B\mu_n\xrightarrow{j}[\bA^1_n/\Gm],\]
where $j$ is the complement of the origin and $i_1$ is the inclusion of $1\in\C$. The complex \defterm{exponential sheaf} is
\[\exp_n = (i_1)_* \bu[-1]\in\Dbc([\bA^1/_n\Gm]).\]

\begin{remark}
To explain the analogy, note that if $i_1\colon \pt\rightarrow [\R/\R_{>0}]$ is the inclusion of $1$, then one can show that $\exp_\R\cong (i_1)_* \bu[-1]$ using the localization sequence.
\end{remark}

\begin{proposition}\label{prop:expproperties}
The exponential sheaves satisfy the following properties:
\begin{enumerate}
    \item Consider the correspondence of topological stacks
    \[
    \xymatrix{
    [\R/\R_{>0}] & [\C/\R_{>0}] \ar_-{f_1}[l] \ar^-{f_2}[r] & [\C/_n\C^\times],
    }
    \]
    where $f_1$ is induced by the real part map $\Re\colon \C\rightarrow \R$ and $f_2$ is induced by the $n$-th power map $\R_{>0}\rightarrow \C^\times$. Then there is an isomorphism
    \[(f_2)_! f_1^* \exp_\R\cong \exp_n.\]
    \item Let
    \[s_d\colon [\bA^1/_{nd} \Gm]\longrightarrow [\bA^1/_n\Gm]\]
    be the map given by the identity on $\bA^1$ and the $d$-th power map on $\Gm$. Then there is an isomorphism
    \begin{equation}\label{eq:exppushforward}
    (s_d)_! \exp_{nd}\cong \exp_n.
    \end{equation}
    \item Assume $R$ has characteristic not $2$ and let $\bu^-_{\B\mu_2}$ be the sign representation of $\B\mu_2$ viewed as a one-dimensional local system on $\B\mu_2$. Then
    \[\exp_2\cong (j_!\bu^-_{\B\mu_2}\oplus j_*\bu_{\B\mu_2})[-1].\]
\end{enumerate}
\end{proposition}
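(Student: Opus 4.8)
The plan is to prove the three isomorphisms in turn; (2) will be deduced from (1), and (3) is essentially a direct computation together with the contraction principle.

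\emph{Part (1).} First I would unwind $f_1^*\exp_\R$. Write $h\colon\R_{\geq 0}\to[\R/\R_{>0}]$ for the defining map of $\exp_\R$, so $\exp_\R=h_!\bu$, and factor $h$ as the closed immersion $\R_{\geq 0}\hookrightarrow\R$ followed by the quotient $\R\to[\R/\R_{>0}]$. The square obtained by pulling the quotient $\R\to[\R/\R_{>0}]$ back along the submersion $\Re\colon\C\to\R$ is Cartesian (both $\C$ and the fibre product are $\R_{>0}$-torsors over $[\C/\R_{>0}]$, and the comparison map $(\Re,-)$ is $\R_{>0}$-equivariant), so base change for the $!$-pushforward gives $f_1^*\exp_\R\cong(H\hookrightarrow\C\to[\C/\R_{>0}])_!\bu_H$, where $H=\{\Re\geq 0\}\subset\C$ is the closed right half-plane. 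Applying $(f_2)_!$, using that the composite $\C\to[\C/\R_{>0}]\xrightarrow{f_2}[\C/_n\C^\times]$ is the quotient map and that $(-)_!$ is functorial, I obtain $(f_2)_!f_1^*\exp_\R\cong(H\hookrightarrow\C\to[\C/_n\C^\times])_!\bu_H$. It then remains to identify this with $\exp_n=(i_1)_*\bu[-1]$, which I would do by comparing the two sheaves over the open stratum $\B\mu_n\subset[\C/_n\C^\times]$ and over the complementary origin stratum $\B\Gm$, and checking the gluing datum. Over $\B\mu_n$, base change for $(-)_!$ reduces the computation to a compactly supported cohomology whose fibres decompose into $n$ contractible closed arcs permuted cyclically by $\mu_n$; the restriction is therefore the regular representation local system $(p_1)_*\bu$ shifted by $[-1]$, i.e.\ $\exp_n|_{\B\mu_n}$. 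Over the origin, base change together with the computation $\R\Gamma_c(\C^\times)\cong R[-1]\oplus R[-2]$ matches the restriction of $(f_2)_!f_1^*\exp_\R$ with the stalk of $\exp_n$ at the origin (computed, e.g., by a punctured-disc calculation).

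\emph{Part (2).} This follows formally from (1). For each positive integer $m$, part (1) reads $\exp_m\cong(f_2^{(m)})_!f_1^*\exp_\R$ for the corresponding map $f_2^{(m)}\colon[\C/\R_{>0}]\to[\C/_m\C^\times]$. One checks directly that $s_d\circ f_2^{(nd)}=f_2^{(n)}$ — equivalently, that the relevant group homomorphisms $\R_{>0}\to\C^\times$ compose correctly with the $d$-th power map defining $s_d$. Then, by functoriality of $(-)_!$,
\[(s_d)_!\exp_{nd}\cong(s_d)_!(f_2^{(nd)})_!f_1^*\exp_\R\cong(s_d\circ f_2^{(nd)})_!f_1^*\exp_\R\cong(f_2^{(n)})_!f_1^*\exp_\R\cong\exp_n.\]

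\emph{Part (3).} By definition $\exp_2=(i_1)_*\bu[-1]=j_*(p_1)_*\bu[-1]$, where $(p_1)_*\bu$ is the pushforward of the constant sheaf along the $\mu_2$-torsor $p_1\colon\pt\to\B\mu_2$, i.e.\ the rank-two regular representation local system on $\B\mu_2$. Since $\mathrm{char}(R)\neq 2$ this splits as $\bu_{\B\mu_2}\oplus\bu^-_{\B\mu_2}$, so $\exp_2\cong(j_*\bu_{\B\mu_2}\oplus j_*\bu^-_{\B\mu_2})[-1]$, and it suffices to prove $j_!\bu^-_{\B\mu_2}\xrightarrow{\sim}j_*\bu^-_{\B\mu_2}$, equivalently that the $*$-restriction of $j_*\bu^-_{\B\mu_2}$ to the origin $\B\Gm\subset[\bA^1/_2\Gm]$ vanishes. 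Pulling back along the $\Gm$-torsor $\bA^1\to[\bA^1/_2\Gm]$, this restriction is computed by $(i_0)^*(j_0)_*\cL$ on $\bA^1$, where $j_0\colon\Gm\hookrightarrow\bA^1$, $i_0\colon\{0\}\hookrightarrow\bA^1$, and $\cL$ is the pullback of $\bu^-_{\B\mu_2}$ — a rank-one local system of monodromy $-1$, since $\Gm\to\B\mu_2$ corresponds to the connected double cover. Then $(i_0)^*(j_0)_*\cL$ is the cohomology of a punctured disc with coefficients in $\cL$, a complex quasi-isomorphic to the two-term complex $[R\xrightarrow{\,(-1)-1=-2\,}R]$; since $2$ is invertible this is acyclic, so the restriction vanishes and the claim follows. (This last step can also be phrased using the contraction principle, \cref{prop:contraction}, for the contracting $\Gm$-action.)

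\emph{Main obstacle.} The substantive part is (1), and within it the identification of $(H\hookrightarrow\C\to[\C/_n\C^\times])_!\bu_H$ with $\exp_n$: one must match not only the two stratum restrictions but the gluing datum of the recollement, which is where the precise bookkeeping of the cohomological shift and of the $\mu_n$-monodromy along the arcs has to be carried out carefully. Parts (2) and (3) are comparatively formal once (1) and \cref{prop:contraction} are available.
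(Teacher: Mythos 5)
Your reduction in part (1) to identifying $(\C_{\Re\geq 0}\hookrightarrow\C\to[\C/_n\C^\times])_!\bu$ with $\exp_n$ is the same first step the paper takes, but the strategy you propose from there — matching the $*$-restrictions to the open and closed strata and then ``checking the gluing datum'' — leaves the hardest step undone, and you say so yourself. Matching the two restrictions up to isomorphism does not determine the sheaf, and there is no obvious candidate map $\exp'_n\to\exp_n$ to compare, so the gluing comparison would require real work. The paper avoids this entirely by computing the \emph{!-pullback} of $\exp'_n$ along the closed inclusion $i_0\colon\B\Gm\hookrightarrow[\C/_n\C^\times]$ and showing it vanishes (via the contraction principle applied on the cover, reduced to $\R\Gamma_c(\C_{\Re\geq 0})=0$); this already forces $\exp'_n\cong j_*j^*\exp'_n$, and then one only has to identify $j^*\exp'_n\cong(p_1)_*\bu[-1]$ on $\B\mu_n$ — no gluing datum enters at all. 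Your stratumwise analysis (sectors permuted by $\mu_n$, the punctured-disc stalk computation) is essentially the content of that identification, so you have the pieces; the missing ingredient is the recollement vanishing $i_0^!\exp'_n=0$, which you should prove instead of attempting to compare gluing data.

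Parts (2) and (3) are correct and genuinely different from the paper's route. For (2) you observe $s_d\circ f_2^{(nd)}=f_2^{(n)}$ and invoke functoriality of $(-)_!$, deducing it from (1); the paper instead uses that $s_d$ is finite (so $(s_d)_!=(s_d)_*$) and the compatibility $i_1^{(n)}=s_d\circ i_1^{(nd)}$ — your version is a nice alternative once (1) is available. For (3) you pass to the $\Gm$-atlas $\bA^1\to[\bA^1/_2\Gm]$ and compute $i_0^*j_{0,*}\cL$ explicitly, where $\cL$ has monodromy $-1$, getting the acyclic complex $[R\xrightarrow{-2}R]$ since $\mathrm{char}(R)\neq2$; the paper instead applies the stacky contraction principle to identify $i_0^!$ with the $!$-pushforward to $\B\Gm$ and computes $\R\Gamma_c(\Gm,\cL)=0$. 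Both are valid; yours is more elementary, while the paper's stays at the stacky level. The only point to tighten in your (3) is that you pass from vanishing of the pullback on the atlas to vanishing on $\B\Gm$ — this uses conservativity of pullback along the smooth cover $\pt\to\B\Gm$, which you should state.
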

\begin{proof}$ $
\begin{enumerate}
    \item Let $\C_{\Re\geq 0}\subset \C$ be the subset of complex numbers with nonnegative real part. Using base change we have to show that
    \[\exp'_n = (\C_{\Re\geq 0}\rightarrow [\C/_n\C^\times])_! \bu\]
    is isomorphic to $\exp_n$. Consider the inclusion of the origin $i_0\colon \B\C^\times\rightarrow [\C/_n\C^\times]$. Let us compute the $!$-pullbacks of $\exp'_n$ along $i_0$ and its complement $j$:
    \begin{enumerate}
        \item We prove that $i_0^! \exp'_n = 0$. The pullback of $\exp'_n$ along $\C\rightarrow [\C/\C^\times]$ is conic. Applying the contraction principle (\cref{prop:contraction}) on this cover the claim is equivalent to showing that $([\C/\C^\times]\rightarrow \B\C^\times)_! \exp'_n = 0$. But this follows from the vanishing of the compactly supported cohomology $\rH^\bullet_c(\C_{\geq 0})$.
        \item We prove that $j^*\exp'\cong (p_1)_!\bu[-1]$. By base change we get $j^*\exp'\cong (p_1)_! \R\Gamma_c(\C_{\geq 0}\setminus \{0\}, \bu)$. Using the localization fiber sequence and the vanishing of the compactly supported cohomology of $\C_{\geq 0}$ we get $\R\Gamma_c(\C_{\geq 0}\setminus \{0\}, \bu)\cong \bu[-1]$.
    \end{enumerate}
    The above claims imply that $\exp'_n\cong j_* p_!\bu[-1]\cong j_*p_*\bu[-1] = \exp_n$.
    \item Since $s_d$ is finite, the claim is equivalent to the statement that $(s_d)_* \exp_{nd}\cong \exp_n$ which follows from the compatibility of $i_1$ with the maps $s_d$.
    \item We may factor $i_1$ as $\pt\xrightarrow{p_1} \B\mu_2\xrightarrow{j} [\bA^1/_2\Gm]$. Therefore,
    \[\exp_2\cong (j_*\bu^-_{\B\mu_2}\oplus j_*\bu_{\B\mu_2})[-1].\]
    Next we will show that the natural morphism $j_!\bu^-_{\B\mu_2}\rightarrow j_*\bu^-_{\B\mu_2}$ is an isomorphism. This is equivalent to showing that the $!$-pullback of $j_!\bu^-_{\B\mu_2}$ along the inclusion of the origin $\B\Gm\rightarrow [\bA^1/_2\Gm]$ is zero. By the stacky contraction principle \cite[Appendix C]{Drinfeld_compact_generation} the $!$-pullback along $\B\Gm\rightarrow [\bA^1/_2\Gm]$ is isomorphic to the $!$-pushforward along $[\bA^1/_2\Gm]\rightarrow \B\Gm$. But the $!$-pushforward of $\bu^{-1}_{\B\mu_2}$ along $\B\mu_2\rightarrow \B\Gm$ is zero.
\end{enumerate}
\end{proof}

\begin{remark}
\Cref{prop:expproperties}(3) identifies the above definition of the exponential sheaf with the one considered in \cite[Definition 10.5.1(c)]{BZSV} in the case $n=2$.
\end{remark}

\begin{remark}
The tower $\{\exp_n\in\bD([\bA^1/_n\Gm])\}$ of exponential sheaves together with isomorphisms \eqref{eq:exppushforward} may be thought of as an approximation to a (non-existent) exponential sheaf on $\bA^1$. This is motivated by the following two results:
\begin{enumerate}
    \item Let $R=\C$ and consider the exponential $D$-module on $\bA^1$. Then its $!$-pushforward along $\bA^1\rightarrow [\bA^1/_n\Gm]$ is isomorphic to $\exp_n$ via the Riemann--Hilbert correspondence.
    \item Consider the setting of schemes over a field of characteristic $p>0$ and let $R=\Q_{\ell}$ for a prime $\ell\neq p$. Choose an additive character $\psi\colon \mathbb{F}_p\rightarrow R^\times$ and let $\cL_\psi$ be the associated Artin--Schreier $\ell$-adic sheaf on $\bA^1$. Then its $!$-pushforward along $\bA^1\rightarrow [\bA^1/_n\Gm]$ is isomorphic to $\exp_n$ in the setting of $\ell$-adic sheaves; see \cite[Lemme 2.3]{Laumon} for the case $n=1$.
\end{enumerate}
\end{remark}

\begin{proposition}\label{prop:exponentialvanishing}
Let $X\in\Sch^{\sep\ft}$ be a scheme equipped with a contracting $\Gm$-action and a function $t\colon X\rightarrow \bA^1$ of weight $n>0$, so that it descends to a morphism $\overline{t}\colon [X/\Gm]\rightarrow [\bA^1/_n\Gm]$. Let $X^0\subset X$ be the fixed point locus. Then there are the following natural isomorphisms:
\begin{enumerate}
    \item For a monodromic $\cF\in\Dbc(X)$ we have
    \[(t^{-1}(0)\rightarrow X^0)_! \varphi_t(\cF)\cong (X\rightarrow X^0)_!(\cF\otimes (\Re t)^*\exp_\R).\]
    \item For $\cF\in\Dbc([X/\Gm])$ we have
    \[(t^{-1}(0)\rightarrow X^0)_! \varphi_t(X\rightarrow [X/\Gm])^*\cF\cong ([X/\Gm]\rightarrow X^0)_!(\cF\otimes \overline{t}^*\exp_n).\]
\end{enumerate}
\end{proposition}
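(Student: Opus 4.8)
The plan is to prove (1) directly via the contraction principle, and to deduce (2) from (1).

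For (1): since $t$ has positive weight, $t|_{X^0}=0$, so $X^0\subset t^{-1}(0)$, and since $t^{-1}(0)$ is $\Gm$-stable the contracting $\Gm$-action restricts to it with fixed locus $X^0$; the two maps to $X^0$ in the statement are the associated retractions. Both $\varphi_t(\cF)$ and $\cF\otimes(\Re t)^*\exp_\R$ are conic --- the first because $\varphi_t$ commutes with the $\R_{>0}$-action up to the canonical identifications $\varphi_{ct}\cong\varphi_t$ for $c\in\R_{>0}$ (together with $a_\lambda^*\cF\cong\cF$ for $\cF$ monodromic), the second because $\Re t\colon X^{\an}\to\R$ is $\R_{>0}$-equivariant for the weight-$n$ scaling and so pulls the conic sheaf $\exp_\R$ back to a conic sheaf. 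Hence by the contraction principle (\cref{prop:contraction}) the two $!$-pushforwards to $X^0$ in the statement are computed by $!$-restriction to $X^0$, and it suffices to construct an isomorphism $(X^0\hookrightarrow t^{-1}(0))^!\varphi_t(\cF)\cong(X^0\hookrightarrow X)^!\bigl(\cF\otimes(\Re t)^*\exp_\R\bigr)$.

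I would then unwind both sides. On the right, pulling back along $\Re t$ the presentation of $\exp_\R$ as the extension by zero of the constant sheaf from the closed half-line (up to the normalization shift built into $\exp_\R$), i.e.\ the excision triangle $j_!\bu_{\R_{<0}}\to\bu_\R\to\exp_\R$ on $\R$, and tensoring with $\cF$, exhibits $(X^0\hookrightarrow X)^!\bigl(\cF\otimes(\Re t)^*\exp_\R\bigr)$ as the cofibre of the canonical map $(X^0\hookrightarrow X)^!\bigl((\{\Re t<0\}\hookrightarrow X)_!\,\cF|_{\{\Re t<0\}}\bigr)\to(X^0\hookrightarrow X)^!\cF$. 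On the left, using $\varphi_t(\cF)\cong\varphi_{\Re t}(\cF)|_{t^{-1}(0)}$, the fibre sequence \eqref{eq:psi phi triangle}, and the definition of $\psi_t$ in terms of $(\{\Re t>0\}\hookrightarrow X)_*$ and restriction to $t^{-1}(0)$, one produces a parallel fibre/cofibre description of $(X^0\hookrightarrow t^{-1}(0))^!\varphi_t(\cF)$. Both sides of (1) are exact functors of $\cF$, so it then remains to match these two descriptions; this can be done by reducing --- via a further application of the contraction principle, which lets one replace $X$ by a $\Gm$-invariant neighbourhood of $X^0$, together with a dévissage of monodromic sheaves along the $\Gm$-action into constant and rank-one monodromic pieces $j_!\mathscr{L}$, $j_*\mathscr{L}$ --- to an explicit local computation of the vanishing cycles of a weighted-homogeneous function against the cohomology of $\exp_\R$, of exactly the type carried out in \cref{prop:expproperties}.

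For (2): apply (1) to the monodromic sheaf $p^*\cF$, where $p\colon X\to[X/\Gm]$ is the quotient. The retraction $X\to X^0$ factors as $X\xrightarrow{p}[X/\Gm]\xrightarrow{\bar r}X^0$, where $\bar r$ is the descent of the retraction followed by the projection $[X^0/\Gm]\to X^0$; so the projection formula gives $(X\to X^0)_!\bigl(p^*\cF\otimes(\Re t)^*\exp_\R\bigr)\cong\bar r_!\bigl(\cF\otimes p_!(\Re t)^*\exp_\R\bigr)$, and it remains to identify $p_!(\Re t)^*\exp_\R\cong\bar t^*\exp_n$. This is exactly the comparison of the real and complex exponential sheaves: pushing forward along $p$ (equivalently along $[X/\R_{>0}]\to[X/\Gm]$) and using the compatibility of $\Re t$, $t$, $\bar t$ with the relevant quotients identifies $p_!(\Re t)^*\exp_\R$ with the pullback along $\bar t$ of $(f_2)_!f_1^*\exp_\R\cong\exp_n$ from \cref{prop:expproperties}(1). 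The point I expect to be most delicate is the left-hand side of (1): because $\varphi_t$ is only the $t^{-1}(0)$-restriction of the real functor $\varphi_{\Re t}$, matching it against the closed-immersion manipulations available on the $\exp_\R$-side requires careful bookkeeping of $*$- versus $!$-restrictions along the chain $X^0\subset t^{-1}(0)\subset\{\Re t=0\}\subset X$ and of the normalization shifts in $\exp_\R$ and $\varphi_t$, and of arranging the reduction so that this bookkeeping is uniform in $\cF$.
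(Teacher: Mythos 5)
Your reduction via the contraction principle is fine — both $\varphi_t(\cF)$ and $\cF\otimes(\Re t)^*\exp_\R$ are conic, so the $!$-pushforwards to $X^0$ are indeed $!$-restrictions — but from that point on the argument has a real gap, and you have essentially set the problem aside rather than solved it. What remains to be shown is the $!$-stalk identity
$(X^0\hookrightarrow t^{-1}(0))^!\varphi_t(\cF)\cong(X^0\hookrightarrow X)^!\bigl(\cF\otimes(\Re t)^*\exp_\R\bigr)$,
and your plan is to unwind both sides via the excision triangle for $\exp_\R$ and the $\psi/\varphi$ fiber sequence and then ``reduce to an explicit local computation'' by a d\'evissage of monodromic sheaves. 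This is where the difficulty actually lives — matching a $*$-pushforward from the open set $\{\Re t>0\}$ (hidden inside $\psi_t$) against a $!$-pushforward from the closed set $\{\Re t\ge 0\}$ (hidden inside $\exp_\R$), together with the normalization shift in $\exp_\R$ — and you acknowledge it without carrying it out. A direct d\'evissage along these lines would, at best, amount to re-deriving by hand the single fact that makes the whole thing collapse; at worst it goes in circles, since \cref{prop:expproperties} concerns the exponential sheaves themselves and gives you no independent handle on vanishing cycles of weighted-homogeneous functions.

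The decisive step you are missing is to invoke Verdier self-duality of vanishing cycles at the outset and rewrite
$\varphi_t\cF\cong\bigl(t^{-1}(0)\hookrightarrow X_{\Re\geq0}\bigr)^!\bigl(X_{\Re\geq0}\hookrightarrow X\bigr)^*\cF$,
where $X_{\Re\geq0}=\{x\in X^{\an}\mid\Re t(x)\geq0\}$. This presents $\varphi_t\cF$ purely in terms of $!$- and $*$-pullbacks through $\R_{>0}$-stable subsets, and then the argument closes in two short steps with no local computation and no d\'evissage: the contraction principle (applied first to $t^{-1}(0)$, then to $X_{\Re\geq0}$, both contracting onto $X^0$) turns $(t^{-1}(0)\to X^0)_!(t^{-1}(0)\to X_{\Re\geq0})^!$ into $(X_{\Re\geq0}\to X^0)_!$, and base change along the Cartesian square defining $X_{\Re\geq0}=(\Re t)^{-1}(\R_{\geq0})$, together with the projection formula, identifies $(X_{\Re\geq0}\to X^0)_!(X_{\Re\geq0}\to X)^*\cF$ with the pushforward of $\cF\otimes(\Re t)^*\exp_\R$. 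So the paper never passes to $!$-stalks at all and never opens up the $\psi/\varphi$ triangle; the Verdier-dual form of $\varphi_t$ does all the work. Your deduction of~(2) from~(1) via the projection formula and \cref{prop:expproperties}(1) is in the same spirit as the paper's and is fine.
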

\begin{proof}
Let
\[X_{\Re\geq 0}=\{x\in X^{\an}\mid \Re t(x)\geq 0\}.\]
For a monodromic $\cF\in\Dbc(X)$ let $\overline{\cF}\in\Shv([X^{\an}/\R_{>0}]; R)$ be the corresponding sheaf on the quotient stack. We have a sequence of natural isomorphisms
\begin{align*}
(t^{-1}(0)\rightarrow X^0)_! \varphi_t\cF &\cong (t^{-1}(0)\rightarrow X^0)_!(t^{-1}(0)\rightarrow X_{\Re\geq 0})^! (X_{\Re\geq 0}\rightarrow X^{\an})^* (X^{\an}\rightarrow [X^{\an}/\R_{>0}])^*\overline{\cF} \\
&\cong (X_{\Re\geq 0}\rightarrow X^0)_! (X_{\Re\geq 0}\rightarrow [X^{\an}/\R_{>0}])^*\overline{\cF} \\
&\cong ([X^{\an}/\R_{>0}]\rightarrow X^0)_! (\overline{\cF}\otimes t^*\exp_\R),
\end{align*}
where the first isomorphism uses Verdier self-duality of vanishing cycles, the second isomorphism uses the contraction principle (\cref{prop:contraction}) and the third isomorphism uses base change.

The second statement follows from the first statement and the relationship between $\exp_\R$ and $\exp_n$ established in \cref{prop:expproperties}(1).
\end{proof}

\subsection{Fourier transforms for sheaves}

Let $B\in\Top^{\lch}$ be a topological space and $E\rightarrow B$ a real vector bundle. Consider the natural pairing
\[\ev^\R\colon [E/\R_{>0}]\times_B [E^\vee/\R_{>0}]\longrightarrow [\R/\R_{>0}]\]
and denote by
\[\pr_1^\R\colon [E/\R_{>0}]\times_B [E^{\vee}/\R_{>0}]\longrightarrow [E/\R_{>0}],\qquad \pr_2^\R\colon [E/\R_{>0}]\times_B [E^{\vee}/\R_{>0}]\longrightarrow [E^{\vee}/\R_{>0}]\]
the projections. We refer to \cite[Chapter 3.7]{KashiwaraSchapira} and \cite[Section 2.4]{KinjoVirtual} for more details on the following notion.

\begin{definition}
The \defterm{Fourier--Sato transform} $\FS\colon \Shv([E/\R_{>0}]; R)\rightarrow \Shv([E^\vee/\R_{>0}]; R)$ is
\[\FS(\cF)= \pr^\R_{2, !}(\pr_1^{\R, *}(\cF)\otimes \ev^{\R, *}\exp_\R).\]
\end{definition}

Let us now relate the Fourier--Sato transform to perverse pullbacks. For this, consider an lfp derived Artin stack $B$ and a vector bundle $E\rightarrow B$. Let
\[\ev\colon E\times_B E^\vee\longrightarrow \bA^1,\qquad \pr_1\colon E\times_B E^\vee\longrightarrow E,\qquad \pr_2\colon E\times_B E^\vee\longrightarrow E^\vee\]
be the corresponding morphisms.

The projection $\pr_2$ is smooth and
\[\R\Crit_{E\times_B E^\vee/E}(\ev)\cong E^\vee.\]
In particular, the morphism
\[u_{E^\vee/E}\colon E^\vee\xrightarrow{p_{E^\vee}} B\xrightarrow{0_E} E\]
carries a natural oriented relative exact $(-1)$-shifted symplectic structure.

\begin{proposition}\label{prop:FSperversepullback}
Let $B$ be an lft higher Artin stack and $E\rightarrow B$ a vector bundle. For a monodromic complex $\cF\in\Dbc(E)$ there is a natural isomorphism
\[u_{E^\vee/E}^\varphi(\cF)\cong \FS(\cF)[\rank E].\]
\end{proposition}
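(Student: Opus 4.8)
The plan is to combine the critical-locus presentation of the perverse pullback (\cref{thm:perversepullback}(1)) with the exponential description of the Fourier--Sato transform furnished by \cref{prop:exponentialvanishing}. First I would reduce to the case where $B\in\Sch^{\sep\ft}$ is affine: both $u_{E^\vee/E}^\varphi$ and $\FS$ are compatible with smooth base change on $B$ (for the perverse pullback this is \eqref{eq:alphabasechange}; for $\FS$ it is immediate from its definition through $*$-pullbacks and $!$-pushforwards along maps of vector bundles), monodromicity is smooth-local, and $\bD(-)$ satisfies smooth descent, so a smooth affine atlas of $B$ does the job. Then $E$, $E^\vee$ and $E\times_B E^\vee$ are affine of finite type as well.

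Next, recall that the oriented relative exact $(-1)$-shifted symplectic structure on $u_{E^\vee/E}\colon E^\vee\to E$ is, by construction, transported from the canonically oriented $\R\Crit_{E\times_B E^\vee/E}(\ev)\cong E^\vee$ along the smooth projection $\pr_1\colon E\times_B E^\vee\to E$, which has relative dimension $\rank E$ and relative $\ev$-critical locus equal to $E^\vee=\{0\}\times_B E^\vee$. Hence \cref{thm:perversepullback}(1) yields a natural isomorphism $u_{E^\vee/E}^\varphi(\cF)\cong\phi_\ev(\pr_1^*\cF)|_{E^\vee}[\rank E]$, where $\phi_\ev(\pr_1^*\cF)$ is viewed on $E\times_B E^\vee$. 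Since $\ev$ vanishes on $\{0\}\times_B E^\vee$, only the $c=0$ term of $\phi_\ev=\bigoplus_c(\ev^{-1}(c)\hookrightarrow\cdot)_*\varphi_{\ev-c}$ survives the restriction, so
\[u_{E^\vee/E}^\varphi(\cF)\cong\varphi_\ev(\pr_1^*\cF)\big|_{E^\vee}[\rank E].\]

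It then remains to identify $\varphi_\ev(\pr_1^*\cF)|_{E^\vee}$ with $\FS(\cF)$. I would equip $E\times_B E^\vee$ with the fibrewise scaling $\Gm$-action in the $E$-direction; this action is contracting, its fixed locus is $\{0\}\times_B E^\vee\cong E^\vee$, its retraction is $\pr_2$, and $\ev$ has weight $1$. Since $\pr_1^*\cF$ is monodromic, \cref{prop:exponentialvanishing}(1) gives
\[\bigl(\ev^{-1}(0)\xrightarrow{\pr_2}E^\vee\bigr)_!\,\varphi_\ev(\pr_1^*\cF)\ \cong\ \pr_{2,!}\bigl(\pr_1^*\cF\otimes(\Re\ev)^*\exp_\R\bigr),\]
whose right-hand side is $\FS(\cF)$ after unwinding that the real pairing $\ev^\R$ equals $\Re\circ\ev$ and that passing to the $\R_{>0}$-quotient on the $E^\vee$-factor is harmless for conic sheaves. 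On the other hand $\varphi_\ev(\pr_1^*\cF)$ is supported on the fixed locus $\{0\}\times_B E^\vee$ — over $\{x\neq0\}$ the function $\ev$ is a submersion along the fibres of $\pr_1$ while $\pr_1^*\cF$ is locally constant along them, so $\ev$ is non-characteristic there and the vanishing cycles vanish (alternatively, invoke \cref{prop:microsupportestimate}). Since moreover $\pr_2$ restricts to an isomorphism $\{0\}\times_B E^\vee\xrightarrow{\sim}E^\vee$, the $!$-pushforward on the left coincides with the restriction of $\varphi_\ev(\pr_1^*\cF)$ to $\{0\}\times_B E^\vee$ transported along this isomorphism, and combining the displays gives $u_{E^\vee/E}^\varphi(\cF)\cong\FS(\cF)[\rank E]$, naturally in $\cF$.

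The substantive new idea here is minimal; the work is bookkeeping. The steps I would watch most carefully are: confirming that the orientation data on $u_{E^\vee/E}$ is literally the canonical one so that \cref{thm:perversepullback}(1) applies without modification; reconciling the conventions and shifts between \cref{prop:exponentialvanishing} (stated for a function to $\bA^1$) and the definition of $\FS$ (stated via the real pairing and quotient stacks); and justifying the support estimate for $\varphi_\ev(\pr_1^*\cF)$ over a possibly singular base, which is most safely handled either by further reducing $B$ to be smooth or by quoting \cref{prop:microsupportestimate}.
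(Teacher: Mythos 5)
Your proof is correct and follows essentially the same route as the paper: reduce to an affine base, present $u_{E^\vee/E}^\varphi$ via the critical chart $\pr_1\colon E\times_B E^\vee\to E$ using \cref{thm:perversepullback}(1), and conclude from \cref{prop:exponentialvanishing}(1) with the weight-$1$ scaling $\Gm$-action on the $E$-factor. You spell out the support estimate for $\varphi_\ev(\pr_1^*\cF)$ and the monodromicity check that the paper leaves implicit, but the substance of the argument is identical.
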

\begin{proof}
Using base change for the Fourier--Sato transform and perverse pullbacks we reduce to the case when $B$ is an affine scheme of finite type. By \cref{thm:perversepullback}(1) we have
\[u_{E^\vee/E}^\varphi(\cF)\cong (\phi_{\ev}\pr_1^\dag\cF)|_{\{0\}\times E^\vee}.\]
Since $(\phi_{\ev}\pr_1^\dag\cF)$ is supported on $\{0\}\times E^\vee$ where $\ev$ takes the zero value, we have
\[u_{E^\vee/E}^\varphi(\cF)\cong (\ev^{-1}(0)\rightarrow E^\vee)_!\varphi_{\ev} \pr_1^\dag\cF.\]
Consider the $\Gm$-action on $E\times_B E^\vee$ given by the weight $1$ action on $E$ and the weight $0$ action on $E^\vee$. The fixed point locus of the action is given by $E^\vee$ and with respect to this $\Gm$-action the function $\ev\colon E\times_B E^\vee\rightarrow \bA^1$ has weight $1$. The claim then follows from \cref{prop:exponentialvanishing}(1).
\end{proof}

\begin{remark}
A closely related statement to \cref{prop:FSperversepullback} is shown in \cite[Lemma 5.2]{Schefers}.
\end{remark}

\section{Dimensional reduction for perverse pullbacks}

In this section we analyze perverse pullbacks for relative $(-1)$-shifted cotangent bundles and prove the following relative dimensional reduction theorem.

\begin{theorem}\label{thm:dimensionalreduction}
    Let $g \colon Y\rightarrow B$ be an lfp geometric morphism of derived stacks and suppose that $B^{\cl}$ is an lft higher Artin stack. Consider the morphism $\pi \colon \T^*[-1](Y/B)\to B$ equipped with its canonical oriented relative exact $(-1)$-shifted symplectic structure. Then there is a canonical isomorphism
    \begin{equation}\label{eq-dimrensional-reudction}
    \red_{Y/B}\colon \tau_* \pi^\varphi\cong g^![-\dim(Y/B)]
    \end{equation}
    In particular, applying Verdier duality we get an isomorphism
    \[
    \tau_! \pi^\varphi\cong g^*[\dim(Y/B)].
    \]
\end{theorem}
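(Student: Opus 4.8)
The plan is to reduce the statement to the linear case and then prove it by an explicit Fourier-theoretic computation. The key structural tool is the deformation to the normal cone: for the morphism $g\colon Y \to B$, consider the deformation space $D_{Y/B} \to \bA^1$ interpolating between a neighbourhood of $Y$ in (a chart for) the source at $t \neq 0$ and the linear model $\N_{Y/B} = \Spec_Y(\Sym \bL_{Y/B}[1])$, the shifted tangent bundle, at $t = 0$. Applying $\T^*[-1](-/B)$ to this family and using the functoriality and base-change properties of the perverse pullback from \cref{thm:perversepullback} — in particular the smooth-base-change isomorphism \eqref{eq:alphabasechange}, the Lagrangian-correspondence isomorphism \eqref{eq-Lagrangian functoriality of perverse pullback}, and compatibility with nearby cycles — one reduces the construction of $\red_{Y/B}$ to the case where $g$ is the zero section $0_E \colon B \to E$ of a perfect complex $E$ of Tor-amplitude in $[a,b]$ (equivalently, $Y = E$ linear over $B$). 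Since everything is smooth-local on $B$ and $E$ can be built up, one further reduces to $E$ a vector bundle in a single degree, i.e. either an honest vector bundle (degree $0$) or a shifted vector bundle $V[1]$ (corresponding to a quasi-smooth, indeed lci, $Y \to B$).

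In the honest vector bundle case $E \to B$, we have $\T^*[-1](E/B) \cong \T^*[-1](E^\vee[-1]/B)$, and the zero section $0_E\colon B \to E$ presents $B$ as the derived critical locus $\R\Crit_{E^\vee/B}(0)$ — or better, one realizes $\T^*[-1](E/B)$ as $\R\Crit_{E \times_B E^\vee/E}(\ev)$ pulled back appropriately. The computation then becomes exactly the Fourier--Sato computation already carried out in \cref{prop:FSperversepullback}: $\tau_* \pi^\varphi(\cF)$ unwinds to a $*$-pushforward of a vanishing-cycle sheaf of the pairing function $\ev$ along the fibres of $E^\vee \to B$, which by \cref{prop:exponentialvanishing} and the contraction principle computes the Fourier--Sato transform, and pushing forward along $E^\vee \to B$ recovers $g^![-\rank E] \cong g^! [-\dim(Y/B)]$ (the rank shift matching $\dim(Y/B) = \rank E$). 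For the shifted case $Y \to B$ quasi-smooth this is precisely the classical dimensional reduction theorem of \cite{KinjoDimred} in relative form, which one recovers by the same Fourier argument applied to the dual shifted bundle; the orientations match because of the canonical orientation $o^{\can}_{\T^*[-1](U/B)/B}$ from \cref{ex:ocan} and the Lagrangian-correspondence transport of orientations $\Upsilon^{\der}$.

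The main obstacle will be \emph{assembling the local linear computations into a global, canonical isomorphism} — i.e. checking that the isomorphism $\red_{Y/B}$ produced via deformation to the normal cone does not depend on the chosen chart, and is compatible with the smooth-descent along covers $U \to B$ and with the $\bA^1$-degeneration. Concretely, one must verify: (i) that the nearby-cycles specialization map along $D_{Y/B} \to \bA^1$ commutes with $\tau_*$ and with $\pi^\varphi$ in a way compatible with the symplectic structure and orientation on the total space of the deformation (this uses that nearby cycles commute with proper pushforward and with the perverse pullback functor, the latter needing a mild extension of \cref{thm:perversepullback} to the relative-over-$\bA^1$ setting); and (ii) a cocycle/coherence check that the identifications on overlaps of charts agree, which reduces — via the Lagrangian correspondences relating different Darboux charts as in \cref{prop:Darboux} — to the already-established compatibilities \eqref{eq:alphapullback} and \eqref{eq-Lagrangian functoriality of perverse pullback}. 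Granting these, Verdier duality (which exchanges $\tau_*$ with $\tau_!$, $g^!$ with $g^*$, and negates the shift, while fixing the self-dual object $\pi^\varphi$ up to the canonical orientation) yields the second isomorphism $\tau_! \pi^\varphi \cong g^*[\dim(Y/B)]$ formally.
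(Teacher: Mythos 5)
Your overall strategy matches the paper's: degenerate $g\colon Y\to B$ to the zero section of $\N_{Y/B}$ via the deformation to the normal cone, prove the linear case by a Fourier-type computation, and then assemble. The two places where you identify the hard part — establishing the linear case and checking coherence under gluing — are indeed where the work lies. However, there are two genuine gaps in the way you propose to carry this out.

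First, your claimed further reduction from a general perfect complex $E$ to ``a vector bundle in a single degree'' does not go through. After deforming to the normal cone, the linear model is the zero section of $\N_{Y/B} = \T[1](Y/B)$, whose fiber is the perfect complex $\bL_{Y/B}^\vee[1]$, and this has no canonical decomposition into single-degree pieces. One cannot simply ``build up'' $E$ and declare the isomorphism degree-by-degree, because the isomorphism $\red_E$ must be defined coherently across different presentations of $E$. The paper's \cref{prop:dim red Fourier-Sato} handles this by a three-step argument: the Tor-amplitude $\le 0$ case (which is already not single-degree) is deduced from the closed-immersion result \cref{cor-dim-red-cl-imm}; then, when $B$ has the resolution property, a smooth resolution $F\to E$ with $F$ of Tor-amplitude $\le 0$ is chosen and a substantial diagram chase shows $\red^F_E$ is independent of the resolution; finally, the resolution-free case is obtained by a gluing argument using perverse $t$-exactness. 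The independence-of-resolution check is the crux, and it is not obtained by reducing to single degrees.

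Second, for the Tor-amplitude $\le 0$ (or quasi-smooth zero locus) case you say this ``is precisely the classical dimensional reduction theorem of \cite{KinjoDimred} \dots recovered by the same Fourier argument.'' That is not how the paper handles it, and the Fourier argument alone does not yield what is needed here. The input the paper actually uses is \cref{lem-pre-local-dim-red} and \cref{cor-local-dim-red}, which are proved via the unipotent nearby cycles functor, the splitting $\psi_t\cong\bigoplus_c\psi_t^{\mathrm{uni}}(-\otimes t^*\cL_c)$, the fiber sequence $i^*\to\psi_t\to\varphi_t[1]$, and the contraction principle — a distinct line of argument (and one the paper emphasizes works in $\ell$-adic settings). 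Your Fourier--Sato computation \cref{prop:FSperversepullback} is the correct identification for monodromic sheaves over a vector bundle, but the local dimensional reduction statement involves arbitrary (not necessarily monodromic) constructible $\cF$ on the base of the cosection $s^\vee$, and the paper derives it from the nearby-cycles argument, not from the Fourier transform. Finally, your item~(ii) is mis-aimed: the cocycle/coherence check is not carried out by comparing Darboux charts via \cref{prop:Darboux}; the paper checks compatibility of $\red_E$ under smooth base change, finite pushforward and fiberwise smooth/closed morphisms of perfect complexes (\cref{prop-property-FS-dimred}) and then runs the deformation-to-the-normal-cone argument (\cref{lem:rhohat}) entirely in terms of these, without ever invoking Darboux local models in the final assembly.
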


Under the tautological identification $\bD(B) = \bD(B^{\cl})$, the statement for a derived stack $B$ is equivalent to the statement for its classical truncation, so from now on we assume that $B$ is classical.

\subsection{Dimensional reduction for vanishing cycle functors}

We first prove the dimensional reduction theorem for local models of relative $(-1)$-shifted cotangent stacks.
This case is due to Davison \cite[Theorem A.1]{DavisonCritical}, and here we give an alternative proof which has the advantage of applying in the context of $\ell$-adic sheaves.
We start with the following lemma, which may be regarded as a slight strengthening of the dimensional reduction theorem:

\begin{lemma}\label{lem-pre-local-dim-red}
    Let $X \in \Sch^{\sep\ft}$ be a separated scheme with a contracting $\Gm$-action, so that $X^+ = X$ and the inclusion of the fixed locus $X^0$ admits a retract $r \colon X \to X^0$ (see \cref{def:contracting}).
    Let $t \colon X \to \mathbb{A}^1$ be a regular function of positive weight $m$ and $\iota \colon Z \hookrightarrow X^{0}$ a closed immersion such that $t |_{r^{-1}(Z)} = 0$.
    Then for any $\mathbb{G}_m$-equivariant constructible complex $\cF \in \Dbc(X)$, we have $\iota^! r_* \psi_t(\cF) = 0$.
\end{lemma}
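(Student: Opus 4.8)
The plan is to first make a sequence of formal reductions and then reduce the remaining content to a linear computation.

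Since $\cF$ is $\Gm$-equivariant and $t$ is homogeneous of positive weight $m$, the $\Gm$-action rescales $t$ by $\lambda\mapsto\lambda^{m}$ and in particular preserves $\{\Re t>0\}$; hence $\psi_{t}(\cF)$ is monodromic on $X_{0}\coloneqq t^{-1}(0)$. The hypothesis $t|_{r^{-1}(Z)}=0$ says precisely that $W\coloneqq r^{-1}(Z)$ is contained in $X_{0}$, so $W$ is a $\Gm$-stable closed subscheme of $X_{0}$ on which the action is again contracting with fixed locus $Z$, and $r$ restricts to the retraction $r_{W}\colon W\to Z$. Writing $j'\colon W\hookrightarrow X_{0}$ for the closed immersion, base change along the Cartesian square with corners $W,X_{0},Z,X^{0}$ gives $\iota^{!}r_{*}\psi_{t}(\cF)\cong (r_{W})_{*}(j')^{!}\psi_{t}(\cF)$, and since $(j')^{!}\psi_{t}(\cF)$ is monodromic on $W$ the contraction principle (\cref{prop:contraction}) turns the $*$-pushforward along $r_{W}$ into a $*$-restriction along $Z\hookrightarrow W$. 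Applying Verdier duality, together with the self-duality of $\psi_{t}$ up to shift, it is then equivalent to show that $\iota^{*}(r_{0})_{!}\psi_{t}(\cG)=0$ for every monodromic $\cG\in\Dbc(X)$, with $r_{0}\colon X_{0}\to X^{0}$ the retraction.

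For this I would apply $\iota^{*}(r_{0})_{!}$ to the canonical fibre sequence $i^{*}\cG\to\psi_{t}(\cG)\to\varphi_{t}(\cG)[1]$. Using base change along $W=r^{-1}(Z)\subseteq X_{0}$, the term $\iota^{*}(r_{0})_{!}i^{*}\cG$ is identified with $(r_{W})_{!}(\cG|_{W})$; on the other hand \cref{prop:exponentialvanishing}(1) identifies $(r_{0})_{!}\varphi_{t}(\cG)$ with $r_{!}(\cG\otimes(\Re t)^{*}\exp_{\R})$, and since $\Re t$ vanishes on $W$ while the stalk of $\exp_{\R}$ at $0\in\R$ is $R$, applying $\iota^{*}$ and base-changing again also produces $(r_{W})_{!}(\cG|_{W})$. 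Thus the fibre sequence exhibits $\iota^{*}(r_{0})_{!}\psi_{t}(\cG)$ as the fibre of an endomorphism of $(r_{W})_{!}(\cG|_{W})[1]$, namely the one induced by the connecting map $\varphi_{t}(\cG)[1]\to i^{*}\cG[1]$, so everything comes down to showing that this endomorphism is invertible.

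This last point is the main obstacle: the abstract formalism does not control the interplay of $*$- and $!$-restrictions along the (in general singular) chain of closed immersions $Z\hookrightarrow W\hookrightarrow X_{0}\hookrightarrow X$, which is exactly what identifying the connecting map requires. To resolve it I would pass to a linear model by deformation to the normal cone of $W$ (equivalently of the fixed locus) inside $X$: the $\Gm$-action extends over the deformation and all the functors in sight are insensitive to it, so one may assume that $X$ is the total space of a cone over $Z$ and that $t$ is fibrewise homogeneous, hence automatically vanishes on the zero section $W$. In this situation $\psi_{t}$ of a monodromic sheaf, pushed forward along the cone projection and restricted over $Z$, is computed directly from the exponential sheaf --- essentially the Fourier--Sato computation of \cref{prop:FSperversepullback} together with \cref{prop:expproperties} --- and one reads off the required vanishing.
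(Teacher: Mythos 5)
The initial reductions are sensible: the base change, contraction principle, and use of \cref{prop:exponentialvanishing}(1) are all legitimate, and you correctly observe that after applying $\iota^*(r_0)_!$ the two outer terms of the fiber sequence \eqref{eq:psi phi triangle} both become $(r_W)_!(\cG|_W)$ up to shift. But the gap you then flag is not a technical footnote — it is the entire content of the lemma. The two identifications with $(r_W)_!(\cG|_W)$ are produced by \emph{different} chains of isomorphisms (one by plain proper base change, the other through \cref{prop:exponentialvanishing}(1), which itself goes through Verdier self-duality and the contraction principle), and knowing that the two ends of a triangle are abstractly isomorphic to the same object says nothing about whether the connecting map is an isomorphism. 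Without that, the fiber-sequence argument proves nothing.

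The proposed deformation-to-the-normal-cone repair does not close this gap. Note first that $W = r^{-1}(Z)$ is not the fixed locus unless $Z = X^0$, so the parenthetical is wrong. More seriously, the claim that ``all the functors in sight are insensitive'' to the deformation is false: $\psi_t$ and $\varphi_t$ depend on the local analytic geometry of $t$ and change drastically under deformation to the normal cone; controlling this is exactly what the proof of \cref{thm:dimensionalreduction} later spends the bulk of its effort on (via \cref{lem:rhohat}), and it uses the present lemma as input rather than the other way around. Even granting the deformation, the final step you sketch is essentially \cref{prop:exponentialvanishing} again, so no genuine progress is made on the connecting map.

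The paper sidesteps the connecting map entirely by working with the \emph{unipotent} nearby cycles $\psi_t^{\mathrm{uni}}$. Its formula, built from $i^*j_*j^*\cF$, makes the vanishing immediate: after contracting $r_*i_*i^* \cong r_*$ on $\Gm$-equivariant complexes, base change along $\iota$ produces $\tilde{\iota}^! j_*$, which vanishes because $r^{-1}(Z) \subset X_0$ is disjoint from the open locus $X_{\neq 0}$. The full $\psi_t$ is then recovered after extending scalars to an algebraically closed field, using \cref{prop:psi from psiuni} to decompose it into unipotent nearby cycles of Kummer twists $\cF\otimes t^*\cL_c$. This argument is shorter, avoids the connecting map altogether, and is purely algebraic (hence portable to $\ell$-adic sheaves), whereas your route is intrinsically tied to $\exp_\R$ and the analytic topology.
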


\begin{proof}
    We first prove the analogous statement for the unipotent nearby part $\psi_t^{\mathrm{uni}}(\cF)$.
    Let $i \colon X_0 \hookrightarrow X$ and $j \colon X_{\neq 0} \hookrightarrow X$ be the inclusions.
    By the definition of $\psi_t^{\mathrm{uni}}$, it is enough to prove the vanishing
    \[
      \iota^! r_* i_* i^* j_* j^* \cF = 0.
    \]
    Denoting by $s \colon X^{0} \hookrightarrow X$ the inclusion of the fixed locus, using \cref{prop:contraction} twice yields natural isomorphisms
    \begin{equation*}
      r_* i_* i^* \cong 
      s^* i_* i^* \cong 
      s^*  \cong 
      r_*
    \end{equation*}
    on $\Gm$-equivariant complexes, since $X^0 \subset X_0$.
    We thus deduce the isomorphism
    \[
        \iota^! r_* i_* i^* j_* j^* \cF \cong
        \iota^! r_*  j_* j^* \cF \cong
        (r |_{p^{-1}(Z)})_*  (\tilde{\iota})^!  j_* j^* \cF,
    \]
    where $\tilde{\iota} \colon p^{-1}(Z) \hookrightarrow X$ is the base change of $\iota \colon Z \hookrightarrow X^0$.
    But since $r^{-1}(Z) \subset X$ is disjoint to $X_{\neq 0}$, the last term vanishes as desired.
    
    We now consider the full nearby cycles functor $\psi_t$.
    By extending scalars and using the invertibility of \eqref{eq-nearby-scalar}, we may assume that the coefficient ring $R$ is an algebraically closed field.
    Then by \cref{prop:psi from psiuni}, it is enough to prove the vanishing
    \[
     \iota^! r_* i_* i^* j_* j^* (\cF \otimes t^* \mathscr{L}_c) = 0
    \]
    for every $c \in R^\times$, where $\mathscr{L}_c$ denotes the rank one local system on $\mathbb{G}_{\mathrm{m}}$ with monodromy operator $c$.
    For this we may repeat the argument in the first paragraph.
\end{proof}

\begin{corollary}[Local dimensional reduction]\label{cor-local-dim-red}
    Let $X \in \Sch^{\sep\ft}$ be a scheme and $E$ a vector bundle on $X$.
    We let $s \in \Gamma(X, E)$ be a section and $s^{\vee} \colon E^{\vee} \to \mathbb{A}^1$ the corresponding cosection.
    We denote by $p \colon E^{\vee} \to X$ the projection and by $\iota \colon Z \hookrightarrow X$ the inclusion of the zero locus of $s$.
    Then for $\mathcal{F} \in \Dbc(X)$, the maps
    \[
    p_! \varphi_{s^{\vee}} p^! \mathcal{F} \xrightarrow{\unit}
    p_! \varphi_{s^{\vee}} p^! \iota_* \iota^* \mathcal{F}
    \cong \iota_* \iota^* \mathcal{F}
    , \qquad
    \iota_! \iota^!\mathcal{F} \cong p_* \varphi_{s^{\vee}} p^* \iota_! \iota^!\mathcal{F} \xrightarrow{\counit} p_* \varphi_{s^{\vee}} p^* \mathcal{F}
    \]
    are invertible, where the isomorphisms follow from proper base change.
\end{corollary}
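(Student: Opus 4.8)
The plan is to deduce this from \cref{lem-pre-local-dim-red}, applied to the total space $E^\vee$ equipped with the fibrewise scaling $\Gm$-action (of weight $1$), which is contracting with contracting locus the zero section $X\hookrightarrow E^\vee$ and retract $p$, to the regular function $t=s^\vee$ (homogeneous of weight $1$), and to the closed subscheme $\iota\colon Z\hookrightarrow X$; the hypothesis $t|_{p^{-1}(Z)}=0$ holds since $s|_Z=0$. Write $W=(s^\vee)^{-1}(0)\subseteq E^\vee$, $i\colon W\hookrightarrow E^\vee$, and $\overline{p}=p|_W\colon W\to X$; then $W$ is $\Gm$-stable with contracting locus $W^0=X$ and retract $\overline{p}$, and $E^\vee|_Z=p^{-1}(Z)\subseteq W$. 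Since $p^*$ and $p^!$ of complexes on $X$ are $\Gm$-equivariant, and since $\varphi_{s^\vee}$, $p_!/p_*$, $p^!/p^*$, and $\iota_!\iota^!/\iota_*\iota^*$ are exchanged by Verdier duality (using $\mathbb{D}\varphi_{s^\vee}\cong\varphi_{s^\vee}\mathbb{D}$), the first displayed map for $\mathcal{F}$ is Verdier dual to the second displayed map for $\mathbb{D}\mathcal{F}$, so it suffices to show the second map is invertible for every $\mathcal{F}\in\Dbc(X)$. Throughout I identify $p_*\varphi_{s^\vee}p^*$ with $\overline{p}_*\varphi_{s^\vee}p^*$.

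I would use two auxiliary facts. \emph{Fact (i) (vanishing away from $Z$):} for any $\mathcal{G}\in\Dbc(X)$, the complex $\varphi_{s^\vee}(p^*\mathcal{G})$ on $W$ is supported on $E^\vee|_Z$. This is Zariski-local on $X$, so one may assume $Z=\varnothing$, i.e.\ $s$ is nowhere zero; then $s^\vee\colon E^\vee\to\mathcal{O}_X$ is a surjection of vector bundles, so Zariski-locally $E^\vee\cong W\times\mathbb{A}^1$ with $s^\vee$ the projection to $\mathbb{A}^1$ and $p^*\mathcal{G}$ pulled back along $W\times\mathbb{A}^1\to W$, whence the vanishing follows from the fact that $\varphi_{\mathrm{pr}_{\mathbb{A}^1}}$ kills complexes pulled back from $W$ (compatibility of vanishing cycles with external products, with one factor the zero function), using $\varphi_{\mathrm{id}_{\mathbb{A}^1}}=0$. \emph{Fact (ii) (computation of $\iota^!$):} applying the exact functor $\iota^!\overline{p}_*$ to the fibre sequence \eqref{eq:psi phi triangle} for $p^*\mathcal{G}$ and using \cref{lem-pre-local-dim-red} to kill the middle term $\iota^!\overline{p}_*\psi_{s^\vee}(p^*\mathcal{G})=0$, one gets $\iota^!\overline{p}_*\varphi_{s^\vee}(p^*\mathcal{G})\cong\iota^!\overline{p}_*i^*p^*\mathcal{G}=\iota^!\overline{p}_*\overline{p}^*\mathcal{G}$; and $\overline{p}_*\overline{p}^*\mathcal{G}\cong\mathcal{G}$, naturally in $\mathcal{G}$, by the contraction principle \cref{prop:contraction} applied to the $\Gm$-equivariant complex $\overline{p}^*\mathcal{G}$ on $W$. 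Hence $\iota^!\overline{p}_*\varphi_{s^\vee}(p^*\mathcal{G})\cong\iota^!\mathcal{G}$.

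To conclude, I would first check the isomorphism $p_*\varphi_{s^\vee}p^*\iota_!\iota^!\mathcal{F}\cong\iota_!\iota^!\mathcal{F}$ of the statement: by proper base change $p^*\iota_!\iota^!\mathcal{F}$ is the pushforward of $p_Z^*\iota^!\mathcal{F}$ from $E^\vee|_Z\subseteq W$, on which $s^\vee$ vanishes, so proper base change for $\varphi_{s^\vee}$ and $\varphi_0=\mathrm{id}$ give $\varphi_{s^\vee}p^*\iota_!\iota^!\mathcal{F}=k_*(p_Z^*\iota^!\mathcal{F})$ with $k\colon E^\vee|_Z\hookrightarrow W$, and then $\overline{p}_*k_*p_Z^*\iota^!\mathcal{F}=\iota_*(p_Z)_*p_Z^*\iota^!\mathcal{F}=\iota_*\iota^!\mathcal{F}$ since $(p_Z)_*p_Z^*\cong\mathrm{id}$ for the vector bundle $p_Z\colon E^\vee|_Z\to Z$. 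Next, apply the exact functor $T\coloneqq p_*\varphi_{s^\vee}p^*$ to the recollement triangle $\iota_!\iota^!\mathcal{F}\to\mathcal{F}\to j'_*j'^*\mathcal{F}\xrightarrow{+1}$, $j'\colon X\setminus Z\hookrightarrow X$, whose first map is the counit; using the identification just established, the map $\kappa_{\mathcal F}$ of the statement sits in an exact triangle $\iota_!\iota^!\mathcal{F}\xrightarrow{\kappa_{\mathcal F}}T\mathcal{F}\to T(j'_*j'^*\mathcal{F})\xrightarrow{+1}$, so it is enough to prove $T(j'_*j'^*\mathcal{F})=0$. But by Fact (i) with $\mathcal{G}=j'_*j'^*\mathcal{F}$, the complex $\varphi_{s^\vee}p^*(j'_*j'^*\mathcal{F})$ is supported on $E^\vee|_Z$, so $T(j'_*j'^*\mathcal{F})$ is supported on $Z$; and by Fact (ii) with the same $\mathcal{G}$, $\iota^!T(j'_*j'^*\mathcal{F})\cong\iota^!(j'_*j'^*\mathcal{F})=0$ as $\iota^!j'_*=0$. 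Hence $T(j'_*j'^*\mathcal{F})=0$, $\kappa_{\mathcal F}$ is invertible, and the first map follows by Verdier duality.

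The step I expect to require the most care is Fact (ii): pinning down that \cref{prop:contraction} identifies $\overline{p}_*\overline{p}^*$ with the identity naturally in the argument (so that the recollement triangle genuinely computes $\kappa_{\mathcal F}$, rather than merely some isomorphism between abstractly isomorphic objects), and checking that the fibre sequence \eqref{eq:psi phi triangle} and the hypotheses of \cref{lem-pre-local-dim-red} apply with $p^*$ of a pullback in place of the generic $\Gm$-equivariant complex. The one genuinely external input is the compatibility of $\varphi$ with external products used in Fact (i); it is standard, but should be cited with care since the base schemes here may be singular.
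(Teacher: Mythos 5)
Your proof is correct and takes essentially the same route as the paper's: both reduce to the second map by Verdier duality, both use the $\psi$--$\varphi$ fibre sequence \eqref{eq:psi phi triangle} together with \cref{lem-pre-local-dim-red} to kill the $\psi$-term, and both invoke \cref{prop:contraction} and a support estimate for $p_*\varphi_{s^\vee}p^*$. The only differences are cosmetic: where the paper applies $\iota_!\iota^!p_*$ to a map of fibre sequences and uses \cite[Proposition 2.19(1)]{KKPS1} for the support claim, you instead prove the support estimate directly (your Fact~(i), which is fine since $s^\vee$ is a submersion off $E^\vee|_Z$) and kill the cofibre $T(j'_*j'^*\mathcal{F})$ of the counit via the recollement triangle, which is the same computation reorganized.
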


\begin{proof}
    Since these maps of constructible complexes are exchanged by Verdier duality, it is enough to show the second one is invertible. By \cite[Proposition 2.19(1)]{KKPS1} the support of the complex $p_* \varphi_{s^{\vee}} p^* \mathcal{F}$ lies in $Z$.
    Thus it is enough to prove that the map
    \[
    \iota_! \iota^!p_* \varphi_{s^{\vee}} p^* \iota_! \iota^!\mathcal{F} \xrightarrow{\counit} \iota_! \iota^! p_* \varphi_{s^{\vee}} p^* \mathcal{F}
    \]
    is invertible.
    Denoting by $K$ the zero locus of $s^\vee$ and by $i_K \colon K \hookrightarrow E^\vee$ the inclusion, the counit $\iota_!\iota^! \to \id$ induces a map of fiber sequences \eqref{eq:psi phi triangle}
    \[\begin{tikzcd}
	{\iota_! \iota^!p_* i_{K,*}i_K^* p^* \iota_! \iota^!\mathcal{F}} & {\iota_! \iota^!p_* \psi_{s^{\vee}} p^* \iota_! \iota^!\mathcal{F}} & {\iota_! \iota^!p_* \varphi_{s^{\vee}} p^* \iota_! \iota^!\mathcal{F}[1]} \\
	{\iota_! \iota^! p_* i_{K,*} i_K^* p^* \mathcal{F}} & {\iota_! \iota^! p_* \psi_{s^{\vee}} p^* \mathcal{F}} & {\iota_! \iota^! p_* \varphi_{s^{\vee}} p^* \mathcal{F}[1].}
	\arrow[from=1-1, to=1-2]
	\arrow[from=1-1, to=2-1]
	\arrow[from=1-2, to=1-3]
	\arrow[from=1-2, to=2-2]
	\arrow[from=1-3, to=2-3]
	\arrow[from=2-1, to=2-2]
	\arrow[from=2-2, to=2-3]
    \end{tikzcd}\]
    By \cref{lem-pre-local-dim-red}, the complexes in the middle are zero, so it is enough to show the invertibility of the left-hand vertical map.
    Let $0 \colon X \hookrightarrow E^\vee$ denote the zero section.
    Then \cref{prop:contraction} implies an isomorphism
    \[
    p_* i_{K,*}i_K^* p^*  \cong 0^* i_{K,*}i_K^* p^* \cong 0^* p^* \cong \id.
    \]
    In particular, the left-hand vertical map is identified with the counit map $\iota_! \iota^! \iota_! \iota^! \to \iota_! \iota^!$, which is clearly invertible.
    Hence we conclude that the right-hand vertical map is invertible as desired.
\end{proof}

\subsection{Dimensional reduction for zero loci}

We now reinterpret the local dimensional reduction isomorphism (\cref{cor-local-dim-red}) in the language of the usual dimensional reduction statement \cref{thm:dimensionalreduction}.
This relies on the following lemma:

\begin{lemma}\label{lem-crit-locus-vs-shifted-cotangent}
    Let $f \colon Y \to B$ be an lfp geometric morphism of derived stacks such that $B^{\cl}$ is an lft higher Artin stack.
    Let $E$ be a perfect complex over $Y$ and $s \in \Gamma(Y, E)$ a section.
    Denote by $Z(s)$ the zero locus and by $s^{\vee} \colon E^{\vee} \to \mathbb{A}^1$ the corresponding cosection.
    Then the projection $p \colon E^\vee \to B$ restricts to $\R\Crit_{E^\vee/B}(s^\vee) \to Z(s)$ and exhibits its source as the relative $(-1)$-shifted cotangent bundle of $Z(s)$ over $B$.
    Moreover, this isomorphism of derived stacks over $Z(s)$
    \[
        \T^*[-1](Z(s) / B) \cong \R\Crit_{E^{\vee} / B} (s^{\vee})
    \]
    is compatible with relative $(-1)$-shifted symplectic structures and there is a canonical isomorphism between canonical orientations.
\end{lemma}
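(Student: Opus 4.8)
The plan is to deduce the lemma from one computation of cotangent complexes --- which at once produces the morphism to $Z(s)$ and identifies the underlying derived stacks --- and then to promote this to an identification of oriented $(-1)$-shifted symplectic stacks over $B$ by factoring it through a derived critical locus relative to $Y$ and a symplectic pushforward along $f \colon Y \to B$.

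First I would carry out the cotangent computation. Write $q \colon E^\vee = \Tot(E^\vee) \to Y$ for the projection; with our conventions $\bL_{E^\vee/Y} \cong q^* E$, and the function $s^\vee$, being the fibrewise-linear function on $E^\vee$ classified by $s \in \Gamma(Y, E)$, has $q$-relative differential the constant section $q^* s$. Hence $\R\Crit_{E^\vee/Y}(s^\vee)$ is the derived zero locus of $q^* s$, which by base change is $\Tot(E^\vee)|_{Z(s)}$; in particular the closed immersion $\R\Crit_{E^\vee/B}(s^\vee) \hookrightarrow E^\vee$ followed by $q$ factors through $Z(s)$, which is the asserted morphism. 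Over $\Tot(E^\vee)|_{Z(s)}$, the residual base-direction part of $d_B s^\vee$ is the fibrewise-linear section of $q^* \bL_{Y/B}$ classified by the connecting map $\sigma \colon E^\vee|_{Z(s)} \cong \bL_{Z(s)/Y}[-1] \to \bL_{Y/B}|_{Z(s)}$ of the cotangent fibre sequence of $Z(s) \to Y \to B$ (using $\bL_{Z(s)/Y} \cong E^\vee|_{Z(s)}[1]$, the relative cotangent complex of the derived zero locus of a section of a perfect complex). A routine manipulation of derived zero loci then identifies $\R\Crit_{E^\vee/B}(s^\vee)$ with the fibre product $\Tot(E^\vee)|_{Z(s)} \times_{\T^*(Y/B)|_{Z(s)}} Z(s)$, the first map induced by $\sigma$ and the second the zero section. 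On the other hand, rotating the same fibre sequence gives $\bL_{Z(s)/B}[-1] \cong \fib(\sigma)$; and since $\Tot$ is built from $\Sym$ (which preserves colimits) and relative $\Spec$ (which turns colimits of algebras into limits of stacks), it carries a fibre sequence of perfect complexes to a Cartesian square over the zero section, so $\T^*[-1](Z(s)/B) = \Tot_{Z(s)}(\fib(\sigma))$ is the same fibre product. Thus the underlying derived stacks over $Z(s)$ agree.

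To promote this to an identification of oriented $(-1)$-shifted symplectic stacks over $B$ I would use the chain of equivalences
\[
\R\Crit_{E^\vee/B}(s^\vee) \;\simeq\; f_* \R\Crit_{E^\vee/Y}(s^\vee) \;\simeq\; f_* \T^*[-1](Z(s)/Y) \;\simeq\; \T^*[-1](Z(s)/B),
\]
where $f_*$ denotes symplectic pushforward along $f$, checking that each arrow is a canonical equivalence respecting symplectic structures and canonical orientations. The first is a chain rule for derived critical loci: unwinding the definition of symplectic pushforward (\cref{prop:momentmap} and the surrounding discussion), the moment map of $\R\Crit_{E^\vee/Y}(s^\vee) \to Y$ relative to $B$ is exactly the residual section found above, so the symplectic forms and the canonical orientations of \cref{ex:ocan} match via the Lagrangian-intersection construction of \cref{ex:Lagrangianintersection}. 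The middle arrow is the linear model --- the derived critical locus over $Y$ of a fibrewise-linear function is, as an oriented $(-1)$-shifted symplectic stack over $Y$, the relative $(-1)$-shifted cotangent bundle of the zero locus of the associated section --- which is the linear Darboux model for \cite{CalaqueCotangent}, with the orientation statement as in \cite[Section 6.1]{KKPS1}. The last arrow expresses compatibility of relative $(-1)$-shifted cotangent bundles with symplectic pushforward along $f$: the moment map of $\T^*[-1](Z(s)/Y) \to Y$ relative to $B$ is induced by $\sigma$, and intersecting it with the zero section of $\T^*(Y/B)$ recovers $\T^*[-1](Z(s)/B)$ with its Liouville structure (\cref{ex:cotangentsymplectic}) and canonical orientation, the latter via the $\Upsilon^{\der}$-isomorphisms of \cite{KPS}. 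Composing the three arrows gives the lemma.

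I expect the main obstacle to be the bookkeeping in this last step. Each of the three equivalences is transparent on underlying derived stacks, so the content lies entirely in tracking the Liouville/Hessian $(-1)$-shifted two-forms and --- more delicately --- the canonical trivializations of the canonical bundles, together with their $\Z/2$-gradings, through symplectic pushforward and through the linear model, and in checking that the three orientation identifications remain compatible under composition. I anticipate no conceptual difficulty here, only sign and parity verifications.
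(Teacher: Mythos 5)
Your overall strategy coincides with the paper's: reduce to the $Y$-relative identification
\[
\T^*[-1](Z(s)/Y) \;\cong\; \R\Crit_{E^\vee/Y}(s^\vee)
\]
(as oriented relative exact $(-1)$-shifted symplectic stacks) and then pass to the $B$-relative statement by symplectic pushforward along $f$. Your cotangent-complex computation of the underlying derived stacks is also sound, though in the end redundant given the chain of $f_*$-equivalences. Where the proposal falls short is precisely at the two places the paper spends its effort.

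First, the middle arrow — the $Y$-relative equivalence — is not a quotable ``linear Darboux model''; it \emph{is} the core content of the lemma over $Y$, and neither \cite{CalaqueCotangent} nor \cite[\S 6.1]{KKPS1} establishes it. The paper proves it by exhibiting both $\T^*[-1](Z(s)/Y)$ and $\R\Crit_{E^\vee/Y}(s^\vee)$ as the symplectic zero locus $Z^{\symp}_{Y/Y}(E\oplus E^\vee[1],(s,0))$ of the $1$-shifted symmetric complex $E\oplus E^\vee[1]$ with its canonical isotropic structure on the section $(s,0)$, applying \cite[Eq.~(21)]{ParkSymplectic} twice. Your proposal just names this step without an argument or a reference that actually covers it.

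Second, the orientation comparison is asserted as ``only sign and parity verifications,'' but it is genuinely delicate and is where roughly half of the paper's proof lives. One has to write out both canonical orientations as explicit composites of determinant-line maps ($\theta_E$, $\theta_{E^\vee}$, $\iota_{E^\vee[1]}$, etc.), identify the two sides under the specific isomorphism $K_{Z(s)/Y}|_X \cong \det(E^\vee[1]|_X) \xrightarrow{\theta_{E^\vee}} \det(E^\vee|_X)^\vee \xrightarrow{\iota_E} \det(E|_X) \cong K_{E^\vee/Y}|_X$, and then check that the relevant diagram commutes. This reduces to the fact that the swap map on $\det(E)\otimes\det(E)$ is multiplication by $(-1)^{\rank E}$ and to the commutativity of a square built from the $\theta$ and $\iota$ isomorphisms (the diagrams (2.16), (2.19) of \cite{KPS}); neither cancellation is automatic, and the signs could plausibly fail to match if the conventions were off by a half-twist. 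You correctly anticipate that this is where the work lies, but deferring it leaves the orientation claim of the lemma unproved.

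In short: same route, correct skeleton, but the two load-bearing steps — the $Y$-relative symplectic identification and the determinant-line check of orientations — are stated rather than carried out.
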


\begin{proof}
    This is a $(-1)$-shifted symplectic version of \cite[Corollary 5.3]{kiem2025cosection}. Though the proof is similar,
    we repeat the proof for reader's convenience.
    Consider a $1$-shifted symmetric complex $E \oplus E^{\vee}[1]$ and take a section $(s, 0)$ equipped with the canonical isotropic structure $s \cdot 0 \to 0$.
    By definition, the symplectic zero locus $Z_{Y/Y}^{\mathrm{symp}}(E \oplus E^{\vee}[1],(s, 0))$ is the zero locus $Z(E \oplus E^{\vee}[1], (s, 0))$ equipped with an exact $(-1)$-shifted symplectic structure over $U$ introduced in \cite[Definition 3.2.3]{ParkSymplectic}.
    Using \cite[Eq. (21)]{ParkSymplectic} twice, we obtain equivalences of exact $(-1)$-shifted symplectic stacks
    \[
    Z_{U/U}^{\mathrm{symp}}(E \oplus E^{\vee}[1],(s, 0)) \cong \T^*[-1](Z(s) / Y), \quad  Z_{U/U}^{\mathrm{symp}}(E \oplus E^{\vee}[1], (s, 0)) \cong \R\Crit_{E^{\vee} / Y} (s^{\vee}).
    \]
    In particular, we have
    \begin{equation}\label{eq-equiv-symplectic-over-Y}
    \T^*[-1](Z(s) / Y) \cong \R\Crit_{E^{\vee} / Y} (s^{\vee}).
    \end{equation}
    By considering the symplectic pushforward along $f$, we obtain the desired equivalence of exact $(-1)$-shifted symplectic stacks over $B$.

We now compare the orientations. 
To see this, it is enough to show that the isomorphism \cref{eq-equiv-symplectic-over-Y}
 preserves the canonical orientation.    
 Consider the following Cartesian diagram
\[\begin{tikzcd}
	{\T^*[-1](Z(s) / Y) } & {\R\Crit_{E^{\vee} / Y} (s^{\vee})} & {E^{\vee}} \\
	{Z(s)} && {Y.}
	\arrow["\cong", from=1-1, to=1-2]
	\arrow[from=1-1, to=2-1]
	\arrow[from=1-2, to=1-3]
	\arrow[from=1-3, to=2-3]
	\arrow[from=2-1, to=2-3]
\end{tikzcd}\]
 Set $X \coloneqq \T^*[-1](Z(s) / Y) \cong \R\Crit_{E^{\vee} / Y} (s^{\vee})$.
 The above Cartesian diagram implies an isomorphism
 \[
 \mathbb{L}_{X / Y} \cong E|_X \oplus E^{\vee}[1] |_X.
 \]
 By definition, the canonical orientation $o^{\can}_{\T^*[-1](Z(s) / Y)/Y}$ is given by the composite
 \begin{align*}
 \left(K_{Z(s) / Y} |_X\right)^{\otimes 2} 
 &\cong \det(E^{\vee}[1] |_X)^{\otimes 2} \xrightarrow[\cong]{\id \otimes \theta_{E}} \det(E^{\vee}[1] |_X) \otimes \det(E^{\vee} |_X)^{\vee} 
  \\
 &\xrightarrow[\cong]{\id \otimes \theta_{E^{\vee}}} \det(E^{\vee}[1] |_X) \otimes \det(E |_X) \cong \det(E|_X \oplus E^{\vee}[1] |_X) \cong \det(\bL_{X/Y}) = K_{X/Y}.
 \end{align*}
 On the other hand, the canonical orientation
 $o^{\can}_{\R\Crit_{E^{\vee} / Y} (s^{\vee})/Y}$ is given by the composite 
 \begin{align*}
    \left( K_{E^{\vee} / Y} |_X \right)^{\otimes 2} &\cong \det(E|_X)^{\otimes 2} \cong 
    \det(E|_X) \otimes \det((E^{\vee}[1])^{\vee}[1]|_X)
    \xrightarrow[\cong]{\id \otimes \theta_{(E^{\vee}[1]|_X)^{\vee}}} \det(E|_X) \otimes \det((E^{\vee}[1]|_X)^{\vee})^{\vee} \\
    &\xrightarrow[\cong]{\id \otimes \iota_{E^{\vee}[1]}} \det(E|_X) \otimes \det(E^{\vee}[1]|_X) \cong \det(E^{\vee}[1]|_X \oplus E|_X) \cong \det(\bL_{X/Y}) = K_{X/Y}.
 \end{align*}
  We claim that these maps are identified under the isomorphism
  \[
  K_{Z(s) / Y} |_X \cong \det(E^{\vee}[1]|_X) \xrightarrow[\cong]{\theta_{E^{\vee}}} \det(E^{\vee} |_X)^{\vee} \xrightarrow[\cong]{\iota_{E}} \det(E|_X) \cong K_{E^{\vee} / Y} |_X.
  \]
 This is the consequence of the following two facts:
 \begin{itemize}
     \item The swapping isomorphism
     \[
     \det(E) \otimes \det(E) \xrightarrow[\cong]{\mathrm{swap}} \det(E) \otimes \det(E)
     \]
     is given by the multiplication by $(-1)^{\rank (E)}$.
     \item The following diagram commutes:
    \[\begin{tikzcd}
	{\det((E^{\vee}[1])^{\vee}[1])} & {\det((E^{\vee}[1])^{\vee})^{\vee}} & {\det(E^{\vee}[1])} & {\det(E[1])^{\vee}} \\
	{\det(E)} &&& {\det(E)}
	\arrow["{\theta_{(E[1]^{\vee})^{\vee}}}", from=1-1, to=1-2]
	\arrow[from=1-1, to=2-1]
	\arrow["{\iota_{E^{\vee}[1]}}", from=1-2, to=1-3]
	\arrow["{\theta_{E[1]}}", from=1-3, to=1-4]
	\arrow["{\iota_{E^{\vee}}}", from=1-4, to=2-4]
	\arrow["{(-1)^{\rank (E)} \cdot \id}"', from=2-1, to=2-4]
    \end{tikzcd}\]
    This is a consequence of the commutativity of the diagrams \cite[(2.16), (2.19)]{KPS}.
 \end{itemize}


\end{proof}

\begin{corollary}\label{cor:dim red for crit}
    Let $f \colon Y \to B$ be a smooth morphism between separated schemes of finite type, $E$ a vector bundle over $Y$, and $s \in \Gamma(Y, E)$ a section.
    Denote by $g \colon  Z(s) \to B$ the natural map, by $\tau \colon \T^*[-1](Z(s)/B) \to Z(s)$ the relative $(-1)$-shifted cotangent bundle, and by $\pi = g \circ \tau \colon\T^*[-1](Z(s)/B) \to B$ the composite, equipped with its relative $(-1)$-shifted symplectic structure and the canonical orientation.
    Then there is a canonical isomorphism
    \begin{equation}\label{eq-dim-red-local-model}
        \tau_* \pi^{\varphi} \cong g^! [-\dim(Y/B)].
    \end{equation}
\end{corollary}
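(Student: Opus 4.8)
The plan is to reduce the claim to the local dimensional reduction isomorphism of \cref{cor-local-dim-red}, using \cref{lem-crit-locus-vs-shifted-cotangent} to make the relative $(-1)$-shifted cotangent bundle explicit and \cref{thm:perversepullback}(1) to describe the perverse pullback concretely. Since $f$ is smooth it is lfp, so \cref{lem-crit-locus-vs-shifted-cotangent} identifies $\pi\colon\T^*[-1](Z(s)/B)\to B$, with its canonical orientation, with the projection $\pi\colon\R\Crit_{E^\vee/B}(s^\vee)\to B$ carrying its canonical orientation, in such a way that $\tau$ becomes the restriction of the bundle projection $p\colon E^\vee\to Y$ (which carries $\R\Crit_{E^\vee/B}(s^\vee)$ into $\iota\colon Z(s)\hookrightarrow Y$). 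The composite $h\coloneqq f\circ p\colon E^\vee\to B$ is smooth and $h^\dag=p^\dag f^\dag$, so \cref{thm:perversepullback}(1) gives a natural isomorphism
\[\pi^\varphi(\cF)\cong (\phi_{s^\vee}(p^\dag f^\dag\cF))|_{\R\Crit_{E^\vee/B}(s^\vee)}.\]

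The key step is to show that here $\phi_{s^\vee}$ may be replaced by the single vanishing cycles functor $\varphi_{s^\vee}$, and that the restriction to $\R\Crit_{E^\vee/B}(s^\vee)$ loses nothing because the relevant sheaf is already supported there. Endow $E^\vee$ with the fibrewise scaling $\Gm$-action over $Y$; then $s^\vee$ has weight one and $p^\dag f^\dag\cF$ is monodromic. Because $f$ is smooth, $\SS(f^\dag\cF)$ lies in the pullback $Y\times_B\T^*B\subset\T^*Y$, so by \cref{prop:SSpullback} the microsupport of $p^\dag f^\dag\cF$ has vanishing components in the $p$-fibre directions and in the directions pulled back from $\T^*(Y/B)$. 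Applying the microsupport estimate for vanishing cycles \cite[Equation (8.6.12)]{KashiwaraSchapira} to $\varphi_{s^\vee-c}(p^\dag f^\dag\cF)$ for each $c$, and writing out $d(s^\vee)$ at a point $(y,\xi)\in E^\vee$: the fibre component of $d(s^\vee)$ forces $s(y)=0$, hence $s^\vee(y,\xi)=0$ so only $c=0$ contributes; the $\T^*(Y/B)$-component forces $\xi$ to annihilate $\mathrm{im}(d_{Y/B}s)$; and these are exactly the equations cutting out $\Crit_{E^\vee/B}(s^\vee)$. Hence $\phi_{s^\vee}(p^\dag f^\dag\cF)$ is the extension by zero of $\varphi_{s^\vee}(p^\dag f^\dag\cF)$ and is supported on $\R\Crit_{E^\vee/B}(s^\vee)$, so the restriction in the display above is an equivalence and $\pi^\varphi(\cF)\cong\varphi_{s^\vee}(p^\dag f^\dag\cF)$.

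To conclude, since this sheaf is supported on $\R\Crit_{E^\vee/B}(s^\vee)\subset p^{-1}(Z(s))$ and $\iota\circ\tau$ is the restriction of $p$, pushing forward along $\tau$ yields $\iota_*\tau_*\pi^\varphi(\cF)\cong p_*\varphi_{s^\vee}(p^*(f^\dag\cF))[\rank(E)]$ (using $p^\dag=p^*[\rank(E)]$). Now \cref{cor-local-dim-red}, applied to the vector bundle $E$ on $Y$, the section $s$, and the constructible complex $\cG\coloneqq f^\dag\cF\in\Dbc(Y)$, gives $p_*\varphi_{s^\vee}(p^*\cG)\cong\iota_*\iota^!\cG$, and therefore $\tau_*\pi^\varphi(\cF)\cong\iota^!f^\dag\cF[\rank(E)]$. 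Finally the purity isomorphism $f^\dag=f^![-\dim(Y/B)]$ for the smooth morphism $f$ gives $\iota^!f^\dag\cF[\rank(E)]\cong g^![-\dim(Z(s)/B)]\cF$, where $\dim(Z(s)/B)=\dim(Y/B)-\rank(E)$ is the relative virtual dimension of $Z(s)\to B$; this is the isomorphism \eqref{eq-dim-red-local-model}, and it is canonical, being a composite of canonical isomorphisms.

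I expect the support analysis to be the main obstacle: one must check that restricting to the \emph{derived} critical locus $\R\Crit_{E^\vee/B}(s^\vee)$ --- whose classical truncation can be strictly smaller than $p^{-1}(Z(s))$ --- does not lose information, and this is exactly where it is indispensable that $\cF$ enters only through the $\dag$-pullback along the \emph{smooth} morphism $f$, so that $\SS(f^\dag\cF)$ is pulled back from $\T^*B$; without this one would locate the support of $\phi_{s^\vee}(p^\dag f^\dag\cF)$ only inside $p^{-1}(Z(s))$.
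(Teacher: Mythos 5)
Your argument is correct and follows essentially the same route as the paper: identify $\T^*[-1](Z(s)/B)$ with $\R\Crit_{E^\vee/B}(s^\vee)$ via \cref{lem-crit-locus-vs-shifted-cotangent}, use \cref{thm:perversepullback}(1) to describe $\pi^\varphi$ as $(\phi_{s^\vee}\,p^\dag f^\dag(-))|_{\R\Crit_{E^\vee/B}(s^\vee)}$, then apply \cref{cor-local-dim-red} after a purity rewriting. The one place where you add real content is the support estimate. The paper passes from $\tau_*\bigl((\varphi_{s^\vee}p^\dag f^\dag(-))|_{\Crit_{E^\vee/B}(s^\vee)}\bigr)$ to $\iota^*p_{E^\vee,*}\varphi_{s^\vee}p^\dag f^\dag(-)$ by remarking that this holds ``on objects supported in $\Crit_{E^\vee/B}(s^\vee)$'', without spelling out why the object in question satisfies that hypothesis; you supply the missing verification: since $\SS(p^\dag f^\dag\cF)$ is pulled back from $\T^*B$, the Kashiwara--Schapira estimate for the support of $\varphi_{s^\vee-c}$ shows that only $c=0$ survives and that the support is cut out exactly by $s(y)=0$ and $\xi\perp\operatorname{im}(d_{Y/B}s)$, i.e.\ by $\Crit_{E^\vee/B}(s^\vee)^{\cl}$. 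This is correct and is indeed where the smoothness of $f$ is used, as you observe; the naive estimate (that $\varphi_{s^\vee}$ of a constructible complex on $E^\vee$ is supported on the absolute critical locus of $s^\vee$) is false for arbitrary complexes, so this step is not optional. Two small remarks: the sentence about the $\Gm$-action and monodromicity of $p^\dag f^\dag\cF$ is not actually used in your subsequent argument (the microsupport estimate already delivers the $c=0$ statement) and could be dropped; and the shift in the statement of \eqref{eq-dim-red-local-model} should read $-\dim(Z(s)/B)$, consistent with \cref{thm:dimensionalreduction}, which is the shift you (correctly) obtain.
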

\begin{proof}
    By \cref{lem-crit-locus-vs-shifted-cotangent} and \cref{thm:perversepullback}(1) we have a canonical isomorphism $\pi^\varphi \cong (\phi_{s^\vee} p_{E^\vee}^\dag f^{\dagger}(-))|_{\Crit_{E^\vee/B}(s^\vee)}$.
    We let $\iota \colon Z(s) \hookrightarrow Y$ denote the canonical inclusion. Pushing forward along $\tau$ yields
    \begin{align*}
        \tau_* \pi^{\varphi}
        &\cong \tau_* (\varphi_{s^\vee} p_{E^\vee}^\dag f^{\dagger}(-)|_{\Crit_{E^\vee/B}(s^\vee)})\\
        &\cong \iota^*p_{E^{\vee},*} \varphi_{s^{\vee}} p_{E^{\vee}}^{\dagger} f^{\dagger}\\
        &\cong \iota^*p_{E^{\vee},*} \varphi_{s^{\vee}} p_{E^{\vee}}^* f^{!} [-\dim(Z(s)/B)]\\
        &\cong \iota^* \iota_! \iota^! f^{!} [-\dim(Z(s)/B)] \cong g^! [-\dim(Z(s)/B)].
    \end{align*}
    For the second isomorphism, we have used that $\tau_* (-|_{\Crit_{E^\vee/B}(s^\vee)}) \cong g^* p_{E^\vee,*}$ on objects supported in $\Crit_{E^\vee/B}(s^\vee) \subset E^\vee$.
    The third isomorphism is the purity isomorphism.
    The fourth isomorphism is local dimensional reduction (\cref{cor-local-dim-red}).
\end{proof}

\subsection{Lagrangian correspondences}

Before proceeding it will be useful to record the following functoriality statements.
Given a commutative square of derived stacks
\[\begin{tikzcd}
    Y' \ar{r}{g'}\ar{d}{p_Y}
    & B' \ar{d}{p_B}
    \\
    Y \ar{r}{g}
    & B,
\end{tikzcd}\]
where $g$ and $g'$ are lfp  and geometric, consider the relative $(-1)$-shifted cotangent bundles
\[
    \tau \coloneqq \tau_{Y/B} \colon \T^*[-1](Y/B) \to Y,
    \qquad
    \tau' \coloneqq \tau_{Y'/B'} \colon \T^*[-1](Y'/B') \to Y'.
\]

\begin{lemma}
    There exists a canonical relative oriented exact $(-1)$-shifted Lagrangian correspondence
    \begin{equation}\label{eq:Lagrangian corr}
    \begin{tikzcd}
        &  \T^*[-1](Y/B) \times_{Y} Y' \ar[swap]{ld}{\partial p}\ar{rd}{p_\tau} & 
        \\
        \T^*[-1](Y'/B') & &  \T^*[-1](Y/B) \times_{B} B',
    \end{tikzcd}
\end{equation}
where $p_\tau$ is induced by $(g, g')$ and $\partial p$ is induced by the map $d(p_Y) \colon p_Y^* \bL_{Y/B} \to \bL_{Y'/B'}$.
\end{lemma}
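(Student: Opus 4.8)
The plan is to reduce, via base change, to the case of a single morphism, and then to build the cotangent‑lift Lagrangian correspondence by hand while tracking orientations exactly as in \cref{lem-crit-locus-vs-shifted-cotangent}. Set $Y''\coloneqq Y\times_B B'$, giving the induced morphism $q\colon Y'\to Y''$ over $B'$ and the Cartesian square with vertices $Y'', B', Y, B$, so that the square of the lemma is the vertical composite of the square $(Y'\xrightarrow{g'} B',\ Y'\xrightarrow{q}Y'')$ over $B'$ (identity on the right edge) with this Cartesian square. By \cite{CalaqueCotangent} and \cite[Section 6.1]{KKPS1} both the Liouville form and the canonical orientation are compatible with base change, so $\T^*[-1](Y''/B')\cong\T^*[-1](Y/B)\times_B B'$ as oriented relative exact $(-1)$-shifted symplectic stacks over $B'$; the span attached to the Cartesian square is the graph of this symplectomorphism, which is an oriented Lagrangian correspondence in an evident way. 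Since (oriented) Lagrangian correspondences compose, extending \cite[Theorem 2.9]{PTVV}, and \eqref{eq:Lagrangian corr} is the composite of this graph with the span attached to $q$ (using $(\T^*[-1](Y/B)\times_B B')\times_{Y''}Y'\cong\T^*[-1](Y/B)\times_Y Y'$ and $q^*\bL_{Y''/B'}\cong p_Y^*\bL_{Y/B}$), it suffices to construct, for a single lfp geometric morphism $q\colon Y'\to Y$ over $B$, the oriented Lagrangian correspondence
\[
\T^*[-1](Y'/B)\xleftarrow{\partial q}\T^*[-1](Y/B)\times_Y Y'\xrightarrow{\rho}\T^*[-1](Y/B),
\]
with $\rho$ the projection and $\partial q=\Tot(dq[-1])$ induced by $dq\colon q^*\bL_{Y/B}\to\bL_{Y'/B}$.

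For the Lagrangian structure, the naturality of the relative Liouville $1$-form under $dq$ yields a canonical homotopy $\rho^*\lambda_Y\simeq(\partial q)^*\lambda_{Y'}$ of relative $1$-forms on $L\coloneqq\T^*[-1](Y/B)\times_Y Y'$, hence a canonical nullhomotopy of the pulled‑back exact form $\rho^*\omega_Y-(\partial q)^*\omega_{Y'}$ in $\cA^{2, \ex}(L/B, -1)$, which we take as the Lagrangian structure on $L\to\overline{\T^*[-1](Y'/B)}\times_B\T^*[-1](Y/B)$. Nondegeneracy — the equivalence $\bT_{L/\overline{\T^*[-1](Y'/B)}\times_B\T^*[-1](Y/B)}\simeq\bL_{L/B}[-2]$ — is checked via the fiber sequence $q^*\bL_{Y/B}\xrightarrow{dq}\bL_{Y'/B}\to\bL_{Y'/Y}$: applying the exact functor $\Tot(-[-1])$ realizes $L$ as the fiber over the zero section of the projection $\T^*[-1](Y'/B)\to\T^*[-1](Y'/Y)$, which makes the relevant cotangent complexes explicit (e.g.\ $\bL_{L/\T^*[-1](Y'/B)}\simeq\bL_{Y'/Y}|_L\simeq\bL_{L/\T^*[-1](Y/B)}$), and one then combines this with the nondegeneracy of the two Liouville symplectic structures. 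Alternatively, this entire step is the relative $(-1)$-shifted instance of the functoriality of shifted cotangent bundles under morphisms in \cite{CalaqueCotangent} (equivalently, since $\T^*[-1](-/-)=\R\Crit_{-/-}(0)$, of the Lagrangian functoriality of derived critical loci for the zero function).

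Finally, equip the two corners with the canonical orientations of \cref{ex:ocan} (that on $\T^*[-1](Y/B)\times_B B'$ by base change). To promote the correspondence to an oriented one we must produce an isomorphism $o^{\can}_{\T^*[-1](Y'/B)/B}|_L\cong o^{\can}_{\T^*[-1](Y/B)/B}|_L\otimes K_{L/\T^*[-1](Y/B)}$ whose square is the isomorphism $\Upsilon^{\der}$ of \cite[(3.5)]{KPS}. Since $K_{L/\T^*[-1](Y/B)}\cong\det(\bL_{Y'/Y})|_L$ and the canonical orientation of a relative $(-1)$-shifted cotangent bundle is built from the determinant of the ambient relative cotangent complex together with the $\theta$- and $\iota$-isomorphisms, this reduces — via multiplicativity of $\det$ along $q^*\bL_{Y/B}\to\bL_{Y'/B}\to\bL_{Y'/Y}$ — to the coherences \cite[(2.16), (2.19)]{KPS} already invoked in the proof of \cref{lem-crit-locus-vs-shifted-cotangent}. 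This determinant‑line bookkeeping, keeping track of the $(-1)^{\rank}$ signs from determinant swaps and of the $\theta/\iota$ compatibilities in the relative setting, is the step that demands real care; everything else is formal, and since all inputs (base change, $\Tot$, Liouville forms, canonical orientations) are natural in the square, the correspondence \eqref{eq:Lagrangian corr} so constructed is canonical.
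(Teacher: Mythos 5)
Your proof takes a genuinely different route from the paper's. The paper works entirely inside the $\infty$-category $\mathrm{Symp}^{\mathrm{ex}}_{B,-1}$ of \cite{ParkSymplectic}, produces \eqref{eq:Lagrangian corr} as a Beck--Chevalley transformation for the adjunction $\tilde{g}^* \dashv \tilde{g}_*$, and then imports the orientation from the naturally oriented unit correspondence \eqref{eq-unit-Lag-corresp}. You instead factor the commutative square into a Cartesian square (handled by base change of the Liouville data) and a square with fixed base $B'$ (handled by the hands-on cotangent-lift correspondence for a single morphism), then compose. Your route is more explicit and makes the role of the Liouville form visible, at the price of having to verify nondegeneracy and orientation compatibility by hand; the paper's route is shorter because the symplectic-pushforward formalism already packages these checks, but is opaque about what exactly the correspondence does to Liouville forms. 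Both are valid, and your decomposition is genuinely useful if one later needs explicit formulas.

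Two caveats. First, the parenthetical identification $\bL_{L/\T^*[-1](Y'/B)}\simeq\bL_{Y'/Y}|_L$ is not right: from the Cartesian square presenting $L$ as the fiber of $\T^*[-1](Y'/B)\to\T^*[-1](Y'/Y)$ over the zero section, one gets $\bL_{L/\T^*[-1](Y'/B)}\simeq\bL_{Y'/\T^*[-1](Y'/Y)}|_L\simeq(\bL_{Y'/Y}[-1])^\vee[1]|_L\simeq\bT_{Y'/Y}[2]|_L$, which differs from $\bL_{L/\T^*[-1](Y/B)}\simeq\bL_{Y'/Y}|_L$ unless $Y'\to Y$ is lci of a special kind. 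The nondegeneracy check needs the dualities between these two complexes to be mediated by the symplectic forms — it is not an identity of cotangent complexes. Your fallback of invoking the Lagrangian functoriality of $\R\Crit_{-/-}(0)$ (equivalently Calaque's functoriality of shifted cotangents) is the cleaner way to finish this step. Second, the orientation compatibility is asserted rather than carried out: you would need to exhibit an explicit isomorphism $o^{\can}_{\T^*[-1](Y'/B)/B}|_L\cong o^{\can}_{\T^*[-1](Y/B)/B}|_L\otimes K_{L/\T^*[-1](Y/B)}$ squaring to $\Upsilon^{\der}$ via the $\theta/\iota$ coherences of \cite[(2.16),(2.19)]{KPS}, tracking the $(-1)^{\rank}$ signs along the fiber sequence $q^*\bL_{Y/B}\to\bL_{Y'/B}\to\bL_{Y'/Y}$, exactly as in the proof of \cref{lem-crit-locus-vs-shifted-cotangent}. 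This is plausible but not free, and it is precisely the step that the paper's appeal to the oriented unit correspondence sidesteps.
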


\begin{proof}
It will be useful to phrase the proof using the $\infty$-category $\mathrm{Symp}^{\mathrm{ex}}_{B,-1}$ (for $B$ a derived stack) from \cite[Section 2]{ParkSymplectic} which has the following description:
\begin{itemize}
    \item Its objects are lfp geometric morphisms $\pi \colon X \to B$ equipped with a relative exact $(-1)$-shifted symplectic structure $\omega$.
    \item Its morphisms $(X_1, \omega_1) \dashrightarrow (X_2, \omega_2)$ are relative exact $(-1)$-shifted Lagrangian correspondences $X_2 \gets L \to X_1$.
\end{itemize}

For a morphism $p\colon B_1\rightarrow B_2$ we have the base change functor $p^*\colon \mathrm{Symp}^{\mathrm{ex}}_{B_2,-1}\rightarrow \mathrm{Symp}^{\mathrm{ex}}_{B_1,-1}$ which by \cite[Theorem A]{ParkSymplectic} admits a right adjoint $p_*\colon \mathrm{Symp}^{\mathrm{ex}}_{B_1,-1}\rightarrow \mathrm{Symp}^{\mathrm{ex}}_{B_2,-1}$ given by the shifted symplectic pushforward.

    Consider the following diagram:
\[\begin{tikzcd}
	{Y'} \\
	& {Y \times _B B'} & {B'} \\
	& Y & B.
	\arrow["\eta", from=1-1, to=2-2]
	\arrow["{g'}", curve={height=-12pt}, from=1-1, to=2-3]
	\arrow["{p_Y}"', curve={height=12pt}, from=1-1, to=3-2]
	\arrow["{\tilde{g}}", from=2-2, to=2-3]
	\arrow["{\tilde{p}_B}"', from=2-2, to=3-2]
	\arrow["{p_B}"', from=2-3, to=3-3]
	\arrow["g", from=3-2, to=3-3]
\end{tikzcd}\]
We show that the desired exact $(-1)$-shifted Lagrangian correspondence is given by the Beck--Chevalley map
\begin{equation}\label{eq:unit Lag corr gtilde}
    \T^*[-1](Y'/B')
    \cong \tilde{g}_* \eta_*  (Y', 0) \dashleftarrow \tilde{g}_*  (Y\times_B B', 0)
    \cong \T^*[-1](Y\times_B B'/B')
    \cong \T^*[-1](Y/B) \times_{B} B'.
\end{equation}
in $\mathrm{Symp}^{\mathrm{ex}}_{B', -1}$, where the middle dashed arrow is induced by the unit map $\eta_* (Y', 0) \dashleftarrow Y\times_B B'$ in $\mathrm{Symp}^{\mathrm{ex}}_{Y\times_B B', -1}$.
Indeed, the latter is given by the exact Lagrangian correspondence
\begin{equation}\label{eq-unit-Lag-corresp}
\T^*[-1](Y' / Y \times_B B') \xleftarrow{0} Y' \xrightarrow{\eta} Y \times_B B'.
\end{equation}
Since $\T^*(Y \times_B B' / B') \cong  \T^*(Y/B) \times_{Y} (Y \times_{B} B')$, it follows from \cite[Proposition 2.3.1]{ParkSymplectic} that $\tilde{g}_*(Y')$ as a morphism in $\mathrm{Symp}^{\mathrm{ex}}_{Y\times_B B', -1}$ is computed as the zero locus of the map $\mu \colon Y' \xrightarrow{p_Y} Y \xrightarrow{0} \T^*(Y/B)$, whence a canonical identification $\tilde{g}_*(Y') \cong  \T^*[-1](Y/B) \times_{Y} Y'$.
Thus we find that the morphism \eqref{eq:unit Lag corr gtilde} is given by the desired correspondence \eqref{eq:Lagrangian corr}.
Since \eqref{eq-unit-Lag-corresp} is naturally oriented, we conclude that \eqref{eq:Lagrangian corr} is naturally oriented as well.
\end{proof}


In certain cases, we can use \eqref{eq:Lagrangian corr} to relate the perverse pullbacks along $\pi \colon \T^*[-1](Y/B) \to B$ and $\pi' \colon \T^*[-1](Y'/B') \to B'$.

\begin{lemma}\label{lem:Lagrangian funct}
    With notation as above, we have:
    \begin{enumerate}
        \item\label{item:Lagrangian funct/1}
        If the morphism $\eta \colon Y' \to Y\times_B B'$ is an isomorphism on classical truncations and is lfp with cotangent complex of Tor-amplitude $\le -2$, then in \eqref{eq:Lagrangian corr} the morphism $\partial p$ is smooth, $p_\tau$ is an isomorphism on classical truncations.
        Moreover, if we set $n \coloneqq \dim(Y'/Y\times_B B')$, there is a canonical isomorphism
        \[ (\tau \times_{B} \id_{B'})_* (\pi \times_{B} \id_{B'})^\varphi \simeq \eta_*  \tau'_* \pi'^\varphi [n]. \]

        \item\label{item:Lagrangian funct/2}
        If the morphism $\eta \colon Y' \to Y\times_B B'$ is smooth, then in \eqref{eq:Lagrangian corr} the morphism $\partial p$ is an isomorphism on classical truncations and $p_\tau$ is smooth.
        Moreover, there is a canonical isomorphism
        \[ \tau'_* \pi'^\varphi \cong \eta^\dag (\tau \times_{B} \id_{B'})_* (\pi \times_{B} \id_{B'})^\varphi. \]
    \end{enumerate}
\end{lemma}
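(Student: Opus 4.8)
The plan is to prove both parts by the same method: reduce to $B=B'$, feed the Lagrangian correspondence \eqref{eq:Lagrangian corr} into the functoriality \eqref{eq:alphapullback} of perverse pullbacks, and push forward along $\tau$ and $\tau'$. Since every functor appearing in the statement is built from the base changes $\T^*[-1](Y/B)\times_B B'\cong\T^*[-1]((Y\times_B B')/B')$, $\tau\times_B\id_{B'}\cong\tau_{(Y\times_B B')/B'}$ and $\pi\times_B\id_{B'}\cong\pi_{(Y\times_B B')/B'}$, and the hypotheses are imposed on $\eta\colon Y'\to Y\times_B B'$, I would replace $(B,Y)$ by $(B',Y\times_B B')$ and assume from now on that $B=B'$ and $\eta=p_Y\colon Y'\to Y$. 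Write $W\coloneqq\T^*[-1](Y/B)\times_Y Y'$ with the two legs $\partial p\colon W\to\T^*[-1](Y'/B)$ and $p_\tau\colon W\to\T^*[-1](Y/B)$ of \eqref{eq:Lagrangian corr}; let $\tau_W\colon W\to Y'$ be the remaining projection and $\tau'=\tau_{Y'/B}$. The square $W=\T^*[-1](Y/B)\times_Y Y'$ is Cartesian, and $\tau\circ p_\tau=\eta\circ\tau_W$ while $\tau'\circ\partial p=\tau_W$.

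First I would check the geometric claims about the legs. The morphism $p_\tau$ is the base change of $\eta$ along $\tau$, hence smooth in the second case and an isomorphism on classical truncations in the first. For $\partial p$: it is induced on relative $(-1)$-shifted cotangent bundles by $d\eta\colon\eta^*\bL_{Y/B}\to\bL_{Y'/B}$, and since the relative cotangent complex of a linear map of total spaces is the pullback of the dual of its fibre, from $\fib(d\eta)\simeq\bL_{Y'/Y}[-1]$ one gets $\bL_{\partial p}\simeq\tau_W^*\bT_{Y'/Y}[2]$. In the first case the Tor-amplitude hypothesis makes this a vector bundle in degree $0$ of rank $n=\dim(Y'/Y)$, so $\partial p$ is a torsor under a rank-$n$ vector bundle over $\T^*[-1](Y'/B)$, in particular a smooth affine bundle of relative dimension $n$; in the second case $\bT_{Y'/Y}$ is a vector bundle in degree $0$, so $\bL_{\partial p}$ sits in cohomological degrees $\leq -2$, whence $\partial p$ is an isomorphism on classical truncations.

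By the preceding lemma, \eqref{eq:Lagrangian corr} is a relative exact $(-1)$-shifted Lagrangian correspondence between $\T^*[-1](Y'/B)$ and $\T^*[-1](Y/B)$, oriented compatibly with the canonical orientations of both sides, so \eqref{eq:alphapullback} applies. In the first case $\partial p$ is smooth and $p_\tau$ an isomorphism on classical truncations, giving $\pi^\varphi\simeq(p_\tau)_*(\partial p)^\dag\pi'^\varphi$; applying $\tau_*$ and using $\tau p_\tau=\eta\tau_W$, $\tau'\partial p=\tau_W$ yields $\tau_*\pi^\varphi\simeq\eta_*\tau'_*(\partial p)_*(\partial p)^\dag\pi'^\varphi$, and since $(\partial p)_*(\partial p)^\dag\cong(\partial p)_*(\partial p)^*[n]\cong\id[n]$ by homotopy invariance of $\bD(-)$ for the affine bundle $\partial p$, we obtain $\tau_*\pi^\varphi\simeq\eta_*\tau'_*\pi'^\varphi[n]$. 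In the second case the two legs exchange roles, so \eqref{eq:alphapullback} gives $\pi'^\varphi\simeq(\partial p)_*(p_\tau)^\dag\pi^\varphi$; applying $\tau'_*$ and using $\tau'\partial p=\tau_W$ gives $\tau'_*\pi'^\varphi\simeq(\tau_W)_*(p_\tau)^\dag\pi^\varphi$, and smooth base change along the Cartesian square identifies $(\tau_W)_*(p_\tau)^*\cong\eta^*\tau_*$; since $\dim(W/\T^*[-1](Y/B))=\dim(Y'/Y)$, combining with $\eta^\dag=\eta^*[\dim(Y'/Y)]$ yields $\tau'_*\pi'^\varphi\simeq\eta^\dag\tau_*\pi^\varphi$.

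The step I expect to require the most care is checking that the orientation the preceding lemma puts on \eqref{eq:Lagrangian corr} agrees with the one induced by the canonical orientations of $\T^*[-1](Y/B)$ and $\T^*[-1](Y'/B)$, so that \eqref{eq:alphapullback} applies with no correction factor, together with keeping the shifts straight via the identity $\dim(W/\T^*[-1](Y'/B))=\dim(Y'/Y)$ in the first case. The cotangent complex computation for $\partial p$, the identity $\tau\circ p_\tau=\eta\circ\tau_W$, and the homotopy-invariance input $(\partial p)_*(\partial p)^*\cong\id$ are routine.
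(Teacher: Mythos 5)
Your argument is correct and follows essentially the same route as the paper's: feed the Lagrangian correspondence \eqref{eq:Lagrangian corr} (with the orientation supplied by the preceding lemma) into \eqref{eq:alphapullback} / \cite[Proposition~6.9]{KKPS1}, push forward along $\tau,\tau'$, and clean up with homotopy invariance and smooth base change. The cotangent-complex computation $\bL_{\partial p}\simeq\tau_W^*\bT_{Y'/Y\times_B B'}[2]$ is a legitimate alternative to the paper's Cartesian square \eqref{eq:cotangentbasechangeCartesian} and gives the same geometric conclusions. Two small points worth tightening: (i) the reduction to $B=B'$ by replacing $(Y,B)$ with $(Y\times_B B',B')$ is fine, but it silently uses that the canonical orientation of $\T^*[-1](Y/B)\to B$ pulls back, along $B'\to B$, to the canonical orientation of $\T^*[-1]((Y\times_B B')/B')\to B'$; this is true but should be said (the paper sidesteps it by not reducing). (ii) In case (1) the Tor-amplitude hypothesis only forces $\bL_{\partial p}$ to be concentrated in cohomological degrees $\geq 0$, not to be a vector bundle in degree $0$; so $\partial p$ is a torsor under a rank-$n$ \emph{vector bundle stack} rather than an affine bundle. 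The conclusion $(\partial p)_*(\partial p)^*\cong\id$ is still valid — this is exactly the homotopy invariance the paper invokes via \cite[Proposition~A.10]{KhanVirtual} — so the argument goes through unchanged once you replace ``affine bundle'' by ``torsor under a vector bundle stack'' and cite that result.
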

\begin{proof}
    (\ref{item:Lagrangian funct/1})
    Note that there is a Cartesian square
    \[\begin{tikzcd}
        \T^*[-1](Y/B) \times_{Y} Y' \ar{r}{p_\tau}\ar{d}{\tau \times_{Y}  \id_{Y'}}
        & \T^*[-1](Y/B) \times_{B} B' \ar{d}{\tau \times_{B}  \id_{B'}}
        \\
        Y' \ar{r}{\eta}
        & Y \times_B B'.
    \end{tikzcd}\]
    Since $\eta$ is an isomorphism on classical truncations with the contangent complex having Tor-amplitude $\le -2$, so is $p_\tau$. 
    We have a Cartesian square
    \[
    \xymatrix{
    p^*_Y\bL_{Y/B}[-1] \ar[r] \ar[d] & \bL_{Y'/B'}[-1] \ar[d] \\
    0 \ar[r] & \bL_{Y'/Y\times_B B'}[-1]
    }
    \]
    of perfect complexes on $Y'$. Taking total spaces, we obtain a Cartesian square
    \begin{equation}\label{eq:cotangentbasechangeCartesian}
    \xymatrix{
    \T^*[-1](Y/B) \times_{Y} Y' \ar^{\partial p}[r] \ar[d] & \T^*[-1](Y'/B') \ar[d] \\
    Y' \ar[r] & \T^*[-1](Y'/Y\times_B B').
    }
    \end{equation}
    Since $\bL_{Y'/Y\times_B B'}[-1]$ is of Tor-amplitude $\le -1$, the zero section $Y'\rightarrow \T^*[-1](Y'/Y\times_B B')$ is smooth and hence so is $\partial p$.
    We may thus apply \cite[Proposition 6.9]{KKPS1} and deduce the natural isomorphisms
    \begin{align*}
       (\tau \times_{B} \id_{B'})_* (\pi \times_{B} \id_{B'})^\varphi \xrightarrow{\sim} (\tau \times_{B} \id_{B'})_* p_{\tau,*} (\partial p)^\dag \pi'^\varphi \xrightarrow{\sim} \eta_* \tau'_*(\partial p)_*  (\partial p)^\dag \pi'^\varphi \xrightarrow{\sim} \eta_* \tau'_* \pi'^\varphi [n], 
    \end{align*}
    where we used purity and homotopy invariance in the last step (note that $\partial p$ is a torsor under a vector bundle stack of rank $n$, so this follows from \cite[Proposition A.10]{KhanVirtual}).

    (\ref{item:Lagrangian funct/2})
    The assumption that $\eta\colon Y' \to Y\times_B B'$ is smooth implies that its base change $p_\tau$ is also smooth, and that the zero section $Y'\rightarrow \T^*[-1](Y'/Y\times_B B')$ is an isomorphism on classical truncations. Therefore, using the Cartesian square \eqref{eq:cotangentbasechangeCartesian} we get that $\partial p$ is an isomorphism on classical truncations. We may thus apply \cite[Proposition 6.9]{KKPS1} and deduce natural isomorphisms
    \[
    \tau'_* \pi'^\varphi \xrightarrow[\sim]{}  \tau'_* (\partial p)_* p_\tau^\dag (\pi \times_{B} \id_{B'})^\varphi  \xleftarrow[\sim]{\Ex^!_*} \eta^{\dagger} (\tau \times_{B} \id_{B'})_*(\pi \times_{B} \id_{B'})^\varphi.
    \]
\end{proof}

\subsection{Dimensional reduction for closed immersions}

We prove \cref{thm:dimensionalreduction} when the morphism $g \colon Y \to B$ is a closed immersion.
We begin with a direct proof of the following consequence of \cref{thm:dimensionalreduction}.

\begin{lemma}\label{lem-pre-dim-red-cl-imm}
    Assume that $g \colon Y \to B$ is a closed immersion.
    Then the maps
    \[
     \tau_! \pi^{\varphi}  \xrightarrow{\unit} \tau_! \pi^{\varphi} g_* g^*,
     \qquad \tau_* \pi^{\varphi}  g_! g^! \xrightarrow{\counit} \tau_* \pi^{\varphi}
    \]
    are invertible.
\end{lemma}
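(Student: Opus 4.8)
\emph{The plan is to reduce the assertion, via the Lagrangian functoriality of perverse pullbacks, to the local dimensional reduction isomorphism \cref{cor-local-dim-red}.} First, the two displayed maps are interchanged by Verdier duality: $\tau_!$ and $\tau_*$ are Verdier dual, so are $g_*g^*$ and $g_!g^!$, the natural transformations $\unit$ and $\counit$ correspond, and $\pi^\varphi$ commutes with Verdier duality for the canonical orientation. It therefore suffices to prove that $\tau_*\pi^\varphi g_!g^!\xrightarrow{\counit}\tau_*\pi^\varphi$ is invertible. Invertibility of this natural transformation is smooth-local on $B$, since every functor involved and the transformation itself are compatible with smooth base change --- for $\pi^\varphi$ this is \eqref{eq:alphabasechange}, together with the identification $\T^*[-1](Y'/B')\cong\T^*[-1](Y/B)\times_B B'$ for a Cartesian square whose horizontal arrows are smooth. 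We may thus assume that $B$ is a separated affine scheme of finite type.

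Next I would reduce to the case that $g$ is quasi-smooth. Choose $x_1,\dots,x_k\in\Gamma(B,\O_B)$ generating the ideal of $Y$ and let $g_1\colon Y_1\hookrightarrow B$ be the derived vanishing locus of $(x_1,\dots,x_k)$; this is a quasi-smooth closed immersion with $Y_1^{\cl}=Y$, and the inclusion $\eta\colon Y\hookrightarrow Y_1$ of the classical truncation satisfies $g=g_1\circ\eta$. The morphism $\eta$ is locally of finite presentation, an isomorphism on classical truncations, and its cotangent complex $\bL_\eta=\cofib(\eta^*\bL_{Y_1/B}\to\bL_{Y/B})$ is perfect and concentrated in cohomological degrees $\le-2$ (the classes $\overline{x_i}$ generate the conormal sheaf $H^{-1}(\bL_{Y/B})$), hence of Tor-amplitude $\le-2$. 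Applying \cref{lem:Lagrangian funct}(1) with $B'=B$ then gives a natural isomorphism $\tau_{Y/B,*}\pi_{Y/B}^\varphi\cong\tau_{Y_1/B,*}\pi_{Y_1/B}^\varphi[-n]$, where $n=\dim(Y/Y_1)$, under the tautological equivalence $\bD(Y)=\bD(Y_1)$; since $g^*=g_1^*$, $g^!=g_1^!$ and $g_!=g_{1,!}$ under this equivalence, the isomorphism intertwines the two counit maps, so the assertion for $g$ follows from the one for $g_1$. Hence we may assume $g$ is a quasi-smooth closed immersion.

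Finally I would pass to the local model. Working Zariski-locally on $B$, we may write $Y=Z(s)$ for the derived vanishing locus of a section $s$ of a vector bundle $E$ on $B$. By \cref{lem-crit-locus-vs-shifted-cotangent} there is an isomorphism $\T^*[-1](Y/B)\cong\R\Crit_{E^\vee/B}(s^\vee)$ compatible with the canonical orientations, so \cref{thm:perversepullback}(1) yields $\pi^\varphi(-)\cong(\phi_{s^\vee}\,p_{E^\vee}^\dag(-))|_{\R\Crit_{E^\vee/B}(s^\vee)}$; moreover $s^\vee$ vanishes on $\Crit_{E^\vee/B}(s^\vee)$, so $\phi_{s^\vee}$ may be replaced here by $\varphi_{s^\vee}$ (the critical locus lies in $(s^\vee)^{-1}(0)$). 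Since $g\circ\tau$ coincides with the restriction of $p_{E^\vee}$ to the critical locus and the relevant complexes are supported there, pushing forward along $\tau$ identifies the functor $g_!\tau_*\pi^\varphi(-)$ naturally with $p_{E^\vee,*}\varphi_{s^\vee}p_{E^\vee}^*(-)[\rank E]$, and under this identification the counit map $\tau_*\pi^\varphi g_!g^!\to\tau_*\pi^\varphi$ becomes a shift of the second map of \cref{cor-local-dim-red}. That map is invertible, and since $g_!=g_*$ is conservative, so is the counit map; this finishes the proof.

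The step I expect to be the main obstacle is the last one: matching the counit transformation itself --- not merely the underlying objects --- with the one in \cref{cor-local-dim-red}, which requires carefully tracking the orientation comparison of \cref{lem-crit-locus-vs-shifted-cotangent}, the purity shift $p_{E^\vee}^\dag=p_{E^\vee}^*[\rank E]$, and the proper and smooth base change isomorphisms used to commute $\tau_*$ past restriction to the critical locus. Checking that the isomorphism of \cref{lem:Lagrangian funct}(1) intertwines the two counit maps is a secondary point, but it is formal once the hypotheses on $\eta$ have been verified.
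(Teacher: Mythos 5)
Your proof follows the paper's route almost exactly: smooth-local reduction to an affine base, factorization of $g$ through a quasi-smooth closed immersion (a derived vanishing locus), transport via \cref{lem:Lagrangian funct}(1), and the local dimensional reduction. The step you flag as the main obstacle --- matching the counit transformation with the one in \cref{cor-local-dim-red} --- is in fact formal, and the paper handles it with a one-line observation worth internalizing: once you have an isomorphism of \emph{functors} $\tau_*\pi^\varphi\cong g^![-\dim(Y/B)]$ (which the paper obtains from \cref{cor:dim red for crit} and \cref{lem:Lagrangian funct}(1), and which you effectively also obtain in your local model), note that $F\cong G$ implies $F(\alpha)$ is invertible iff $G(\alpha)$ is, for any natural transformation $\alpha$, and that $g^!$ inverts the counit $g_!g^!\to\id$ since $g$ is a closed immersion. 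This renders the orientation/chart tracking you were worried about unnecessary, and it also makes automatic your intermediate claim that the quasi-smooth reduction ``intertwines the two counit maps.'' Two smaller cautions: your construction of the factorization $Y\to Y_1\to B$ needs the chosen $x_i$ to generate $H^{-1}(\bL_{Y/B})$, which for a non-classical derived $Y$ is slightly more than generating the classical ideal --- the paper refers to \cite[Proposition 3.2.18]{LurieHA} for this factorization; and compatibility of $\pi^\varphi$ with Verdier duality, used in your first paragraph, is not among the properties recorded in \cref{thm:perversepullback}, so the paper simply states that the argument for the first map is analogous rather than invoking duality.
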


\begin{proof}
We consider the second map; the proof for the first is similar.
Thanks to \cref{thm:perversepullback}(2), the statement is smooth-local on $B$.
Therefore we may assume that $B$ is a scheme of finite type, and that $g \colon Y \to B$ factors as 
\[
g \colon Y \xrightarrow{i} Y' \xrightarrow{g'} B
\]
where $i$ is an isomorphism on classical truncations and $\mathbb{L}_{Y / Y'}$ is of Tor-amplitude $\leq -2$, and $g'$ is the inclusion of the zero locus of a section $s \in \Gamma(B, E)$ of a vector bundle $E$ on $B$.
(This follows from the proof of \cite[Proposition 3.2.18]{LurieHA}).
We let $\tau' \colon \mathrm{T}^*[-1](Y' / B) \to Y'$ be the projection and set $\pi' \coloneqq g' \circ \tau'$.
Note that \cref{cor:dim red for crit} yields $\tau'_* \pi'^{\varphi} \cong g'^![-\dim(Y' / B)]$.
By \cref{lem:Lagrangian funct}(\ref{item:Lagrangian funct/1}) we have $\tau'_* \pi'^\varphi \cong i_* \tau_* \pi^\varphi [\dim(Y/Y')]$, or equivalently $\tau_* \pi^\varphi \simeq i^! \tau'_* \pi'^\varphi [-\dim(Y/Y')]$ (since $i_*$ is an equivalence with inverse $i^* \simeq i^!$).
Combining this with the isomorphism $\tau'_* \pi'^{\varphi} \cong g'^![-\dim(Y' / B)]$ yields
\[
    \tau_* \pi^\varphi \simeq i^! g'^![-\dim(Y / B)] \simeq g^![-\dim(Y/B)].
\]
Since $g^!$ inverts the counit map $g_! g^! \to \id$ (as $g$ is a closed immersion), the claim now follows.
\end{proof}

The above lemma implies the dimensional reduction theorem for closed immersions:

\begin{corollary}\label{cor-dim-red-cl-imm}
    Assume that $g \colon Y \to B$ is a closed immersion and the coefficient ring $R$ is a field.
    Then we have natural isomorphisms
    \begin{equation}\label{eq-dim-red-cl-imm}
    \tau_! \pi^\varphi\cong g^*[\dim(Y/B)],\qquad \tau_* \pi^\varphi\cong g^![-\dim(Y/B)].
    \end{equation}
\end{corollary}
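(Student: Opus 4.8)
The plan is to read off both isomorphisms of \eqref{eq-dim-red-cl-imm} from \cref{lem-pre-dim-red-cl-imm}. The second one, $\tau_* \pi^\varphi \cong g^![-\dim(Y/B)]$, is in fact already produced in the course of the proof of \cref{lem-pre-dim-red-cl-imm}: after reducing smooth-locally on $B$ (using \cref{thm:perversepullback}(2)) to a factorization $g = g' \circ i$ with $i$ an equivalence on classical truncations and $\bL_{Y/Y'}$ of Tor-amplitude $\le -2$, and with $g'$ the inclusion of the zero locus of a section of a vector bundle on $B$, one invokes \cref{cor:dim red for crit} for $g'$ (taking the smooth source there to be $B$ itself) together with the comparison \cref{lem:Lagrangian funct}(\ref{item:Lagrangian funct/1}) to pass from $Y'$ to $Y$. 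So for the Corollary I would simply record that isomorphism, noting that it is natural in the sheaf variable since it is assembled from natural transformations (\cref{cor-local-dim-red}, and hence \cref{cor:dim red for crit} and \cref{lem:Lagrangian funct}(\ref{item:Lagrangian funct/1}), each supply a canonical arrow once the local model is fixed).

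For the first isomorphism $\tau_! \pi^\varphi \cong g^*[\dim(Y/B)]$ the plan is to apply Verdier duality $\bbD$. Since $R$ is a field, biduality holds on constructible complexes, and one has $\bbD_{\T^*[-1](Y/B)} \circ \pi^\varphi \cong \pi^\varphi \circ \bbD_B$: perverse pullback is compatible with Verdier duality up to replacing the relative $(-1)$-shifted symplectic structure by its opposite and the orientation by the induced one (which follows from the self-duality of the vanishing cycle functor $\varphi_f$ with the chosen normalization, via the Darboux presentation of \cref{prop:Darboux} and \cref{thm:perversepullback}(1),(3)), and for the $(-1)$-shifted cotangent bundle the fibrewise antipode identifies $\overline{\T^*[-1](Y/B)}$ with $\T^*[-1](Y/B)$ as oriented relative $(-1)$-shifted symplectic stacks over $B$, compatibly with $\tau$. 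Granting this, and using $\bbD \tau_! \cong \tau_* \bbD$ and $\bbD g^! \cong g^* \bbD$, one computes $\bbD \tau_! \pi^\varphi \cong \tau_* \bbD \pi^\varphi \cong \tau_* \pi^\varphi \bbD \cong g^![-\dim(Y/B)] \circ \bbD \cong \bbD \circ g^*[\dim(Y/B)]$, and concludes by applying $\bbD$ once more.

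I expect the only real work to be the Verdier-duality compatibility of $\pi^\varphi$ for the cotangent bundle together with the attendant orientation/antipode bookkeeping. An alternative that side-steps invoking such a general statement is to run the entire chain of reductions above in parallel for the ``$\iota_!\iota^!$-half'' of \cref{cor-local-dim-red} — which is already shown there to be invertible, being exchanged with the ``$\iota^*\iota_*$-half'' under duality — so that one obtains $\tau_! \pi^\varphi \cong g^*[\dim(Y/B)]$ directly; this is the route I would take if the duality compatibility of $\pi^\varphi$ is not conveniently at hand.
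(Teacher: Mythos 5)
The core issue with your plan for $\tau_*\pi^\varphi\cong g^![-\dim(Y/B)]$ is globalization. \Cref{lem-pre-dim-red-cl-imm} is phrased so that its content is the invertibility of a \emph{fixed, globally defined} canonical morphism (the counit $\tau_*\pi^\varphi g_!g^!\to\tau_*\pi^\varphi$), and invertibility is a property that may legitimately be checked smooth-locally on $B$ after choosing a factorization $g=g'\circ i$ and a local model $(E,s)$. The isomorphism $\tau_*\pi^\varphi\cong g^![-\dim(Y/B)]$ you want to ``simply record'' only arises \emph{inside} that local reduction: it depends on the chosen chart, the factorization, and $(E,s)$, so to promote it to the natural isomorphism the corollary asserts you would still have to show these local isomorphisms agree on overlaps and under refinement. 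Your own qualifier ``once the local model is fixed'' names exactly the dependence that must be removed, and nothing in the proposal removes it. The comparison results that would control such choice-independence (\cref{prop-dimred-cl-imm-comparison}, \cref{cor-dimred-local-model-comparison}) come after, and depend on, \cref{thm:dimensionalreduction}, so are not available here.

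The paper's proof sidesteps gluing entirely: it exhibits $\tau_*\pi^\varphi\cong g^![-\dim(Y/B)]$ as a composite of maps that are already globally defined and canonical, namely the counit above (invertible by \cref{lem-pre-dim-red-cl-imm}); the symplectic-pushforward isomorphism $\varepsilon_g\colon\pi^\varphi g_*\xrightarrow{\sim}\tau^\dag$ of \eqref{eq-symp-push-perverse-pullback}, applied with $X=Y$ carrying the trivial $(-1)$-shifted symplectic structure and $p=g$ --- a globally canonical ingredient your proposal does not mention; and homotopy invariance $\tau_*\tau^\dag\cong(-)[r]$ with $r=-\dim(Y/B)$. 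The $\tau_!$ isomorphism is the parallel composite using the unit half of \cref{lem-pre-dim-red-cl-imm}, so no Verdier-duality compatibility of $\pi^\varphi$ is invoked at all. Your Verdier-duality route is reasonable in spirit but compounds the problem: it inherits the gluing gap and additionally relies on $\bbD\,\pi^\varphi\cong\pi^\varphi\,\bbD$ together with the antipode/orientation bookkeeping, which you identify as remaining work but do not supply.
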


\begin{proof}
From \eqref{eq-symp-push-perverse-pullback} we obtain a canonical isomorphism $\varepsilon_g\colon \pi^\phi g_! \cong \pi^\phi g_* \xrightarrow{\sim} \tau^\dag$. By \cref{lem-pre-dim-red-cl-imm} we thus have
    \begin{equation*}
        \tau_* \pi^\varphi
        \xleftarrow[\sim]{\counit} \tau_* \pi^\varphi g_! g^!
        \xrightarrow[\sim]{\varepsilon_g} \tau_* \tau^\dag g^!
        \xrightarrow[\sim]{} g^![-\dim(Y/B)],
    \end{equation*}
    where $\tau_* \tau^\dag \cong \tau_* \tau^* [r] \cong (-)[r]$ by homotopy invariance since $\tau$ is a vector bundle of rank $r=-\dim(Y/B)$.
    Similarly, $\tau_! \pi^\varphi \cong \tau_! \tau^\dag g^* \cong g^*[\dim(Y/B)]$.
\end{proof}

\subsection{Dimensional reduction for the Fourier--Sato transform}\label{sect:FourierSato}

We now turn our attention to another special case of the dimensional reduction theorem.
Given a perfect complex $E \in \Perf(B)$, we denote the total space $\Tot_B(E)$ also by $E$.
If we take the morphism $g$ in \cref{thm:dimensionalreduction} to be the zero section $0_E \colon B \to E$, observe that there is a canonical isomorphism $\T^*[-1](B/E) \cong E^{\vee}$ and the morphism $\pi \colon \T^*[-1](B/E) \to E$ is identified with the composite
\begin{equation}\label{eq:uEveeE}
    u_{E^\vee/E} \colon E^{\vee} \xrightarrow{p_{E^\vee}} B \xrightarrow{0_E} E.
\end{equation}
In particular, we obtain a canonical relative exact $(-1)$-shifted symplectic structure and orientation on $u_{E^\vee/E}$, whence a perverse pullback functor
\[
    u_{E^\vee/E}^{\varphi} \colon \Dbc(E) \to \Dbc(E^{\vee}).
\]

In this situation, the dimensional reduction theorem (\cref{thm:dimensionalreduction}) implies in particular:

\begin{proposition}\label{prop:dim red Fourier-Sato}
    Let $B$ be an lft higher Artin stack. Then,
    for every perfect complex $E \in \Perf(B)$ of rank $r$, there is a canonical isomorphism
    \begin{equation}\label{eq-dimred-FS}
        \red_E \colon p_{E^{\vee},*} u_{E^\vee/E}^\varphi p_{E}^!
        \cong (-)[r].
    \end{equation}
\end{proposition}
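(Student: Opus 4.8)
The plan is to deduce this proposition from the closed-immersion case of dimensional reduction, \cref{cor-dim-red-cl-imm}, applied to the zero section $0_E\colon B\to E$. As recalled just before the statement, taking $g\coloneqq 0_E$ in the role of the morphism ``$Y\to B$'' of \cref{thm:dimensionalreduction} --- so that the base is now the total space $E$ and the source is $B$ --- one has a canonical identification $\T^*[-1](B/E)\cong E^{\vee}$ under which the projection $\tau$ becomes $p_{E^{\vee}}\colon E^{\vee}\to B$, the composite $\pi=g\circ\tau$ becomes $u_{E^{\vee}/E}\colon E^{\vee}\to E$, and the canonical oriented relative exact $(-1)$-shifted symplectic structure on $\pi$ is the one carried by $u_{E^{\vee}/E}$ in the statement. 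First I would verify that we are in the situation of \cref{cor-dim-red-cl-imm}: the classical truncation $E^{\cl}$ is an lft higher Artin stack, being an affine morphism locally of finite presentation over $B^{\cl}$, and $0_E$ is a closed immersion, being the zero section of the affine morphism $p_E\colon E\to B$, i.e. corresponding to the split augmentation $\Sym_{\cO_B}(E^{\vee})\to\cO_B$.

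Granting this, the remainder is three short steps. First, \cref{cor-dim-red-cl-imm} yields a canonical isomorphism $p_{E^{\vee},*}\,u_{E^{\vee}/E}^{\varphi}\cong 0_E^{!}[-\dim(B/E)]$. Second, I would compute the shift: the cotangent cofiber sequence for $B\xrightarrow{0_E}E\xrightarrow{p_E}B$ gives $\bL_{B/E}\cong 0_E^{*}\bL_{E/B}[1]$, whence $\dim(B/E)=-\dim(E/B)=-\rank(E)=-r$ and the isomorphism reads $p_{E^{\vee},*}\,u_{E^{\vee}/E}^{\varphi}\cong 0_E^{!}[r]$. Third, I would precompose with $p_E^{!}$ and use that $p_E\circ 0_E=\id_B$, so $0_E^{!}\circ p_E^{!}\cong(p_E\circ 0_E)^{!}=\id_{\bD(B)}$; this produces the desired isomorphism $\red_E\colon p_{E^{\vee},*}\,u_{E^{\vee}/E}^{\varphi}\,p_E^{!}\cong(-)[r]$. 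The Verdier-dual statement $p_{E^{\vee},!}\,u_{E^{\vee}/E}^{\varphi}\,p_E^{*}\cong(-)[-r]$ would follow the same way from the first formula in \cref{cor-dim-red-cl-imm}.

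Since all the work is already contained in \cref{cor-dim-red-cl-imm}, I do not anticipate a real obstacle; the points that require care are purely bookkeeping. The first is that the canonical oriented relative exact $(-1)$-shifted symplectic structure on $u_{E^{\vee}/E}$ in the proposition really coincides with the one on $\pi$ coming from the relative $(-1)$-shifted cotangent bundle of $0_E$ --- this is immediate from \cref{ex:ocan} together with the identification $\T^*[-1](B/E)\cong E^{\vee}$, and is also visible through \cref{lem-crit-locus-vs-shifted-cotangent} when $E$ is a vector bundle. The second is keeping the degree shifts consistent, as above. As a sanity check on the normalization, for $B=\pt$ and $E$ an $r$-dimensional vector space, combining with \cref{prop:FSperversepullback} this recovers the classical computation that the Fourier--Sato transform sends $p_E^{!}\bu$ to $\bu[r]$ supported at the origin.
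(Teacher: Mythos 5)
Your argument correctly treats the case where $E$ has Tor-amplitude $\le 0$, but that is only the first of three cases in the paper's proof, and the reduction to \cref{cor-dim-red-cl-imm} does not work for a general perfect complex $E$. The problem is the claim that $0_E \colon B \to E$ is a closed immersion ``corresponding to the split augmentation $\Sym_{\cO_B}(E^\vee) \to \cO_B$''. This description is valid exactly when $E^\vee$ is connective, i.e.\ when $E$ has Tor-amplitude $\le 0$, so that $\Tot_B(E)$ is affine over $B$. For a general perfect complex --- say $E = V[1]$ for a vector bundle $V$, so $\Tot_B(E)$ has a copy of $\B V$ in it --- the projection $p_E$ is not affine and $0_E$ is not a closed immersion (its relative cotangent complex $E^\vee[1]$ is not $1$-connective). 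Since \cref{cor-dim-red-cl-imm} is stated and proved only for closed immersions (and its proof crucially uses that $g_!g^! \to \id$ is inverted by $g^!$), you cannot invoke it for the zero section of an arbitrary perfect complex.

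The paper's proof acknowledges this by making your argument precisely Case~1. The remaining work is genuinely additional: Case~2 reduces general $E$ over a $B$ with the resolution property to Case~1 via a choice of global resolution $f \colon F \to E$ with $F$ of Tor-amplitude $\le 0$, using the functoriality $\mathfrak{F}_{E'}f^\dagger \cong f^\vee_*\mathfrak{F}_E$ of \eqref{eq-FS-fiber-pullback}; the substance of that case is the independence of the resulting isomorphism from the chosen resolution, which requires the computation with the stabilization isomorphism $\stab^K_{K\oplus K^\vee}$ and the parity condition $\rank(K)$ even. Case~3 then glues these local constructions, using that the functor in question is perverse $t$-exact up to shift. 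None of these steps are ``bookkeeping'' in the sense you suggest; in particular the resolution-independence is where most of the proof's content lies. You should revise the argument to cover general $E$, either by following the paper's resolution-and-gluing strategy or by supplying an alternative reduction that does not require $0_E$ to be a closed immersion.
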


We will give a direct proof of \cref{prop:dim red Fourier-Sato} on our route towards the proof of \cref{thm:dimensionalreduction}.

\begin{remark}\label{rem:Fourier-Sato}
    The functor $u_{E^\vee/E}^\varphi$ preserves monodromic sheaves and coincides with (the restriction to perverse sheaves of) the Fourier–Sato transform $\Shv(E;R) \to \Shv(E^\vee;R)$ defined in \cite[\S2]{KhanKinjo}. When $E$ is a vector bundle, this is established in \cref{prop:FSperversepullback}, and the general case reduces to this situation. As this fact is not needed in the present work, we do not include the detail here.
\end{remark}

In view of \cref{rem:Fourier-Sato}, we abbreviate
\[ \mathfrak{F}_E \coloneqq u_{E^\vee/E}^{\varphi} \colon \Dbc(E) \to \Dbc(E^\vee). \]
This functor enjoys the following properties:

\begin{lemma}
    Let $g \colon B' \to B$ be a morphism between lft higher Artin stacks and $E$ be a perfect complex over $B$. Denote by $g_E \colon E |_ {B'} \to E$ and $g_{E^{\vee}} \colon E^{\vee} |_ {B'} \to E^{\vee}$ the base change of $g$.
    \begin{enumerate}
        \item If $g$ is a smooth morphism, there is a natural isomorphism
        \begin{equation}\label{eq-FS-base-pullback}
             \mathfrak{F}_{E |_ {B'}} g_{E}^{\dagger} \cong g_{E^{\vee}}^{\dagger}  \mathfrak{F}_{E}.
        \end{equation}
         \item If $g$ is a finite morphism, there is a natural isomorphism
        \begin{equation}\label{eq-FS-base-pushforward}
            \mathfrak{F}_{E} g_{E, *} \cong g_{E^{\vee}, *} \mathfrak{F}_{E |_ {B'}}.
        \end{equation}
        \item Assume that we are further given a morphism $h \colon \tilde{B} \to B$ and form the following Cartesian square:
        \[\begin{tikzcd}
        	{\tilde{B'}} & {\tilde{B}} \\
        	{B'} & {B.}
        	\arrow["{\tilde{g}}", from=1-1, to=1-2]
        	\arrow["{h'}", from=1-1, to=2-1]
        	\arrow["h"', from=1-2, to=2-2]
        	\arrow["g", from=2-1, to=2-2]
        \end{tikzcd}\]       
        Assume further that $g$ is finite and $h$ is smooth.
        We define $h_E$, $h'_E$, $\tilde{g}_E$, $h_{E^{\vee}}$, $h'_{E^{\vee}}$, $\tilde{g}_{E^{\vee}}$ in similar manners to $g_E$ and $g_{E^{\vee}}$. 
        Then the following diagram commutes:
        \begin{equation}\label{eq-FS-base-base-smooth-finite}
        \begin{tikzcd}
        	{ \mathfrak{F}_{E |_{\tilde{B}}}  \tilde{g}_{E,*}   h_{E}'^{\dagger}} & { \mathfrak{F}_{E |_{\tilde{B}}}    h_{E}^{\dagger}  {g}_{E,*}} & {h_{E^{\vee}}^{\dagger} \mathfrak{F}_{E } g_{E, *} } \\
        	{ \tilde{g}_{E^{\vee},*}  \mathfrak{F}_{E |_{\tilde{B}'}}  h_{E}'^{\dagger}} & { \tilde{g}_{E^{\vee},*} h_{E^{\vee}}'^{\dagger} \mathfrak{F}_{E |_{B'}} } & {h_{E^{\vee}}^{\dagger} g_{E^{\vee},*} \mathfrak{F}_{E |_{B'}}.}
        	\arrow["\textnormal{{(\ref{eq-FS-base-pushforward})}}"', from=1-1, to=2-1]
        	\arrow["{{\Ex^!_*}}"', "\sim", from=1-2, to=1-1]
        	\arrow["\textnormal{{(\ref{eq-FS-base-pullback})}}", from=1-2, to=1-3]
        	\arrow["\textnormal{{(\ref{eq-FS-base-pushforward})}}", from=1-3, to=2-3]
        	\arrow["\textnormal{{(\ref{eq-FS-base-pullback})}}"', from=2-1, to=2-2]
        	\arrow["{{\Ex^!_*}}", "\sim"', from=2-3, to=2-2]
        \end{tikzcd}
        \end{equation}
    \end{enumerate}
\end{lemma}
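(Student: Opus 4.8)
The plan is to deduce (1) and (2) directly from the base-change isomorphisms recorded in \cref{thm:perversepullback}, and to prove (3) by reducing to a diagram of standard compatibilities in constructible sheaf theory.

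\textbf{Parts (1) and (2).} Since $u_{E^\vee/E}$ factors as $E^\vee \xrightarrow{p_{E^\vee}} B \xrightarrow{0_E} E$, a short computation gives $E^\vee \times_E (E\times_B B') \cong E^\vee \times_B B' = E^\vee|_{B'}$, so the square with horizontal maps $g_{E^\vee}\colon E^\vee|_{B'}\to E^\vee$, $g_E\colon E|_{B'}\to E$ and vertical maps $u_{E^\vee|_{B'}/E|_{B'}}$, $u_{E^\vee/E}$ is Cartesian, and the left vertical map is indeed $u_{E^\vee|_{B'}/E|_{B'}}=0_{E|_{B'}}\circ p_{E^\vee|_{B'}}$. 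If $g$ is smooth (resp. finite), so is its base change $g_E$. Moreover, the canonical oriented relative exact $(-1)$-shifted symplectic structure on $u_{E^\vee|_{B'}/E|_{B'}}$ (the projection $\T^*[-1](B'/E|_{B'})\to E|_{B'}$) is the pullback along $g_E$ of the one on $u_{E^\vee/E}$: the underlying symplectic structure because relative $(-1)$-shifted cotangent bundles are stable under base change, and the orientation because the canonical orientation of a relative derived critical locus (here with the zero function) is stable under base change, by \cite[Section 6.1]{KKPS1} — this is the same type of compatibility verified in \cref{lem-crit-locus-vs-shifted-cotangent}. Granting this, \eqref{eq-FS-base-pullback} is the isomorphism \eqref{eq:alphabasechange} of \cref{thm:perversepullback}(2) applied to this square viewed over the base $E$, and \eqref{eq-FS-base-pushforward} is the isomorphism $\beta_{g_E}$ of \cref{thm:perversepullback}(4).

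\textbf{Part (3).} The hexagon \eqref{eq-FS-base-base-smooth-finite} is an equality of natural transformations, so it suffices to check it after pullback along a smooth surjection onto $B$ from an affine scheme (using (1) and finite base change to see the functors involved are compatible with such a pullback); a further standard dévissage (cf. \cref{rem:Fourier-Sato}) reduces to the case where $B$ is an affine scheme of finite type and $E$ a vector bundle. In that case, exactly as in the proof of \cref{prop:FSperversepullback}, \cref{thm:perversepullback}(1) identifies $\mathfrak{F}_E$ with $(\phi_{\ev}\circ\pr_1^{\dag})(-)|_{E^\vee}$, where the final restriction is onto the zero section $E^\vee\hookrightarrow E\times_B E^\vee$ (the support of $\phi_{\ev}\pr_1^\dag(-)$). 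Under this identification, both \eqref{eq-FS-base-pullback} and \eqref{eq-FS-base-pushforward} unwind into the evident isomorphisms built from the compatibility of smooth $\dag$-pullback with finite pushforward, the compatibility of vanishing cycles $\phi$ with smooth pullback and with proper pushforward, and the naturality of restriction to a closed support. The hexagon then decomposes into squares, each an instance of the coherence between smooth and proper base change in the six-functor formalism, of the analogous coherence for $\phi$, or of naturality of restriction. (Alternatively, if one takes as input that the isomorphisms in \cref{thm:perversepullback}(2) and (4) form a single coherent system of base-change data in \cite{KKPS1}, the hexagon is formal.)

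\textbf{Main obstacle.} The substance is in (3): one must match the abstractly-defined isomorphisms \eqref{eq:alphabasechange} and $\beta$ with their Darboux-chart descriptions in terms of $\phi$ and $\dag$-pullback, and then verify that the resulting diagram of standard natural transformations commutes on the nose — routine but bookkeeping-heavy. The only genuine input needed for (1) and (2) is the base-change stability of the canonical orientation; once that is spelled out, (1) and (2) follow immediately from \cref{thm:perversepullback}.
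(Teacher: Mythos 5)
Your proof of (1) and (2) is essentially the paper's: identify the Cartesian square
\[
\begin{tikzcd}
E^\vee|_{B'} \ar[r,"g_{E^\vee}"] \ar[d,"u_{E^\vee|_{B'}/E|_{B'}}"'] & E^\vee \ar[d,"u_{E^\vee/E}"] \\
E|_{B'} \ar[r,"g_E"] & E,
\end{tikzcd}
\]
verify that the canonical oriented exact $(-1)$-shifted symplectic structure on the left is the pullback of the one on the right (the point the paper leaves implicit but that you correctly flag and justify via \cite[Section 6.1]{KKPS1}), and then apply the smooth base-change isomorphism $\alpha$ of \eqref{eq-Lagrangian functoriality of perverse pullback} (equivalently \eqref{eq:alphabasechange}) and the finite base-change isomorphism $\beta$ of \eqref{eq-perverse-pullback-symplectic-finite-base-change-compatibility}. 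The paper's one-line proof cites exactly these.

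For (3), your ``alternative'' parenthetical is in fact the paper's actual proof: the hexagon follows formally from the coherence of the $\alpha$ and $\beta$ data established in \cite[Theorem 5.23]{KKPS1}. Your primary route, however, has a gap. The claimed dévissage ``to the case where $B$ is an affine scheme of finite type and $E$ a vector bundle'' is not available: a perfect complex on an affine scheme is not a vector bundle, and passing to a resolution $F\to E$ with $F$ of Tor-amplitude $\le 0$ and then deducing the hexagon for $E$ from the one for $F$ is precisely what would require the compatibilities \eqref{eq-FS-fiber-pullback}/\eqref{eq-FS-fiber-pushforward} and their coherence with \eqref{eq-FS-base-pullback}/\eqref{eq-FS-base-pushforward} — which is circular at this stage (compare the actual resolution-based argument the paper runs later in the proof of \cref{prop:dim red Fourier-Sato}, which needs all of \cref{lem-fs-fiber-functorial} as input). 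The citation to \cref{rem:Fourier-Sato} does not supply this reduction; that remark concerns identifying $\mathfrak{F}_E$ with a Fourier–Sato transform, not a descent argument for a commutativity statement. So if you want to give a self-contained proof of (3) along your primary route, you must first establish the full coherence package of \cref{lem-fs-fiber-functorial} and argue the reduction carefully; otherwise, just invoke \cite[Theorem 5.23]{KKPS1} as the paper does.
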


\begin{proof}
    The first two statements are straightforward consequences of \eqref{eq-Lagrangian functoriality of perverse pullback} and \eqref{eq-perverse-pullback-symplectic-finite-base-change-compatibility}. The third statement follows from the compatibility relations between $\alpha$ and $\beta$ in \cite[Theorem 5.23]{KKPS1}.
\end{proof}

\begin{lemma}\label{lem-fs-fiber-functorial}
    Let $B$ be an lft higher Artin stack and $f \colon E' \to E$ a morphism of perfect complexes over $B$.
    We let $f^{\vee} \colon E^{\vee} \to E'^{\vee}$ denote the dual of $f$.
    \begin{enumerate}
        \item If $f$ is smooth (equivalently, its fiber is of Tor-amplitude $\le 0$), then $f^{\vee}$ is a closed immersion and there is a natural isomorphism
        \begin{equation}\label{eq-FS-fiber-pullback}
        \mathfrak{F}_{E'} f^{\dagger} \cong f^{\vee}_* \mathfrak{F}_{E}.
        \end{equation}
        which is functorial for compositions in $f$.
    
        \item If $f$ is a closed immersion (equivalently, its fiber is of Tor-amplitude $> 0$), then $f^{\vee}$ is smooth and there is a natural isomorphism
        \begin{equation}\label{eq-FS-fiber-pushforward}
        \mathfrak{F}_{E} f_* \cong  f^{\vee,\dagger} \mathfrak{F}_{E'} 
        \end{equation}
        which is functorial for compositions in $f$.

        \item Assume that we are given a morphism $e \colon \tilde{E} \to E$ and form the following Cartesian square:
        \[\begin{tikzcd}
        	{\tilde{E}'} & {\tilde{E}} \\
        	{E'} & {E.}
        	\arrow["{\tilde{f}}", from=1-1, to=1-2]
        	\arrow["{e'}", from=1-1, to=2-1]
        	\arrow["{e}", from=1-2, to=2-2]
        	\arrow["f", from=2-1, to=2-2]
        \end{tikzcd}\]
        Assume further that $f$ is smooth and $e_2$ is a closed immersion. Then the following diagram commutes:
        \begin{equation}\label{eq-FS-fibre-fibre-smooth-finite}
        \begin{tikzcd}
        	{\mathfrak{F}_{E'}  e_{ *} \tilde{f}^{\dagger}} & {\mathfrak{F}_{E'} f^{\dagger} e_{*}} & {   f^{\vee}_*  \mathfrak{F}_{E} e_{*}} \\
        	{ (e'^{\vee})^{\dagger} \mathfrak{F}_{\tilde{E}'}  \tilde{f}^{\dagger}} & { (e'^{\vee})^{\dagger}   \tilde{f}^{\vee}_* \mathfrak{F}_{\tilde{E}}} & {   f^{\vee}_*  (e^{\vee})^{\dagger} \mathfrak{F}_{\tilde{E}}.}
        	\arrow["{{(\ref{eq-FS-fiber-pushforward})}}"', from=1-1, to=2-1]
        	\arrow["{\sim}", "\Ex^!_*"', from=1-2, to=1-1]
        	\arrow["{{(\ref{eq-FS-fiber-pullback})}}", from=1-2, to=1-3]
        	\arrow["{{(\ref{eq-FS-fiber-pushforward})}}", from=1-3, to=2-3]
        	\arrow["{{(\ref{eq-FS-fiber-pullback})}}"', from=2-1, to=2-2]
        	\arrow["{\sim}", "\Ex^!_*"', from=2-2, to=2-3]
        \end{tikzcd}
        \end{equation}

        \item Assume that we are given a morphism $g \colon B' \to B$ and form the following Cartesian square:
        \begin{equation}
        \begin{tikzcd}
        	{E' |_{B'}} & {E |_{B'}} \\
        	{E'} & {E.}
        	\arrow["{f'}", from=1-1, to=1-2]
        	\arrow["{g_{E'}}"', from=1-1, to=2-1]
        	\arrow["{g_{E}}", from=1-2, to=2-2]
        	\arrow["f", from=2-1, to=2-2]
        \end{tikzcd}
        \end{equation}
                If $f$ and $g$ are smooth, then the following diagram commutes:
        \begin{equation}\label{eq-FS-base-fibre-smooth-smooth}
        \begin{tikzcd}
        	{ \mathfrak{F}_{E' |_{B'}}  f'^{\dagger}  g_{E}^{\dagger}} & { \mathfrak{F}_{E' |_{B'}} g_{E'}^{\dagger} f^{\dagger} } & {g_{E'^{\vee}}^{\dagger} \mathfrak{F}_{E'} f^{\dagger}} \\
        	{  f'^{\vee}_*  \mathfrak{F}_{E|_{B'}} g_{E}^{\dagger}} & {  f'^{\vee}_* g_{E^{\vee}}^{\dagger} \mathfrak{F}_{E}} & {g_{E'^{\vee}}^{\dagger}  f^{\vee}_* \mathfrak{F}_{E}.}
        	\arrow["\sim", from=1-1, to=1-2]
        	\arrow["{{(\ref{eq-FS-fiber-pullback})}}"', from=1-1, to=2-1]
        	\arrow["{{(\ref{eq-FS-base-pullback})}}", from=1-2, to=1-3]
        	\arrow["{{(\ref{eq-FS-fiber-pullback})}}", from=1-3, to=2-3]
        	\arrow["{{(\ref{eq-FS-base-pullback})}}"', from=2-1, to=2-2]
        	\arrow["{{\Ex^!_*}}", "\sim"', from=2-3, to=2-2]
        \end{tikzcd}
        \end{equation}
        If $f$ is smooth and $g$ is finite, then the following diagram commutes:
        \begin{equation}\label{eq-FS-base-fibre-finite-smooth}
        \begin{tikzcd}
        	{\mathfrak{F}_{E'} f^\dagger g_{E, *}} & {\mathfrak{F}_{E'}  g_{E', *} f'^\dagger} & { g_{E'^{\vee}, *} \mathfrak{F}_{E' |_{B'}}  f'^\dagger} \\
        	{f^{\vee}_* \mathfrak{F}_{E}  g_{E, *}} & {f^{\vee}_* g_{E^{\vee}, *} \mathfrak{F}_{E |_{B'}}  } & {g_{E'^{\vee}, *} f'^{\vee}_*  \mathfrak{F}_{E |_{B'}}.  }
        	\arrow["{\Ex^!_*}", from=1-1, to=1-2]
        	\arrow["{(\ref{eq-FS-fiber-pullback})}"', from=1-1, to=2-1]
        	\arrow["{(\ref{eq-FS-base-pushforward})}", from=1-2, to=1-3]
        	\arrow["{(\ref{eq-FS-fiber-pullback})}", from=1-3, to=2-3]
        	\arrow["{(\ref{eq-FS-base-pushforward})}"', from=2-1, to=2-2]
        	\arrow["\sim"', from=2-2, to=2-3]
        \end{tikzcd}
        \end{equation}
        If $f$ is a closed immersion and $g$ is smooth, then the following diagram commutes:
        \begin{equation}\label{eq-base-fibre-smooth-finite}
        \begin{tikzcd}
        	{ \mathfrak{F}_{E|_{B'}} f'_* g_{E'}^{\dagger}} & { \mathfrak{F}_{E|_{B'}} g_{E}^{\dagger} f_*} & {g_{E^{\vee}}^{\dagger} \mathfrak{F}_{E} f_*} \\
        	{ (f'^{\vee})^{\dagger} \mathfrak{F}_{E'|_{B'}}  g_{E'}^{\dagger} } & {(f'^{\vee})^{\dagger} g_{E'^{\vee}}^{\dagger} \mathfrak{F}_{E'} } & {g_{E^{\vee}}^{\dagger}(f^{\vee})^{\dagger} \mathfrak{F}_{E'}.}
        	\arrow["{{{(\ref{eq-FS-fiber-pushforward})}}}"', from=1-1, to=2-1]
        	\arrow["\sim", "\Ex^!_*"', from=1-2, to=1-1]
        	\arrow["{{{(\ref{eq-FS-base-pullback})}}}", from=1-2, to=1-3]
        	\arrow["{{{(\ref{eq-FS-fiber-pushforward})}}}", from=1-3, to=2-3]
        	\arrow["{{{(\ref{eq-FS-base-pullback})}}}"', from=2-1, to=2-2]
        	\arrow["\sim"', from=2-2, to=2-3]
        \end{tikzcd}
        \end{equation}
        If $f$ is a closed immersion and $g$ is finite, then the following diagrams commutes:
        \begin{equation}\label{eq-FS-base-fibre-finite-finite}
        \begin{tikzcd}
        	{\mathfrak{F}_{E} f_* g_{E', *}} & {\mathfrak{F}_{E}  g_{E, *} f'_*} & { g_{E^{\vee}, *} \mathfrak{F}_{E|_{B'}}  f'_*} \\
        	{(f^{\vee})^{\dagger} \mathfrak{F}_{E'} g_{E', *}} & {(f^{\vee})^{\dagger} g_{E'^{\vee}, *} \mathfrak{F}_{E'|_{B'}} } & { g_{E^{\vee}, *} (f'^{\vee})^{\dagger} \mathfrak{F}_{E'|_{B'}}.}
        	\arrow["\sim", from=1-1, to=1-2]
        	\arrow["{{(\ref{eq-FS-fiber-pushforward})}}"', from=1-1, to=2-1]
        	\arrow["{{(\ref{eq-FS-base-pushforward})}}", from=1-2, to=1-3]
        	\arrow["{{(\ref{eq-FS-fiber-pushforward})}}", from=1-3, to=2-3]
        	\arrow["{{(\ref{eq-FS-base-pushforward})}}"', from=2-1, to=2-2]
        	\arrow["{{\Ex^!_*}}"', from=2-2, to=2-3]
        \end{tikzcd}
        \end{equation} 
    \end{enumerate}
\end{lemma}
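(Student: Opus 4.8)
The plan is to deduce all four parts from the fact that $\mathfrak{F}_E = u_{E^\vee/E}^{\varphi}$ is a perverse pullback, using the functorialities of \cref{thm:perversepullback} --- chiefly the symplectic-pushforward isomorphisms $\gamma$ and $\varepsilon$ of property (5) --- together with their mutual coherences from \cite{KKPS1}. The one geometric input needed is the following identification of symplectic pushforwards. Given a morphism $\phi\colon E'\to E$ of perfect complexes over $B$, write $f\colon E'\to E$ for the induced morphism of total spaces and $f^{\vee}\colon E^\vee\to E'^\vee$ for its dual. Then the symplectic pushforward of $u_{E'^\vee/E'}\colon E'^{\vee}\to E'$ along $f\colon E'\to E$ is canonically $u_{E^\vee/E}\colon E^{\vee}\to E$, with the structure morphism $i\colon E^{\vee}\to E'^{\vee}$ identified with $f^{\vee}$. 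Indeed $u_{E'^\vee/E'}$ is the relative $(-1)$-shifted cotangent bundle $\T^*[-1](B/E')\to E'$, which is the symplectic pushforward of the zero object $(\mathrm{id}_B,0)$ along $0_{E'}\colon B\to E'$; pushing further along $f$ and using that symplectic pushforward respects composition gives $(f\circ 0_{E'})_*(\mathrm{id}_B,0) = (0_E)_*(\mathrm{id}_B,0) = \T^*[-1](B/E)$, i.e.\ $u_{E^\vee/E}$. The map $i$ is the one induced on $\T^*[-1]$ by $\bL_{B/E}\to\bL_{B/E'}$, namely $f^{\vee}$, and a computation in the spirit of \cref{lem-crit-locus-vs-shifted-cotangent} (tracking the maps $\theta$ and $\iota$ as in \cite{KPS}) shows that the identification matches the canonical orientations on the two sides.

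Granting this, parts (1) and (2) become instances of property (5) of \cref{thm:perversepullback}, applied with $\pi = u_{E'^\vee/E'}$ and $p = f$. When $f$ is smooth, $f^{\vee}$ is a closed immersion and $\gamma_f\colon \pi^{\varphi}p^{\dagger}\xrightarrow{\sim} i_*\bar{\pi}^{\varphi}$ becomes \eqref{eq-FS-fiber-pullback}; when $f$ is a closed immersion, $f^{\vee}$ is smooth and $\varepsilon_f\colon \bar{\pi}^{\varphi}p_*\xrightarrow{\sim} i^{\dagger}\pi^{\varphi}$ becomes \eqref{eq-FS-fiber-pushforward}. Functoriality for compositions $\phi_2\circ\phi_1$ follows from $(f_2\circ f_1)_*\cong f_{2*}\circ f_{1*}$ on symplectic pushforwards together with the compatibility of $\gamma$ (resp.\ $\varepsilon$) with composition in the base morphism, which is part of the structure established in \cite{KKPS1}.

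For parts (3) and (4), each square asserts that two of the isomorphisms built above --- in (4) also combined with the smooth base change $\alpha$ of \eqref{eq-FS-base-pullback} and the finite base change $\beta$ of \eqref{eq-FS-base-pushforward} from the preceding lemma --- agree around a Cartesian square. Once every $2$-cell is identified as an instance of $\gamma$, $\varepsilon$, $\alpha$, or $\beta$, commutativity follows from two ingredients: that symplectic pushforward commutes with smooth and with finite base change and that $\gamma,\varepsilon$ are natural for such base changes; and the mutual compatibility relations among $\alpha,\beta,\gamma,\varepsilon$ of \cite[Theorem 5.23]{KKPS1}. This is the same mechanism used in the proof of \eqref{eq-FS-base-base-smooth-finite}.

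The main difficulty lies in the geometric identification of the first paragraph: pinning down the isomorphism $f_*\T^*[-1](B/E')\cong\T^*[-1](B/E)$ so that it is compatible with the structure maps and with the $i$-maps (so that $i=f^{\vee}$ on the nose) and, more delicately, so that it carries the canonical orientation to the canonical orientation. Once that is settled, parts (1)--(2) are immediate and parts (3)--(4) reduce to a long but routine unwinding of the coherences of \cite{KKPS1}.
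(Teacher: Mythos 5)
Your proof is correct and follows essentially the same approach as the paper: the key geometric input that $u_{E^\vee/E}$ is the symplectic pushforward of $u_{E'^\vee/E'}$ along $f$ (which the paper also states as its displayed fact $(\ast)$), combined with $\gamma_f$ and $\varepsilon_f$ of \cref{thm:perversepullback}(5) for parts (1)--(2), and the coherence relations of $\alpha,\beta,\gamma,\varepsilon$ for parts (3)--(4). Your elaboration of the geometric identification via composition of symplectic pushforwards of the zero object is a valid and useful expansion; the paper is merely slightly more specific in citing \cite[Proposition 6.10(4)]{KKPS1} for (3) and \cite[Theorem 5.23, Proposition 6.10(2,3)]{KKPS1} for the four squares in (4).
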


\begin{proof}
    In view of the identifications $\T^*(B/E')[-1] \cong E'^\vee$ and $\T^*(B/E)[-1] \cong E^\vee$, the functoriality of the symplectic pushforward implies:
    \begin{enumerate}
        \item[$(\ast)$]
        The morphism $u_{E^\vee/E} \colon E^\vee \to E$, together with its relative $(-1)$-shifted symplectic structure, is identified with the symplectic pushforward of $u_{E'^\vee/E'} \colon E'^\vee \to E'$ along $f \colon E' \to E$.
    \end{enumerate}
    In other words, we have a symplectic pushforward square
    \[\begin{tikzcd}
        {E^{\vee}} \ar{r}{f^\vee}\ar{d}{u_{E^\vee/E}} & {E'^{\vee}} \ar{d}{u_{E'^\vee/E'}} \\
        {E}  & {E'.} \ar{l}{f}
    \end{tikzcd}\]
    The first and second claims then follow by applying \eqref{eq-perverse-pullback-symplectic-finite-base-change-compatibility} and \eqref{eq-symp-push-perverse-pullback}. The third claim is a special case of \cite[Proposition 6.10(4)]{KKPS1}. The fourth claim follows form the compatibility relations of $\alpha$, $\beta$ and $\gamma$ in \cite[Theorem 5.23, Proposition 6.10(2,3)]{KKPS1} respectively.
\end{proof}

\begin{proposition}\label{prop-FS-product}
    Let $E_i \to B_i$ be perfect complexes over lft higher Artin stacks for $i = 1, 2$.
    Then there is a natural isomorphism of functors
    \begin{equation}\label{eq-FS-TS}
    \TS\colon \mathfrak{F}_{E_1}(-) \boxtimes \mathfrak{F}_{E_2}(-) \cong \mathfrak{F}_{E_1 \boxplus E_2}(- \boxtimes -).
    \end{equation}
    Further, it is compatible with the isomorphisms \eqref{eq-FS-base-pullback}, \eqref{eq-FS-base-pushforward}, \eqref{eq-FS-fiber-pullback} and \eqref{eq-FS-fiber-pushforward} in a natural manner.
\end{proposition}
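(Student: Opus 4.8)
The plan is to deduce \cref{prop-FS-product} from the Thom--Sebastiani isomorphism for perverse pullbacks, \cref{thm:perversepullback}(6), together with the coherence results of \cite{KKPS1}. Recall that $\mathfrak{F}_E = u_{E^\vee/E}^{\varphi}$, where $u_{E^\vee/E}\colon E^\vee \xrightarrow{p_{E^\vee}} B \xrightarrow{0_E} E$ carries the oriented relative exact $(-1)$-shifted symplectic structure coming from the identification $\T^*[-1](B/E) \cong E^\vee$ together with the canonical orientation of a relative $(-1)$-shifted cotangent bundle; in particular this makes sense for arbitrary perfect complexes $E$. Applying \cref{thm:perversepullback}(6) to $\pi_i = u_{E_i^\vee/E_i}$ produces a natural isomorphism
\[
\mathfrak{F}_{E_1}(-) \boxtimes \mathfrak{F}_{E_2}(-) \cong \bigl(u_{E_1^\vee/E_1} \times u_{E_2^\vee/E_2}\bigr)^{\varphi}(- \boxtimes -),
\]
where the right-hand side is taken with the product oriented relative $(-1)$-shifted symplectic structure. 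It then remains to identify $u_{E_1^\vee/E_1} \times u_{E_2^\vee/E_2}$, equipped with this product structure, with $u_{(E_1\boxplus E_2)^\vee/(E_1\boxplus E_2)}$ equipped with its canonical one. On underlying derived stacks this is the tautological equivalence
\[
\T^*[-1]\bigl((B_1\times B_2)/(E_1\boxplus E_2)\bigr) \cong \T^*[-1](B_1/E_1) \times \T^*[-1](B_2/E_2) \cong E_1^\vee \times E_2^\vee,
\]
which is compatible with the relative Liouville forms; and the two orientations agree because the canonical orientation of a relative $(-1)$-shifted cotangent bundle (equivalently, of a derived critical locus) is multiplicative for external products, as follows from its construction in \cite[Section 6.1]{KKPS1}. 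This yields \eqref{eq-FS-TS}.

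For the compatibility of $\TS$ with \eqref{eq-FS-base-pullback}, \eqref{eq-FS-base-pushforward}, \eqref{eq-FS-fiber-pullback} and \eqref{eq-FS-fiber-pushforward}, I would recall that each of these four isomorphisms is a specialization of a structural isomorphism of the perverse-pullback formalism applied to $u_{E^\vee/E}$. Indeed, \eqref{eq-FS-base-pullback} and \eqref{eq-FS-base-pushforward} are the smooth and finite base-change isomorphisms \eqref{eq:alphabasechange} and \eqref{eq-perverse-pullback-symplectic-finite-base-change-compatibility} along the base changes $g_E\colon E|_{B'}\to E$; while, under the identification of $u_{E^\vee/E}$ with the symplectic pushforward of $u_{E'^\vee/E'}$ along $f\colon E'\to E$ (from the proof of \cref{lem-fs-fiber-functorial}), the isomorphisms \eqref{eq-FS-fiber-pullback} and \eqref{eq-FS-fiber-pushforward} are the symplectic-pushforward isomorphisms \eqref{eq-symp-push-perverse-pullback-smooth} and \eqref{eq-symp-push-perverse-pullback}. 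The required compatibilities of $\TS$ with these base-change and symplectic-pushforward isomorphisms are among the coherence relations established in \cite[Theorem 5.23, Proposition 6.10]{KKPS1}; together with the evident compatibility of the external-product identification of symplectic and orientation data from the previous paragraph with base change and with symplectic pushforward, these yield the four commuting squares, as well as the asserted functoriality for compositions.

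Thus the argument is essentially formal, and the only points that require care are bookkeeping. The main one is the identification in the first paragraph of the product oriented relative $(-1)$-shifted symplectic structure with the canonical one: the symplectic forms agree tautologically, and the non-trivial content is that the external product of canonical orientations is again canonical, i.e.\ a Thom--Sebastiani compatibility for the orientations $o^{\can}$ of \cite[Section 6.1]{KKPS1} (equivalently, $o^{\can}$ for $\R\Crit_{U_1\times U_2/B_1\times B_2}(f_1\boxplus f_2)$ is the external product of the $o^{\can}$ for $\R\Crit_{U_i/B_i}(f_i)$). The secondary point is to confirm that the coherence diagrams of \cite{KKPS1} relating $\TS$ to the base-change and symplectic-pushforward isomorphisms apply at the present level of generality of perfect complexes over lft higher Artin stacks; since those statements are proved in that generality, this is immediate. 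No input beyond \cref{thm:perversepullback} and the cited coherence results is needed.
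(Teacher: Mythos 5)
Your proposal is correct and takes essentially the same approach as the paper: derive \eqref{eq-FS-TS} as a specialization of the structural Thom--Sebastiani isomorphism \eqref{eq:TS} applied to $\pi_i = u_{E_i^\vee/E_i}$, and reduce all four compatibilities to the coherence relations of \cite[Theorem~5.23, Proposition~6.10]{KKPS1}. Your version usefully makes explicit a point the paper's one-line proof elides: that specializing \eqref{eq:TS} requires identifying the product oriented relative $(-1)$-shifted symplectic structure on $u_{E_1^\vee/E_1}\times u_{E_2^\vee/E_2}$ with the canonical one on $u_{(E_1\boxplus E_2)^\vee/(E_1\boxplus E_2)}$, which beyond the tautological identification of the shifted symplectic structures reduces to the multiplicativity of the canonical orientation $o^{\can}$ under external products.
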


\begin{proof}
    The isomorphism \eqref{eq-FS-TS} is a special case of the isomorphism \eqref{eq:TS}. The compatibility relations are proved in \cite[Theorem 5.23(5) and Proposition 6.10(6)]{KKPS1}.
\end{proof}

Now consider the case when $E$ is a vector bundle (i.e., a  perfect complex of Tor-amplitude $[0, 0]$) over an lft higher Artin stack.
Let $q_E \colon E \times_B E^{\vee} \to \bA^1$ be the pairing function.
It follows from \cref{lem-crit-locus-vs-shifted-cotangent} that there exists a canonical equivalence
\[
E^{\vee} \cong \R\Crit_{E \times_B E^{\vee} / E} (q_E)
\]
preserving the exact $(-1)$-shifted symplectic structure over $E$ and the canonical orientation.
In particular, there exists a natural equivalence
\begin{equation}\label{eq-FS-vect-bundle}
(E^{\vee} \hookrightarrow E \times_B E^{\vee})_* \mathfrak{F}_E \cong   \varphi_{q_E} (E \times_B E^{\vee} \to E)^\dagger.
\end{equation}

In \cite[Section 5.1.1]{KKPS1} we have defined the stabilization isomorphism
\[\stab^E_{E\oplus E^\vee}\colon 0_{E\oplus E^\vee, *}\xrightarrow{\sim} \varphi_{q_E} p_{E\oplus E^\vee}^\dag.\]

By construction,
the following diagrams commute:
\begin{equation}\label{eq-FS-bundle-push}
\begin{tikzcd}
	{\mathfrak{F}_E 0_{E, *}} & {p_{E^{\vee}}^{\dagger}} \\
	{(E^{\vee} \hookrightarrow E \times_B E^{\vee})^* \varphi_{q_E} (E \times_B E^{\vee} \to E)^{\dagger} 0_{E, *}} & {(E^{\vee} \hookrightarrow E \times_B E^{\vee})^* (E \times_B E^{\vee} \to E)^{\dagger} 0_{E, *},}
	\arrow["{(\ref{eq-FS-fiber-pushforward})}", from=1-1, to=1-2]
	\arrow["\cong"', "(\ref{eq-FS-vect-bundle})", from=1-1, to=2-1]
	\arrow["\cong", "(\ref{eq-FS-vect-bundle})"', from=1-2, to=2-2]
	\arrow["\cong"', from=2-1, to=2-2]
\end{tikzcd}
\end{equation}
\begin{equation}\label{eq-FS-bundle-pull}
\begin{tikzcd}
	{\mathfrak{F}_E p_E^{\dagger}} &[40pt] {0_{E^{\vee}, *}} \\
	{(E \hookrightarrow E \oplus E^{\vee})^! \varphi_{q_E} p_{E \oplus E^{\vee}}^{\dagger}} & {0_{E^{\vee}, *}.}
	\arrow["{(\ref{eq-FS-fiber-pullback})}", "\cong"', from=1-1, to=1-2]
	\arrow[from=1-1, to=2-1, "\cong"']
	\arrow[equals, from=1-2, to=2-2]
	\arrow["{\stab^{E}_{E \oplus E^{\vee}}}", "\cong"', from=2-2, to=2-1]
\end{tikzcd}
\end{equation}



\begin{proof}[Proof of \cref{prop:dim red Fourier-Sato}]

    We first prove the result for special cases and then deduce the general case from these.

    \textit{Case 1: $E$ is of Tor-amplitude $\leq 0$.}
    
    In this case, the zero section $0_E \colon B \hookrightarrow E$ is a closed immersion, so the claim follows from the isomorphism
    \begin{equation}\label{eq-dim-red-climm-tor>0}
      p_{E^{\vee},*} \mathfrak{F}_E \simeq 0_{E}^![r]
    \end{equation}
    provided by \cref{cor-dim-red-cl-imm}.

    \textit{Case 2: $B$ admits the resolution property, e.g. $B$ is a quasi-projective scheme.}
    
    We next consider the case when $B$ admits the resolution property.
    We choose a global resolution for $E$, i.e., a morphism of perfect complexes $f \colon F \to E$ such that
    \begin{itemize}
        \item $f$ is a smooth morphism.
        \item The projection $p_F \colon F \to B$ is representable.
    \end{itemize}
    We define an isomorphism
    \[
         \red^{F}_E \colon (p_{E^{\vee}})_*\mathfrak{F}_{E} p_{E}^! \simeq (-)[r]
    \]
    as the composite
    \[
    (p_{E^{\vee}})_*\mathfrak{F}_{E} p_{E}^! \simeq 
    (p_{F^{\vee}})_* f^{\vee}_* \mathfrak{F}_{E} p_{E}^! \xleftarrow[\simeq]{\eqref{eq-FS-fiber-pullback}}
    (p_{F^{\vee}})_*  \mathfrak{F}_{F} f^! p_{E}^! [- \dim f] \xrightarrow[\simeq]{\eqref{eq-dim-red-climm-tor>0}} (-)[r].
    \]

    We claim that $\red^{F}_E$ does not depend on the choice of the resolution $f \colon F \to E$,
    as long as the relative dimension of $f$ is even.
    Assuming this, we define the dimensional reduction isomorphism by
    \[
    \red_E \coloneqq \red^{F}_E
    \]
    for arbitrarily chosen resolution $F \to E$ with even relative dimension.
    To prove the independence of the resolution, take another resolution $f' \colon F' \to E$ such that $\dim f'$ is even.
    We let $F''$ be a resolution of $\fib(F \oplus F' \to E)$,
    such that the relative dimension over $E$ is even.
    To prove $\red^{F}_E \simeq \red^{F'}_E$, it is enough to prove 
    $\red^{F}_E \simeq \red^{F''}_E$ and
    $\red^{F'}_E \simeq \red^{F''}_E$.
    In particular, we may assume that there is a smooth morphism of resolutions $g \colon (F' \to E) \to  (F \to E)$.
    Now consider the following diagram:
\[\begin{tikzcd}
	& {(p_{F^{\vee}})_* (f^{\vee})_* \mathfrak{F}_E p_{E}^!} & {(p_{F^{\vee}})_* \mathfrak{F}_F   p_{F}^![- \dim f]} & {\id_X[-r]} \\
	{(p_{E^{\vee}})_* \mathfrak{F}_E p_{E}^!} & {(p_{F'^{\vee}})_* (g^{\vee})_* (f^{\vee})_* \mathfrak{F}_E p_{E}^!} & {(p_{F'^{\vee}})_* (g^{\vee})_* \mathfrak{F}_F   p_{F}^! [- \dim f]} \\
	& {(p_{F'^{\vee}})_* (f'^{\vee})_* \mathfrak{F}_E p_{E}^!} & {(p_{F'})_* \mathfrak{F}_{F'}   p_{F'}^! [- \dim f']} & {\id_X[-r].}
	\arrow["\cong"', from=1-2, to=2-2]
	\arrow["{{\eqref{eq-FS-fiber-pullback}}}"', from=1-3, to=1-2]
	\arrow[""{name=0, anchor=center, inner sep=0}, "\cong", from=1-3, to=1-2]
	\arrow["{{\mathrm{red}_F}}", from=1-3, to=1-4]
	\arrow[""{name=1, anchor=center, inner sep=0}, "\cong"', from=1-3, to=1-4]
	\arrow["\cong"', from=1-3, to=2-3]
	\arrow[equals, from=1-4, to=3-4]
	\arrow[""{name=2, anchor=center, inner sep=0}, "\cong", from=2-1, to=1-2]
	\arrow["{{\mathrm{red}^F_E}}", curve={height=-80pt}, from=2-1, to=1-4]
	\arrow[""{name=3, anchor=center, inner sep=0}, "\cong"', from=2-1, to=3-2]
	\arrow["{{\mathrm{red}_E^{F'}}}"', curve={height=80pt}, from=2-1, to=3-4]
	\arrow["\cong"', from=2-2, to=3-2]
	\arrow["{{\eqref{eq-FS-fiber-pullback}}}"', from=2-3, to=2-2]
	\arrow[""{name=4, anchor=center, inner sep=0}, "\cong", from=2-3, to=2-2]
	\arrow["{{\eqref{eq-FS-fiber-pullback}}}"', "\cong", from=3-3, to=2-3]
	\arrow["{{\eqref{eq-FS-fiber-pullback}}}"', from=3-3, to=3-2]
	\arrow[""{name=5, anchor=center, inner sep=0}, "\cong", from=3-3, to=3-2]
	\arrow["\cong"', from=3-3, to=3-4]
	\arrow[""{name=6, anchor=center, inner sep=0}, "{{\mathrm{red}_{F'}}}", from=3-3, to=3-4]
	\arrow["{{(B)}}"{description}, draw=none, from=0, to=4]
	\arrow["{{(D)}}"{description}, shift left=3, draw=none, from=1, to=6]
	\arrow["{{(A)}}"{description}, shift right=5, draw=none, from=2, to=3]
	\arrow["{{(C)}}"{description}, draw=none, from=4, to=5]
\end{tikzcd}\]
    It is enough to show that the outer square commutes.
    The commutativity of the diagrams (A) and (B) are obvious.
    The commutativity of the diagram (C) follows from the associativity of \eqref{eq-FS-fiber-pullback} proved in \cref{lem-fs-fiber-functorial}.
    Therefore we are reduced to proving the commutativity of the diagram (D).
    
    Set $K \coloneqq \fib(F' \xrightarrow[]{g} F)$.
    Since $g$ is a smooth representable morphism, $K$ is a vector bundle on $X$.
    Further, $\rank (K)$ is even by assumption.
    We let $\iota \colon K \hookrightarrow F'$ be the natural inclusion.
    To prove the commutativity of the diagram (D) above,
    it is enough to show that the commutativity of the outer square of the following diagram:
\[\begin{tikzcd}
	{(g^{\vee})_* \mathfrak{F}_F p_{F}^!} & {(g^{\vee})_* \mathfrak{F}_F (0_{F})_!} &&& {(g^{\vee})_* p_{F^{\vee}}^{\dagger}} \\
	{ \mathfrak{F}_{F'} g^{\dagger} p_{F}^!} & { \mathfrak{F}_{F'} g^{\dagger} (0_{F})_!} & { \mathfrak{F}_{F'} \iota_! p_K^{\dagger}} & {(\iota^{\vee})^{\dagger} \mathfrak{F}_{K}  p_K^{\dagger}} & {(\iota^{\vee})^{\dagger} (0_{K^{\vee}})_* } \\
	&& { \mathfrak{F}_{F'} \iota_! (0_K)_![-m]} & {(\iota^{\vee})^{\dagger} \mathfrak{F}_{K} (0_K)_![-m]} \\
	{ \mathfrak{F}_{F'}  p_{F'}^![-m]} & { \mathfrak{F}_{F'}  (0_{F'})_![-m]} &&& {p_{F'^{\vee}}^{\dagger}[-m].}
	\arrow[""{name=0, anchor=center, inner sep=0}, "{\mathrm{counit}}"', from=1-2, to=1-1]
	\arrow[""{name=1, anchor=center, inner sep=0}, "{\eqref{eq-FS-fiber-pushforward}}", from=1-2, to=1-5]
	\arrow["\cong"{marking, allow upside down}, shift right=3, draw=none, from=1-2, to=1-5]
	\arrow["{\eqref{eq-FS-fiber-pullback}}", from=2-1, to=1-1]
	\arrow["\cong"{description}, shift right=3, draw=none, from=2-1, to=1-1]
	\arrow["{\eqref{eq-FS-fiber-pullback}}", from=2-2, to=1-2]
	\arrow["\cong"', from=2-2, to=1-2]
	\arrow[""{name=2, anchor=center, inner sep=0}, "{\mathrm{counit}}"', from=2-2, to=2-1]
	\arrow[""{name=3, anchor=center, inner sep=0}, "\cong", "\Ex^*_!"', from=2-3, to=2-2]
	\arrow[""{name=4, anchor=center, inner sep=0}, "{\eqref{eq-FS-fiber-pushforward}}", from=2-3, to=2-4]
	\arrow["\cong"{description}, shift right=3, draw=none, from=2-3, to=2-4]
	\arrow[""{name=5, anchor=center, inner sep=0}, "{\eqref{eq-FS-fiber-pullback}}", from=2-4, to=2-5]
	\arrow["\cong"{description}, shift right=3, draw=none, from=2-4, to=2-5]
	\arrow["\cong"', "\Ex^!_*", from=2-5, to=1-5]
	\arrow["{\mathrm{counit}}", from=3-3, to=2-3]
	\arrow[""{name=6, anchor=center, inner sep=0}, "\cong"{description}, shift right=3, draw=none, from=3-3, to=3-4]
	\arrow["{\eqref{eq-FS-fiber-pushforward}}", from=3-3, to=3-4]
	\arrow["{\mathrm{counit}}", from=3-4, to=2-4]
	\arrow[""{name=7, anchor=center, inner sep=0}, "{\eqref{eq-FS-fiber-pushforward}}", from=3-4, to=4-5]
	\arrow["\cong"{description}, shift right=3, draw=none, from=3-4, to=4-5]
	\arrow["\cong", from=4-1, to=2-1]
	\arrow["{\mathrm{counit}}", from=4-2, to=2-2]
	\arrow[""{name=8, anchor=center, inner sep=0}, "\cong", from=4-2, to=3-3]
	\arrow[""{name=9, anchor=center, inner sep=0}, "{\mathrm{counit}}"', from=4-2, to=4-1]
	\arrow["\cong"', shift right=3, draw=none, from=4-2, to=4-5]
	\arrow["{\mathrm{unit}}"', from=4-5, to=2-5]
	\arrow[""{name=10, anchor=center, inner sep=0}, "{\eqref{eq-FS-fiber-pushforward}}"', from=4-5, to=4-2]
	\arrow["{(A)}"{description}, draw=none, from=0, to=2]
	\arrow["{(B)}"{description}, draw=none, from=1, to=4]
	\arrow["{(C)}"{description}, draw=none, from=2, to=9]
	\arrow["{(D)}"{description}, draw=none, from=3, to=8]
	\arrow[""{name=11, anchor=center, inner sep=0}, "{(E)}"{description}, draw=none, from=4, to=6]
	\arrow["{(F)}"{description}, draw=none, from=5, to=7]
	\arrow["{(G)}"{description, pos=0.4}, draw=none, from=10, to=11]
\end{tikzcd}\]
    Here, we set $m \coloneqq \dim(g) = \rank(K)$.
    The commutativity of the diagrams (A), (C), (D), (E) are obvious.
    The commutativity of the diagram (B) follows from \cite[Proposition 6.10(4)]{KKPS1} and the commutativity of the diagram (G) follows from the associativity of the map \eqref{eq-FS-fiber-pushforward} proved in \cref{lem-fs-fiber-functorial}.
    Therefore it is enough to prove the commutativity of the diagram (F).
    In particular, we are reduced to proving the commutativity of the following diagram:
    \[\begin{tikzcd}
	{\mathfrak{F}_K 0_{K, !}} & {p_{K^{\vee}}^{\dagger}} \\
	{\mathfrak{F}_K p_K^!} & {0_{K, *}[\rank (K)].}
	\arrow["\cong"', "\eqref{eq-FS-fiber-pullback}",  from=1-1, to=1-2]
	\arrow["\counit", from=1-1, to=2-1]
	\arrow["\unit", from=1-2, to=2-2]
	\arrow["\eqref{eq-FS-fiber-pullback}", "\cong"', from=2-1, to=2-2]
    \end{tikzcd}\]
    Using the identification \eqref{eq-FS-vect-bundle} and the commutativity of the diagrams \eqref{eq-FS-bundle-push} and \eqref{eq-FS-bundle-pull},
    it is enough to prove the following diagram commutes:
\[\begin{tikzcd}
	{0_{K \oplus K^{\vee}}^* \varphi_{q_K}(K \times _B K^{\vee} \to K)^{\dagger} 0_{K, !}} & {0_{K \oplus K^{\vee}}^* (K \times _B K^{\vee} \to K)^{\dagger} 0_{K, !}} & {\id[\rank K]} \\
	{0_{K \oplus K^{\vee}}^* \varphi_{q_K}(K \times _B K^{\vee} \to K)^{\dagger} p_{K}^!} & {0_{K \oplus K^{\vee}}^* \varphi_{q_K} p_{K \oplus K^{\vee}}^{\dagger}[\rank K]} & {\id[\rank K].}
	\arrow["\cong", from=1-1, to=1-2]
	\arrow["\counit", from=1-1, to=2-1]
	\arrow["\cong", from=1-2, to=1-3]
	\arrow[equals, from=1-3, to=2-3]
	\arrow["\cong"', from=2-1, to=2-2]
	\arrow["{\stab^{K}_{K \oplus K^{\vee}}}"', from=2-3, to=2-2]
	\arrow["\cong"{description}, shift left=3, draw=none, from=2-3, to=2-2]
\end{tikzcd}\]
    Since $\rank K$ is even,
    we have $\stab^{K}_{K \oplus K^{\vee}} = \stab^{K^{\vee}}_{K \oplus K^{\vee}}$.
    Then the claim immediately follows from the construction of the stabilization isomorphism.
    
    \textit{Case 3: The general case.}

    We now construct the map $\red_E$ for general $B$ by gluing.
    To see this, take smooth morphisms from finite type affine schemes $h_1 \colon B_1 \to B$ and $h_2 \colon B_2 \to B$ along with a smooth morphism $g \colon B_1 \to B_2$ such that $h_1 \cong h_2 \circ g$.
    Consider the following diagram:
    \begin{equation}\label{eq-sm-compat-FS-red}
    \begin{tikzcd}
	{g^\dag (p_{h_2^*{E}^{\vee}})_* \mathfrak{F}_{h_2^*{E}} p_{h_2^*{E}}^!} &[40pt] {g^\dag[r]} \\
	{ (p_{h_1^*{E}^{\vee}})_* \mathfrak{F}_{h_1^*{E}} p_{h_1^*{E}}^! g^\dag} & {g^\dag[r]},
	\arrow["{g^\dag\red_{h_2^*{E}}}", from=1-1, to=1-2]
	\arrow["\cong"', from=1-1, to=2-1]
	\arrow[equals, from=1-2, to=2-2]
	\arrow["{\red_{h_1^*{E}}}"', from=2-1, to=2-2]
    \end{tikzcd}
    \end{equation}
    where the left-hand vertical isomorphism is induced by \eqref{eq-FS-fiber-pullback} (along with the exchange isomorphisms for smooth pullbacks with respect to $*$-pushforward and $!$-pullback).
    By the construction in Case~2, this square commutes and is moreover compatible with compositions in $g$ up to homotopy.
    
    In other words, consider the $\infty$-category whose objects are affine derived schemes $B'$ smooth of finite type over $B$, and whose morphisms are smooth morphisms over $B$.
    We have a natural transformation
    \[
    F \colon \Dbc(-) \to \Dbc(-)
    \]
    assigning to each $h \colon B' \to B$ the functor $p_{h^*E, *} \mathfrak{F}_{h^* E} p_{h^*E}^! \colon \Dbc(B') \to \Dbc(B')$.
    Note that the isomorphism $\red_{h^*E}$ shows in particular that this functor is perverse t-exact up to a shift.
    Furthermore, the homotopy associativity of the commutative squares \eqref{eq-sm-compat-FS-red} yields an equivalence $F|_{\Perv(-)} \cong \id[r]$ on perverse sheaves.
    Using \cite[Proposition 2.17]{KKPS1}, this equivalence extends to the entire derived category.
    Finally, passing to the limit over $B'$ yields the desired equivalence \eqref{eq-dimred-FS}.
\end{proof}

For later use, we collect some properties of the dimensional reduction isomorphism \eqref{eq-dimred-FS} for the Fourier--Sato transform. We adopt the notation from \cref{prop:dim red Fourier-Sato}:

\begin{proposition}\label{prop-property-FS-dimred}
   Let $g \colon B' \to B$ be a morphism between lft higher Artin stacks and denote by $g_{E} \colon E |_{B'} \to E$ and $g_{E^{\vee}} \colon E^{\vee} |_{B'} \to E^{\vee}$ the base changes.
   \begin{enumerate}
        \item \label{item-FS-dimred-base-smooth}
        If $g$ is smooth, then the following diagram commutes:

        \begin{equation}\label{eq-FS-dimred-base-smooth}
            \begin{tikzcd}
        	{ p_{E|_{B'}^{\vee}, *} \mathfrak{F}_{E|_{B'}} p_{E|_{B'}}^! g^{\dagger} } & { p_{E|_{B'}^{\vee}, *} \mathfrak{F}_{E|_{B'}} g_{E}^{\dagger} p_{E}^!} & { p_{E|_{B'}^{\vee}, *} g_{E^{\vee}}^{\dagger} \mathfrak{F}_{E} p_{E}^!} & {g^{\dagger}  p_{E^{\vee}, *} \mathfrak{F}_{E} p_{E}^!} \\
        	{g^![r]} &&& {g^![r].}
        	\arrow["\sim", from=1-1, to=1-2]
        	\arrow["{{\red_{E|_{B'}}}}"', from=1-1, to=2-1]
        	\arrow["{{{(\ref{eq-FS-base-pullback})}}}", from=1-2, to=1-3]
        	\arrow["{{\Ex^!_*}}", from=1-3, to=1-4]
        	\arrow["{{\red_{E}}}", from=1-4, to=2-4]
        	\arrow[equals, from=2-1, to=2-4]
        \end{tikzcd}
        \end{equation}

       \item \label{item-FS-dimred-base-finite} If $g$ is finite, then the following diagram commutes:
    \begin{equation}\label{eq-FS-dimred-base-finite}
    \begin{tikzcd}
    	{p_{E^{\vee}, *} \mathfrak{F}_{E} p_{E}^! g_*} & {p_{E^{\vee}, *} \mathfrak{F}_{E}  g_{E, *} p_{E|_{B'}}^!} & {p_{E^{\vee}, *}   g_{E^{\vee}, *} \mathfrak{F}_{E |_{B'}} p_{E|_{B'}}^!} & {g_* p_{E|_{B'}^{\vee}, *}   \mathfrak{F}_{E |_{B'}} p_{E|_{B'}}^!} \\
    	{g_* [r]} &&& {g_* [r].}
    	\arrow["{\Ex^!_*}", from=1-1, to=1-2]
    	\arrow["{{{\red_{E}}}}"', from=1-1, to=2-1]
    	\arrow["{{{(\ref{eq-FS-base-pushforward})}}}", from=1-2, to=1-3]
    	\arrow["\sim", from=1-3, to=1-4]
    	\arrow["{{{\red_{E|_{B'}}}}}", from=1-4, to=2-4]
    	\arrow[equals, from=2-1, to=2-4]
    \end{tikzcd}
    \end{equation}
    \end{enumerate}

    Now take a morphism of perfect complexes $f \colon E' \to E$.
    \begin{enumerate}[resume]
      \item \label{item-FS-dimred-fibre-smooth} If $f$ is a smooth morphism, then the following diagram commutes:
    \begin{equation}\label{eq-FS-dimred-fibre-smooth}
    \begin{tikzcd}
    	{(p_{E'^{\vee}})_*  \mathfrak{F}_{E'}  p_{E'}^! [- \dim f]} & {(p_{E'^{\vee}})_*  \mathfrak{F}_{E'} f^{\dagger} p_{E}^! } & {(p_{E'^{\vee}})_* f^{\vee}_* \mathfrak{F}_{E} p_{E}^! } & {p_{E^{\vee}, *} \mathfrak{F}_{E} p_{E}^! } \\
    	{(-)[r]} &&& {(-)[r].}
    	\arrow["\sim", from=1-1, to=1-2]
    	\arrow["{\red_{E'}}"', from=1-1, to=2-1]
    	\arrow["{(\ref{eq-FS-fiber-pullback})}", from=1-2, to=1-3]
    	\arrow["\sim", from=1-3, to=1-4]
    	\arrow["{\red_E}", from=1-4, to=2-4]
    	\arrow[equals, from=2-1, to=2-4]
    \end{tikzcd}
    \end{equation}
  \item \label{item-FS-dimred-fibre-finite}
        If $f$ is a closed immersion, then the following diagram commutes:
        \begin{equation}\label{eq-FS-dimred-fibre-finite}
        \adjustbox{scale=0.85, center}
        {\begin{tikzcd}
        	{p_{E^{\vee}, *} \mathfrak{F}_{E}  p_{E}^! } & {p_{E^{\vee}, *} \mathfrak{F}_{E} f_! f^! p_{E}^! } & {p_{E^{\vee}, *} (f^{\vee})^{\dagger} \mathfrak{F}_{E'}  f^! p_{E}^! } & {0_{E^{\vee}}^* (f^{\vee})^{\dagger} \mathfrak{F}_{E'}  f^! p_{E}^! } & {p_{E'^{\vee}, *} \mathfrak{F}_{E'} p_{E'}^! [- \dim f]} \\
        	{\id[r]} &&&& {\id[r].}
        	\arrow["{{\red_{E}}}"', from=1-1, to=2-1]
        	\arrow["\counit"', from=1-2, to=1-1]
        	\arrow["{{(\ref{eq-FS-fiber-pushforward})}}", from=1-2, to=1-3]
        	\arrow["\unit", from=1-3, to=1-4]
        	\arrow["\unit"', "\sim", from=1-5, to=1-4]
        	\arrow["{{\red_{E'}}}", from=1-5, to=2-5]
        	\arrow[equals, from=2-1, to=2-5]
        \end{tikzcd}}
        \end{equation}
    Here, the rightmost upper horizontal map is invertible by \cref{prop:contraction}. 
    \end{enumerate}

    Finally, let $E_i \to B_i$ be perfect complexes over lft higher Artin stacks for $i = 1, 2$.
    \begin{enumerate}[resume]
    \item \label{item-FS-dimred-product}
    The following diagram commutes:
    \[
    \adjustbox{scale=0.82, center}
    {\begin{tikzcd}
	{p_{E_1^{\vee}, *} \mathfrak{F}_{E_1} p_{E_1}^! (-) \boxtimes p_{E_2^{\vee}, *} \mathfrak{F}_{E_2} p_{E_2}^!(-)} & {(p_{E_1^{\vee}} \times p_{E_2^{\vee}})_* (\mathfrak{F}_{E_1} \boxtimes \mathfrak{F}_{E_2}) (p_{E_1} \times p_{E_2})^!(- \boxtimes -)} & {(p_{(E_1 \oplus E_2)^{\vee}})_* \mathfrak{F}_{E_1 \oplus E_2} p_{E_1 \oplus E_2}^!(- \boxtimes -)} \\
	{\id[r_1] \boxtimes \id[r_2]} && {\id [r_1 + r_2].}
	\arrow["\TS", from=1-1, to=1-2]
	\arrow["{\red_{E_1} \boxtimes \red_{E_2}}"', from=1-1, to=2-1]
	\arrow["\sim", from=1-2, to=1-3]
	\arrow["{\red_{E_1 \oplus E_2}}", from=1-3, to=2-3]
	\arrow["\sim"', from=2-1, to=2-3]
\end{tikzcd}}\]
    \end{enumerate}
   
\end{proposition}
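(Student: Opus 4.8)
The plan is to follow the three-step structure of the proof of \cref{prop:dim red Fourier-Sato}. First I would reduce to the case where $B_1$ and $B_2$ are affine schemes of finite type. Indeed, $\red_{E_i}$ and $\red_{E_1\oplus E_2}$ were constructed by gluing along smooth covers (Case~3 in the proof of \cref{prop:dim red Fourier-Sato}), hence are compatible with smooth base change by \cref{prop-property-FS-dimred}(\ref{item-FS-dimred-base-smooth}); the external product $\TS$ of \eqref{eq-FS-TS} is compatible with the smooth base-change isomorphism \eqref{eq-FS-base-pullback} by \cref{prop-FS-product}, and $p_{(-)}^{!}$, $p_{(-)^{\vee},*}$ commute with smooth pullback by the usual exchange isomorphisms. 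Choosing smooth covers $B_i'\to B_i$ by affine schemes of finite type, the product $B_1'\times B_2'\to B_1\times B_2$ is again such a cover, and the entire diagram of part~(\ref{item-FS-dimred-product}) is compatible with pullback along it; since smooth surjective pullback is conservative, it suffices to treat the case of affine $B_1$, $B_2$. Then $B_1\times B_2$ is affine as well, so $\red_{E_1\oplus E_2}$ is the isomorphism produced in Case~2.

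Next I would reduce to the case where all of $E_1$, $E_2$, $E_1\oplus E_2$ have Tor-amplitude $\le 0$, i.e.\ the relevant zero sections are closed immersions. For this, choose global resolutions $f_i\colon F_i\to E_i$ of even relative dimension; then $f_1\boxplus f_2\colon F_1\boxplus F_2\to E_1\boxplus E_2$ is again a global resolution, of even relative dimension $\dim f_1+\dim f_2$, so $\red_{E_1\oplus E_2}=\red^{F_1\boxplus F_2}_{E_1\oplus E_2}$ by the resolution-independence established in Case~2. By \cref{prop-property-FS-dimred}(\ref{item-FS-dimred-fibre-smooth}), each of $\red_{E_i}$ and $\red_{E_1\oplus E_2}$ is obtained from $\red_{F_i}$, resp.\ $\red_{F_1\boxplus F_2}$, by conjugating with the isomorphism \eqref{eq-FS-fiber-pullback} (and the purity-type shifts), and by \cref{prop-FS-product} the map $\TS$ intertwines the two copies of \eqref{eq-FS-fiber-pullback} for $f_1$, $f_2$ with the single copy for $f_1\boxplus f_2$. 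Pasting these commuting squares onto the outer edges of the diagram of part~(\ref{item-FS-dimred-product}) reduces its commutativity to that of the analogous diagram for $(F_1,F_2)$, i.e.\ to the Tor-amplitude $\le 0$ case.

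It then remains to treat the closed-immersion case. Here $\red_{E_i}$ is the isomorphism \eqref{eq-dim-red-climm-tor>0} supplied by \cref{cor-dim-red-cl-imm}, which, unwinding the proof of that corollary, is the composite
\[
p_{E_i^{\vee},*}\mathfrak{F}_{E_i}p_{E_i}^{!}
\xleftarrow{\counit}
p_{E_i^{\vee},*}\mathfrak{F}_{E_i}\,0_{E_i,!}0_{E_i}^{!}p_{E_i}^{!}
\xrightarrow{\eqref{eq-FS-fiber-pushforward}}
p_{E_i^{\vee},*}p_{E_i^{\vee}}^{\dagger}0_{E_i}^{!}p_{E_i}^{!}
\xrightarrow{\sim}
\id[r_i],
\]
the last arrow being homotopy invariance for the vector bundle $p_{E_i^{\vee}}$, and similarly for $E_1\oplus E_2$ using $0_{E_1\oplus E_2}=0_{E_1}\times 0_{E_2}$ and $p_{(E_1\oplus E_2)^{\vee}}=p_{E_1^{\vee}}\times p_{E_2^{\vee}}$. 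Each stage is compatible with $\TS$: the application of \eqref{eq-FS-fiber-pushforward} by \cref{prop-FS-product}; the counit maps by naturality of $\TS$ together with the K\"unneth isomorphism $0_{E_1\oplus E_2,!}0_{E_1\oplus E_2}^{!}\cong(0_{E_1,!}0_{E_1}^{!})\boxtimes(0_{E_2,!}0_{E_2}^{!})$; and the homotopy-invariance isomorphism by direct inspection, since the K\"unneth isomorphism for $p_{E_i^{\vee},*}p_{E_i^{\vee}}^{\dagger}$ is multiplicative. Pasting these three squares yields the claimed commutativity.

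The main obstacle I anticipate is organizational rather than conceptual: keeping track of the shifts $[r_1]$, $[r_2]$, $[r_1+r_2]$ and of which instance of the coherence relations of \cite{KKPS1} applies at each node, and in particular checking that the relative dimension $\dim f_1+\dim f_2$ of the chosen product resolution is even — so that resolution-independence of $\red$ from Case~2 applies and no sign coming from the stabilization isomorphism $\stab$ enters — together with verifying that the homotopy-invariance step is genuinely multiplicative for the external product. Once these points are pinned down, the remainder is a formal diagram chase.
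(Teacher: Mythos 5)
Your proposal addresses only part~(\ref{item-FS-dimred-product}) of the proposition; parts~(\ref{item-FS-dimred-base-smooth})--(\ref{item-FS-dimred-fibre-finite}) are never argued. This is a genuine gap: the proposition has five claims, and the paper's proof spends most of its effort establishing (\ref{item-FS-dimred-base-smooth})--(\ref{item-FS-dimred-fibre-finite}), first in the Tor-amplitude $\le 0$ case (where $\red_E$ is the explicit unit/counit/purity composite and each diagram is traced back to one of the compatibility squares \eqref{eq-base-fibre-smooth-finite}, \eqref{eq-FS-base-fibre-finite-finite}, \eqref{eq-FS-fibre-fibre-smooth-finite} and associativity of \eqref{eq-FS-fiber-pushforward}), and then in general by choosing a global resolution of even relative dimension and conjugating. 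Moreover your argument for~(\ref{item-FS-dimred-product}) \emph{uses}~(\ref{item-FS-dimred-base-smooth}) (to pass to a smooth cover by affines) and~(\ref{item-FS-dimred-fibre-smooth}) (to replace each $E_i$ by a resolution $F_i$), so the proposal as written is not self-contained even for the part it does treat; those two parts must be proved first. This is not circular, since (\ref{item-FS-dimred-base-smooth}) and (\ref{item-FS-dimred-fibre-smooth}) do not depend on (\ref{item-FS-dimred-product}), but they are nontrivial and omitted.

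On the part you do treat, your route agrees with the paper's: reduce to a base with global resolutions, reduce to the case where the zero sections are closed immersions via~(\ref{item-FS-dimred-fibre-smooth}) and the $\TS$-compatibility of \eqref{eq-FS-fiber-pullback} from \cref{prop-FS-product}, then in that case unwind $\red_{E_i}$ as the composite of a counit, the isomorphism \eqref{eq-FS-fiber-pushforward}, and homotopy invariance, and check each stage against $\TS$ using \cref{prop-FS-product} and K\"unneth/naturality. The observation that $f_1 \boxplus f_2$ has even relative dimension (so the resolution-independence from Case~2 applies without sign issues from $\stab$) is correct and matches the care the paper takes there. So the missing content is parts~(\ref{item-FS-dimred-base-smooth})--(\ref{item-FS-dimred-fibre-finite}): you would need to run the same two-stage argument for each of them, invoking \eqref{eq-base-fibre-smooth-finite} for~(\ref{item-FS-dimred-base-smooth}), \eqref{eq-FS-base-fibre-finite-finite} for~(\ref{item-FS-dimred-base-finite}), associativity of \eqref{eq-FS-fiber-pushforward} for~(\ref{item-FS-dimred-fibre-smooth}), and \eqref{eq-FS-fibre-fibre-smooth-finite} for~(\ref{item-FS-dimred-fibre-finite}) in the Tor-amplitude $\le 0$ case, before the general reduction via resolutions.
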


\begin{proof}

    We first deal with the case when $E$ has tor-amplitude $\leq 0$.
    In this, case, the zero section $0_E \colon B \to E$ is a closed immersion and the map $\red_E$ is constructed as the composition
    \[
    p_{E^{\vee}, *} \mathfrak{F}_{E} p_{E}^{!}  \xleftarrow[\sim ]{\unit} p_{E^{\vee}, *} \mathfrak{F}_{E} 0_{E, !} \xrightarrow[]{\eqref{eq-FS-fiber-pushforward}}  p_{E^{\vee}, *} p_{E^{\vee}}^{\dagger} \mathfrak{F}_{E} \xrightarrow[]{\sim} p_{E^{\vee}, *} p_{E^{\vee}}^* \mathfrak{F}_{E} [r] \xleftarrow[\sim]{\unit} \id[r].
    \]
    Here, the invertibility of the first map follows from \eqref{lem-pre-dim-red-cl-imm} and the invertibility of the last map follows from \cref{prop:contraction}.
    Then the commutativity of \eqref{eq-FS-dimred-base-smooth}, \eqref{eq-FS-dimred-base-finite}, \eqref{eq-FS-dimred-fibre-smooth}, \eqref{eq-FS-dimred-fibre-finite} follows from the commutativity of diagrams \eqref{eq-base-fibre-smooth-finite}, \eqref{eq-FS-base-fibre-finite-finite}, \eqref{eq-FS-fibre-fibre-smooth-finite} and the associativity of \eqref{eq-FS-fiber-pushforward}.

    Now we prove the general cases. For (\ref{item-FS-dimred-base-smooth}), by the construction of $\red_E$ in \eqref{eq-dimred-FS}, 
        we may assume that $B$ is a quasi-projective scheme and $E$ admits a global resolution $F \to E$ of even relative dimension. Then \eqref{eq-FS-base-fibre-smooth-smooth} implies that we may replace $E$ by $F$, for which we have already proved the commutativity in the last paragraph. Other three statements can be proved in analogous manners.

    Regarding (\ref{item-FS-dimred-product}), if $E$ has Tor-amplitude $\leq 0$, then the claim follows from the compatibility between the Thom--Sebastiani isomorphism \eqref{eq-FS-TS} and \eqref{eq-FS-fiber-pushforward} as established in \cref{prop-FS-product}.
    The general case can be reduced to this case using the compatibility between the isomorphism \eqref{eq-FS-TS} and \eqref{eq-FS-base-pullback} and \eqref{eq-FS-fiber-pullback} proved in \cref{prop-FS-product}.
\end{proof}

\subsection{The dimensional reduction theorem}

We will now prove the dimensional reduction theorem in general,
making use of the deformation to the normal bundle to reduce the general case to the linear one considered in \cref{prop:dim red Fourier-Sato}.

The \defterm{deformation to the normal bundle} is an $\bA^1$-family of morphisms of derived stacks $Y \times \bA^1 \to \mathfrak{B}$ which degenerates the morphism $g \colon Y \to B$ to the zero section of the normal bundle $\N_{Y/B}$.
In this context, the normal bundle is
\[ \N_{Y/B} \coloneqq \T[1](Y/B) \coloneqq \Tot_{Y}(\bL_{Y/B}^\vee[1]), \]
or equivalently the dual of the $(-1)$-shifted cotangent bundle $\T^*[-1](Y/B)$, and the derived stack $\mathfrak{B}$ is defined as the mapping stack
\[ \mathfrak{B} \coloneqq \Map_{B\times\bA^1}(B\times\{0\}, Y\times\bA^1). \]
This fits in the Cartesian diagram of derived stacks
\[\begin{tikzcd}
    Y \ar{r}{0_{\bA^1}}\ar{d}{0_{N_{Y/B}}}
    & Y \times \bA^1 \ar{d}{\hat{g}}\ar[leftarrow]{r}{j_{\bA^1}}
    & Y \times \Gm \ar{d}{g \times \id_{\Gm}}
    \\
    \N_{Y/B} \ar{r}{0_{\mathfrak{B}}}\ar{d}{t_0}
    & \mathfrak{B} \ar[leftarrow]{r}{j_{\mathfrak{B}}}\ar{d}{t}
    & B \times \Gm \ar[equals]{d}
    \\
    B \ar{r}{0_{\bA^1}}
    & B\times \bA^1 \ar[leftarrow]{r}{j_{\bA^1}}
    & B\times \Gm,
\end{tikzcd}\]
where $0_{\bA^1}$ is the inclusion of the zero section, $j_{\bA^1}$ is its complement, $t_0$ is the composite $g\circ p_{\N_{Y/B}} \colon \N_{Y/B} \to Y \to B$, and $t \circ \hat{g} \simeq g \times \id_{\bA^1}$.
See \cite{HekkingKhanRydh,CalaqueSafronov}. For later use, we recall the following statement from \cite{HekkingKhanRydh}:

\begin{proposition}\label{prop-property-def-space-map}
    Consider the following commutative diagram
    \[\begin{tikzcd}
    	{Y'} & Y \\
    	{B'} & {B.}
    	\arrow["{p_Y}", from=1-1, to=1-2]
    	\arrow["{g'}"', from=1-1, to=2-1]
    	\arrow["g", from=1-2, to=2-2]
    	\arrow["{p_B}"', from=2-1, to=2-2]
    \end{tikzcd}\]
    We let $\mathfrak{B}$ and $\mathfrak{B}'$ denote the deformation space for $g \colon Y \to B$ and $g' \colon Y' \to B'$ respectively, and $p_{\mathfrak{B}} \colon \mathfrak{B}' \to \mathfrak{B}$ be the induced map.
    \begin{enumerate}
        \item If $p_B$, $p_{Y}$ are smooth, $\eta \colon Y' \to Y \times_{B} B'$ is quasi-smooth and the map $p_{\mathfrak{B}}$ is smooth.
        \item If $p_{B}$ is a closed immersion, the map $\eta \colon Y' \to Y \times_{B} B'$ induces an isomorphism on classical truncations and $\mathbb{L}_{Y' / Y \times_{B} B'}$ has Tor-amplitude $\leq -2$, the map $p_{\mathfrak{B}}$ is a closed immersion.
    \end{enumerate}
\end{proposition}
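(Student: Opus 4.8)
My plan is to split the induced map $p_{\mathfrak B}$ into a part coming from $p_B$ (handled by base change) and a part coming from $\eta$ (handled by the functoriality of the deformation‑to‑the‑normal‑bundle construction established in \cite{HekkingKhanRydh}), using throughout the mapping‑stack description $\mathfrak B = \Map_{B\times\bA^1}(B\times\{0\}, Y\times\bA^1)$ recalled above, which exhibits $\mathfrak B$ as the derived Weil restriction of $g$ along the quasi‑smooth closed immersion $i\colon B\times\{0\}\hookrightarrow B\times\bA^1$ (and similarly for $\mathfrak B'$ and $i'\colon B'\times\{0\}\hookrightarrow B'\times\bA^1$).

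\emph{Step 1: reduction to a fixed base.} Set $\mathfrak B_{B'}\coloneqq \mathfrak B\times_{B\times\bA^1}(B'\times\bA^1)$. Because derived Weil restriction is compatible with base change along Cartesian squares, unwinding the functor of points identifies $\mathfrak B_{B'}$ canonically with the deformation space of the projection $Y\times_B B'\to B'$, and under this identification $p_{\mathfrak B}$ factors as
\[ \mathfrak B'\xrightarrow{q}\mathfrak B_{B'}\xrightarrow{r}\mathfrak B, \]
where $r$ is the base change of $p_B\times\id_{\bA^1}$ and $q$ is the map of deformation spaces induced by $\eta\colon Y'\to Y\times_B B'$, now a morphism over the fixed base $B'$. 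Since smoothness and the property of being a closed immersion are stable under base change, $r$ is smooth in the situation of (1) and a closed immersion in the situation of (2); it therefore suffices to establish the corresponding assertions for $q$.

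\emph{Step 2: the contribution of $\eta$.} The map $q$ is the value at $\eta$ of the deformation‑to‑the‑normal‑bundle functor over the fixed base $B'$, so the two statements about $q$ are instances of the functoriality of this construction proved in \cite{HekkingKhanRydh}. To see them directly, one computes the relative cotangent complex: writing $0_{\mathfrak B'}\colon \N_{Y'/B'}\hookrightarrow\mathfrak B'$ for the inclusion of the special fibre over $0\in\bA^1$ and $p_{\N'}\colon \N_{Y'/B'}\to Y'$ for the projection, the standard formula for the tangent complex of a mapping stack together with base change yields a canonical identification
\[ \bT_{q}\cong (0_{\mathfrak B'})_*\, p_{\N'}^*\, \bT_{Y'/(Y\times_B B')}. \]
In particular $q$ restricts to an isomorphism over the open locus $B'\times\Gm\subset B'\times\bA^1$, while over the complementary divisor its (co)tangent complex is obtained from $\bL_\eta$ by pushforward along a codimension‑one virtual Cartier divisor. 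Feeding in the hypotheses — quasi‑smoothness of $\eta$ (with $p_B$, $p_Y$ smooth) in case (1), and, in case (2), $\eta$ being an isomorphism on classical truncations with $\bL_\eta$ of Tor‑amplitude $\le -2$, the latter together with affineness of $q$ upgrading the cotangent‑complex estimate to the assertion that $q$ is a closed immersion — one obtains the desired conclusions. An alternative to this computation is to reduce $\eta$ to a standard local form and compute $q$ explicitly on the corresponding local models.

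The technical heart is Step 2. Because $i'$ is a closed immersion rather than a proper smooth morphism, the pushforward $(0_{\mathfrak B'})_*$ introduces a Koszul shift, so a fibrewise‑over‑$\bA^1$ verification (on $B'\times\Gm$ and on $\N_{Y'/B'}$) of smoothness or of being a closed immersion does not by itself suffice; one has to control $\bL_q$ \emph{uniformly} in the $\bA^1$‑parameter. Supplying this uniformity is precisely what the functoriality results of \cite{HekkingKhanRydh} do, so the cleanest way to conclude is to deduce the proposition from the relevant statement there directly.
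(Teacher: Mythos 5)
The paper itself gives no proof of this proposition: it is explicitly recalled as a statement from \cite{HekkingKhanRydh}, and the text moves on immediately. Your proposal therefore cannot be compared to a paper proof in the usual sense; what it does is attempt a proof and then, at the point where the real work would have to happen, concede that the cleanest route is to cite \cite{HekkingKhanRydh} directly — which is exactly what the paper does from the outset.

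Your Step~1 (factoring $p_{\mathfrak B}$ as $q\circ r$, with $r$ a base change of $p_B\times\id_{\bA^1}$ and $q$ induced by $\eta$ over the fixed base $B'$) is correct: the Weil-restriction description of $\mathfrak B$ does commute with base change along $B'\times\bA^1\to B\times\bA^1$, so $\mathfrak B\times_{B\times\bA^1}(B'\times\bA^1)$ is the deformation space of $Y\times_B B'\to B'$, and $r$ inherits smoothness or closed-immersiveness from $p_B$. This reduction is clean, but it is also superfluous if one is going to invoke \cite{HekkingKhanRydh} anyway, since the cited result already covers the full hypotheses of the proposition without splitting off $p_B$.

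The genuine gap is Step~2. You write a formula $\bT_q\cong (0_{\mathfrak B'})_* p_{\N'}^*\bT_{Y'/(Y\times_B B')}$ and then argue from it, but you do not actually derive this formula, and it is not obviously of the right shape: a $*$-pushforward along the closed immersion of the special fibre produces a complex concentrated on the zero fibre, which is consistent with $q$ being an isomorphism over $\Gm$, but the passage from ``$\bT_q$ is supported on the special fibre with this local form'' to ``$q$ is smooth'' or ``$q$ is a closed immersion'' is precisely where a fibrewise computation is insufficient — a point you yourself flag. In short, the ``technical heart'' you identify is not supplied; you correctly observe that supplying it is the content of the cited result in \cite{HekkingKhanRydh}, at which point Step~1 and the tangent-complex discussion become motivational rather than load-bearing. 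As a proof, your proposal is therefore incomplete on its own terms; as an appeal to \cite{HekkingKhanRydh}, it matches the paper but takes a longer road to get there.
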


We make use of this construction as follows.
Denote by
\[\hat{\tau} \colon \mathfrak{X} \coloneqq \T^*[-1](Y\times\bA^1/\mathfrak{B}) \to Y \times \bA^1\]
the relative $(-1)$-shifted cotangent bundle of $\hat{g}$.
The projection $\hat{\pi} \colon \mathfrak{X} \to \mathfrak{B}$ admits a canonical relative exact $(-1)$-shifted symplectic structure and orientation, both stable under base change by construction.
We have the following Cartesian diagram:
\[\begin{tikzcd}
    N_{Y/B}^\vee \ar{r}{0_{\mathfrak{X}}}\ar{d}{p_{N_{Y/B}^\vee}}
    & \mathfrak{X} \ar{d}{\hat{\tau}}\ar[leftarrow]{r}{j_{\mathfrak{X}}}
    & \T^*[-1](Y/B) \times \Gm \ar{d}{\tau \times \id_{\Gm}}
    \\
    Y \ar{d}{0_{N_{Y/B}}}\ar{r}{0_{\bA^1}}
    & Y \times \bA^1 \ar{d}{\hat{g}}\ar[leftarrow]{r}{j_{\bA^1}}
    & Y \times \Gm \ar{d}{g \times \id_{\Gm}}
    \\
    \N_{Y/B} \ar{r}{0_{\mathfrak{B}}}
    & \mathfrak{B} \ar[leftarrow]{r}{j_{\mathfrak{B}}}
    & B \times \Gm,
\end{tikzcd}\]
where the vertical composites are $\pi \times \id_{\Gm}$, $\hat{\pi}$, and $u_{\N_{Y/B}^\vee}$ \eqref{eq:uEveeE} from right to left.
Informally speaking, we may regard $\hat{\pi} \colon \mathfrak{X} \to \mathfrak{B}$ as a family degenerating the oriented $(-1)$-shifted symplectic fibration $\pi \colon \T^*[-1](Y/B) \to B$ to the oriented $(-1)$-shifted symplectic fibration $\pi_0 \coloneqq u_{\N_{Y/B}^\vee} \colon \N_{Y/B}^\vee \to \N_{Y/B}$.
To prove \cref{thm:dimensionalreduction}, the idea is to use the perverse pullback functor
\[
    \hat{\pi}^\varphi \colon \Perv(\mathfrak{B}) \to \Perv(\mathfrak{X})
\]
to reduce the dimensional reduction theorem for $\pi^\varphi$ to the statement in the linear case $\pi_0^\varphi = \mathfrak{F}_{N_{Y/B}}$ proven in \cref{prop:dim red Fourier-Sato}.
Indeed, we may regard the functor
\begin{equation*}
    \hat{\rho}_{Y/B} \colon
    \Perv(B \times \bA^1) \xrightarrow{t^!}
    \Perv(\mathfrak{B}) \xrightarrow{\hat{\pi}^\varphi}
    \Perv(\mathfrak{X}) \xrightarrow{\hat{\tau}_*}
    \Perv(Y \times \bA^1)
\end{equation*}
as interpolating between
\[
    \rho_{Y\times\Gm/B\times\Gm} \coloneqq (\tau \times \id_{\Gm})_* (\pi\times\id_{\Gm})^\varphi(-),
    \qquad
    \rho_{0,Y/B} \coloneqq p_{N^\vee_{Y/B},*} \pi_0^\varphi t_0^!(-) \cong p_{N^\vee_{Y/B},*} \pi_0^\varphi p_{N^\vee_{Y/B}}^! g^!(-),
\]
where $\rho_{0,Y/B} \cong g^!(-)[-\dim(Y/B)]$ by \cref{prop:dim red Fourier-Sato}.
The following lemma states, informally speaking, that (1) $\hat{\rho}_{Y/B}$ commutes with $j_{\bA^1}^*$, (2) it commutes with $0_{\bA^1,*}$, (3) it is smooth-local on $Y$ and $B$, (4) it preserves constancy along $\bA^1$, and (5) its fiber over $0 \in \bA^1$ only depends on the $0$-fiber of its argument.

\begin{lemma}\label{lem:rhohat}
Let $g \colon Y \to B$ be an lfp geometric morphism of derived stacks with $B$ an lft higher Artin stack. Then we have:
\begin{enumerate}
    \item\label{item:rhohat/generic} There is a natural isomorphism $j_{\bA^1}^* \hat{\rho}_{Y/B}(-) \cong \rho_{Y\times\Gm/B\times\Gm}(j_{\bA^1}^*(-))$.

    \item\label{item:rhohat/zeropush} There is a natural isomorphism $\hat{\rho}_{Y/B} 0_{\bA^1,*} (-) \cong 0_{\bA^1,*} \rho_{0,Y/B}$.

    \item\label{item:rhohat/local} Given a commutative square
    \[\begin{tikzcd}
        Y' \ar{r}{p_Y}\ar{d}{g'}
        & Y \ar{d}{g}
        \\
        B' \ar{r}{p_B}
        & B,
    \end{tikzcd}\]
    where $p_B$ and $p_Y$ are smooth, there is a canonical isomorphism $(p_Y \times \id_{\bA^1})^\dag \hat{\rho}_{Y/B} \cong \hat{\rho}_{Y'/B'} (p_B \times \id_{\bA^1})^\dag$.

    \item\label{item:rhohat/constant} The natural map $p_{\bA^1}^* p_{\bA^1,*} (\hat{\rho}_{Y/B} p_{\bA^1}^*(-)) \xrightarrow{\mathrm{counit}} \hat{\rho}_{Y/B} p_{\bA^1}^*(-)$ is invertible.
    \item\label{item:rhohat/zeropull} The natural map $0_{\bA^1}^* \hat{\rho}_{Y/B}(-) \xrightarrow{\mathrm{unit}}0_{\bA^1}^* \hat{\rho}_{Y/B}(0_{\bA^1,*} 0_{\bA^1}^* (-))$ is invertible.
\end{enumerate}
\end{lemma}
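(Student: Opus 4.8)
The plan is to prove the five assertions in increasing order of difficulty, with \textnormal{(\ref{item:rhohat/generic})}--\textnormal{(\ref{item:rhohat/local})} being formal consequences of the functorialities of the perverse pullback recorded in \cref{thm:perversepullback} applied to the deformation diagram, and \textnormal{(\ref{item:rhohat/constant})}--\textnormal{(\ref{item:rhohat/zeropull})} being the substantive ones, handled by reduction to an explicit model.

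For \textnormal{(\ref{item:rhohat/generic})} and \textnormal{(\ref{item:rhohat/zeropush})} one threads $j_{\bA^1}^*$, resp.\ $0_{\bA^1,*}$, through the three functors $t^!$, $\hat\pi^\varphi$, $\hat\tau_*$ comprising $\hat\rho_{Y/B}$. In the first case the open immersion $j_{\bA^1}$ pulls back along the deformation diagram to the open immersions $j_{\mathfrak B}$ and $j_{\mathfrak X}$, over which $\hat\pi$, $\hat\tau$, $t$ restrict to $\pi\times\id_{\Gm}$, $\tau\times\id_{\Gm}$, $\id_{B\times\Gm}$; hence $j_{\bA^1}^* t^!\cong j_{\mathfrak B}^*$ (open immersions satisfy $j^*\cong j^!$), then \eqref{eq:alphabasechange} for the smooth map $j_{\mathfrak B}$ gives $j_{\mathfrak X}^*\hat\pi^\varphi\cong(\pi\times\id_{\Gm})^\varphi j_{\mathfrak B}^*$, and smooth base change for $\hat\tau_*$ along $j_{\mathfrak X}$ finishes the chain. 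In the second case the closed immersion $0_{\bA^1}$ pulls back to the finite maps $0_{\mathfrak B}\colon N_{Y/B}\hookrightarrow\mathfrak B$ and $0_{\mathfrak X}\colon N^\vee_{Y/B}\hookrightarrow\mathfrak X$, with $\hat\pi|_{N_{Y/B}}=\pi_0$; one then uses proper base change $t^!0_{\bA^1,*}\cong 0_{\mathfrak B,*}t_0^!$, the finite-base-change isomorphism $\beta_{0_{\mathfrak B}}\colon\hat\pi^\varphi 0_{\mathfrak B,*}\xrightarrow{\sim}0_{\mathfrak X,*}\pi_0^\varphi$ of \eqref{eq-perverse-pullback-symplectic-finite-base-change-compatibility}, and the identity $\hat\tau\circ 0_{\mathfrak X}=0_{\bA^1}\circ p_{N^\vee_{Y/B}}$ read off the Cartesian diagram, whence $\hat\tau_*0_{\mathfrak X,*}=0_{\bA^1,*}p_{N^\vee_{Y/B},*}$ and $\hat\rho_{Y/B}0_{\bA^1,*}\cong 0_{\bA^1,*}\rho_{0,Y/B}$.

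Part \textnormal{(\ref{item:rhohat/local})} is smooth-locality. By \cref{prop-property-def-space-map}(1) the induced map $\mathfrak B'\to\mathfrak B$ of deformation spaces is smooth and the deformation diagrams are functorial, so the square $(Y'\times\bA^1\to Y\times\bA^1,\ \mathfrak B'\to\mathfrak B)$ yields the Lagrangian correspondence \eqref{eq:Lagrangian corr}; combining the Lagrangian functoriality \eqref{eq-Lagrangian functoriality of perverse pullback} along $\mathfrak B'\to\mathfrak B$ with smooth base change for $\hat\tau_*$ and $t^!$ gives $(p_Y\times\id_{\bA^1})^\dag\hat\rho_{Y/B}\cong\hat\rho_{Y'/B'}(p_B\times\id_{\bA^1})^\dag$, exactly as in the proof of \cref{lem:Lagrangian funct}.

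The heart of the lemma is \textnormal{(\ref{item:rhohat/constant})} and \textnormal{(\ref{item:rhohat/zeropull})}, which I would prove together. Using \textnormal{(\ref{item:rhohat/local})}, the local factorization of an arbitrary lfp geometric morphism as in the proof of \cref{lem-pre-dim-red-cl-imm} (appealing to \cite[Proposition 3.2.18]{LurieHA}), and \cref{lem:Lagrangian funct}\textnormal{(\ref{item:Lagrangian funct/1})} together with \cref{prop-property-def-space-map}(2) --- which transports the entire statement of \cref{lem:rhohat} across a morphism that is an isomorphism on classical truncations with cotangent complex of Tor-amplitude $\le-2$, up to an even shift --- I reduce to the case where $g\colon Y\to B$ is the inclusion of the derived zero locus $Z(s)$ of a section $s$ of a vector bundle over a smooth $B$-scheme $V$. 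In this model \cref{lem-crit-locus-vs-shifted-cotangent} identifies $\T^*[-1](Y/B)$ with $\R\Crit_{E^\vee/B}(s^\vee)$ over a smooth $B$-scheme, and --- crucially --- the deformation of $g$ to its normal bundle degenerates $s$ (equivalently $s^\vee$) to its leading term, so that the whole family $\hat\pi\colon\mathfrak X\to\mathfrak B$ is again of critical-locus type and $\hat\pi^\varphi$ is computed by \cref{thm:perversepullback}(1) as $\phi_{\widehat{s^\vee}}$ composed with a smooth pullback, where $\widehat{s^\vee}$ is the rescaled cosection on a scheme smooth over $\mathfrak B$, homogeneous for the scaling $\Gm$-action. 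Assertion \textnormal{(\ref{item:rhohat/constant})} then follows from the base-change properties of the (relative Fourier--Sato) functor recorded in \cref{prop-property-FS-dimred,prop:dim red Fourier-Sato}, the $\Gm$-equivariance of $\widehat{s^\vee}$, and the contraction principle (\cref{prop:contraction}), which forces the monodromic sheaf $\hat\rho_{Y/B}(p_{\bA^1}^*(-))$ on $Y\times\bA^1$ --- already constant along $Y\times\Gm$ by \textnormal{(\ref{item:rhohat/generic})} --- to be pulled back from $Y$. Assertion \textnormal{(\ref{item:rhohat/zeropull})} follows because, after $t^!$, the object $j_{\bA^1,!}j_{\bA^1}^*(-)$ becomes $j_{\mathfrak B,!}$ of its restriction (the map $t$ sending the closed stratum $N_{Y/B}$ into $B\times\{0\}$), hence an object supported away from the contracting locus of $\mathfrak X$, which is annihilated by passing through $\hat\pi^\varphi$, $\hat\tau_*$ and $0_{\bA^1}^*$; here one uses the $\psi/\varphi$ fibre sequence \eqref{eq:psi phi triangle} together with the contraction principle to identify $0_{\bA^1}^*\hat\tau_*$ with a retraction-pushforward on monodromic objects. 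The main obstacle I expect is precisely this last identification: the composite $0_{\bA^1}^*\circ\hat\tau_*$ is not a clean base change, since $\hat\tau$ is affine but not proper, so one must exploit the contracting $\Gm$-action on $\mathfrak X$ (whose fixed locus over $0\in\bA^1$ is $N^\vee_{Y/B}$) to bring the contraction principle to bear, and then propagate the requisite monodromicity and $\Gm$-equivariance through the reduction steps \textnormal{(\ref{item:rhohat/local})} and \cref{lem:Lagrangian funct}\textnormal{(\ref{item:Lagrangian funct/1})} --- this bookkeeping is the technical core of the argument.
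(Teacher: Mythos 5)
Your parts (\ref{item:rhohat/generic})--(\ref{item:rhohat/local}) follow the paper's proof essentially verbatim: (\ref{item:rhohat/generic}) and (\ref{item:rhohat/zeropush}) via the Lagrangian-functoriality and finite base-change isomorphisms applied to $j_{\mathfrak B}$ and $0_{\mathfrak B}$, and (\ref{item:rhohat/local}) via \cref{prop-property-def-space-map}(1) and \cref{lem:Lagrangian funct}.

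For (\ref{item:rhohat/constant}) and (\ref{item:rhohat/zeropull}) you take a genuinely different and considerably longer route, and it is here that you have missed the short argument. The paper uses (\ref{item:rhohat/local}) to reduce to the case that $g$ is a closed immersion, observes via \cref{prop-property-def-space-map} that the deformed map $\hat g\colon Y\times\bA^1\to\mathfrak B$ is \emph{also} a closed immersion, and then simply invokes \cref{cor-dim-red-cl-imm} for $\hat g$ to obtain the identification $\hat\rho_{Y/B}\cong(g\times\id_{\bA^1})^![-\dim(Y/B)]$. Once $\hat\rho_{Y/B}$ is written in this explicit form, (\ref{item:rhohat/constant}) is just $\bA^1$-homotopy invariance of the six-functor formalism, and (\ref{item:rhohat/zeropull}) follows from the base-change isomorphism $0_{\bA^1}^*(g\times\id)^!\cong g^!0_{\bA^1}^*$. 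The essential observation is that the ``closed-immersion'' dimensional reduction \cref{cor-dim-red-cl-imm} was established earlier in the paper precisely so that it could be applied here to the closed immersion $\hat g$; you do not invoke it at all.

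Instead you go back to a local critical-chart model and argue via monodromicity of the deformed cosection and the contraction principle. This is not obviously wrong, but you concede yourself that the key step --- identifying $0_{\bA^1}^*\hat\tau_*$ with a contraction-principle retraction on monodromic objects, and checking that the requisite $\Gm$-equivariance survives the reduction steps --- is ``the technical core'' and you have not filled it in. That gap is avoided entirely by the paper's shortcut. In particular, for (\ref{item:rhohat/zeropull}) your argument that $j_{\mathfrak B,!}$ of anything ``is annihilated by passing through $\hat\pi^\varphi$, $\hat\tau_*$ and $0_{\bA^1}^*$'' would require a non-trivial support estimate plus a contraction-principle computation that you do not carry out and which is not trivial (the relevant composite $0_{\bA^1}^*\circ\hat\tau_*$ is indeed not proper base change, as you note). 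I would recommend replacing the entire second half of your argument for (\ref{item:rhohat/constant})--(\ref{item:rhohat/zeropull}) by the one-line appeal to \cref{cor-dim-red-cl-imm} applied to $\hat g$.
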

\begin{proof}
    Claims~(\ref{item:rhohat/generic}) and (\ref{item:rhohat/zeropush}) follow from the isomorphisms $\alpha_{j_{\mathfrak{B}}, (j_{\mathfrak{X}}, \id)} \colon j_{\mathfrak{X}}^* \hat{\pi}^\varphi \cong (\pi\times\id_{\Gm})^\varphi j_{\mathfrak{B}}^*$ of \eqref{eq-Lagrangian functoriality of perverse pullback} and $\beta_{0_{\mathfrak{B}}} \colon \hat{\pi}^\varphi 0_{\mathfrak{B},*} \cong 0_{\mathfrak{X},*} \pi_0^\varphi$ of \eqref{eq-perverse-pullback-symplectic-finite-base-change-compatibility}.

    For (\ref{item:rhohat/local}), there is a commutative diagram
    \[\begin{tikzcd}
        Y' \times \bA^1 \ar{d}{p_Y\times\id}\ar{r}{\hat{g}'}
        & \mathfrak{B}' \ar{d}{Dp}\ar{r}{t'}
        & B' \times \bA^1 \ar{d}{p_B\times\id}
        \\
        Y \times \bA^1 \ar{r}{\hat{g}}
        & \mathfrak{B} \ar{r}{t}
        & B \times \bA^1,
    \end{tikzcd}\]
    where the upper row is the deformation to the normal bundle of $g' \colon Y' \to B'$.
    By \cref{prop-property-def-space-map}, the morphism $Dp \colon \mathfrak{B}' \to \mathfrak{B}$ is smooth, with fiber $p_{B} \colon B' \to B$ over $1 \in \bA^1$ and fiber $Np \colon N_{Y'/B'} \to N_{Y/B} \times_B B' \to N_{Y/B}$ over $0 \in \bA^1$.
    Unravelling the definitions of $\hat{\rho}_{Y/B}$ and $\hat{\rho}_{Y'/B'}$, it will suffice to construct canonical isomorphisms
    \begin{align}
        (Dp)^\dag t^! &\cong t'^!  (p_B \times \id_{\bA^1})^\dag \label{eq:rhohatlocal1}
        \\
        (p_Y \times \id_{\bA^1})^\dag \hat{\tau}_* \hat{\pi}^\varphi &\cong \hat{\tau}_*' \hat{\pi}'^\varphi (Dp)^\dag \label{eq:rhohatlocal2}
    \end{align}
    where $\hat{\pi}' \colon \mathfrak{X}' \coloneqq \T^*[-1](Y' \times \bA^1 /\mathfrak{B}') \to \mathfrak{B}'$ and $\tau' \colon \mathfrak{X}' \to Y' \times \mathbb{A}^1$.
    Since $Dp$ is smooth of the same (virtual) dimension as $p$, \eqref{eq:rhohatlocal1} follows from functoriality.
    For \eqref{eq:rhohatlocal2}, we apply \cref{lem:Lagrangian funct}(\ref{item:Lagrangian funct/2}) to the square
    \[\begin{tikzcd}
        Y' \times \bA^1 \ar{r}{h}\ar{d}{p_Y\times\id}
        & \mathfrak{B} \ar[equals]{d}
        \\
        Y \times \bA^1 \ar{r}{\hat{g}}
        & \mathfrak{B},
    \end{tikzcd}\]
    where  $h \coloneqq \hat{g} \circ (p_Y\times\id)$, to obtain a canonical isomorphism
    \[\tau_{h,*} \pi_{h}^\varphi \cong (p_Y\times\id)^\dag \hat{\tau}_* \hat{\pi}^\varphi\]
    where $\tau_h \colon \T^*[-1](h) \to Y' \times \bA^1$ and $\pi_h \coloneqq h \circ \tau_h \colon \T^*[-1](Y' \times \bA^1 / \mathfrak{B}) \to \mathfrak{B}$ are the projections.
    To compute the left-hand side, note that  $\pi_h$ is the symplectic pushforward of $\hat{\pi}'$ along the smooth morphism $Dp \colon \mathfrak{B'} \to \mathfrak{B}$.
    Hence we have
    \[
    \tau_{h,*} \pi_{h}^\varphi \xrightarrow[\sim]{} \hat{\tau}'_* i_* \pi_h^\varphi  \xleftarrow[\sim]{\eqref{eq-symp-push-perverse-pullback-smooth}} \hat{\tau}'_* \hat{\pi}'^\varphi (Dp)^\dag
    \]
    where $i \colon \T^*[-1](Y' \times \bA^1 / \mathfrak{B}) \hookrightarrow \T^*[-1](Y' \times \bA^1 / \mathfrak{B}')$ is the natural map.
    Combining the previous two displayed isomorphisms yields \eqref{eq:rhohatlocal2} and hence concludes the proof of (\ref{item:rhohat/local}).

    To prove (\ref{item:rhohat/constant}) and (\ref{item:rhohat/zeropull}), we may use (\ref{item:rhohat/local}) to work smooth-locally on $Y$ and $B$.
    In particular, we may assume that $g \colon Y \to B$ is a closed immersion.
    In this case, \cref{prop-property-def-space-map} implies that $\hat{g} \colon Y \times \bA^1 \to \mathfrak{B}$ is also a closed immersion, so \cref{cor-dim-red-cl-imm} yields $\hat{\rho}_{Y/B} \cong (g \times \id_{\bA^1})^![-\dim(Y/B)]$.
    It is then obvious that both maps in (\ref{item:rhohat/constant}) and (\ref{item:rhohat/zeropull}) are invertible.
\end{proof}

We are now ready to conclude the proof.

\begin{proof}[Proof of \cref{thm:dimensionalreduction}]
    By \cref{lem:rhohat}, we have natural isomorphisms
    \begin{align*}
        \rho_{Y/B}
        \xrightarrow[\sim]{\eqref{item:rhohat/generic}} 1_{\bA^1}^* \hat{\rho}_{Y/B} p_{\bA^1}^* 
        \xrightarrow[\sim]{\eqref{item:rhohat/constant}} 
         0_{\bA^1}^* \hat{\rho}_{Y/B} p_{\bA^1}^*
        \xrightarrow[\sim]{\eqref{item:rhohat/zeropull}} 0_{\bA^1}^* \hat{\rho}_{Y/B} 0_{\bA^1,*}
        \xrightarrow[\sim]{\eqref{item:rhohat/zeropush}}0_{\bA^1}^* 0_{\bA^1,*} {\rho}_{0,Y/B}
        \xleftarrow[\sim]{} {\rho}_{0,Y/B}.
    \end{align*}
    Finally, we have ${\rho}_{0,Y/B} \cong g^*(-)[-\dim(Y/B)]$  by \cref{prop:dim red Fourier-Sato}.
\end{proof}

\subsection{Properties of the relative dimensional reduction isomorphism}

Here, we prove some properties of the relative dimensional reduction isomorphism \eqref{eq-dimrensional-reudction}. We first show that it is compatible with the isomorphisms defined in \cref{thm:perversepullback}.

\begin{proposition}\label{prop-dimensional-reduction-compatible}
Let $B$ be an lft higher Artin stack and $g \colon Y \to B$ an lfp geometric morphism between derived stacks.
    Assume that we are given the following diagram of derived stacks
    \begin{equation}\label{eq-square-of-stacks}
    \begin{tikzcd}
    	{Y'} & Y \\
    	{B'} & {B.}
    	\arrow["{p_Y}", from=1-1, to=1-2]
    	\arrow["{g'}"', from=1-1, to=2-1]
    	\arrow["g", from=1-2, to=2-2]
    	\arrow["{p_B}"', from=2-1, to=2-2]
    \end{tikzcd}
    \end{equation}
    with $g'\colon Y'\rightarrow B$ an lfp geometric morphism. Form the following diagram:
        \begin{equation}
        \begin{tikzcd}\label{eq-cotangent-square-diagram-pullback}
        	& {\T^*[-1](Y/B) \times_{Y} Y' } \\
        	{\T^*[-1](Y'/B')} & {\T^*[-1](Y/B) \times_{B} B'} & {\T^*[-1](Y/B)} \\
        	{Y'} && Y \\
        	{B'} && {B.}
        	\arrow["{{\partial p}}"', from=1-2, to=2-1]
        	\arrow["{{p_{\tau}}}", from=1-2, to=2-2]
        	\arrow["{{\tilde{p}_Y}}", from=1-2, to=2-3]
        	\arrow["{{\tau_{Y' / B'}}}"', from=2-1, to=3-1]
        	\arrow["{{\pi_{Y'/B'}}}"', curve={height=45pt}, from=2-1, to=4-1]
        	\arrow["{{\tilde{p}_B}}"', from=2-2, to=2-3]
        	\arrow["{{\tau_{Y / B}}}", from=2-3, to=3-3]
        	\arrow["{{\pi_{Y/B}}}", curve={height=-45pt}, from=2-3, to=4-3]
            \arrow["{\pi_{Y / B}'}"', {pos=0.3}, from=2-2, to=4-1]
        	\arrow["{{p_{Y}}}", crossing over, from=3-1, to=3-3]
        	\arrow["{{g'}}"', from=3-1, to=4-1]
        	\arrow["g", from=3-3, to=4-3]
        	\arrow["{{p_B}}"', from=4-1, to=4-3]
        \end{tikzcd}
        \end{equation}
        
\begin{enumerate}
    \item \label{item-dimred-compatible-alpha} Assume that $p_B$ and $\eta \colon Y' \to Y \times_{B} B'$ are smooth. Then the following diagram commutes:
    \[\begin{tikzcd}
    	{\tau_{Y'/ B', *} \pi_{Y' / B'}^{\varphi} p_B^{\dagger}} & {\tau_{Y'/ B', *} (\partial p)_* \tilde{p}_Y^{\dagger} \pi_{Y / B}^{\varphi} } & {p_{Y}^{\dagger}\tau_{Y/B, *} \pi_{Y / B}^{\varphi} } \\
    	{(g')^!p_{B}^{\dagger} [- \dim (Y'/B')]} && {p_Y^{\dagger} g^! [- \dim (Y/B)].}
    	\arrow["{\alpha_{p_B, (\tilde{p}_Y, \partial p)}}", from=1-1, to=1-2]
    	\arrow["{\red_{Y' / B'}}"', from=1-1, to=2-1]
    	\arrow["{\Ex^!_*}", from=1-2, to=1-3]
    	\arrow["{\red_{Y/B}}", from=1-3, to=2-3]
    	\arrow["\sim"', from=2-1, to=2-3]
    \end{tikzcd}\]
    \item \label{item-dimred-compatible-beta}  Assume that $p_B$ is a closed immersion, $p_{Y}$ induces an isomorphism on classical truncations and $\mathbb{L}_{Y' / Y \times_{B} B'}$ has Tor-amplitude $\leq -2$. Then the following diagram commutes:
        \[\adjustbox{scale=0.9, center}{
        \begin{tikzcd}
        	&[-10pt]& { p_{Y, *} \tau_{Y'/B', *}  (\partial p)_* (\partial p)^{\dagger}\pi_{Y' / B'}^{\varphi}} & {p_{Y, *} \tau_{Y'/B', *}  \pi_{Y' / B'}^{\varphi} [\dim (\partial p)]} \\
        	{\tau_{Y/B, *} \pi_{Y / B}^{\varphi} p_{B, *}} & {\tau_{Y/B, *}  \tilde{p}_{B, *} (\pi_{Y / B}')^{\varphi}} & {\tau_{Y/B, *}  \tilde{p}_{B, *} p_{\tau, *} (\partial p)^{\dagger}\pi_{Y' / B'}^{\varphi}} \\
        	{g^!p_{B, *}[- \dim (Y/B)]} &&& {p_{Y, *}  g'^! [- \dim (Y' / B')].}
        	\arrow["\unit"', "\sim", from=1-4, to=1-3]
        	\arrow["{\red_{Y' / B'}}", from=1-4, to=3-4]
        	\arrow["{\beta_{p_B}}", from=2-1, to=2-2]
        	\arrow["{\red_{Y/B}}"', from=2-1, to=3-1]
        	\arrow["{\alpha_{(\partial p, p_{\tau})}}", from=2-2, to=2-3]
        	\arrow["\sim"', from=2-3, to=1-3]
        	\arrow["{\Ex^!_*}"', from=3-1, to=3-4]
        \end{tikzcd}
        }\]
    
\end{enumerate}

Now consider the following diagram:

        \begin{equation}\label{eq-cotangent-square-diagram-pushforward}
        \begin{tikzcd}
        	& {\T^*[-1](Y'/B) } & {\T^*[-1](Y/B) \times_{Y} Y'} \\
        	{\T^*[-1](Y'/B') } && {\T^*[-1](Y/B) } \\
        	{Y'} && Y \\
        	{B'} && {B.}
        	\arrow["r"', from=1-2, to=2-1]
        	\arrow["{\partial p_Y}"', from=1-3, to=1-2]
        	\arrow["{\tilde{p}_Y}", from=1-3, to=2-3]
        	\arrow["{\pi_{Y/B}}", curve={height=-50pt}, from=2-3, to=4-3]
        	\arrow["{\tau_{Y'/B'}}"', from=2-1, to=3-1]
        	\arrow["{\pi_{Y'/B'}}"', curve={height=60pt}, from=2-1, to=4-1]
        	\arrow["{\tau_{Y/B}}", from=2-3, to=3-3]
        	\arrow["{g'}"', from=3-1, to=4-1]
        	\arrow["g", from=3-3, to=4-3]
        	\arrow["{p_B}"', from=4-1, to=4-3]
        	\arrow["{\pi_{Y'/B}}"'{pos=0.37}, from=1-2, to=4-3]
            \arrow["{p_Y}", crossing over, from=3-1, to=3-3]
            \arrow["{\tau_{Y/B}'}"'{pos=0.6}, crossing over, from=1-3, to=3-1]
        \end{tikzcd}
        \end{equation}

        \begin{enumerate}[resume]
            \item \label{item-dimred-compatible-gamma}  If $p_B$ and $p_Y$ are smooth, the following diagram commutes:
            \[\begin{tikzcd}
            	&& { \tau_{Y/B, *}' \tilde{p}_Y^{\dagger} \pi_{Y/B}^{\varphi}} & {p_Y^{\dagger} \tau_{Y/B, *}\pi_{Y/B}^{\varphi}} \\
            	{ \tau_{Y'/B', *}  \pi_{Y'/B'}^{\varphi}  p_B^!} & { \tau_{Y'/B', *} r_* \pi_{Y'/B}^{\varphi}} & { \tau_{Y'/B', *} r_* (\partial p_Y)_* \tilde{p}_Y^{\dagger} \pi_{Y/B}^{\varphi}} \\
            	{ (g')^! p_B^{\dagger}[- \dim(Y' / B')]} &&& {p_Y^{\dagger} g^![- \dim(Y / B)].}
            	\arrow["{\Ex^!_*}"', "\sim", from=1-4, to=1-3]
            	\arrow["{{\red_{Y  /B}}}"', from=1-4, to=3-4]
            	\arrow["{{\gamma_{p_B}}}", from=2-1, to=2-2]
            	\arrow["{{\red_{Y' / B'}}}"', from=2-1, to=3-1]
            	\arrow["{{\alpha_{(\tilde{p}_Y, \partial p_Y)}}}", from=2-2, to=2-3]
            	\arrow["\sim", from=2-3, to=1-3]
            	\arrow["\sim"', from=3-1, to=3-4]
            \end{tikzcd}\]
            
        \item \label{item-dimred-compatible-epsilon} If $p_B$ is a closed immersion and $p_Y$ induces an isomorphism on classical truncations and $\mathbb{L}_{Y'/Y}$ has Tor-amplitude $\leq -2$, the following diagram commutes:

        \[\begin{tikzcd}
        	{\tau_{Y/B, *} \pi_{Y/B}^{\varphi} p_{B, *}} & {\tau_{Y/B, *} \tilde{p}_{Y, *} (\partial p_Y)^{\dagger}\pi_{Y'/B}^{\varphi} p_{B, *}} & {\tau_{Y/B, *} \tilde{p}_{Y, *} (\partial p_Y)^{\dagger} r^{\dagger} \pi_{Y'/B'}^{\varphi} } \\
        	&& {p_{Y, *}\tau_{Y'/B', *} r_* (\partial p_Y)_*  (\partial p_Y)^{\dagger} r^{\dagger} \pi_{Y'/B'}^{\varphi} } \\
        	&& {p_{Y, *} \tau_{Y'/B', *}  \pi_{Y'/B'}^{\varphi}[\dim (Y' / Y \times_B B')] } \\
        	{g^! p_{B, *}[- \dim(Y/B)]} && {p_{Y, *} g'^! [- \dim(Y/B)].}
        	\arrow["{\alpha_{(\partial p_Y, \tilde{p}_Y)}}", from=1-1, to=1-2]
        	\arrow["{\red_{Y/B}}"', from=1-1, to=4-1]
        	\arrow["{\varepsilon_{p_B}}", from=1-2, to=1-3]
        	\arrow["\sim", from=1-3, to=2-3]
        	\arrow["\unit"', "\sim", from=3-3, to=2-3]
        	\arrow["{\red_{Y' / B'}}", from=3-3, to=4-3]
        	\arrow["{\Ex^!_*}"', from=4-1, to=4-3]
        \end{tikzcd}\]
    \end{enumerate}

    Now assume that $B_i$ for $i=1,2$ are lft higher Artin stacks and $g_i\colon Y_i\rightarrow B_i$ are lfp geometric morphisms of derived stacks.
    \begin{enumerate}[resume]
        \item \label{item-dimred-compatible-tau}
        We let $\tau_i \colon \T^*[-1](Y_i / B_i) \to Y_i$ and $\pi_i \colon \T^*[-1](Y_i / B_i) \to B_i$ be the natural projection. Then the following diagram commutes:
        \[\begin{tikzcd}
        	{\tau_{1, *} \pi^{\varphi}_1 (-) \boxtimes \tau_{2, *} \pi^{\varphi}_2 (-)} & {(\tau_{1} \times \tau_{2})_*( \pi^{\varphi}_1 (-) \boxtimes  \pi^{\varphi}_2 (-))} & {(\tau_{1} \times \tau_{2})_* ( \pi^{\varphi}_1  \boxtimes  \pi^{\varphi}_2 ) (- \boxtimes  -)} \\
        	{g_1^! (-) \boxtimes g_2^!(-)} && {(g_1 \times g_2)^!(- \boxtimes -).}
        	\arrow["\sim", from=1-1, to=1-2]
        	\arrow["{\red_{Y_1 / B_1} \boxtimes \red_{Y_2 / B_2}}"', from=1-1, to=2-1]
        	\arrow["\TS", from=1-2, to=1-3]
        	\arrow["{\red_{Y_1 \times Y_2 / B_1 \times B_2}}", from=1-3, to=2-3]
        	\arrow["\sim"', from=2-1, to=2-3]
        \end{tikzcd}\]
        \end{enumerate}

\end{proposition}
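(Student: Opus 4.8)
The plan is to prove all five compatibilities uniformly, by transporting each identity through the deformation to the normal bundle used to construct $\red_{Y/B}$ in the proof of \cref{thm:dimensionalreduction}. Recall from that proof that $\red_{Y/B}$ is, by definition, the composite of the five isomorphisms of \cref{lem:rhohat} (restriction along $j_{\bA^1}$, homotopy invariance, the unit along $0_{\bA^1}$, $\beta_{0_{\mathfrak{B}}}$, and the co-unit along $0_{\bA^1}$) followed by $\red_{N_{Y/B}}$ of \cref{prop:dim red Fourier-Sato} for the perfect complex $N_{Y/B}$ (here $\rank N_{Y/B}=-\dim(Y/B)$). Hence I would realize each compatibility diagram in the statement as the restriction to $0\in\bA^1$ of a diagram of natural transformations of functors on $\bD(\mathfrak{B})$ whose restriction along $j_{\mathfrak{B}}$ is a coherence of $\pi^\varphi$ already recorded in \cite{KKPS1} and whose restriction along $0_{\mathfrak{B}}$ is the corresponding clause of \cref{prop-property-FS-dimred} for the Fourier--Sato transform. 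Since all functors in sight are perverse $t$-exact up to shift, it will suffice to work on perverse sheaves, and then a diagram that commutes after restriction along $j_{\mathfrak{B}}$ and along $0_{\mathfrak{B}}$ will commute on all of $\bD(\mathfrak{B})$ by \cite[Proposition 2.17]{KKPS1}, using that all inputs to $\hat\rho_{Y/B}$ are pulled back along $t$ from $\mathfrak{B}/\Gm$ (cf. \cref{lem:rhohat}(\ref{item:rhohat/constant})), so that no unipotent-nearby-cycles obstruction arises.

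For item (1) I would argue as follows. Since $p_B$ and $\eta$ are smooth, \cref{prop-property-def-space-map}(1) shows the induced map $Dp\colon\mathfrak{B}'\to\mathfrak{B}$ is smooth, with fibre $p_B$ over $1$ and fibre $Np\colon N_{Y'/B'}\to N_{Y/B}$ over $0$; moreover $Np$ factors as the fibre-smooth morphism $N_{Y'/B'}\to N_{Y/B}\times_B B'$ followed by the smooth base change $N_{Y/B}\times_B B'\to N_{Y/B}$. The isomorphism $\alpha_{p_B,(\tilde p_Y,\partial p)}$, after applying $\tau_{(-),*}$, is exactly the restriction to $1$ of the canonical isomorphism $(p_Y\times\id_{\bA^1})^\dag\hat\rho_{Y/B}\cong\hat\rho_{Y'/B'}(p_B\times\id_{\bA^1})^\dag$ produced in the proof of \cref{lem:rhohat}(\ref{item:rhohat/local}) (built from \cref{lem:Lagrangian funct}(\ref{item:Lagrangian funct/2}), \eqref{eq-Lagrangian functoriality of perverse pullback} and \eqref{eq-symp-push-perverse-pullback-smooth}), while its restriction to $0$ is the analogous Fourier--Sato base-change isomorphism for $\mathfrak{F}_{N_{Y/B}}$ along $Np$. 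Transporting the target square through this identification, it becomes the concatenation of \eqref{eq-FS-dimred-fibre-smooth} and \eqref{eq-FS-dimred-base-smooth} of \cref{prop-property-FS-dimred}(\ref{item-FS-dimred-fibre-smooth}) and (\ref{item-FS-dimred-base-smooth}) on the $0$-fibre, together with the coherence of $\alpha$ for $\pi^\varphi$ from \cite{KKPS1} on the $j_{\mathfrak{B}}$-fibre; the gluing principle of the first paragraph then closes the argument.

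Item (2) follows the same pattern with smooth and finite morphisms exchanged. Now $p_B$ is a closed immersion, $p_Y$ an isomorphism on classical truncations, and $\bL_{Y'/Y\times_B B'}$ of Tor-amplitude $\le -2$, so by \cref{prop-property-def-space-map}(2) the map $Dp\colon\mathfrak{B}'\to\mathfrak{B}$ is a closed immersion and $\hat g'$ stays a closed immersion. Repeating the proof of \cref{lem:rhohat}(\ref{item:rhohat/local}) but with $\beta_{Dp}$ of \eqref{eq-perverse-pullback-symplectic-finite-base-change-compatibility} and \cref{lem:Lagrangian funct}(\ref{item:Lagrangian funct/1}) in place of \eqref{eq-symp-push-perverse-pullback-smooth} and \cref{lem:Lagrangian funct}(\ref{item:Lagrangian funct/2}), one obtains a canonical isomorphism $\hat\rho_{Y/B}(Dp)_*\cong(p_Y\times\id_{\bA^1})_*\hat\rho_{Y'/B'}[\dim(\partial p)]$ whose restriction to $1$ is the $\beta_{p_B}$-then-$\alpha$ composite appearing along the left side of the target diagram, and whose restriction to $0$ is the corresponding statement for $\mathfrak{F}_{N_{Y/B}}$ along $Np$ (which now decomposes into a closed immersion in the fibre direction and a finite base change). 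The target diagram then reduces to \eqref{eq-FS-dimred-fibre-finite} and \eqref{eq-FS-dimred-base-finite} of \cref{prop-property-FS-dimred}(\ref{item-FS-dimred-fibre-finite}),(\ref{item-FS-dimred-base-finite}) on the $0$-fibre, plus the compatibility of $\alpha$ and $\beta$ for $\pi^\varphi$ from \cite{KKPS1} on the $j_{\mathfrak{B}}$-fibre. Items (3)--(5) are handled identically: (3) via the smooth symplectic pushforward $\gamma$, whose deformation-space lift is already present in the proof of \cref{lem:rhohat}(\ref{item:rhohat/local}), using \cite{KKPS1} on the generic fibre and \cref{prop-property-FS-dimred}(\ref{item-FS-dimred-base-smooth}),(\ref{item-FS-dimred-fibre-smooth}) on the special one; (4) via the closed-immersion symplectic pushforward $\varepsilon$, with the lift built from \eqref{eq-symp-push-perverse-pullback} and \cref{lem:Lagrangian funct}(\ref{item:Lagrangian funct/1}), using \cref{prop-property-FS-dimred}(\ref{item-FS-dimred-base-finite}),(\ref{item-FS-dimred-fibre-finite}); and (5) via the external product $\mathfrak{B}_1\times\mathfrak{B}_2$, which is the deformation space of $Y_1\times Y_2\to B_1\times B_2$, using \eqref{eq:TS} with \cite{KKPS1} on the generic fibre and \cref{prop-property-FS-dimred}(\ref{item-FS-dimred-product}) on the special fibre.

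The main obstacle I anticipate is the coherence bookkeeping rather than any single computation: for each of $\alpha,\beta,\gamma,\varepsilon,\TS$ one must construct the appropriate lift to $\mathfrak{B}$, track its Beck--Chevalley and exchange isomorphisms through the Lagrangian correspondence \eqref{eq:Lagrangian corr} and the relevant Cartesian squares, and check that it specializes correctly over both $1$ and $0$; and one must justify carefully the descent from these two fibres to all of $\bD(\mathfrak{B})$. I expect the closed-immersion cases, items (2) and (4), to be the most delicate, since the Tor-amplitude $\le -2$ hypothesis forces the genuinely different lift through \cref{lem:Lagrangian funct}(\ref{item:Lagrangian funct/1}), which must be matched against the decomposition of $Np$ into fibre and base pieces used in \cref{prop-property-FS-dimred}.
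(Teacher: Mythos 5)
Your overall strategy---transport everything through the deformation to the normal bundle and reduce to the linear statements of \cref{prop-property-FS-dimred}---is the same as the paper's, but the central reduction step is not sound as you have formulated it. You propose to verify each lifted diagram over $\mathfrak{B}$ by checking its restrictions along $j_{\mathfrak{B}}$ and along $0_{\mathfrak{B}}$ separately and then ``gluing''. Two problems. First, the restriction of the lifted diagram along $j_{\mathfrak{B}}$ is (up to the factor of $\Gm$) precisely the diagram you are trying to prove commutes, since $\red_{Y/B}$ is by definition recovered from $\hat\rho_{Y/B}$ over the generic fibre; it is not ``a coherence of $\pi^\varphi$ already recorded in \cite{KKPS1}'', so checking there is circular. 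Second, even setting that aside, agreement of two natural transformations on the open part and on the closed complement does not imply agreement on $\bD(\mathfrak{B})$: there is a genuine gluing obstruction, and \cite[Proposition 2.17]{KKPS1} (which extends equivalences from perverse sheaves to the whole category) says nothing about open/closed gluing. The argument that actually works, and that the paper uses, is different in kind: all inputs are of the form $t^!(-)$ of complexes constant along $\bA^1$, and all functors in the top rows of the diagrams preserve this constancy, so a map between them is determined by its restriction to the fibre over $\{0\}$. One therefore checks commutativity \emph{only} over $0$ (where the whole square degenerates to the Fourier--Sato case), and constancy then propagates it to the fibre over $1$, which is the desired statement. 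Once you adopt this, your generic-fibre check is not merely redundant---it must be deleted, because it presupposes the conclusion.

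A second, smaller gap: even after reducing to the zero fibre you are not immediately in the situation of \cref{prop-property-FS-dimred}. The paper needs a further reduction (via \cref{lem:rhohat}(\ref{item:rhohat/zeropull}) and the $\beta$-compatibilities) to the case where $g$ and $g'$ are zero sections of perfect complexes $E$, $E'$, and then a factorization of the resulting square into a ``base-direction'' piece (handled by clauses (\ref{item-FS-dimred-base-smooth})/(\ref{item-FS-dimred-base-finite}) of \cref{prop-property-FS-dimred}) and a ``fibre-direction'' piece with $Y=Y'$ (handled by clauses (\ref{item-FS-dimred-fibre-smooth})/(\ref{item-FS-dimred-fibre-finite})); the latter piece requires an explicit application of the stacky version of \cite[Proposition 5.20(2)]{KKPS1} to a specific square to match the lifted $\alpha$'s against the Fourier--Sato exchange maps. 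You gesture at ``the decomposition of $Np$ into fibre and base pieces'' but do not supply this intermediate coherence, which is where most of the actual work in items (1)--(4) lives.
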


\begin{proof}

    First, to prove the first four statements, we may replace the commutative square \eqref{eq-square-of-stacks} by the square
    \[\begin{tikzcd}
    	{Y' \times \mathbb{A}^1} & {Y \times \mathbb{A}^1} \\
    	{\mathfrak{B}'} & {\mathfrak{B},}
    	\arrow["{p_Y \times\id_{\mathbb{A}^1}}", from=1-1, to=1-2]
    	\arrow["{\hat{g}'}"', from=1-1, to=2-1]
    	\arrow["\hat{g}", from=1-2, to=2-2]
    	\arrow["Dp"', from=2-1, to=2-2]
    \end{tikzcd}\]
    where $\mathfrak{B}$ and ${\mathfrak{B}'}$ are the deformation spaces for $g \colon Y \to B$ and $g' \colon Y' \to B'$ respectively,
    and the complex we consider is of the form $(\mathfrak{B} \to B \times \mathbb{A}^1)^{!}(-)$ for (1), (3) and $(\mathfrak{B'} \to B' \times \mathbb{A}^1)^{!}(-)$ for (2) and (4).
    Indeed, the above diagram is defined over $\mathbb{A}^1$, and the original diagram \eqref{eq-square-of-stacks} multiplied by $\mathbb{G}_{\mathrm{m}}$ is recovered as the restriction over $\mathbb{G}_{\mathrm{m}}$. Also, the property for the map $p_{B}$ persists to $Dp$ by \cref{prop-property-def-space-map} and a straightforward computation shows that the cotangent complex $\mathbb{L}_{Y' \times \mathbb{A}^1 / (Y \times \mathbb{A}^1) \times_{\mathfrak{B}} \mathfrak{B}'}$ has Tor-amplitude $\leq -2$ as long as $\mathbb{L}_{Y' / Y \times_{B} B'}$ does.
    Note that all complexes appearing in the diagram of the statements are constant along the $\mathbb{A}^1$-direction. Therefore it is enough to prove the statement after restricting to the fibre over $\{ 0 \} \in \mathbb{A}^1$. Further, using \cref{lem:rhohat} (\ref{item:rhohat/zeropull}) and the commutativity of the map $\beta$ with the other exchange isomorphisms for the perverse pullback established in \cite[Theorem 5.23]{KKPS1}, it is enough to prove the statement for the following commutative square
    \[\begin{tikzcd}
    	{Y'} && Y \\
    	{E'} & {E \times_{Y} Y'} & E
    	\arrow["g", from=1-1, to=1-3]
    	\arrow["{0_{E'}}"', from=1-1, to=2-1]
    	\arrow["{0_E}", from=1-3, to=2-3]
    	\arrow["f"', from=2-1, to=2-2]
    	\arrow["{g_E}"', from=2-2, to=2-3]
    \end{tikzcd}\]
    where $E$ and $E'$ are perfect complexes over $B$ and $B'$, $g_E$ is the base change of $g$, $f$ is a morphism of perfect complexes and the vertical maps are the zero sections.
    Further, we may assume that the object in the source is of the form $(E \to Y)^!(-)$ for (1), (3) and $(E' \to Y')^!(-)$ for (2), (4).
    We now present the proof of each statement in this setting.
    
    For (\ref{item-dimred-compatible-alpha}), by construction, it is enough to deal with the case when $E' = E \times_{Y} Y'$ and $Y = Y'$. The former case follows from \cref{prop-property-FS-dimred} (\ref{item-FS-dimred-base-smooth}). We now prove the latter case, where the diagram \eqref{eq-cotangent-square-diagram-pullback} is identified with the following:
        \[\begin{tikzcd}
        	& {E^{\vee}} \\
        	{E'^{\vee}} & {E^{\vee} \times_Y F} & {E^{\vee}} \\
        	Y && Y \\
        	{E'} && E
        	\arrow["{f^{\vee}}"', from=1-2, to=2-1]
        	\arrow["h"', from=1-2, to=2-2]
        	\arrow[equals, from=1-2, to=2-3]
        	\arrow[from=2-1, to=3-1]
        	\arrow["{u_{E'}}"', curve={height=24pt}, from=2-1, to=4-1]
        	\arrow["{\tilde{f}}"', from=2-2, to=2-3]
        	\arrow["{u_E'}"'{pos=0.2}, from=2-2, to=4-1]
        	\arrow[from=2-3, to=3-3]
        	\arrow["{u_{E}}", curve={height=-24pt}, from=2-3, to=4-3]
        	\arrow[equals, crossing over, from=3-1, to=3-3]
        	\arrow[from=3-1, to=4-1]
        	\arrow[from=3-3, to=4-3]
        	\arrow["f"', from=4-1, to=4-3]
        \end{tikzcd}\]
        Here, $F = \fib(E' \to E)$ and $i$ is induced by the zero section of $F$. 
        We now apply the stacky version of \cite[Proposition 5.20(2)]{KKPS1} as established in \cite[Theorem 5.23]{KKPS1} for the following square
        \[\begin{tikzcd}
        	E' & E' \\
        	{E'} & E.
        	\arrow[equals, from=1-1, to=1-2]
        	\arrow[equals,  from=1-1, to=2-1]
        	\arrow["f", from=1-2, to=2-2]
        	\arrow["f"', from=2-1, to=2-2]
        \end{tikzcd}\]
        It yields the commutativity of the following diagram
        \[\begin{tikzcd}
        	{u_{E'}^{\varphi} f^{\dagger}} & {f^{\vee}_* h^{\dagger}(u_{E}')^{\varphi} f^{\dagger}} \\
        	{f^{\vee}_* u_{E}^{\varphi}} & {f^{\vee}_* h^{\dagger} \tilde{f}^{\dagger} u_{E}^{\varphi}. }
        	\arrow["{\alpha_{(h, f^{\vee})}}", from=1-1, to=1-2]
        	\arrow["{\gamma_f}"', from=1-1, to=2-1]
        	\arrow["{\alpha_{f}}", from=1-2, to=2-2]
        	\arrow["\sim"', from=2-1, to=2-2]
        \end{tikzcd}\]
        Then the claim follows from  \cref{prop-property-FS-dimred} (\ref{item-FS-dimred-fibre-smooth}).

        For (\ref{item-dimred-compatible-beta}), repeating the above discussion, we may assume $Y' = Y$. Then the statement follows from \cref{prop-property-FS-dimred} (\ref{item-FS-dimred-fibre-finite}) together with the construction of the map \eqref{eq-symp-push-perverse-pullback}.

        For (\ref{item-dimred-compatible-gamma}), using the compatibility relation of $\alpha$ and $\gamma$, it is enough to prove the statement when $Y = Y'$ and $E' = E|_{Y'}$.
        In the former case, the statement follows from \cref{prop-property-FS-dimred} (\ref{item-FS-dimred-fibre-smooth}). In the latter case, the diagram \eqref{eq-cotangent-square-diagram-pushforward} is identified with the following diagram:
        \[\begin{tikzcd}
        	& {E^{\vee} |_{Y'} \times_{Y'} \T^*[-1](Y' / Y)} & {E^{\vee} |_{Y'}} \\
        	{E^{\vee} |_{Y'}} && {E^{\vee}} \\
        	{Y'} && Y \\
        	{E|_{Y'}} && {E.}
            \arrow["{u_{E}'}", from=1-2, to=4-3]
        	\arrow["k"', from=1-2, to=2-1]
        	\arrow["i"', from=1-3, to=1-2]
        	\arrow["{g_{E^{\vee}}}", from=1-3, to=2-3]
        	\arrow[crossing over, from=1-3, to=3-1]
        	\arrow[from=2-1, to=3-1]
        	\arrow["{u_{E'}}"', curve={height=24pt}, from=2-1, to=4-1]
        	\arrow[from=2-3, to=3-3]
        	\arrow["{u_E}", curve={height=-24pt}, from=2-3, to=4-3]
        	\arrow[crossing over, "g", from=3-1, to=3-3]
        	\arrow[from=3-1, to=4-1]
        	\arrow[from=3-3, to=4-3]
        	\arrow["{g_E}"', from=4-1, to=4-3]
        \end{tikzcd}\]
        We now apply the stacky version of \cite[Proposition 5.20(2)]{KKPS1} as established in \cite[Theorem 5.23]{KKPS1} for the following square
        \[\begin{tikzcd}
        	E |_{Y'} & E \\
        	{E} & E
        	\arrow["g_E", from=1-1, to=1-2]
        	\arrow["g_E"', from=1-1, to=2-1]
        	\arrow[equals,  from=1-2, to=2-2]
        	\arrow[equals,  from=2-1, to=2-2]
        \end{tikzcd}\]
        which yields the commutativity of the following diagram
        \[\begin{tikzcd}
        	{u_{E'}^{\varphi} g_E^{\dagger}} & {k_* i_*  u_{E'}^{\varphi} g_{E}^{\dagger}} \\
        	{k_* u_{E}'^{\varphi}} & {k_* i_* g_{E^{\vee}}^{\dagger} u_{E}^{\varphi}.}
        	\arrow["\sim", from=1-1, to=1-2]
        	\arrow["{\gamma_{g_E}}"', from=1-1, to=2-1]
        	\arrow["{\alpha_{g_E}}", from=1-2, to=2-2]
        	\arrow["{\alpha_{(g_E^{\vee}, i)}}"', from=2-1, to=2-2]
        \end{tikzcd}\]
        Then the claim follows from  \cref{prop-property-FS-dimred} (\ref{item-FS-dimred-base-smooth}).

        For (\ref{item-dimred-compatible-epsilon}), repeating the above discussion, we may assume $E' = E|_{Y'}$. Then the statement follows from \cref{prop-property-FS-dimred} (\ref{item-FS-dimred-base-finite}) together with the construction of the map \eqref{eq-symp-push-perverse-pullback}.

        For (\ref{item-dimred-compatible-tau}), repeating the above argument, it is enough to prove the statement when $g_i \colon Y_i \to B_i$ is given by the zero section of a vector bundle $0_{E_1} \colon Y_i \to E_i$. In this case, the statement is proved in \cref{prop-property-FS-dimred} (\ref{item-FS-dimred-product}).
\end{proof}

Next, we show that the general construction of the dimensional reduction isomorphism coincides with the one previously constructed in \cref{cor-dim-red-cl-imm} in the special case of closed immersions.

\begin{proposition}\label{prop-dimred-cl-imm-comparison}
    In the notation of \cref{cor-dim-red-cl-imm}, the isomorphism $\tau_* \pi^{\varphi} \cong g^! [- \dim (Y / B)]$ in \eqref{eq-dim-red-cl-imm} coincides with \eqref{eq-dimrensional-reudction}.
\end{proposition}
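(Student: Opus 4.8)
The plan is to recognize the isomorphism \eqref{eq-dim-red-cl-imm} as the instance of the compatibility of \eqref{eq-dimrensional-reudction} with the symplectic‑pushforward isomorphism $\varepsilon$ from \cref{prop-dimensional-reduction-compatible}(\ref{item-dimred-compatible-epsilon}) attached to a degenerate square.

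First I would recall, from the proof of \cref{cor-dim-red-cl-imm}, that the isomorphism \eqref{eq-dim-red-cl-imm} is the composite
\[
\tau_*\pi^\varphi \xleftarrow[\sim]{\counit} \tau_*\pi^\varphi g_!g^! \xrightarrow[\sim]{\varepsilon_g} \tau_*\tau^\dag g^! \xrightarrow[\sim]{} g^![-\dim(Y/B)],
\]
where the leftmost map is whiskered from the counit $g_!g^!\to\id$ (invertible by \cref{lem-pre-dim-red-cl-imm}), the middle one from $\varepsilon_g\colon\pi^\varphi g_*\xrightarrow{\sim}\tau^\dag$, and the last is homotopy invariance $\tau_*\tau^\dag\cong(-)[-\dim(Y/B)]$. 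Here $\varepsilon_g$ is the instance of \eqref{eq-symp-push-perverse-pullback} for the symplectic pushforward of the trivial relative $(-1)$‑shifted symplectic morphism $\id_Y$ along $g$, which reproduces $\pi\colon\T^*[-1](Y/B)\to B$ with $i=\tau$. Write $\red^{\mathrm{cl}}_{Y/B}$ for this composite; the goal is to identify it with $\red_{Y/B}$.

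I would then apply \cref{prop-dimensional-reduction-compatible}(\ref{item-dimred-compatible-epsilon}) to the degenerate square with $Y'=Y$, $B'=Y$, $g'=\id_Y$, $p_Y=\id_Y$ and $p_B=g$. Its hypotheses hold — in particular $\bL_{Y'/Y}=\bL_{Y/Y}=0$ has Tor‑amplitude $\le-2$ — and in the diagram \eqref{eq-cotangent-square-diagram-pushforward} everything on the left collapses: $\T^*[-1](Y'/B')=\T^*[-1](Y/Y)=Y$ with $\pi_{Y'/B'}^\varphi=\id$ and $\red_{Y'/B'}=\id$; $\T^*[-1](Y'/B)=\T^*[-1](Y/B)$ with $\pi_{Y'/B}^\varphi=\pi^\varphi$; the morphism $r$ becomes $\tau$; and $\tilde p_Y$, $\partial p_Y$ together with the attached $\alpha$‑isomorphism become identities. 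Hence the commuting diagram there reduces to the identity
\[
\Ex^!_*\circ\bigl(\red_{Y/B}\ast g_*\bigr)\ =\ \jmath\circ\tau_*(\varepsilon_g)
\]
of natural transformations $\tau_*\pi^\varphi g_*\Rightarrow(-)[-\dim(Y/B)]$, where $\jmath\colon\tau_*\tau^\dag\xrightarrow{\sim}(-)[-\dim(Y/B)]$ is homotopy invariance and $\Ex^!_*\colon g^!g_*\xrightarrow{\sim}\id$ is the canonical isomorphism (the square is Cartesian on classical truncations, which is all $\bD(-)$ sees).

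It then remains a formal manipulation: whiskering the last identity with $g^!$, using $g_!=g_*$ and the naturality square of $\red_{Y/B}$ along $\counit$ to rewrite $\red_{Y/B}\ast g_!g^!$, and cancelling the isomorphism $\tau_*\pi^\varphi\ast\counit$, one obtains $\red^{\mathrm{cl}}_{Y/B}=(\Ex^!_*\ast g^!)\circ(g^![-\dim(Y/B)]\ast\counit)^{-1}\circ\red_{Y/B}$; and since $g$ is a closed immersion the unit $u\colon\id\xrightarrow{\sim}g^!g_*$ is invertible with $\Ex^!_*=u^{-1}$, so the triangle identity $(g^!\ast\counit)\circ(u\ast g^!)=\id$ forces $(\Ex^!_*\ast g^!)\circ(g^![-\dim(Y/B)]\ast\counit)^{-1}=\id$, whence $\red^{\mathrm{cl}}_{Y/B}=\red_{Y/B}$. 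The only genuine work is the bookkeeping in the previous paragraph — verifying that under the degenerate substitutions every structure morphism in \cref{prop-dimensional-reduction-compatible}(\ref{item-dimred-compatible-epsilon}) reduces to the evident one, and in particular that the instance of $\varepsilon$ appearing there is precisely the $\varepsilon_g$ used in the construction of \eqref{eq-dim-red-cl-imm}.
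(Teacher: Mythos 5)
Your approach is genuinely different from the paper's, which simply ``repeats the argument in the first paragraph of the proof of \cref{prop-dimensional-reduction-compatible}'' to reduce, by deformation to the normal cone, to the case where $g$ is the zero section of a perfect complex of Tor-amplitude $\leq 0$; there the two isomorphisms coincide by definition, since the isomorphism $\red_E$ of \cref{prop:dim red Fourier-Sato} in that case (Case 1 of its proof) is literally built from \cref{cor-dim-red-cl-imm}. Your idea — recognizing \eqref{eq-dim-red-cl-imm} as the $\varepsilon$-compatibility applied to the degenerate square $(Y'=Y,\ B'=Y,\ p_Y=\id,\ p_B=g)$ — is attractive because it only uses \cref{prop-dimensional-reduction-compatible}(\ref{item-dimred-compatible-epsilon}) as a black box, and your identification of the relevant maps ($r=\tau$, $\partial p_Y=\tilde p_Y=\id$, $\varepsilon_{p_B}=\varepsilon_g$) and the subsequent formal manipulation are correct.

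However, there is a genuine gap: you assert $\red_{Y'/B'}=\red_{Y/Y}=\id$ (the dimensional-reduction isomorphism for $\id_Y\colon Y\to Y$) and silently absorb it into the homotopy-invariance isomorphism $\jmath$. This is not immediate. $\red_{Y/Y}$ is defined by the chain of isomorphisms in the proof of \cref{thm:dimensionalreduction} (the comparisons \eqref{item:rhohat/generic}--\eqref{item:rhohat/zeropush} of \cref{lem:rhohat} followed by $\red_0$ from \cref{prop:dim red Fourier-Sato}), and verifying that this composite collapses to the identity for $g=\id$ requires checking that the $\alpha$- and $\beta$-isomorphisms specialize to identities along the (trivial) deformation $\mathfrak{B}=Y\times\bA^1$ — which is essentially the same deformation-to-the-normal-cone bookkeeping that the paper's proof invokes directly. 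In fact, $\red_{Y/Y}=\id$ is most easily extracted \emph{as a corollary} of the proposition you are trying to prove (apply it to $g=\id$ and note $\red^{\mathrm{cl}}_{Y/Y}=\id$), so using it as an input without an independent proof is worryingly close to circular. If you supply a direct verification that $\red_{Y/Y}=\id$, your argument becomes a valid alternative proof; as written, this step is an unjustified assertion, and filling it in is likely to cost roughly as much as the paper's route.
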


\begin{proof}
    By repeating the argument in the first paragraph of the proof of \cref{prop-dimensional-reduction-compatible}, we may assume that $g$ is a zero section for the total space of the perfect complex of Tor-amplitude $\leq 0$. In this case, the statement is tautological, since we constructed the isomorphism \eqref{eq-dim-red-climm-tor>0} using \cref{cor-dim-red-cl-imm}.
\end{proof}

We now compare the general construction of the dimensional reduction isomorphism and the one for local models given in  \cref{cor:dim red for crit}.

\begin{corollary}\label{cor-dimred-local-model-comparison}
        In the notation of \cref{cor:dim red for crit}, the isomorphism $\tau_* \pi^{\varphi} \cong g^! [- \dim (Y / B)]$ in \eqref{eq-dim-red-local-model} coincides with \eqref{eq-dimrensional-reudction}. 
\end{corollary}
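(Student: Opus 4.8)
The plan is to reduce to the closed-immersion case, already settled in \cref{prop-dimred-cl-imm-comparison}, by exploiting the factorization $g\colon Z(s)\xrightarrow{\iota}Y\xrightarrow{f}B$ into the closed immersion $\iota$ (the zero locus of $s$) followed by the smooth morphism $f$, together with the compatibility of the relative dimensional reduction isomorphism with the symplectic-pushforward isomorphism $\gamma$ from \eqref{eq-symp-push-perverse-pullback-smooth} established in \cref{prop-dimensional-reduction-compatible}(\ref{item-dimred-compatible-gamma}).

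First I would record, using \cref{lem-crit-locus-vs-shifted-cotangent}, that the oriented exact $(-1)$-shifted symplectic fibration $\pi\colon\T^*[-1](Z(s)/B)\to B$ is the symplectic pushforward along $f$ of $\pi_{Z(s)/Y}\colon\T^*[-1](Z(s)/Y)\to Y$, compatibly with canonical orientations, and that $\tau=\tau_{Z(s)/Y}\circ i$ for the natural map $i\colon\T^*[-1](Z(s)/B)\hookrightarrow\T^*[-1](Z(s)/Y)$. Applying \cref{prop-dimensional-reduction-compatible}(\ref{item-dimred-compatible-gamma}) to the square
\[\begin{tikzcd}
Z(s) & Z(s) \\
Y & B
\arrow["\id", from=1-1, to=1-2]
\arrow["\iota"', from=1-1, to=2-1]
\arrow["g", from=1-2, to=2-2]
\arrow["f"', from=2-1, to=2-2]
\end{tikzcd}\]
(with $p_Y=\id$ and $p_B=f$ both smooth) then exhibits the isomorphism \eqref{eq-dimrensional-reudction} for $g$ as the composite of the $\gamma_f$-isomorphism $\tau_{Z(s)/Y,*}\pi_{Z(s)/Y}^\varphi f^\dagger\cong\tau_*\pi^\varphi$, the isomorphism \eqref{eq-dimrensional-reudction} for the closed immersion $\iota$, and the purity isomorphism $\iota^!f^\dagger\cong g^![-\dim(Y/B)]$. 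By \cref{prop-dimred-cl-imm-comparison}, the isomorphism \eqref{eq-dimrensional-reudction} for $\iota$ appearing here agrees with the isomorphism \eqref{eq-dim-red-cl-imm} of \cref{cor-dim-red-cl-imm}.

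It then remains to check that the isomorphism \eqref{eq-dim-red-local-model}, as built in the proof of \cref{cor:dim red for crit}, equals this same composite. I would unwind the chain of that proof. Under \cref{thm:perversepullback}(1) we identify $\pi^\varphi\cong\phi_{s^\vee}(E^\vee\to B)^\dagger|_{\R\Crit_{E^\vee/B}(s^\vee)}$ and $\pi_{Z(s)/Y}^\varphi\cong\phi_{s^\vee}(E^\vee\to Y)^\dagger|_{\R\Crit_{E^\vee/Y}(s^\vee)}$; since $(E^\vee\to B)^\dagger\cong(E^\vee\to Y)^\dagger f^\dagger$ by transitivity of purity and $\R\Crit_{E^\vee/B}(s^\vee)=i^{-1}\R\Crit_{E^\vee/Y}(s^\vee)$, the compatibility of the local-model isomorphism \cref{thm:perversepullback}(1) with $\gamma$ (part of the internal compatibilities proved in \cite[Theorem 5.23, Proposition 6.10]{KKPS1}) identifies $\gamma_f$ with the first isomorphisms of that chain. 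The remaining isomorphisms of the chain --- $\tau_{Z(s)/Y,*}(-|_{\R\Crit})\cong\iota^*p_{E^\vee,*}$, the purity isomorphism, and the local dimensional reduction \cref{cor-local-dim-red} --- then assemble, using the compatibility of \cref{thm:perversepullback}(1) with the closed-immersion isomorphism \eqref{eq-symp-push-perverse-pullback} together with homotopy invariance, into precisely the construction of \eqref{eq-dim-red-cl-imm} for $\iota$ given in the proof of \cref{cor-dim-red-cl-imm}, postcomposed with purity. Combined with the previous paragraph this yields \eqref{eq-dim-red-local-model} $=$ \eqref{eq-dimrensional-reudction}.

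I expect this last identification to be the main obstacle: it amounts to tracing carefully through the mutual compatibilities of the structure isomorphisms of \cref{thm:perversepullback} recorded in \cite{KKPS1}, so as to see that the explicit chain written in the proof of \cref{cor:dim red for crit} agrees on the nose with the composite ``$\gamma_f$, then the closed-immersion dimensional reduction for $\iota$, then purity''. An alternative organization is to first check the coincidence in the case $f=\id$, where \cref{cor:dim red for crit} reduces to the local construction for the closed immersion $\iota$, match it directly with \cref{cor-dim-red-cl-imm} (which underlies \cref{prop-dimred-cl-imm-comparison}), and then transport along $\gamma_f$ via \cref{prop-dimensional-reduction-compatible}(\ref{item-dimred-compatible-gamma}); the bookkeeping content is the same either way. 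Everything else is formal.
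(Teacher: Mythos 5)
Your proposal is correct and follows essentially the same route as the paper: apply the compatibility with $\gamma$ from \cref{prop-dimensional-reduction-compatible}(\ref{item-dimred-compatible-gamma}) to the square with $p_Y=\id_{Z(s)}$, $p_B=f$, so as to reduce to the closed-immersion case $\iota\colon Z(s)\hookrightarrow Y$ handled by \cref{prop-dimred-cl-imm-comparison}, then observe that the explicit chain in the proof of \cref{cor:dim red for crit} is compatible with the same $\gamma_f$-transport. The paper compresses your third paragraph into the remark that this last compatibility is ``obvious from the construction,'' but the content is the same.
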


\begin{proof}
    Consider the following commutative diagram:
    \[\begin{tikzcd}
    	{E^{\vee}|_{Z(s)}} & {\T^*[-1](Z(s) / B)} \\
    	{Z(s)} & {Z(s)} \\
    	Y & {B.}
    	\arrow["{\tau_{Z(s)/Y}}", from=1-1, to=2-1]
    	\arrow["{\pi_{Z(s) /Y}}"', curve={height=24pt}, from=1-1, to=3-1]
    	\arrow["r"', from=1-2, to=1-1]
    	\arrow["{\tau_{Z(s)/B}}"', from=1-2, to=2-2]
    	\arrow["{\pi_{Z(s) /B}}", curve={height=-24pt}, from=1-2, to=3-2]
    	\arrow["\iota", from=2-1, to=3-1]
    	\arrow[equals, from=2-2, to=2-1]
    	\arrow["{g\circ \iota}"', from=2-2, to=3-2]
    	\arrow["g", from=3-1, to=3-2]
    \end{tikzcd}\]
    Then \cref{prop-dimensional-reduction-compatible} (\ref{item-dimred-compatible-gamma}) implies the commutativity of the following diagram:
    \[\begin{tikzcd}
    	{\tau_{Z(s ) / Y, *}  \pi_{Z(s) /Y}^{\varphi} g^{\dagger} } & {\tau_{Z(s ) / Y, *} r_* \pi_{Z(s) /B}^{\varphi} } & {\tau_{Z(s ) / B, *} \pi_{Z(s) /B}^{\varphi} } \\
    	{\iota^! g^![- \dim(Z(s) / B)]} && {(g \circ \iota)^![- \dim(Z(s) / B)].}
    	\arrow["{\gamma_g}", from=1-1, to=1-2]
    	\arrow["{\red_{Z(s)/Y}}"', from=1-1, to=2-1]
    	\arrow["\sim", from=1-2, to=1-3]
    	\arrow["{\red_{Z(s)/B}}", from=1-3, to=2-3]
    	\arrow["\sim"', from=2-1, to=2-3]
    \end{tikzcd}\]
    Therefore it is enough to prove the analogous statement for the isomorphism \eqref{eq-dim-red-local-model}. However, this is obvious from the construction.
\end{proof}

We expect that \cref{thm:dimensionalreduction} holds for any coefficient ring, if we restrict to the perverse category on the source. As a consequence of \cref{cor-dimred-local-model-comparison}, we prove the following weaker version of this statement. Namely, for an lft higher Artin stack $B$ and a commutative ring $R$ (which is no longer assumed to be a field), let $\Dbc(B; R)$ be the $\infty$-category of constructible complexes of sheaves of $R$-modules on $B$ and $R_B\in\Dbc(B; R)$ the corresponding constant sheaf.

\begin{corollary}
    We adopt the notation from \cref{thm:dimensionalreduction}. Assume that $B$ is smooth, so that the dualizing complex $\bbD \Z_{B}[- \dim B]\in\Dbc(B; \Z)$ is perverse.
    Then there exists a natural isomorphism
    \[
    \tau_* \pi^{\varphi} \bbD \Z_{B} \cong  \bbD \Z_{Y}[- \dim(Y/B)]
    \]
    in $\Dbc(Y; \Z)$ which recovers \eqref{eq-dimrensional-reudction} after applying the rationalization functor $\Dbc(Y; \Z)\rightarrow \Dbc(Y; \Q)$.
\end{corollary}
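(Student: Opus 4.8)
The plan is to realise the object ``$\tau_*\pi^\varphi\bbD\Z_B$'' and the isomorphism locally, via the explicit critical‑chart formula for the perverse pullback, and then to glue using the canonicity of local dimensional reduction.

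First I would reduce to a local model. Since $\bbD\Z_Y[-\dim(Y/B)]\cong g^!\bbD\Z_B[-\dim(Y/B)]$ and the formation of $g^!$ (with the appropriate shift) commutes with smooth base change, and since, by \cref{prop-dimensional-reduction-compatible}, the rational isomorphism \eqref{eq-dimrensional-reudction} is itself smooth‑local on $Y$ and $B$, it suffices to treat the case in which $g\colon Y\to B$ arises as in \cref{cor:dim red for crit}: there is a smooth morphism $f\colon \tilde Y\to B$ between smooth schemes, a vector bundle $E$ on $\tilde Y$, a section $s\in\Gamma(\tilde Y,E)$ with $Y=Z(s)$, and an identification $\T^*[-1](Y/B)\cong\R\Crit_{E^\vee/B}(s^\vee)$ over $B$ (where $s^\vee\colon E^\vee\to\bA^1$ is the cosection), compatible with the canonical orientations by \cref{lem-crit-locus-vs-shifted-cotangent}.

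In this situation I would define the integral object by the same formula that computes $\tau_*\pi^\varphi$ in the proof of \cref{cor:dim red for crit}, namely $\iota^*p_{E^\vee,*}\varphi_{s^\vee}p_{E^\vee}^\dag f^\dag\bbD\Z_B$, where $\iota\colon Y\hookrightarrow\tilde Y$ is the inclusion; every operation appearing here is defined with $\Z$‑coefficients. The key point is that $p_{E^\vee}^\dag f^\dag\bbD\Z_B$ is a shift of the pullback along $p_{E^\vee}$ of the constant sheaf on the smooth scheme $\tilde Y$, hence is monodromic for the fibrewise scaling $\Gm$‑action on $E^\vee$; since $s^\vee$ has weight $1$ for this action, $\psi_{s^\vee}$ of this sheaf coincides with its unipotent part. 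Therefore \cref{lem-pre-local-dim-red} and \cref{cor-local-dim-red} apply with $\Z$‑coefficients: only the unipotent part of the proof of \cref{lem-pre-local-dim-red} is needed, and that part uses only the contraction principle, not that the coefficients form a field. Running the computation in the proof of \cref{cor:dim red for crit} verbatim then produces a canonical isomorphism with $g^!\bbD\Z_B[-\dim(Y/B)]=\bbD\Z_Y[-\dim(Y/B)]$.

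It remains to glue these local descriptions. The isomorphisms just constructed are canonical --- local dimensional reduction is the statement that a certain counit is invertible, and the identification of $\tau_*\pi^\varphi$ with the above formula is \cref{thm:perversepullback}(1) --- and their compatibility with smooth base change and with change of local model is governed by the integral incarnations of the $\alpha$‑ and $\gamma$‑compatibilities of \cref{prop-dimensional-reduction-compatible} and by \cref{cor-dimred-local-model-comparison}; evaluated on $\bbD\Z_B$ these involve only the explicit critical‑chart description of $\pi^\varphi$ and manipulations of vanishing cycles, all of which make sense integrally. By smooth descent for $\Dbc(-;\Z)$ the local objects and isomorphisms then glue to a global object $\tau_*\pi^\varphi\bbD\Z_B$ together with a global isomorphism $\tau_*\pi^\varphi\bbD\Z_B\cong\bbD\Z_Y[-\dim(Y/B)]$. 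Finally, rationalising reduces the construction on each chart to the rational local‑model isomorphism \eqref{eq-dim-red-local-model}, which by \cref{cor-dimred-local-model-comparison} is the restriction of \eqref{eq-dimrensional-reudction}, so the integral isomorphism recovers \eqref{eq-dimrensional-reudction} after applying $\Dbc(Y;\Z)\to\Dbc(Y;\Q)$. I expect the main obstacle to be precisely this gluing step: one must verify that replacing $\pi^\varphi$ (defined only over a field) by its critical‑chart formula is carried out coherently enough over the category of charts that the base‑change compatibilities of \cref{prop-dimensional-reduction-compatible}, stated over a field, can be promoted to $\Z$‑coefficients in the special case of the dualizing sheaf.
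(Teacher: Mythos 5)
Your reduction to the critical-chart picture is the same as the paper's, and your observation that the local dimensional reduction argument can be run for the specific input $\bbD\Z_B$ is also what the paper uses. However, there are two genuine issues.

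The more serious one is the gluing step, which you yourself flag as the main obstacle. Your plan requires promoting the coherence data --- \cref{thm:perversepullback}(1), and the $\alpha$- and $\gamma$-compatibilities of \cref{prop-dimensional-reduction-compatible} --- to $\Z$-coefficients, which is a substantial undertaking since the perverse pullback $\pi^\varphi$ is only constructed over a field. The paper sidesteps this entirely with a short trick you did not find: it reduces to showing that the \emph{rational} isomorphism \eqref{eq-dimrensional-reudction} lifts to an integral one \emph{after pulling back to a smooth cover}. That much is a pointwise statement and needs no descent coherence. Descent is then automatic because the sheaf
\[
\cL \;=\; \cHom\bigl(\tau_*\pi^\varphi\bbD\Z_B,\ \bbD\Z_Y[-\dim(Y/B)]\bigr)
\]
is a rank-one $\Z$-local system once it is trivialized on a smooth cover; a rank-one $\Z$-local system with a globally defined rational trivialization has at most one integral generator lifting it, so the local integral lifts agree on overlaps for free. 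This is the key idea missing from your argument, and it removes exactly the obstacle you worried about.

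The second issue is smaller but still a gap as written. You claim that because $p_{E^\vee}^\dag f^\dag\bbD\Z_B$ is monodromic and $s^\vee$ has weight one, $\psi_{s^\vee}$ of it coincides with $\psi^{\mathrm{uni}}_{s^\vee}$. Monodromicity alone does \emph{not} imply unipotent monodromy: the extension by zero of a nontrivial rank-one local system on $\Gm$ is conic, its function is the identity (weight one), yet the nearby-cycle monodromy at $0$ is multiplication by a nontrivial scalar. What saves your specific case is the much stronger fact that the sheaf is \emph{pulled back from the fixed locus} $\tilde Y$ (not merely conic), so the loop in the $s^\vee$-plane lifts through the $\Gm$-action to a loop of isomorphisms of the sheaf that closes up to the identity at $\theta=1$. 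Your argument should invoke that, not monodromicity.

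Finally, a structural remark: the paper first reduces (via a smooth cover) to $g$ a closed immersion of schemes, then uses \cref{prop-dimensional-reduction-compatible}(\ref{item-dimred-compatible-epsilon}) to reduce to $\bL_{\tilde Y/B}$ of Tor-amplitude $\le -1$, and only then shrinks to realize $Y$ as $Z(s)$ for a section $s$ of a vector bundle \emph{on $B$}. Your proposal jumps directly to the critical chart without the closed-immersion and Tor-amplitude reduction; this is not wrong per se but makes it harder to invoke \cref{cor-dimred-local-model-comparison} in the precise form needed.
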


\begin{proof}
    It is enough to prove the isomorphism $\tau_* \pi^{\varphi}(\bbD \Q_{B}) \cong  \bbD \Q_{Y}[- \dim(Y/B)]$ lifts to an integral isomorphism after pulling back to a smooth cover.
    Indeed, this statement implies that the Hom complex $\mathcal{L} \coloneqq\mathcal{H}\mathrm{om}(\tau_* \pi^{\varphi} \bbD \Z_{B}, \bbD \Z_{Y}[- \dim(Y/B)])$ is a local system of rank one and $\mathcal{L}_{\Q}$ admits a trivialization which admits an integral lift over a smooth cover. This implies that the trivialization $\mathcal{L}_{\Q}$ lifts to an integral isomorphism as desired.

    Now consider the following commutative square
    \[\begin{tikzcd}
	{Y'} & Y \\
	{B'} & B
	\arrow["{p_{Y}}", from=1-1, to=1-2]
	\arrow["{g'}"', from=1-1, to=2-1]
	\arrow["g", from=1-2, to=2-2]
	\arrow["{p_{B}}"', from=2-1, to=2-2]
    \end{tikzcd}\]
    such that $p_B$ and $p_Y$ are smooth and surjective map from schemes and $g'$ is a closed immersion.
    Applying \cref{prop-dimensional-reduction-compatible} (\ref{item-dimred-compatible-gamma}), we conclude that we may assume that $g$ is a closed immersion between schemes.

    Now, possibly shrinking $Y$ and $B$, we may choose the following commutative diagram:
    \[\begin{tikzcd}
    	Y & {\tilde{Y}} \\
    	B & B
    	\arrow["\iota", from=1-1, to=1-2]
    	\arrow["g"', from=1-1, to=2-1]
    	\arrow["{\tilde{g}}", from=1-2, to=2-2]
    	\arrow[equals, from=2-1, to=2-2]
    \end{tikzcd}\]
    where $\iota$ induces an isomorphism on classical truncations and $\mathbb{L}_{\tilde{Y} / B}$ has Tor-amplitude $\leq -1$.
    Using \cref{prop-dimensional-reduction-compatible} (\ref{item-dimred-compatible-epsilon}), we may assume that $\mathbb{L}_{\tilde{Y} / B}$ has Tor-amplitude $\leq -1$.
    After shrinking $Y$ and $B$ if necessary, we may assume that there exists a vector bundle $E$ on $B$ and section $s \in \Gamma(B, E)$ such that $Y = Z(s)$. 
    In this case, we have shown in \cref{cor-dimred-local-model-comparison} that the dimensional reduction isomorphism in \eqref{eq-dimrensional-reudction} coincides with \eqref{eq-dim-red-local-model}. On the other hand, since \eqref{eq-dim-red-local-model} holds integrally, we obtain the desired statement.
\end{proof}

\begin{remark}
    By comparing the perverse pullback for $(-1)$-shifted cotangent stacks with the derived microlocalization developed by Khan and Kinjo \cite{KhanKinjo}, and by gluing the isomorphisms for local models constructed by Kinjo \cite[Theorem 4.1]{KinjoVirtual} and independently by Schefers \cite[Theorem 5.1]{Schefers}, we expect that it is possible to lift \cref{thm:dimensionalreduction} to an isomorphism of functors on $\Perv(B; R)$ for any coefficient ring $R$.
    We do not pursue this further in the present paper.
\end{remark}



\subsection{Application: the Fourier--Kashiwara isomorphism}

Let $p \colon B \to S$ be an lfp morphism between higher Artin stacks and $E$ be a perfect complex over $B$.
Here we prove the following statement, which we refer to as the Fourier--Kashiwara isomorphism following \cite{mirkovic2010linear} (which describes its categorification in terms of linear Koszul duality):

\begin{theorem}\label{thm-Kashiwara-Fourier}
    Let $\pr_E \colon E \to B$ and $\pr_{E^{\vee}[-1]} \colon E^{\vee}[-1] \to B$ be the projections. Assume that the coefficient ring $R$ is a field.
    Then there exists a natural equivalence of functors
    \[
    \pr_{E, *} (p \circ \pr_E)^! \simeq \pr_{E^{\vee}[-1], *} (p \circ \pr_{E^{\vee}[-1]} )^! [2\rank(E)]
    \]
    from $\Dbc(S; R)$ to $\Dbc(B; R)$. In particular, there is a natural isomorphism between the relative Borel--Moore homology groups
    \begin{equation}
    \label{eq-Kashiwara-Fourier-absolute}
    \HBM_\bullet(E / S) \cong \HBM_{\bullet - 2\rank(E)}(E^{\vee}[-1] / S).
    \end{equation}
\end{theorem}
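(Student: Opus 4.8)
The plan is to deduce this from the relative dimensional reduction theorem \cref{thm:dimensionalreduction}, applied to the two linear stacks $\pr_E\colon E=\Tot_B(E)\to B$ and $\pr_{E^\vee[-1]}\colon E^\vee[-1]=\Tot_B(E^\vee[-1])\to B$, combined with the Fourier self-duality of the relative $(-1)$-shifted cotangent bundle of a linear stack.

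The geometric input is a canonical identification
\[\T^*[-1](E/B)\;\cong\;\T^*[-1](E^\vee[-1]/B)\]
of oriented relative exact $(-1)$-shifted symplectic stacks over $B$ under which the structure maps to $B$ correspond. I would obtain this from \cref{lem-crit-locus-vs-shifted-cotangent}, applied with $Y=B$, with the perfect complex there taken to be $E^\vee$, and with the zero section $s=0$: then $Z(0)\cong\Tot_B(E^\vee[-1])$, the corresponding cosection is $0\colon\Tot_B(E)\to\bA^1$, and $\R\Crit_{E/B}(0)$ is $\T^*[-1](E/B)$ equipped with its canonical relative $(-1)$-shifted symplectic structure and canonical orientation (cf.\ \cref{ex:ocan}). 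The lemma then supplies an equivalence $\T^*[-1](E^\vee[-1]/B)\cong\T^*[-1](E/B)$ compatible with the $(-1)$-shifted symplectic structures and matching the canonical orientations. On underlying derived stacks this is the obvious symmetry of $\Tot_B(E\oplus E^\vee[-1])$ exchanging the two summands; in particular, writing $\tau\colon\T^*[-1](E/B)\to E$ and $\tau'\colon\T^*[-1](E^\vee[-1]/B)\to E^\vee[-1]$ for the projections, the structure maps satisfy $\pi=\pr_E\circ\tau=\pr_{E^\vee[-1]}\circ\tau'=\pi'$, so that $\pi_*=\pr_{E,*}\tau_*=\pr_{E^\vee[-1],*}\tau'_*$, and an isomorphism of the underlying oriented relative $(-1)$-shifted symplectic stacks over $B$ yields $\pi^\varphi\cong\pi'^\varphi$ (\cref{thm:perversepullback}).

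Granting this, I apply \cref{thm:dimensionalreduction} to $g=\pr_E$ and to $g'=\pr_{E^\vee[-1]}$. Using $\dim(E/B)=\rank(E)$ and $\dim(E^\vee[-1]/B)=\rank(E^\vee[-1])=-\rank(E)$, this yields natural isomorphisms
\[\tau_*\pi^\varphi\cong\pr_E^![-\rank(E)],\qquad\tau'_*\pi'^\varphi\cong\pr_{E^\vee[-1]}^![\rank(E)].\]
Precomposing with $p^!$, postcomposing with $\pr_{E,*}$ respectively $\pr_{E^\vee[-1],*}$, and using $\pr_{E,*}\tau_*=\pi_*=\pi'_*=\pr_{E^\vee[-1],*}\tau'_*$ together with $\pi^\varphi\cong\pi'^\varphi$ and the functoriality identities $\pr_E^!\,p^!\cong(p\circ\pr_E)^!$ and $\pr_{E^\vee[-1]}^!\,p^!\cong(p\circ\pr_{E^\vee[-1]})^!$, I obtain
\[\pr_{E,*}(p\circ\pr_E)^![-\rank(E)]\;\cong\;\pi_*\pi^\varphi p^!\;\cong\;\pi'_*\pi'^\varphi p^!\;\cong\;\pr_{E^\vee[-1],*}(p\circ\pr_{E^\vee[-1]})^![\rank(E)],\]
and a shift by $[\rank(E)]$ is the asserted equivalence of functors $\Dbc(S;R)\to\Dbc(B;R)$. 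That these functors are valued in constructible complexes is routine: $p^!$ and $\pi^\varphi$ preserve $\Dbc$, hence so does $\tau_*\pi^\varphi$ by the dimensional reduction isomorphism, and $\pr_{E,*}$ of a conic sheaf is computed by the contraction principle (\cref{prop:contraction}).

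Finally, \eqref{eq-Kashiwara-Fourier-absolute} follows by evaluating the functor isomorphism at the constant sheaf $\bu_S\in\Dbc(S;R)$, applying $\R\Gamma(B,-)$, and using $\HBM_\bullet(X/S)=\rH^{-\bullet}\bigl(B,\pr_{X,*}(p\circ\pr_X)^!\bu_S\bigr)$ for $X\in\{E,E^\vee[-1]\}$. The only step requiring genuine care is the oriented-symplectic identification of the two $(-1)$-shifted cotangent bundles---i.e.\ verifying that the Fourier self-duality is exactly the instance of \cref{lem-crit-locus-vs-shifted-cotangent} described above, including the matching of canonical orientations, the relevant sign bookkeeping being precisely the one carried out in the proof of that lemma. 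Everything after that is a formal manipulation of the dimensional reduction isomorphism, the shift coming out as $\dim(E/B)-\dim(E^\vee[-1]/B)=2\rank(E)$.
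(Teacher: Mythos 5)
Your proposal is correct and follows essentially the same route as the paper: obtain the oriented relative exact $(-1)$-shifted symplectic identification $\T^*[-1](E/B)\cong\T^*[-1](E^\vee[-1]/B)$ from \cref{lem-crit-locus-vs-shifted-cotangent} and then apply \cref{thm:dimensionalreduction} twice, keeping track of the shifts $\dim(E/B)=\rank(E)$ and $\dim(E^\vee[-1]/B)=-\rank(E)$. The only (cosmetic) difference is the input to the lemma: you take the perfect complex to be $E^\vee$ with the zero section (so $Z(0)=E^\vee[-1]$ and $\R\Crit_{E/B}(0)=\T^*[-1](E/B)$), whereas the paper uses $E[1]$ with the zero section (so $Z(0)=E$ and $\R\Crit_{E^\vee[-1]/B}(0)=\T^*[-1](E^\vee[-1]/B)$); these are the same isomorphism read in opposite directions, and both are valid applications of the lemma.
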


\begin{proof}
Applying \cref{lem-crit-locus-vs-shifted-cotangent} for the zero section $0 \in \Gamma(B, E[1])$, we obtain an isomorphism of oriented relative exact $(-1)$-shifted symplectic stacks
\[
 \T^*[-1](E / B) \cong  \T^*[-1](E^{\vee}[-1] / B).
\]
Then using \cref{thm:dimensionalreduction} twice, we obtain the desired isomorphism. 
\end{proof}

\begin{remark}
A homogeneous version of the Fourier--Kashiwara isomorphism was also previously established in \cite[Corollary 1.33]{KhanFourier}.
That version follows from \cref{thm-Kashiwara-Fourier} applied to a perfect complex on $B \times \mathrm{B} \mathbb{G}_{\mathrm{m}}$ for which $\mathbb{G}_{\mathrm{m}}$ acts by weight one.
\end{remark}

\begin{example}
When $E$ is a vector bundle, the isomorphism \eqref{eq-Kashiwara-Fourier-absolute} is nothing but the Thom isomorphism $\HBM_\bullet(E/S)\cong \HBM_{\bullet-2\rank(E)}(B/S)$. Therefore, we can regard the above theorem as a generalization of the Thom isomorphism theorem to singular vector bundles.
\end{example}

\begin{example}
Let $G$ be an algebraic group acting on a smooth variety $X$. Then we get an isomorphism
\[\HBM_\bullet(\T^*[X/G])\cong \HBM_{\bullet-2(\dim(X)-\dim(G))}(\T[-1][X/G]).\]
Here $\T^*[X/G]\cong [\mu^{-1}(0)/G]$, where $\mu\colon \T^*X\rightarrow \g^*$ is the moment map, and $\T[-1][X/G]\cong [\{(a,x)\in \g\times X\mid \rho(a)_x =0\} / G]$, where $\rho\colon \g\rightarrow \T_X$ is the action map. This is the case considered in \cite{KashiwaraCharacter}.
\end{example}


We provide an application of \cref{thm-Kashiwara-Fourier} to the study of the moduli space of Higgs bundles. Let $\Sigma$ be a smooth projective curve and $G$ a reductive algebraic group. For a line bundle $\mathcal{L}$,  we let $\mathrm{Higgs}_G^{\mathcal{L}}(\Sigma)$ be the moduli stack of pairs $(P, \phi)$ of a principal $G$-bundle $P$ and a section $\phi \in \Gamma(\Sigma, (P \times^{G} \mathfrak{g}) \otimes \mathcal{L})$ where  $\mathfrak{g}$ denotes the Lie algebra associated with $G$. For instance, for $K_\Sigma$ the canonical bundle $\mathrm{Higgs}_G^{K_\Sigma}(\Sigma)$ is the usual moduli stack of Higgs bundles. Then we have the following:

\begin{corollary}
    Set $n \coloneqq (g(\Sigma)-1 ) \cdot \dim(G)$. There is a natural isomorphism
    \begin{equation}\label{eq-Higgs-Fourier}
    \HBM_\bullet(\mathrm{Higgs}_G^{\mathcal{O}_\Sigma}(\Sigma)) \cong \HBM_{\bullet + 2n}(\mathrm{Higgs}_G^{K_\Sigma}(\Sigma)).
    \end{equation}
\end{corollary}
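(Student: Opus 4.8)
The plan is to exhibit $\mathrm{Higgs}_G^{\mathcal{O}_\Sigma}(\Sigma)$ and (a shift of) $\mathrm{Higgs}_G^{K_\Sigma}(\Sigma)$ as the total spaces of a perfect complex and its shifted dual over a common base, and then to invoke the Fourier--Kashiwara isomorphism (\cref{thm-Kashiwara-Fourier}). Let $\Bun_G(\Sigma)$ be the moduli stack of principal $G$-bundles on $\Sigma$ and let $\mathrm{ad}_P = P\times^G\mathfrak{g}$ denote the adjoint bundle of the universal $G$-bundle. First I would observe that, by definition, $\mathrm{Higgs}_G^{\mathcal L}(\Sigma)\to\Bun_G(\Sigma)$ is the total space of the perfect complex $E_{\mathcal L}\coloneqq \mathrm{R}p_{*}(\mathrm{ad}_P\otimes \mathcal L)$ on $\Bun_G(\Sigma)$, where $p\colon \Sigma\times\Bun_G(\Sigma)\to\Bun_G(\Sigma)$ is the projection; indeed its fibre over $P$ is $\mathrm{R}\Gamma(\Sigma,\mathrm{ad}_P\otimes\mathcal L)$, so $\mathrm{Higgs}_G^{\mathcal L}(\Sigma)=\Tot_{\Bun_G(\Sigma)}(E_{\mathcal L})$ in the sense used in \cref{sect:FourierSato} (taking into account the obstruction $\mathrm H^1$, which is why one wants the full derived total space).

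The key step is then the duality $E_{\mathcal O_\Sigma}^{\vee}\cong E_{K_\Sigma}[-1]$ (or the symmetric reformulation $E_{K_\Sigma}\cong E_{\mathcal O_\Sigma}^\vee[-1]$). This is exactly relative Serre duality on the curve: $\mathrm{R}p_*(\mathrm{ad}_P)^\vee\cong \mathrm{R}p_*((\mathrm{ad}_P)^\vee\otimes K_\Sigma)[1]$, combined with the self-duality $(\mathrm{ad}_P)^\vee\cong \mathrm{ad}_P$ coming from a chosen invariant nondegenerate pairing on $\mathfrak g$ (available since $G$ is reductive). So I would spell out: $E_{K_\Sigma}=\mathrm{R}p_*(\mathrm{ad}_P\otimes K_\Sigma)\cong \mathrm{R}p_*((\mathrm{ad}_P)^\vee\otimes K_\Sigma)\cong \mathrm{R}p_*(\mathrm{ad}_P)^\vee[-1]=E_{\mathcal O_\Sigma}^\vee[-1]$. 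At this point $\mathrm{Higgs}_G^{K_\Sigma}(\Sigma)=\Tot(E_{K_\Sigma})\cong \Tot(E_{\mathcal O_\Sigma}^\vee[-1])$, which is precisely the stack called $E^\vee[-1]$ in \cref{thm-Kashiwara-Fourier} for $E=E_{\mathcal O_\Sigma}$.

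Finally I would apply \cref{thm-Kashiwara-Fourier} with $B=\Bun_G(\Sigma)$, $S=\pt$, and $E=E_{\mathcal O_\Sigma}$, yielding
\[
\HBM_\bullet(\mathrm{Higgs}_G^{\mathcal O_\Sigma}(\Sigma))=\HBM_\bullet(E/\pt)\cong \HBM_{\bullet-2\rank(E)}(E^\vee[-1]/\pt)=\HBM_{\bullet-2\rank(E)}(\mathrm{Higgs}_G^{K_\Sigma}(\Sigma)),
\]
and it remains to compute the virtual rank. By Riemann--Roch on $\Sigma$, $\rank E_{\mathcal O_\Sigma}=\chi(\Sigma,\mathrm{ad}_P)=\dim(G)\cdot(1-g(\Sigma))=-n$ with $n=(g(\Sigma)-1)\dim(G)$, so $-2\rank(E)=2n$, which gives the stated shift $\HBM_\bullet(\mathrm{Higgs}_G^{\mathcal O_\Sigma}(\Sigma))\cong\HBM_{\bullet+2n}(\mathrm{Higgs}_G^{K_\Sigma}(\Sigma))$. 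The main obstacle I anticipate is purely bookkeeping: checking that $\Bun_G(\Sigma)$ and the relevant morphisms satisfy the hypotheses of \cref{thm-Kashiwara-Fourier} (that $\Bun_G(\Sigma)^{\cl}$ is an lft higher Artin stack and $E_{\mathcal O_\Sigma}$ is a genuine perfect complex on it, which holds by cohomology-and-base-change for the proper morphism $p$), and making sure the Serre-duality identification of perfect complexes is canonical enough that no stacky subtlety intervenes — but both are standard.
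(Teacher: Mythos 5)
Your proposal is correct and takes essentially the same route as the paper. The paper phrases the two identifications as $\mathrm{Higgs}_G^{\mathcal O_\Sigma}(\Sigma)\cong\T[-1]\Bun_G(\Sigma)$ and $\mathrm{Higgs}_G^{K_\Sigma}(\Sigma)\cong\T^*\Bun_G(\Sigma)$ and then cites \cref{thm-Kashiwara-Fourier}; your explicit perfect complex $E_{\mathcal O_\Sigma}=\R p_*(\mathrm{ad}_P)$ is precisely $\bT_{\Bun_G(\Sigma)}[-1]$, and the Serre-duality step you spell out (together with $\mathrm{ad}_P\cong\mathrm{ad}_P^\vee$ from the Killing form) is exactly what makes $E_{\mathcal O_\Sigma}^\vee[-1]\cong\bL_{\Bun_G(\Sigma)}=E_{K_\Sigma}$, so the two formulations coincide; your Riemann--Roch computation of $\rank E_{\mathcal O_\Sigma}=-n$ matches the paper's $\dim\Bun_G(\Sigma)=n$.
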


\begin{proof}
    Let $\Bun_G(\Sigma)$ be the moduli stack of $G$-bundles on $\Sigma$.
    Then we have
    \[
    \mathrm{Higgs}_G^{\mathcal{O}_\Sigma}(\Sigma) \cong \T[-1]\Bun_G(\Sigma), \quad \mathrm{Higgs}_G^{K_\Sigma}(\Sigma) \cong \T^*\Bun_G(\Sigma)
    \]
    and $\dim(\Bun_G(\Sigma)) = n$. Hence the desired isomorphism follows from \cref{thm-Kashiwara-Fourier}.
\end{proof}

\begin{remark}
    The Borel--Moore homology group $\HBM_\bullet(\mathrm{Higgs}_G^{K_\Sigma}(\Sigma))$ has been intensively studied in the literature, especially in its connection with the topological mirror symmetry and the P=W conjecture (see \cite{davison2023nonabelian, davison2024purity, bu2025cohomology}). We expect that the isomorphism \eqref{eq-Higgs-Fourier} together with the Hitchin fibration $\mathrm{Higgs}_G^{\mathcal{\cO}_\Sigma}(\Sigma) \to \mathfrak{g} /\!/ G$ provides an interesting algebraic structure on $\HBM_\bullet(\mathrm{Higgs}_G^{K_\Sigma}(\Sigma))$. We note that a decategorified version of \eqref{eq-Higgs-Fourier} has been already observed by Mozgovoy and Schiffmann \cite[Proposition 4.1]{mozgovoy2014counting}.
\end{remark}

\section{Period sheaves for Hamiltonian spaces}

In this section we define period sheaves for general Hamiltonian spaces as envisaged in \cite{BZSV}. In this section $k=\C$ and $R$ is an algebraically closed field of characteristic zero.

\subsection{Setup}\label{sect:periodsetup}

Throughout this section we consider the following objects over $k$:
\begin{itemize}
    \item $G_1, G_2$ are reductive groups. We denote $G=G_1\times G_2$.
    \item $(M, \omega)$ is a quasi-separated smooth symplectic scheme equipped with a $G\times \Gm$-action, such that the $G$-action preserves $\omega$ and the $\Gm$-action scales it with weight $2$.
    \item $\Sigma$ is a connected smooth projective algebraic curve over $k$ equipped with a choice of $K_\Sigma^{1/2}$.
\end{itemize}

The assumptions on $M$ imply the following:
\begin{itemize}
    \item The symplectic form $\omega$ is canonically exact. Namely, $\omega = d\alpha$, where $\alpha = \iota_v \omega$ with $v$ the vector field generating the $\Gm$-action.
    \item The symplectic volume form $\vol_M = \frac{\omega^{\dim M/2}}{(\dim M/2)!}$ is $G$-equivariant and has weight $\dim M$ under the $\Gm$-action. Thus, the symplectic volume form provides a trivialization
    \begin{equation}\label{eq:symplecticvolumeform}
    \vol_M\colon \cO_{[M/(G\times\Gm)]}\xrightarrow{\sim} K_M\otimes \pi^*_{\B\Gm}\cO(-\dim M),
    \end{equation}
    where $\pi_{\B\Gm}\colon [M/(G\times \Gm)]\rightarrow \B\Gm$ and $\cO(n)$ is the line bundle on $\B\Gm$ corresponding to the one-dimensional $\Gm$-representation of weight $n$.
\end{itemize}

Let
\[\Bun_G(\Sigma) = \Map(\Sigma, \B G)\]
be the moduli stack of $G$-bundles, which is a smooth 1-Artin stack. Let $\Bun^M_G(\Sigma)$ be the derived stack given by the fiber product
\begin{equation}\label{eq:BunGM}
\xymatrix{
\Bun^M_G(\Sigma) \ar[r] \ar^{\pi}[d] & \Map(\Sigma, [M/(G\times\Gm)]) \ar[d] \\
\Bun_G(\Sigma) \ar^{\id\times K^{-1/2}_\Sigma}[r] & \Bun_{G\times\Gm}(\Sigma).
}
\end{equation}

\begin{example}
Let $\Sigma=E$ be an elliptic curve. Then $\Bun_G^M(E)\cong \Map(E, [M/G])$.
\end{example}

By \cite[Theorem 1.2]{HallRydh} the classical truncation of $\Bun_G^M(\Sigma)$ is a 1-Artin stack locally of finite type. An important observation that will allow us to apply the formalism of perverse pullbacks is that $\Bun^M_G(\Sigma)$ carries a relative exact $(-1)$-shifted symplectic structure.

\begin{proposition}\label{prop:BunGMsymplectic}
The projection $\pi\colon\Bun^M_G(\Sigma)\rightarrow \Bun_G(\Sigma)$ carries a natural relative exact $(-1)$-shifted symplectic structure.
\end{proposition}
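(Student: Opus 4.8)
The plan is to realize $\pi$ as a base change of a relative mapping stack and to transgress a twisted relative symplectic structure along $\Sigma$, in the manner of the AKSZ construction of \cite{PTVV}. The target of the transgression is the morphism $[M/(G\times\Gm)]\to\B(G\times\Gm)$, which is schematic and locally of finite presentation. Because $G$ preserves $\omega$ while $\Gm$ scales it with weight $2$, the form $\omega$ does not descend to a relative two-form on $[M/(G\times\Gm)]$; it descends instead to a section of $\wedge^2\bL_{[M/(G\times\Gm)]/\B(G\times\Gm)}\otimes\pi_{\B\Gm}^*\cO(-2)$ (compare the trivialization \eqref{eq:symplecticvolumeform} given by $\vol_M$), and the canonical primitive $\alpha=\iota_v\omega$ likewise descends to a section of $\bL_{[M/(G\times\Gm)]/\B(G\times\Gm)}\otimes\pi_{\B\Gm}^*\cO(-2)$ with $d\alpha\sim\omega$ up to a scalar. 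Thus $[M/(G\times\Gm)]\to\B(G\times\Gm)$ carries what one may call a $\pi_{\B\Gm}^*\cO(-2)$-twisted relative exact $0$-shifted symplectic structure: the twisted two-form induces an equivalence $\bT_{[M/(G\times\Gm)]/\B(G\times\Gm)}\cong\bL_{[M/(G\times\Gm)]/\B(G\times\Gm)}\otimes\pi_{\B\Gm}^*\cO(-2)$.

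By \eqref{eq:BunGM}, $\pi$ is the base change along $\id\times K^{-1/2}_\Sigma\colon\Bun_G(\Sigma)\to\Bun_{G\times\Gm}(\Sigma)$ of the relative mapping stack $\Map(\Sigma,[M/(G\times\Gm)])\to\Bun_{G\times\Gm}(\Sigma)$, which is geometric and locally of finite presentation by the representability results of \cite{HallRydh}. Write $q\colon\Sigma\times\Bun^M_G(\Sigma)\to\Bun^M_G(\Sigma)$ for the projection, a smooth proper morphism of relative dimension $1$ with relative dualizing line bundle (the pullback of) $K_\Sigma$, and $\ev\colon\Sigma\times\Bun^M_G(\Sigma)\to[M/(G\times\Gm)]$ for the tautological evaluation, whose composite with the projection to $\B\Gm$ is classified by the $\Gm$-bundle $K^{-1/2}_\Sigma$. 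Then, using Serre duality for $q$,
\[
\bT_\pi\cong\R q_*\,\ev^*\bT_{[M/(G\times\Gm)]/\B(G\times\Gm)},\qquad \bL_\pi\cong\R q_*\bigl(\ev^*\bL_{[M/(G\times\Gm)]/\B(G\times\Gm)}\otimes K_\Sigma\bigr)[1].
\]
Since $\ev^*\pi_{\B\Gm}^*\cO(-2)\cong(K^{-1/2}_\Sigma)^{\otimes(-2)}\cong K_\Sigma$, pulling back the twisted symplectic equivalence gives $\ev^*\bT_{[M/(G\times\Gm)]/\B(G\times\Gm)}\cong\ev^*\bL_{[M/(G\times\Gm)]/\B(G\times\Gm)}\otimes K_\Sigma$, whence $\bT_\pi\cong\bL_\pi[-1]$ — the non-degeneracy required of a relative exact $(-1)$-shifted symplectic structure. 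In other words, the weight-$2$ scaling of $\omega$ and the spin structure $K^{1/2}_\Sigma$ are precisely arranged so that the $\cO(-2)$-twist on the target matches the relative dualizing sheaf of $\Sigma$, and the $(-1)$-shift emerges from Serre duality.

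It remains to upgrade this to an honest point of $\cA^{2,\ex}(\Bun^M_G(\Sigma)/\Bun_G(\Sigma),-1)$ whose underlying two-form is the equivalence above. For this one constructs a twisted AKSZ transgression $(\omega,\alpha)\longmapsto\int_\Sigma\ev^*(\omega,\alpha)$, where integration along $\Sigma$ is the Serre-duality trace for $q$ made available by the isomorphism $K^{1/2}_\Sigma\otimes K^{1/2}_\Sigma\cong K_\Sigma$; closedness of the transgressed two-form and the homotopy $d(\int_\Sigma\ev^*\alpha)\sim\int_\Sigma\ev^*\omega$ are formal, exactly as in the $\cO$-oriented case of \cite{PTVV}, and the resulting underlying two-form is the non-degenerate pairing found above, so exactness is inherited from $\alpha$. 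Equivalently, by \cref{prop:momentmap} the same structure is encoded by the moment map $\Bun^M_G(\Sigma)\to\T^*\Bun_G(\Sigma)$ induced by the classical moment map $M\to\g^*$, together with a Lagrangian structure obtained by transgressing the Lagrangian $M\to[\g^*/(G\times\Gm)]$; I would use whichever packaging turns out to be most economical. The main obstacle is exactly this last step: building the ``integration along $\Sigma$'' operation on spaces of \emph{relative closed} forms valued in a line bundle and checking its compatibility with the de Rham differential — this is where the global choice of $K^{1/2}_\Sigma$ genuinely enters. By contrast the non-degeneracy and the exactness reduce to the tangent-complex computation above plus routine bookkeeping of $\Gm$-weights, and the geometricity and local finite presentation of $\pi$ follow from \cite{HallRydh}.
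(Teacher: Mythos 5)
Your strategy is exactly the paper's: observe that $[M/(G\times\Gm)]\to\B(G\times\Gm)$ carries an exact $\cO(-2)$-twisted $0$-shifted symplectic structure, pull back along the evaluation so the twist becomes $K_\Sigma$, and transgress along $\Sigma$ using the spin structure so that Serre duality converts the twist into the $(-1)$-shift; the non-degeneracy and $\Gm$-weight bookkeeping you write down is what the paper implicitly relies on. The one step you flag as ``the main obstacle'' --- constructing the twisted integration $\int_\Sigma\ev^*(-)$ on spaces of relative closed forms valued in a line bundle and verifying compatibility with the de Rham differential --- is not something you would need to build from scratch: the paper's proof dispatches it in one line by invoking \cite[Theorem 2.34]{CalaqueSafronov}, which is the twisted AKSZ construction for relative shifted symplectic structures, with the $K_\Sigma[1]$-orientation of $\Sigma$ from \cite[Proposition 2.20]{CalaqueSafronov} playing the role of your Serre-duality trace. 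With that reference in hand your argument closes. Your alternative packaging via the moment map $\Bun^M_G(\Sigma)\to\T^*\Bun_G(\Sigma)$ and \cref{prop:momentmap} is also sound and is the Gaiotto Lagrangian of \cite[Theorem 1.3]{GinzburgRozenblyum}, which the paper records in \cref{rmk:GaiottoLagrangian} rather than using it as the proof.
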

\begin{proof}
Consider the diagram
\[
\xymatrix{
\ev^*[M/(G\times\Gm)] \ar[d] \ar[r] & [M/(G\times\Gm)] \ar[d] \\
\Bun_G(\Sigma)\times \Sigma \ar^{\ev}[r] \ar^{\pi_1}[d] & \B(G\times\Gm) \\
\Bun_G(\Sigma),
}
\]
where the square is Cartesian and $\ev\colon \Bun_G(\Sigma)\times\Sigma\rightarrow\B(G\times\Gm)$ is given by the composite
\[\Bun_G(\Sigma)\times\Sigma\xrightarrow{\id\times K^{-1/2}_\Sigma}\Bun_{G\times\Gm}(\Sigma)\times \Sigma\longrightarrow \B(G\times\Gm),\]
where the last morphism is the natural evaluation. By assumption on $M$ the derived stack $[M/(G\times \Gm)]$ over $\B(G\times\Gm)$ carries an exact $\cO(-2)$-twisted symplectic structure. Therefore, the base change $\ev^*[M/(G\times\Gm)]\rightarrow \Bun_G(\Sigma)\times\Sigma$ carries an exact $K_\Sigma$-twisted symplectic structure. By \cite[Proposition 2.20]{CalaqueSafronov} $\Sigma$ carries a $K_\Sigma[1]$-orientation. Therefore, using the AKSZ construction as in \cite[Theorem 2.34]{CalaqueSafronov}, we get that the Weil restriction $\Res_{\Bun_G(\Sigma)\times \Sigma/\Bun_G(\Sigma)}(\ev^*[M/(G\times\Gm)])\cong \Bun^M_G(\Sigma)$ carries a relative exact $(-1)$-shifted symplectic structure.
\end{proof}

\begin{remark}\label{rmk:GaiottoLagrangian}
The moment map associated to the relative exact $(-1)$-shifted symplectic structure on $\Bun_G^M(\Sigma)\rightarrow \Bun_G(\Sigma)$ provides an exact $0$-shifted Lagrangian structure on $\Bun_G^M(\Sigma)\rightarrow \T^* \Bun_G(\Sigma)$, where on the right we consider the moduli stack of $G$-Higgs bundles. This is exactly the $0$-shifted Lagrangian morphism constructed in \cite[Theorem 1.3]{GinzburgRozenblyum}.
\end{remark}

\subsection{Orientations and anomaly}

Our next goal is to describe orientations of the relative exact $(-1)$-shifted symplectic structure on $\Bun_G^M(\Sigma)$. We begin with a computation of the relative cotangent complex which shows, in particular, that $\Bun_G^M(\Sigma)$ is quasi-smooth.

\begin{lemma}\label{lm:BunGMcotangent}
For an affine scheme $S$ together with a morphism $f\colon S\rightarrow \Bun_G^M(\Sigma)$ represented by $\overline{f}\colon S\times \Sigma\rightarrow [M/(G\times\Gm)]$ we have an isomorphism
\[f^*\bL_{\Bun_G^M(\Sigma)/\Bun_G(\Sigma)}\cong p_*((\cO_S\boxtimes K_\Sigma)\otimes \overline{f}^*\Omega^1_M)[1]\]
functorially in $S$, where $p\colon S\times \Sigma\rightarrow S$ is the projection.
\end{lemma}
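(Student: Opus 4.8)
The plan is to use the description of $\Bun_G^M(\Sigma)$ as a relative mapping stack (Weil restriction) already introduced in the proof of \cref{prop:BunGMsymplectic}, together with the standard formula for the cotangent complex of a mapping stack over a proper flat base. Recall that $\Bun_G^M(\Sigma)\cong \Res_{\Bun_G(\Sigma)\times\Sigma/\Bun_G(\Sigma)}(\ev^*[M/(G\times\Gm)])$, where $\ev^*[M/(G\times\Gm)]\to\Bun_G(\Sigma)\times\Sigma$ is the pullback of $[M/(G\times\Gm)]\to\B(G\times\Gm)$ along $\ev$. Since formation of the relative cotangent complex is compatible with base change, for $f\colon S\to\Bun_G^M(\Sigma)$ classified by $\overline f\colon S\times\Sigma\to[M/(G\times\Gm)]$ the computation reduces to pulling back $\bL_{[M/(G\times\Gm)]/\B(G\times\Gm)}$ along $\overline f$ and then pushing it forward along $p\colon S\times\Sigma\to S$ with the duality twist governed by the relative dualizing complex of $\Sigma$.

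First I would identify $\bL_{[M/(G\times\Gm)]/\B(G\times\Gm)}$: base change along the torsor $\pt\to\B(G\times\Gm)$ shows that its pullback to $M$ is $\bL_M=\Omega^1_M$, so it is the descent to $[M/(G\times\Gm)]$ of the $(G\times\Gm)$-equivariant bundle $\Omega^1_M$; pulling back along $\overline f$ gives $\overline f^*\Omega^1_M$. Next, the projection $\Bun_G(\Sigma)\times\Sigma\to\Bun_G(\Sigma)$ is smooth and proper of relative dimension one, so its relative dualizing complex is $\cO_{\Bun_G(\Sigma)}\boxtimes K_\Sigma[1]$, which becomes $(\cO_S\boxtimes K_\Sigma)[1]$ after base change along $S\to\Bun_G(\Sigma)$. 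Applying the mapping-stack cotangent complex formula (in the AKSZ setting, cf.\ \cite{PTVV,CalaqueSafronov}) fibrewise over $\Bun_G(\Sigma)$ then yields
\[
f^*\bL_{\Bun_G^M(\Sigma)/\Bun_G(\Sigma)}\cong p_*\bigl(\overline f^*\Omega^1_M\otimes(\cO_S\boxtimes K_\Sigma)[1]\bigr)\cong p_*\bigl((\cO_S\boxtimes K_\Sigma)\otimes\overline f^*\Omega^1_M\bigr)[1],
\]
and functoriality in $S$ is immediate from the naturality of base change, of the formation of $\bL$, and of $p_*$. Equivalently, one may first observe that deformations of a section are global sections of the pulled-back vertical tangent bundle, i.e.\ $f^*\bT_{\Bun_G^M(\Sigma)/\Bun_G(\Sigma)}\cong p_*(\overline f^*\bT_M)$, and then apply relative Serre duality along the proper smooth curve $p\colon S\times\Sigma\to S$ to arrive at the stated description; this also makes transparent that $\Bun_G^M(\Sigma)\to\Bun_G(\Sigma)$ is quasi-smooth, since $p_*$ of a vector bundle on $S\times\Sigma$ is concentrated in cohomological degrees $0$ and $1$.

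The bookkeeping is routine; the one point requiring care is the duality twist in the mapping-stack cotangent complex formula — that one gets $p_*(\overline f^*\bL_{\text{target}}\otimes\omega_\Sigma)$ rather than the naive $p_*(\overline f^*\bL_{\text{target}})$ — which is an instance of relative Grothendieck–Serre duality along the smooth proper curve. I expect no essential obstacle beyond invoking this twist correctly and checking that the cited representability and cotangent-complex statements apply in the present (stacky, non-quasi-compact over $\Bun_G(\Sigma)$) situation, which is covered by \cite{HallRydh} and the references already used in \cref{prop:BunGMsymplectic}.
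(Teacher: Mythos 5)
Your proposal is correct and takes essentially the same approach as the paper: reduce to computing the cotangent complex of a mapping stack out of $\Sigma$ (the paper via the Cartesian square defining $\Bun_G^M(\Sigma)$, you via the equivalent Weil-restriction description), identify the target's relative cotangent complex as the descent of $\Omega^1_M$, and invoke the mapping-stack cotangent complex formula with the $K_\Sigma[1]$ twist coming from Serre duality along the smooth proper curve. The paper phrases this by exhibiting $p_\sharp(-)=p_*((\cO_S\boxtimes K_\Sigma)\otimes(-))[1]$ as the left adjoint of $p^*$ and citing Rozenblyum's Proposition B.3.5, whereas you invoke the AKSZ/mapping-stack formula directly and also rederive it by dualizing the tangent complex $p_*(\overline f^*\bT_M)$ via relative Serre duality — these are the same argument.
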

\begin{proof}
Since the diagram \eqref{eq:BunGM} is Cartesian, it is enough to compute the relative cotangent complex of $\Map(\Sigma, [M/(G\times\Gm)])\rightarrow \Bun_{G\times\Gm}(\Sigma)$. By Serre duality the functor $p^*\colon \QCoh(S)\rightarrow \QCoh(S\times \Sigma)$ admits a left adjoint $p_\sharp(-) = p_*((\cO_S\boxtimes K_\Sigma)\otimes (-))[1]$. The claim then follows from \cite[Proposition B.3.5]{Rozenblyum}.
\end{proof}

The description of orientations will be given in terms of motivic cohomology which we briefly recall. For a stack $X$ let $\bK(X)$ be the nonconnective $K$-theory spectrum of $\Perf(X)$ and let $\rK(X)$ be its underlying space, so that for a perfect complex $V\in\Perf(X)$ we have the associated class $[V]\in \rK(X)$ and the determinant map factors as
\[\det\colon \Perf(X)^{\sim}\longrightarrow \rK(X)\longrightarrow \Pic(X)\]
naturally in $X$.

Let $\SH(k)$ be the stable motivic $\infty$-category; we denote by $\bMap_{\SH(k)}(-, -)$ the mapping spectrum in $\SH(k)$. $K$-theory is representable on smooth schemes, i.e. there is a motivic spectrum $\bKGL\in\SH(k)$ such that
\[\bK(X)\cong \bMap_{\SH(k)}(\Sigma^\infty_T X_+, \bKGL)\]
naturally in smooth schemes $X$.
Moreover, there is a motivic filtration on $\bK(X)$ for $X$ a smooth scheme which can be realized as follows. Let $f_j \bKGL\in\SH(k)$ be the slice filtration \cite{Voevodsky,Levine} with associated graded $s_j \bKGL\cong \bMZ(j)[2j]$, where $\bMZ$ is the motivic Eilenberg--MacLane spectrum. Then
\[\Fil^\bullet \bK(X) = \bMap_{\SH(k)}(\Sigma^\infty_T X_+, f_\bullet \bKGL)\]
with associated graded
\[\gr^j \bK(X)\cong \R\Gamma(X, \Z(j))[2j],\]
the Zariski cohomology of the motivic complexes of weight $j$ as in \cite{MVW}. The motivic filtration on $\bK(-)$, as well as the motivic complexes $\Z(j)$, can be extended to general Noetherian schemes in two equivalent ways: either by left Kan extending from smooth schemes and then applying pro-cdh sheafification \cite{KellySaito} or by using trace methods \cite{ElmantoMorrow}.

Taking motivic cohomology mod 2 we get morphisms
\[\R\Gamma(X, \Z(j))\longrightarrow \R\Gamma_{\et}(X, \mu_2^{\otimes j}),\]
where on the right we consider \'etale cohomology. For a vector bundle $E\rightarrow X$ we denote by
\[c_j(E)\in |\R\Gamma_{\et}(X, \mu_2^{\otimes j})[2j]|\]
its \'etale Chern classes (see \cite[Expos\'e~VII, \S 3]{SGA5} or \cite{Soule}). In case $E$ is a $G$-equivariant vector bundle, we denote by
\[c^G_j(E) \in |\R\Gamma_{\et}([X/G], \mu_2^{\otimes j})[2j]|\]
its equivariant Chern classes. The obvious projection
\[\ch_n\colon \Fil^n\rK(X)\longrightarrow |\R\Gamma(X, \Z(n))[2n]|,\]
has the following explicit description in low weights:
\begin{enumerate}
\setcounter{enumi}{-1}
    \item $\R\Gamma(X, \Z(0))\cong \R\Gamma(X, \Z)$ is the Zariski cohomology of the constant sheaf. For $V\in\Perf(X)$ we have $\ch_0([V]) = \rank(V)$, the rank of $V$.
    \item $\R\Gamma(X, \Z(1))[2]\cong \Pic(X)$. For $V\in\Perf(X)$ we have $\ch_1([V]-\rank(V)[\cO]) = \det(V)$. Its image in $\R\Gamma_{\et}(X, \mu_2)[2]$ coincides with $c_1(V)$; this is the class of the gerbe parametrizing square roots of $\det(V)$.
    \item For $V\in\Perf(X)$ with $\det(V)\cong \cO$ we have $[V]-\rank(V)[\cO]\in\Fil^2\rK(X)$. In this case the image of $\ch_2([V]-\rank(V)[\cO])$ in $\R\Gamma_{\et}(X, \mu_2^{\otimes 2})[4]$ coincides with $-c_2(V)$.
\end{enumerate}

We have the following functoriality properties of the motivic filtration.

\begin{proposition}\label{prop:motfiltrationfunctoriality}
Let $X$ be a scheme of finite type.
\begin{enumerate}
    \item Let $E$ be a vector bundle of rank $r+1$ over $X$. Let $\pi\colon \bP(E)\rightarrow X$ be the projection and $\cO(-1)$ the tautological line bundle on $\bP(E)$. Then for every $n$ the morphism
    \[\sum_{i=0}^r ([\cO]-[\cO(-1)])^i \pi^*\colon \bigoplus_{i=0}^r \Fil^{n-i} \bK(X)\longrightarrow \Fil^n\bK(\bP(E))\]
    is an isomorphism.
    \item In the notation of the previous part, $\pi_*\colon \bK(\bP(E))\rightarrow \bK(X)$ refines to a morphism
    \[\pi_*\colon \Fil^n\bK(\bP(E))\longrightarrow \Fil^{n-r}\bK(X)\]
    functorially in $X$.
    \item Let $i\colon Z\rightarrow X$ be a regular closed immersion of codimension $d$. Then the pushforward $i_*\colon \bK(Z)/2\rightarrow \bK(X)/2$ refines to a morphism
    \[i_*\colon \Fil^n\bK(Z)/2\longrightarrow \Fil^{n+d}\bK(X)/2\]
    functorially in $X$ with respect to base change.
\end{enumerate}
\end{proposition}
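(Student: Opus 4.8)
The plan is to read this as a statement about the motivic spectrum $\bKGL$ together with its slice filtration, extended to singular $X$ via the defining pro-cdh sheafification, and to deduce part (2) formally from part (1). Throughout I would use that $f_\bullet\bKGL$ is a filtered $E_\infty$-ring in $\SH(k)$, so that each $\Fil^i\bK(X)$ is a sub-$\bK(X)$-module and multiplication satisfies $\Fil^a\cdot\Fil^b\subseteq\Fil^{a+b}$, and that $f_\bullet\bMap_{\SH(k)}(\Sigma^\infty_T X_+,-)$ is compatible with smooth pullback.

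For part (1), I would first treat $X$ smooth, where it is the projective bundle formula. The map is well defined because $[\cO]-[\cO(-1)]$ has rank $0$, hence lies in $\Fil^1\bK(\bP(E))$, so multiplication by its $i$-th power raises filtration by $i$, while $\pi^*$ is filtered of degree $0$. To see it is an equivalence I would pass to the associated graded: using $\gr^j\bK\cong\R\Gamma(-,\Z(j))[2j]$ and that $[\cO]-[\cO(-1)]$ projects to $c_1^{\mathrm{mot}}(\cO(1))\in\R\Gamma(\bP(E),\Z(1))[2]$, the graded map becomes the projective bundle formula for the motivic complexes \cite{MVW}. For general $X$, both sides arise by left Kan extension from smooth schemes followed by pro-cdh sheafification \cite{KellySaito,ElmantoMorrow}; since the transformation is an equivalence on smooth schemes and the projective bundle formula is compatible with Nisnevich and cdh descent, it remains an equivalence after sheafification, and naturality in $X$ is built into the construction.

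Part (2) should follow formally. Using (1) to identify $\Fil^n\bK(\bP(E))\cong\bigoplus_{i=0}^r\Fil^{n-i}\bK(X)$ and the projection formula for the $\bKGL$-module map $\pi_*$, the pushforward acts on the $i$-th summand by $\alpha\mapsto\pi_*\bigl(([\cO]-[\cO(-1)])^i\bigr)\cdot\alpha$. Since $\pi_*\bigl(([\cO]-[\cO(-1)])^i\bigr)\in\bK(X)=\Fil^0\bK(X)$, multiplication by it preserves filtration, so the image lies in $\Fil^{n-i}\bK(X)\subseteq\Fil^{n-r}\bK(X)$ because $i\le r$ and the filtration is decreasing. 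Functoriality in $X$ is inherited from (1) and the projection formula, and composing with $\Fil^{n-r}\bK(X)\to\bK(X)$ recovers the usual $\pi_*$.

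For part (3) I would reduce, via deformation to the normal cone, to the case where $i$ is the zero section of a vector bundle: the regular closed immersion $Z\times\bA^1\hookrightarrow M_{Z/X}$ into the deformation space (cf. \cite{HekkingKhanRydh}) restricts to $i$ over $1\in\bA^1$ and to the zero section $Z\hookrightarrow\N_{Z/X}$ over $0$, and one transports the filtered estimate along the $\bA^1$-family using homotopy invariance of $K$-theory. In the zero-section case, with $q\colon V\to Z$ the projection, homotopy invariance makes $q^*$ a filtered equivalence, so every class on $V$ is $q^*$ of its restriction; the projection formula (and $[\cO_Z]$ being the unit) gives $i_*(\alpha)=q^*\alpha\cdot i_*(\cO_Z)$ for $\alpha\in\bK(Z)$, and the Koszul resolution identifies $i_*(\cO_Z)$ with $q^*$ of $\sum_{j=0}^d(-1)^j[\Lambda^j V^\vee]\in\bK(Z)$. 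By the splitting principle — pulling back to a full flag bundle of $V$, which by part (1) is a split monomorphism on each $\Fil^\bullet$ — together with $[\cO]-[L^\vee]\in\Fil^1$ for a line bundle $L$ and multiplicativity, this class lies in $\Fil^d\bK(Z)$; hence $i_*(\alpha)\in\Fil^n\cdot\Fil^d\subseteq\Fil^{n+d}$, and base-change functoriality is automatic from naturality. \textbf{The main obstacle} is the first reduction: restriction to the two ends of the deformation is pullback along closed immersions, which does \emph{not} respect the slice filtration, so one must propagate the filtered bound using the constancy of the relevant classes along $\bA^1$ together with the pro-cdh description of the filtration on singular schemes. It is precisely in reconciling these two mechanisms that the remaining completeness/torsion obstructions appear, and passing to mod-$2$ coefficients (which is all that the applications require) removes them.
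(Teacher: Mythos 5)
Your parts (1) and (2) are essentially the paper's argument (boundedness of the motivic filtration reduces (1) to the projective bundle formula for motivic cohomology; in (2) the identification of the filtered pieces plus the projection formula gives the factorization — your shortcut using $\pi_*\bigl(([\cO]-[\cO(-1)])^i\bigr)\in\Fil^0$ is actually slightly cleaner than the paper's, which computes this class to be $[\cO_X]$).

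For part (3), however, you have correctly identified a gap and then stopped short of closing it, and it is a real gap. The issue is precisely the one you name: the deformation space $M_{Z/X}$ has closed fibers over $0$ and $1\in\bA^1$, and restriction along closed immersions is not a filtered equivalence, so the filtered estimate for the zero-section case on $\N_{Z/X}$ cannot simply be transported to $X$ by ``constancy along $\bA^1$.'' The missing idea in the paper's proof is to work not with $\bK(X)$ but with \emph{supported} $K$-theory $\bK(X~\textrm{on}~Z)$: the pushforward $i_*\colon\bK(Z)\to\bK(X)$ factors through $\bK(X~\textrm{on}~Z)$ via the localization sequence, and it is \emph{this} object, with its induced filtration, that one should lift. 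For supported $K$-theory mod $2$, deformation to the normal cone together with $\bA^1$-homotopy invariance of $\bK(-)/2$ genuinely identifies $\bK(X~\textrm{on}~Z)/2$ with $\bK(\N_{Z/X}~\textrm{on}~Z)/2$ in a filtered way, so the reduction to the zero section is legitimate there. Once in the zero-section case one is comparing $\Fil^d\bK(E~\textrm{on}~Z)/2$ with $\bK(E~\textrm{on}~Z)/2$; the paper shows this map is already an \emph{isomorphism} by checking, locally (where $E$ is trivial), that the cofiber has a finite filtration whose graded pieces are motivic cohomology of negative weight, hence vanish. Your splitting-principle computation of $i_*(\cO_Z)=\lambda_{-1}(V^\vee)\in\Fil^d$ is also correct and could be substituted for that last step, but without the detour through supported $K$-theory the deformation argument never gets off the ground, and the applications do need the statement for arbitrary regular immersions (the appeal to mod-$2$ coefficients is needed, but only to get homotopy invariance of $\bK(-)/2$ on singular schemes, not to ``kill'' the filtered obstruction that arises from restricting along closed immersions).
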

\begin{proof}$ $
\begin{enumerate}
    \item As the motivic filtration is bounded \cite[Theorem 4.12]{ElmantoMorrow}, it is enough to show that the morphism on the associated graded level is an isomorphism. Passing to the associated graded we get the composite
    \begin{align*}
    \bigoplus_{i=0}^r \R\Gamma(X, \Z(n-i))[2n-2i]&\xrightarrow{\pi^*\otimes [\cO(1)]^i} \bigoplus_{i=0}^r \R\Gamma(\bP(E), \Z(n-i))[2n-2i]\otimes (\R\Gamma(\bP(E), \Z(1))[2])^{\otimes i}\\
    &\longrightarrow \R\Gamma(\bP(E), \Z(n))[2n]
    \end{align*}
    which is an isomorphism by the projective bundle formula \cite[Theorem 5.24(2)]{ElmantoMorrow}.
    \item We use the description of the filtered pieces $\Fil^n\bK(\bP(E))$ from the previous part. Using the projection formula as well as the vanishing of cohomology of $\cO(-i)$ for $1\leq i\leq r$ on $\bP(E)$ we get that the composite
    \[\bigoplus_{i=0}^r \Fil^{n-i}\bK(X)\xrightarrow{\sim} \Fil^n\bK(\bP(E))\xrightarrow{\pi_*} \bK(X)\]
    is given by the obvious inclusions $\Fil^{n-i}\bK(X)\rightarrow \bK(X)$ for each $i$. In particular, $\pi_*\colon \Fil^n\bK(\bP(E))\rightarrow \bK(X)$ factors through $\Fil^{n-r}\bK(X)$.
    \item Using the localization fiber sequence $\bK(X~\textrm{on}~Z) \to \bK(X) \to \bK(X\setminus Z)$ we see that the pushforward $i_*\colon \bK(Z)\rightarrow \bK(X)$ lifts to $i_*\colon \bK(Z)\rightarrow \bK(X~\textrm{on}~Z)$. Our goal is to show that mod 2 we can lift this morphism to a morphism
    \[i_*\colon \Fil^n\bK(Z)/2\longrightarrow \Fil^{n+d}\bK(X~\textrm{on}~Z)/2\]
    functorially in $X$. Using deformation to the normal cone as well as the $\bA^1$-homotopy invariance of $\bK(-)/2$ (see \cite[Proposition 1.6]{Weibel} where we recall our standing assumption that the ground field $k$ has characteristic not $2$) we are reduced to showing the claim for $i$ the zero section $Z\rightarrow E$ of a vector bundle $\pi\colon E\rightarrow Z$ of rank $d$. That is, we have to show that the morphism $i_*\colon \bK(Z)/2\rightarrow \bK(E~\textrm{on}~Z)/2$ lifts to a morphism
    \[\Fil^n\bK(Z)/2\longrightarrow \Fil^{n+d} \bK(E~\textrm{on}~Z)/2\]
    functorially in $Z$. The projection formula shows that it is compatible with the $\bK(Z)/2$-module structure on both sides (where on the right the action is given via $\pi^*$). Thus, we are reduced to showing that the Thom class $\Th(E)\in \rK(E~\textrm{on}~Z)/2$ (i.e. the class of $i_*\cO_Z$) lifts canonically along
    \[\Fil^d \bK(E~\textrm{on}~Z)/2\rightarrow \bK(E~\textrm{on}~Z)/2.\]
    We claim that this map is, in fact, an isomorphism. The claim is local, so we may assume $E$ is trivial. In this case the cofiber has a finite filtration with associated graded pieces $\R\Gamma(Z, \Z(i-d)/2)[2i-2d]$ for $i < d$. But these motivic cohomology complexes have negative weights, so they are zero.
\end{enumerate}
\end{proof}

\begin{remark}
    \cref{prop:motfiltrationfunctoriality} holds for derived schemes, and (3) holds for quasi-smooth closed immersions, with the same proofs.
\end{remark}

The following statement is a homotopy-coherent version of the Grothendieck--Riemann--Roch formula for curves in the style of Deligne.

\begin{corollary}\label{cor:GRR}
Let $S$ be an affine scheme of finite type and let $p\colon S\times \Sigma\rightarrow S$ be the projection. Let $L$ be a line bundle on $\Sigma$. Then there is a commutative diagram
\[
\xymatrix@C=2cm{
\Fil^2 \bK(S\times \Sigma)/2\ar^-{p_*((\cO_S\boxtimes L)\otimes(-))}[r] \ar^{-c_2}[d] & \bK(S)/2 \ar^{c_1}[d] \\
\R\Gamma(S\times \Sigma, \mu_2^{\otimes 2})[4] \ar^-{\int_\Sigma}[r] & \R\Gamma(S, \mu_2)[2],
}
\]
functorial in $S$.
\end{corollary}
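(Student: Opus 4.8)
The plan is to reduce the square, via the functoriality of the motivic filtration recorded in \cref{prop:motfiltrationfunctoriality}, to a statement purely about motivic and \'etale cohomology: that the motivic Gysin/transfer map along $p\colon S\times\Sigma\to S$ agrees, after reduction mod $2$ and the comparison map to \'etale cohomology, with the \'etale integration-over-the-fibre map $\int_\Sigma$. All of the reductions are carried out at the level of the filtered spectrum $\Fil^\bullet\bK$ and of complexes, so the homotopy produced is automatically functorial in $S$.

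First I would discard the twist by $L$. Writing $[\cO_S\boxtimes L]=[\cO]+\bigl([\cO_S\boxtimes L]-[\cO]\bigr)$ with $[\cO_S\boxtimes L]-[\cO]\in\Fil^1\bK(S\times\Sigma)$, multiplication by $[\cO_S\boxtimes L]$ on $\Fil^2\bK(S\times\Sigma)/2$ differs from the identity by a map factoring through $\Fil^3\bK(S\times\Sigma)/2$. By the description of $\ch_2$ on $\Fil^2$, the left vertical arrow is the projection $\Fil^2\bK(S\times\Sigma)/2\to\gr^2\bK(S\times\Sigma)/2\cong\R\Gamma(S\times\Sigma,\Z(2)/2)[4]$ followed by the comparison map to $\R\Gamma_{\et}(S\times\Sigma,\mu_2^{\otimes2})[4]$ (this accounts for the sign in $-c_2$), hence is insensitive to this difference; the same will hold for $c_1\circ p_*$, and so we may assume $L=\cO_\Sigma$. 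Now choose a closed immersion $\iota\colon\Sigma\hookrightarrow\bP^N$ for some $N$ --- automatically a regular closed immersion, of codimension $d=N-1$, since $\Sigma$ is smooth --- so that $p=q\circ(\mathrm{id}_S\times\iota)$ with $\mathrm{id}_S\times\iota\colon S\times\Sigma\hookrightarrow S\times\bP^N$ a regular closed immersion of codimension $d$ and $q\colon S\times\bP^N=\bP(\cO_S^{N+1})\to S$ a projective bundle of relative dimension $N=d+1$. By \cref{prop:motfiltrationfunctoriality}(3) and (2), $(\mathrm{id}_S\times\iota)_*$ refines to $\Fil^2\bK(S\times\Sigma)/2\to\Fil^{2+d}\bK(S\times\bP^N)/2$ and $q_*$ refines to $\Fil^{2+d}\bK(S\times\bP^N)/2\to\Fil^{1}\bK(S)/2$, both functorially in $S$; hence $p_*$ refines to $\Fil^2\bK(S\times\Sigma)/2\to\Fil^1\bK(S)/2$ and carries $\Fil^3\bK(S\times\Sigma)/2$ into $\Fil^2\bK(S)/2$. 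Since $c_1$ restricted to $\Fil^1\bK(S)/2$ is the projection to $\gr^1$ followed by the comparison map, and $\Fil^2\to\gr^1$ is null, this both confirms that the $L$-twist is irrelevant and shows that $c_1\circ p_*$ factors through $\gr^2\bK(S\times\Sigma)/2$.

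We are thus reduced to checking that two induced maps $\R\Gamma(S\times\Sigma,\Z(2)/2)[4]\to\R\Gamma_{\et}(S,\mu_2)[2]$ coincide: the first is the motivic transfer $p_*^{\mathrm{mot}}\colon\R\Gamma(S\times\Sigma,\Z(2)/2)[4]\to\R\Gamma(S,\Z(1)/2)[2]$ obtained on associated gradeds from \cref{prop:motfiltrationfunctoriality}(3) and (2) --- the motivic Gysin for $\mathrm{id}_S\times\iota$ followed by integration over $\bP^N$ --- composed with the comparison map to \'etale cohomology; the second is the comparison map $\R\Gamma(S\times\Sigma,\Z(2)/2)[4]\to\R\Gamma_{\et}(S\times\Sigma,\mu_2^{\otimes2})[4]$ followed by $\int_\Sigma$. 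That these agree is the compatibility of motivic and \'etale pushforwards along the proper lci morphism $p$: \'etale cohomology satisfies the same projective bundle formula and carries Gysin maps for regular closed immersions built by the same deformation-to-the-normal-cone and Thom-class recipe underlying \cref{prop:motfiltrationfunctoriality}, the mod-$2$ cycle class map $\R\Gamma(-,\Z(n)/2)\to\R\Gamma_{\et}(-,\mu_2^{\otimes n})$ is multiplicative and sends motivic Thom and Euler classes to \'etale ones, and $\int_\Sigma$ is by its normalization the \'etale transfer along $p$. (That no further sign intervenes, beyond the one built into $-c_2$, can be checked by a Grothendieck--Riemann--Roch computation over $\Q$: as $\Fil^2$-classes have vanishing rank and first Chern class, the Todd correction enters only in higher degrees, so $c_1$ of the pushforward equals $\int_\Sigma$ of $\ch_2$.)

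I expect the main obstacle to be executing both reductions --- the factorization through $\gr^2$ and, more seriously, the motivic/\'etale Gysin compatibility --- coherently, i.e.\ as homotopies of maps of complexes rather than equalities of cohomology classes. Conceptually this corollary is the mod-$2$, \'etale shadow of the classical Deligne-style Riemann--Roch for a family of curves: $c_1$ of the determinant of cohomology of a virtual bundle of rank and determinant zero is $\int_\Sigma$ of minus its second Chern class, all Todd terms being trivially absent; on cohomology this reduces, after splitting the class into $\bK(S)/2$-linear combinations of products $\bigl([\cL]-[\cO]\bigr)\bigl([\cL']-[\cO]\bigr)$ of line bundles, to the classical identity between the Deligne pairing $\langle\cL,\cL'\rangle_p$ and $\int_\Sigma c_1(\cL)\cup c_1(\cL')$.
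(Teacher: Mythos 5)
Your proposal follows essentially the same route as the paper: reduce to $L=\cO_\Sigma$, factor $p$ through a closed immersion $S\times\Sigma\hookrightarrow S\times\bP^n$ followed by the projective bundle $S\times\bP^n\to S$, invoke \cref{prop:motfiltrationfunctoriality}(2)--(3) to refine $p_*$ to the level of the motivic filtration, and then identify the resulting map on $\gr$ with the \'etale transfer $\int_\Sigma$ under the mod-$2$ comparison map (the paper is about as terse as you are on this last compatibility). The only variation is in the $L$-reduction: you argue that the $L$-twist perturbs a class in $\Fil^2$ only within $\Fil^3$, that $p_*$ carries $\Fil^3$ into $\Fil^2$, and that $c_1$ vanishes on $\Fil^2$, whereas the paper uses multiplicativity of the motivic filtration to factor the twisting composite $\Fil^2\bK(S\times\Sigma)\to\gr^2\bK(S\times\Sigma)$ through $\gr^0\bK(\Sigma)\otimes\gr^2\bK(S\times\Sigma)$, in which $[L]$ and $[\cO_\Sigma]$ agree --- both arguments are valid.
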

\begin{proof}
Let us first prove the claim for $L=\cO_\Sigma$. Fix a closed immersion $i\colon \Sigma\hookrightarrow \bP^n$ and let $\overline{p}\colon \bP^n\rightarrow \pt$. By \cref{prop:motfiltrationfunctoriality} we have commutative diagrams
\[
\xymatrix{
\Fil^2 \bK(S\times \Sigma)/2 \ar^-{(\id\times i)_*}[r] \ar^{\ch_2}[d] & \Fil^{n+1} \bK(S\times \bP^n)/2 \ar^-{(\id\times\overline{p})_*}[r] \ar^{\ch_{n+1}}[d] & \Fil^1 \bK(S)/2 \ar^{\ch_1}[d] \\
\gr^2 \bK(S\times \Sigma)/2 \ar^-{(\id\times i)_*}[r] & \gr^{n+1} \bK(S\times \bP^n)/2 \ar^-{(\id\times\overline{p})_*}[r] & \gr^1 \bK(S)/2
}
\]
Identifying the bottom terms with motivic cohomology groups and passing to underlying spaces, we get the result for $L=\cO_\Sigma$.

Consider the composite
\[\bK(\Sigma)\otimes \Fil^2\bK(S\times \Sigma)\longrightarrow \Fil^2\bK(S\times \Sigma)\longrightarrow \gr^2\bK(S\times \Sigma),\]
where the first map is induced by the tensor product $V, W\mapsto (\cO_S\boxtimes V)\otimes W$. Using the multiplicativity of the motivic filtration we get that this morphism factors through
\[\gr^0\bK(\Sigma)\otimes \gr^2\bK(S\times \Sigma)\longrightarrow \gr^2\bK(S\times \Sigma).\]
Since the images of $[L]$ and $[\cO_\Sigma]$ are canonically homotopic in $\gr^0\rK(\Sigma)$, the commutative diagram for an arbitrary line bundle $L$ is induced by the commutative diagram for $\cO_\Sigma$.
\end{proof}

To have a hands-on description of orientations, we will need a description of the cohomology of quotient stacks by $\Gm$.

\begin{lemma}\label{lm:Gmcohomology}
Let $X$ be a smooth stack with a $\Gm$-action and $\pi\colon X\rightarrow [X/\Gm]$ the projection. For any $n\geq 0$ there is a fiber sequence
\[\R\Gamma_{\et}([X/\Gm], \mu_2^{\otimes n})[-2]\xrightarrow{(-)\cdot c_1(\cO(1))} \R\Gamma_{\et}([X/\Gm], \mu_2^{\otimes (n+1)})\xrightarrow{\pi^*} \R\Gamma_{\et}(X, \mu_2^{\otimes (n+1)}).\]
\end{lemma}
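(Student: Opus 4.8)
The plan is to identify the asserted fiber sequence with the Gysin (localization) triangle attached to the $\Gm$-torsor $\pi\colon X\to [X/\Gm]$, so that the lemma becomes a case of absolute purity.

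First I would set up the geometry. The morphism $\pi$ is the $\Gm$-torsor classified by the tautological map $q\colon [X/\Gm]\to \B\Gm$. Since the universal $\Gm$-torsor $\pt\to\B\Gm$ is the complement of the zero section in the total space of $\cO(1)$ — with the convention that a weight-$n$ representation of $\Gm$ corresponds to $\cO(n)$ — the same description holds after base change along $q$. Thus, writing $V\coloneqq \Tot_{[X/\Gm]}(\cO(1))$ with projection $p\colon V\to [X/\Gm]$, zero section $s\colon [X/\Gm]\hookrightarrow V$ (a regular closed immersion of codimension $1$ with normal bundle $N_s\cong\cO(1)$) and open complement $j\colon X\hookrightarrow V$, we have $p\circ s=\id$ and $p\circ j=\pi$.

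Next I would write the localization triangle in the \'etale six-functor formalism (for smooth Artin stacks, obtained by smooth descent from an atlas) for the closed--open decomposition of $V$ with coefficients $\mu_2^{\otimes(n+1)}$:
\[
\R\Gamma_{s([X/\Gm])}\bigl(V,\mu_2^{\otimes(n+1)}\bigr)\longrightarrow \R\Gamma\bigl(V,\mu_2^{\otimes(n+1)}\bigr)\xrightarrow{j^{*}}\R\Gamma\bigl(X,\mu_2^{\otimes(n+1)}\bigr).
\]
Then I would identify the two left-hand terms. Homotopy invariance of \'etale cohomology with $\mu_2$-coefficients (valid since $k=\C$) gives an isomorphism $p^{*}\colon \R\Gamma([X/\Gm],\mu_2^{\otimes(n+1)})\xrightarrow{\sim}\R\Gamma(V,\mu_2^{\otimes(n+1)})$, and purity for the codimension-$1$ regular immersion $s$ (classical here, as everything is smooth over $\C$), together with the canonical isomorphism $\mu_2(-1)\cong\mu_2$, gives
\[
\R\Gamma_{s([X/\Gm])}\bigl(V,\mu_2^{\otimes(n+1)}\bigr)\cong\R\Gamma\bigl([X/\Gm],\mu_2^{\otimes n}\bigr)[-2].
\]
Under $p^{*}$ the map $j^{*}$ becomes $j^{*}p^{*}=(p\circ j)^{*}=\pi^{*}$, which is the second arrow of the claim; and under the two identifications the first arrow is the Gysin map for $s$, which by the self-intersection formula is cup product with the top Chern (Euler) class of the normal bundle $N_s\cong\cO(1)$, equal in $\mathrm{H}^{*}(-,\mu_2)$ to $c_1(\cO(1))$ (the class of the gerbe of square roots of $\cO(1)$). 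This is exactly the asserted fiber sequence.

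Every step is standard for schemes; the only points requiring a little care are bookkeeping ones: (i) that the line bundle whose frame bundle is $\pi$ is $\cO(1)$ rather than its dual, which is pinned down by the weight convention recalled in \cref{sect:periodsetup}; (ii) the Tate-twist identification $\mu_2^{\otimes(n+1)}(-1)\cong\mu_2^{\otimes n}$ in the purity isomorphism; and (iii) the comparison of the Gysin map with the first Chern class modulo $2$. For the stacky formulation I would note that localization and purity pass to smooth Artin stacks by descent along a smooth atlas, so no input beyond the scheme case is needed; this descent point is the only place where a little vigilance is called for, but it is not a genuine obstacle. (Alternatively one could first treat $[X/\Gm]=\B\Gm$, where $\R\Gamma_\et(\B\Gm,\mu_2^{\otimes\bullet})$ is explicit, and then bootstrap via a Leray--Hirsch argument; the Gysin approach above seems cleaner and avoids this.)
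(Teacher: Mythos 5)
Your proof is correct and uses essentially the same argument as the paper: the localization (Gysin) triangle for the zero section of the line bundle whose complement is the $\Gm$-torsor $\pi$, identified via absolute purity and $\bA^1$-invariance of mod-$2$ \'etale cohomology. The only difference is cosmetic: the paper first reduces to $X=\pt$ by smooth base change and runs the localization sequence for $\B\Gm\hookrightarrow[\bA^1/\Gm]$ with complement $\pt$, whereas you run it directly on $\Tot_{[X/\Gm]}(\cO(1))$ — which is the same triangle after base change along $[X/\Gm]\to\B\Gm$.
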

\begin{proof}
By smooth base change \cite[Corollary 6.2.10]{LiuZheng} applied to the Cartesian diagram
\[
\xymatrix{
X \ar[r] \ar[d] & \pt \ar[d] \\
[X/\Gm] \ar[r] & \B\Gm
}
\]
the claim is reduced to the case $X=\pt$ in which case it comes from the localization sequence associated to the closed immersion $\B\Gm\rightarrow [\bA^1/\Gm]$ and its open complement $\pt$.
\end{proof}

Consider the \'etale first Chern class $c_1(K_\Sigma^{1/2})\in|\R\Gamma_{\et}(\Sigma, \mu_2)[2]|$. We will be interested in a trivialization of its square
\[c_1(K_\Sigma^{1/2})^2\in|\R\Gamma_{\et}(\Sigma, \mu_2^{\otimes 2})[4]|.\]
Note that since $k$ is algebraically closed, $\Sigma$ has cohomological dimension $\leq 2$ \cite[Chapter VI, Theorem 1.1]{Milne}, so there is a unique trivialization of $c_1(K_\Sigma^{1/2})^2$. In fact, the space of trivializations is non-empty and connected, but generally non-contractible.

\begin{theorem}\label{thm:periodanomalyfree}
Let $(G_1\times G_2, \Sigma, M)$ be as in \cref{sect:periodsetup}. Consider the following data:
\begin{enumerate}
    \item There is a nullhomotopy of the $G\times\Gm$-equivariant \'etale second Chern class $c^{G\times\Gm}_2(\T_M\otimes\cO(1))\in |\R\Gamma_{\et}([M/(G\times \Gm)], \mu_2^{\otimes 2})[4]|$.
    \item
    \begin{enumerate}
        \item A choice of $K_\Sigma^{1/4}$ and
        \item a nullhomotopy of the $G$-equivariant \'etale second Chern class $c^G_2(\T_M)\in|\R\Gamma_{\et}([M/G], \mu_2^{\otimes 2})[4]|$.
    \end{enumerate}
    \item
    \begin{enumerate}
        \item A trivialization of $c_1(K_\Sigma^{1/2})^2$,
        \item a nullhomotopy of the $G$-equivariant \'etale second Chern class $c^G_2(\T_M)$, so that by \cref{lm:Gmcohomology} there is a canonical class $\alpha\in|\R\Gamma_{\et}([M/(G\times\Gm)], \mu_2)[2]|$ such that $c^{G\times\Gm}_2(\T_M\otimes\cO(1)) = \alpha\cdot c_1(\cO(1))$ and
        \item a trivialization of the image of $\alpha$ in $|\R\Gamma_{\et}([M/G], \mu_2)[2]|$.
    \end{enumerate} 
\end{enumerate}
Then there is a canonical orientation of the relative exact $(-1)$-shifted symplectic structure on $\Bun_G^M(\Sigma)\rightarrow \Bun_G(\Sigma)$.
\end{theorem}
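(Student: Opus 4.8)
The plan is to reduce, in each of the three cases, the construction of an orientation to the following pieces of linear-algebraic data: a trivialization of the square root of the canonical bundle $K_{\Bun^M_G(\Sigma)/\Bun_G(\Sigma)}$, which we will build out of \Cref{lm:BunGMcotangent} together with the Grothendieck--Riemann--Roch computation of \Cref{cor:GRR}. Recall from \Cref{lm:BunGMcotangent} that on a test affine scheme $S$ mapping to $\Bun^M_G(\Sigma)$ via $\overline f\colon S\times\Sigma\to[M/(G\times\Gm)]$ we have $f^*\bL_{\Bun^M_G(\Sigma)/\Bun_G(\Sigma)}\cong p_*((\cO_S\boxtimes K_\Sigma)\otimes\overline f^*\Omega^1_M)[1]$. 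Hence $K_{\Bun^M_G(\Sigma)/\Bun_G(\Sigma)}=\det(\bL_{\Bun^M_G(\Sigma)/\Bun_G(\Sigma)})$ is, up to the usual sign conventions for shifts, $\det(p_!((\cO_S\boxtimes K_\Sigma)\otimes\overline f^*\Omega^1_M))^{\vee}$, i.e.\ it is $c_1$ applied to a $K$-theory class of the form $p_*((\cO_S\boxtimes K_\Sigma)\otimes(-))$ evaluated on $[\overline f^*\Omega^1_M]$. An orientation is precisely a square root of this line bundle (as a $\Z/2$-graded line, but the grading is determined by the rank and causes no trouble), so what we must produce is a \emph{canonical} nullhomotopy of the mod-$2$ class $c_1(K_{\Bun^M_G(\Sigma)/\Bun_G(\Sigma)})\in|\R\Gamma_{\et}(\Bun^M_G(\Sigma),\mu_2)[2]|$, functorially in $S$, together with the comparison with the symplectic structure which (via the symplectic volume form \eqref{eq:symplecticvolumeform}) makes $K_{\Bun^M_G(\Sigma)/\Bun_G(\Sigma)}$ in the first place a square; the content of \Cref{thm:perversepullback} only needs the existence of a square-root line with an iso to $K$, so all the work is in trivializing the relevant $\mu_2$-cohomology class.

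The key step is to run \Cref{cor:GRR} with $L=K_\Sigma$ and the $K$-theory class $[\overline f^*\Omega^1_M]$, using that $\Omega^1_M\cong\T_M$ via the symplectic form $\omega$ (which is where the weight-$2$ $\Gm$-scaling enters: $\omega$ identifies $\T_M\cong\Omega^1_M\otimes\cO(2)$ $G\times\Gm$-equivariantly, so $[\overline f^*\Omega^1_M]$ differs from $[\overline f^*\T_M]$ by a twist by $\cO(-2)$ pulled back along $\Sigma\xrightarrow{K_\Sigma^{-1/2}}\B\Gm$, i.e.\ by a twist by $K_\Sigma$). The diagram in \Cref{cor:GRR} then computes $c_1(K_{\Bun^M_G(\Sigma)/\Bun_G(\Sigma)})$ as $\int_\Sigma$ of $-c_2$ of the pullback along $S\times\Sigma\to[M/(G\times\Gm)]$ of $\T_M\otimes\cO(1)$ (the half-twist $\cO(1)$ appearing because of the $K_\Sigma^{1/2}$ already built into the definition of $\Bun^M_G(\Sigma)$ in \eqref{eq:BunGM}), modulo lower-weight corrections which, by the low-weight description of $\ch_j$ recalled in the excerpt and the fact that $\omega$ forces $\det(\T_M)$ to be equivariantly trivial with the expected $\Gm$-weight, reduce to the first Chern class contributions controlled separately. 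Thus in Case~(1) a nullhomotopy of $c^{G\times\Gm}_2(\T_M\otimes\cO(1))$ pulls back along $\overline f$ and integrates over $\Sigma$ to the desired nullhomotopy of $c_1(K_{\Bun^M_G(\Sigma)/\Bun_G(\Sigma)})$, proving the theorem in that case. For Case~(2): the choice of $K_\Sigma^{1/4}$ together with $K_\Sigma^{1/2}$ gives a canonical square root of $c_1(\cO(1))$ over $\Sigma$, so one can split off the $\Gm$-equivariant part using \Cref{lm:Gmcohomology} and reduce $c^{G\times\Gm}_2(\T_M\otimes\cO(1))$ to $c^G_2(\T_M)$ plus a term involving $c_1(\cO(1))^2=c_1(K_\Sigma^{1/2})^2$ which is trivialized by the $K_\Sigma^{1/4}$-data; for Case~(3) one uses \Cref{lm:Gmcohomology} directly to extract the class $\alpha$ with $c^{G\times\Gm}_2(\T_M\otimes\cO(1))=\alpha\cdot c_1(\cO(1))$, and then the trivialization of $\alpha|_{[M/G]}$ together with the trivialization of $c_1(K_\Sigma^{1/2})^2$ kills the integral over $\Sigma$.

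The main obstacle I anticipate is bookkeeping the precise weights and the half-integral twists: one must keep careful track of (a) the $\cO(1)=K_\Sigma^{1/2}$-twist coming from \eqref{eq:BunGM}, (b) the relation $\T_M\cong\Omega^1_M\otimes\cO(2)$ forced by the weight-$2$ scaling, and (c) the sign/shift conventions in passing from $\bL[1]$ to $\det$, so that the class being integrated over $\Sigma$ in \Cref{cor:GRR} is genuinely $c_2(\T_M\otimes\cO(1))$ on the nose and not some cousin of it; an off-by-a-twist error would change which anomaly class needs to vanish. A secondary subtlety is that \Cref{cor:GRR}, \Cref{prop:motfiltrationfunctoriality} and \Cref{lm:Gmcohomology} are stated for (finite-type, smooth) schemes or stacks, whereas $\Bun^M_G(\Sigma)$ is only a locally-finite-type $1$-Artin stack, so all identities must be checked functorially on affine test schemes $S\to\Bun^M_G(\Sigma)$ and then glued --- this is where the functoriality clauses in those statements, and the derived/quasi-smooth extensions mentioned in the remark after \Cref{prop:motfiltrationfunctoriality}, are used. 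Once these identifications are pinned down, the three cases are essentially formal manipulations in $\mu_2$-cohomology via \Cref{lm:Gmcohomology}, and I expect no further difficulty; the orientation so produced is canonical because every step (GRR comparison, $\Gm$-splitting, the various pullbacks and the integration $\int_\Sigma$) is natural in the given data.
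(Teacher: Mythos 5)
Your proposal follows essentially the same route as the paper: compute the orientation gerbe via \cref{lm:BunGMcotangent}, feed the resulting $K$-theory class through the Grothendieck--Riemann--Roch diagram of \cref{cor:GRR}, and then dispatch the three cases using \cref{lm:Gmcohomology}. Two points where you gloss over something that the paper has to make precise. First, \cref{cor:GRR} only applies to a class in $\Fil^2$, and $[\overline f^*\Omega^1_M]$ is not in $\Fil^2$ (its rank and determinant are nonzero); the paper's fix is to rewrite the pushforward as $p_*\bigl((\cO_S\boxtimes K_\Sigma^{1/2})\otimes \overline f^*(\Omega^1_M\otimes\cO(-1))\bigr)$, where the symplectic volume form \eqref{eq:symplecticvolumeform} trivializes $\det(\Omega^1_M\otimes\cO(-1))$ $G\times\Gm$-equivariantly, so that $\overline f^*[\Omega^1_M\otimes\cO(-1)]-\dim(M)[\cO]$ lies in $\Fil^2$. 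Second, the rank-correction term $\dim(M)\cdot c_1\bigl(p_*(\cO_S\boxtimes K_\Sigma^{1/2})\bigr)$ that this subtraction leaves behind must itself be trivialized; the paper does this by observing that, via Serre duality, $\det p_*(\cO_S\boxtimes K_\Sigma^{1/2})\cong\det\rH^0(\Sigma,K_\Sigma^{1/2})^{\otimes 2}$ has a canonical square root. You flag this as ``first Chern class contributions controlled separately'' but do not actually produce the square root, and it is not automatic --- it is exactly here that the chosen theta characteristic $K_\Sigma^{1/2}$ (as opposed to any line bundle) does real work. Your account of case (2) is also slightly off: after trivializing $c_2^G(\T_M)$ one gets $c_2^{G\times\Gm}(\T_M\otimes\cO(1))=\alpha\cdot c_1(\cO(1))$ with a single factor of $c_1(\cO(1))$, not a $c_1(\cO(1))^2$ term, and the $K_\Sigma^{1/4}$ kills $c_1(K_\Sigma^{1/2})$ directly; your case (3) description matches the paper. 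These are repairable imprecisions rather than a wrong approach, but as written the argument has a genuine hole at the $\Fil^2$/rank-correction step.
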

\begin{proof}
As $\Bun_G^M(\Sigma)$ is locally of finite presentation, we have to construct a trivialization of the orientation gerbe $f^*\ori_{\Bun_G^M(\Sigma)/\Bun_G(\Sigma)}$ for every morphism $f\colon S\rightarrow \Bun_G^M(\Sigma)$ from a finite type affine scheme, functorially in $S$. By 
\cref{lm:BunGMcotangent} the orientation gerbe for the relative exact $(-1)$-shifted symplectic structure on $\Bun_G^M(\Sigma)\rightarrow \Bun_G(M)$ coincides with the \'etale first Chern class of the relative cotangent complex of $\Bun_G^M(\Sigma)\rightarrow \Bun_G(\Sigma)$.

If $f$ is represented by $\overline{f}\colon S\times\Sigma\rightarrow [M/(G\times\Gm)]$ and $p\colon S\times\Sigma\rightarrow S$ is the projection, by \cref{lm:BunGMcotangent} as well as using the isomorphism $p_*((\cO_S\boxtimes K_\Sigma)\otimes \overline{f}^*\Omega^1_M)\cong p_*((\cO_S\boxtimes K^{1/2}_\Sigma)\otimes \overline{f}^*(\Omega^1_M\otimes \cO(-1)))$ we have to construct a trivialization of
\[c_1(p_*((\cO_S\boxtimes K^{1/2}_\Sigma)\otimes \overline{f}^*(\Omega^1_M\otimes \cO(-1))))\in|\R\Gamma_{\et}(S, \mu_2)[2]|.\]
Using the symplectic volume form \eqref{eq:symplecticvolumeform} we have a $G\times\Gm$-equivariant trivialization of the determinant line bundle $\det(\Omega^1_M\otimes \cO(-1))$ on $M$. Thus,
\[\overline{f}^*[\Omega^1_M\otimes \cO(-1)] - \dim(M) [\cO_{S\times\Sigma}]\in \Fil^2\rK(S\times\Sigma).\]
By base change the line bundle $\det(p_*(\cO_S\otimes K^{1/2}_\Sigma))$ on $S$ is pulled back from the corresponding line bundle on $S=\Spec k$. The latter line bundle has a canonical square root given by $\det\rH^0(\Sigma, K^{1/2}_\Sigma)$. Thus, by \cref{cor:GRR} we get that
\[c_1(p_*((\cO_S\boxtimes K^{1/2}_\Sigma)\otimes \overline{f}^*(\Omega^1_M\otimes \cO(-1))))\sim -\int_\Sigma \overline{f}^* c^{G\times\Gm}_2(\Omega^1_M\otimes \cO(-1))\in|\R\Gamma_{\et}(S, \mu_2)[2]|,\]
where
\[c^{G\times\Gm}_2(\Omega^1_M\otimes \cO(-1))\sim c^{G\times\Gm}_2(\T_M\otimes\cO(1)).\]

We will now show that the choice of the data (1), (2) or (3) trivializes this expression. The case (1) is clear. In the cases (2) or (3) by \cref{lm:Gmcohomology} we have $c^{G\times\Gm}_2(\T_M\otimes\cO(1)) = \alpha\cdot c_1(\cO(1))$, so that
\[\overline{f}^* c^{G\times\Gm}_2(\Omega^1_M\otimes \cO(-1)) \sim (\overline{f}^* \alpha)\cdot c_1(\cO_S\boxtimes K_\Sigma^{1/2}).\]
\begin{enumerate}
\setcounter{enumi}{1}
    \item A choice of $K_\Sigma^{1/4}$ trivializes $c_1(K_\Sigma^{1/2})$.
    \item Given a trivialization of the image of $\alpha$ in $|\R\Gamma_{\et}([M/G], \mu_2)[2]|$, by \cref{lm:Gmcohomology} we have
    \[\overline{f}^* c^{G\times\Gm}_2(\Omega^1_M\otimes \cO(-1)) = (\overline{f}^* \beta) \cdot c_1(\cO_S\boxtimes K_\Sigma^{1/2})^2.\]
    Given a trivialization of $c_1(K_\Sigma^{1/2})^2$, this expression is trivialized.
\end{enumerate}
\end{proof}

We refer to the data as in \cref{thm:periodanomalyfree} an \defterm{anomaly trivialization} for the triple $(G_1\times G_2, \Sigma, M)$.

\begin{remark}
Denote by $\mu$ the original $\Gm$-action on $M$. Consider a $\Gm$-action $\sqrt{\mu}$ on $M$ commuting with the $G$-action whose square gives $\mu$. Given a trivialization of $c_2^G(\T_M)$, by \cref{lm:Gmcohomology} we have
\[c_2^{G\times\Gm, \sqrt{\mu}}(\T_M) = \overline{\alpha}\cdot c_1(\cO(1)),\]
where on the left we consider the equivariant second Chern class with respect to the $\Gm$-action $\sqrt{\mu}$. Thus,
\[c_2^{G\times\Gm, \mu}(\T_M) = \overline{\alpha}\cdot c_1(\cO(2)),\]
which is canonically trivial (the trivialization is given by the line bundle $\cO(1)$ which is a square root of $\cO(2)$). Therefore, if we have a square root of the $\Gm$-action on $M$, we have a trivialization of $\alpha$ relevant for condition (3c) in \cref{thm:periodanomalyfree}.
\end{remark}

\subsection{Induction functors}

For a reductive group $G$ we may identify $\T^* \Bun_G(\Sigma)$ with the moduli stack of $G$-Higgs bundles on $\Sigma$. Denote by $\Nilp\subset \T^* \Bun_G(\Sigma)$ the global nilpotent cone, i.e. the subset where the Higgs field is nilpotent. We will be interested in the full subcategory
\[\bD_{\Nilp}(\Bun_G(\Sigma))\subset \bD(\Bun_G(\Sigma))\]
of ind-constructible sheaves with nilpotent singular support. By \cite[Theorem 16.4.10 and Proposition 17.2.3]{AGKRRV1} the inclusion admits a colimit-preserving right adjoint
\[\sfP\colon \bD(\Bun_G(\Sigma))\longrightarrow \bD_{\Nilp}(\Bun_G(\Sigma)).\]
The following statement is closely related to the miraculous duality of $\Bun_G(\Sigma)$.
\begin{theorem}[{\cite[Theorem 3.2.2]{AGKRRV2}}]\label{thm:miraculousduality}
For a reductive group $G$ the functor
\[\R\Gamma_c(\Bun_G(\Sigma), -\otimes -)\colon \bD_{\Nilp}(\Bun_G(\Sigma))\otimes \bD_{\Nilp}(\Bun_G(\Sigma))\longrightarrow \Mod_R\]
is the evaluation of a self-duality of $\bD_{\Nilp}(\Bun_G(\Sigma))\in\PrSt_R$.
\end{theorem}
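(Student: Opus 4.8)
The plan is to exhibit the displayed functor as the evaluation (counit) of a self-duality obtained by comparing two \emph{a priori} distinct dualities on $\cC\coloneqq\bD_{\Nilp}(\Bun_G(\Sigma))$: the one furnished by Verdier duality, and the one encoded by compactly supported cohomology. First I would record that $\cC$ is dualizable. By the compact generation result cited above \cite{AGKRRV1}, $\cC$ is compactly generated, so $\cC^\vee\cong\Ind\big((\cC^c)^{\op}\big)$. Since the global nilpotent cone $\Nilp\subset\T^*\Bun_G(\Sigma)$ is conic, it is stable under the antipodal involution of the cotangent stack; hence Verdier duality $\bbD_{\Bun_G(\Sigma)}$ preserves the condition $\SS(-)\subset\Nilp$ on constructible objects and restricts to an anti-autoequivalence of $\cC^c$, which already gives one self-duality of $\cC$ — but with pairing $(\cF,\cG)\mapsto\Hom(\cF,\bbD\cG)$ rather than $\R\Gamma_c(\Bun_G(\Sigma),\cF\otimes\cG)$.

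To compare the two I would bring in the \emph{pseudo-identity} functor $\Psi\colon\bD(\Bun_G(\Sigma))\to\bD(\Bun_G(\Sigma))$ of Drinfeld--Gaitsgory, defined via the diagonal (so that on any finite-type substack $\Psi$ is an equivalence, the $!$- and $*$-dualities agreeing there up to a shift by the dualizing complex), and check that $\Psi$ preserves ind-constructibility with nilpotent singular support, so that its corestriction to $\cC$ does not require the right adjoint $\sfP$ introduced above. A formal manipulation with base change along $\Delta$ and the $(f_!,f^!)$- and $(\otimes,\cHom)$-adjunctions then yields a natural isomorphism $\Hom_{\cC}(\Psi(\cF),\cG)\cong\R\Gamma_c(\Bun_G(\Sigma),\cF\otimes\cG)$. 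Since $\cF\mapsto\R\Gamma_c(\Bun_G(\Sigma),\cF\otimes-)$ is colimit-preserving — it is $p_!$ composed with $\cF\otimes-$ and the colimit-preserving inclusion $\cC\hookrightarrow\bD(\Bun_G(\Sigma))$ — it represents a functor $\cC\to\cC^\vee$ identified with $\cF\mapsto\Hom_{\cC}(\Psi(\cF),-)$. Thus the displayed pairing is the evaluation of a self-duality \emph{precisely} when $\Psi|_{\cC}$ is an equivalence.

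The heart of the matter — and the main obstacle — is then the \emph{miraculousness} of $\Bun_G(\Sigma)$: that $\Psi$, which measures the discrepancy between $\R\Gamma$ and $\R\Gamma_c$ at the boundary, becomes an equivalence after imposing nilpotence of the singular support. I would follow the argument of \cite{AGKRRV2}, which adapts the $D$-module proof of Drinfeld--Gaitsgory to constructible coefficients. One reduces, using the Harder--Narasimhan/instability stratification, to a generating family of $\cC$ built from the Eisenstein series attached to proper parabolics and their constant terms; on each quasi-compact open $\Bun_G(\Sigma)$ is smooth of finite type, where $\Psi$ is an equivalence for elementary reasons, so all the content lies in the non-quasi-compact directions. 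There one exploits the cleanness of the Eisenstein series on $\Bun_G(\Sigma)$ — equivalently, the contractibility of the spaces of generic $B$-reductions, i.e. of rational maps $\Sigma\dashrightarrow G/B$ — together with the conservativity of the family of constant-term functors on $\cC$ to intertwine $\Psi$ with the corresponding functors on the Levi quotients, reducing inductively to tori; for $\Bun_T(\Sigma)$, which up to a discrete group is a product of copies of $\Pic^0(\Sigma)$, miraculousness is a direct computation. Granting this, $\Psi|_{\cC}$ is an equivalence, and the displayed functor is the evaluation of the resulting self-duality in $\PrSt_R$. Since in the present setting $R$ has characteristic zero, an alternative route is to transport the known $D$-module statement along the Riemann--Hilbert correspondence, provided one checks that the nilpotent singular support condition matches under Riemann--Hilbert for the holonomic objects involved; in either approach the contractibility/cleanness input is the essential geometric ingredient, and the principal difficulty.
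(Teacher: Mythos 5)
This statement is not proved in the paper at all: it is imported verbatim, with attribution, from \cite[Theorem 3.2.2]{AGKRRV2} (the label in the paper even carries the citation inside the theorem header). There is therefore no ``paper's own proof'' against which to compare your argument; the authors treat it as a black box and use it only to convert a colimit-preserving functor $\bD_{\Nilp}(\Bun_G(\Sigma))\to\Mod_R$ into an object of $\bD_{\Nilp}(\Bun_G(\Sigma))$, which is exactly what \cref{def:induction} needs.

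As an outline of the \emph{cited} result, your sketch captures the right flavor: compact generation of $\bD_{\Nilp}$ gives dualizability, Verdier duality preserves the nilpotent microsupport condition because $\Nilp$ is conic and hence stable under the fiberwise antipode, and the content of the theorem is the ``miraculousness'' of $\Bun_G$ --- that the pseudo-identity functor $\Psi$ becomes an equivalence after imposing nilpotence --- with the geometric input coming from the contractibility of spaces of generic reductions and reduction to proper Levis. Two cautions, though. First, you assert as a step that $\Psi$ carries $\bD_{\Nilp}$ into itself ``so that its corestriction does not require $\sfP$''; this is precisely the kind of delicate claim that needs to be checked against \cite{AGKRRV2} rather than asserted --- the natural formulation one should aim to verify is that $\sfP\circ\Psi|_{\bD_{\Nilp}}$ is an equivalence, and whether $\sfP$ can be dropped is a nontrivial point of the proof, not a routine observation. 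Second, the reduction-to-tori scheme you describe is borrowed from Drinfeld--Gaitsgory's $D$-module argument; in the constructible-with-nilpotent-support setting of \cite{AGKRRV1,AGKRRV2} the bookkeeping around singular support, $\sfP$, and the ind-constructible (rather than ind-holonomic) objects is genuinely different, and the Riemann--Hilbert shortcut you mention at the end is not known to be available in the generality needed (one would have to match nilpotent singular support with the appropriate condition on the de~Rham side for the full ind-category, which is itself a nontrivial comparison). So your proposal is a reasonable expository summary of the expected shape of the argument, but it should not be presented as a self-contained proof; the honest move here --- and the one the paper itself makes --- is to cite \cite[Theorem 3.2.2]{AGKRRV2}.
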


By \cref{thm:miraculousduality} for any $\cC\in\PrSt_R$ the data of a colimit-preserving functor $F\colon \cC\rightarrow \bD_{\Nilp}(\Bun_G(\Sigma))$ is the same as the data of a colimit-preserving functor $\tilde{F}\colon \cC\otimes\bD_{\Nilp}(\Bun_G(\Sigma))\rightarrow \Mod_R$, where the two are related by
\[\R\Gamma_c(\Bun_G(\Sigma), F(-)\otimes -)\cong \tilde{F}(-,-).\]

Consider now the triple $(G_1\times G_2, \Sigma, M)$ as in \cref{sect:periodsetup} and denote by
\[
\xymatrix{
& \Bun_G^M(\Sigma) \ar_{\pi_1}[dl] \ar^{\pi}[d] \ar^{\pi_2}[dr] & \\
\Bun_{G_1}(\Sigma) & \Bun_G(\Sigma) & \Bun_{G_2}(\Sigma).
}
\]
the natural projections.

\begin{definition}\label{def:induction}
Consider a triple $(G_1\times G_2, \Sigma, M)$ together with a choice of anomaly trivialization. The \defterm{induction functor}
\[\ind^{G_1\rightarrow G_2}_M\colon \bD_{\Nilp}(\Bun_{G_1}(\Sigma))\longrightarrow \bD_{\Nilp}(\Bun_{G_2}(\Sigma))\]
is the unique functor with a natural equivalence
\[\R\Gamma_c(\Bun_{G_2}(\Sigma), \ind^{G_1\rightarrow G_2}_M(-)\otimes -)\cong \R\Gamma_c(\Bun_G^M(\Sigma), \pi^\varphi(-\boxtimes -))[\dim \Bun_{G_2}(\Sigma)].\]
\end{definition}

\subsection{Cotangent case}\label{sect:cotangentperiods}

Let $X$ be a quasi-separated smooth scheme equipped with a $G\times\Gm$-action. Set $M=\T^* X$ with its natural Hamiltonian $G$-action. As the $\Gm$-action on $M$ we take the product of the Hamiltonian $\Gm$-action induced by the $\Gm$-action on $X$ and the weight $2$ action along the fibers of $\T^* X$. By construction we have an exact sequence
\[0\longrightarrow \Omega^1_X|_M\otimes \cO(-2)\longrightarrow \T_M\longrightarrow \T_X|_M\longrightarrow 0\]
of $G\times\Gm$-equivariant vector bundles on $M$. We get
\[c_2^{G\times\Gm}(\T_M) \sim c^{G\times\Gm}_2(\T_X|_M) + c^{G\times\Gm}_2(\Omega^1_X|_M\otimes \cO(-2)).\]
Using the square root of $\cO(-2)$ given by $\cO(-1)$ we obtain homotopies $c^{G\times\Gm}_2(\Omega^1_X|_M\otimes \cO(-2)) \sim c^{G\times\Gm}_2(\Omega^1_X|_M) \sim c^{G\times\Gm}_2(\T_X|_M)$. Thus, we get a canonical trivialization of $c_2^{G\times\Gm}(\T_M)$, i.e. an anomaly trivialization for the triple $(G_1\times G_2, \Sigma, \T^* X)$.

We define the derived stack $\Bun_G^X(\Sigma)$ analogously to $\Bun_G^M(\Sigma)$ by a Cartesian diagram
\[
\xymatrix{
\Bun_G^X(\Sigma) \ar[r] \ar[d] & \Map(\Sigma, [X/(G\times\Gm)]) \ar[d] \\
\Bun_G(\Sigma) \ar^-{\id\times K^{-1/2}_\Sigma}[r] & \Bun_{G\times\Gm}(\Sigma).
}
\]
\begin{remark}
In contrast to $\Bun_G^M(\Sigma)$, $\Bun_G^X(\Sigma)$ carries no natural relative $(-1)$-shifted symplectic structure.    
\end{remark}

Consider the correspondence
\begin{equation}\label{eq:BunXcorrespondence}
\xymatrix{
& \Bun_G^X(\Sigma) \ar_{\pi^X_1}[dl] \ar^{\pi^X_2}[dr] & \\
\Bun_{G_1}(\Sigma) && \Bun_{G_2}(\Sigma).
}
\end{equation}

\begin{proposition}\label{prop:cotangentperiod}
There is a natural isomorphism
\[\ind^{G_1\rightarrow G_2}_M(-)\cong \sfP((\pi^X_2)_!(\pi^X_1)^*(-))[\dim(\Bun_G^X(\Sigma)/\Bun_{G_1}(\Sigma))].\]
\end{proposition}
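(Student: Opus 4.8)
The plan is to recognize that when $M = \T^* X$, the derived stack $\Bun_G^M(\Sigma)$ is the relative $(-1)$-shifted cotangent bundle of $\Bun_G^X(\Sigma)$ over $\Bun_G(\Sigma)$, and then apply the dimensional reduction theorem (\cref{thm:dimensionalreduction}). First I would verify this identification: using the AKSZ/mapping stack description of $\Bun_G^M(\Sigma)$ from \cref{prop:BunGMsymplectic} together with the compatibility of Weil restriction with $(-1)$-shifted cotangent bundles (as in the construction of the deformation space, cf. \cite{CalaqueSafronov}), one gets that the evaluation description of $M = \T^* X$ as $\T^*[0](X/\B(G\times\Gm))$ twisted by $\cO(-2)$, combined with the $K_\Sigma[1]$-orientation on $\Sigma$, produces exactly $\T^*[-1](\Bun_G^X(\Sigma)/\Bun_G(\Sigma)) \to \Bun_G(\Sigma)$. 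One should check that the $(-1)$-shifted symplectic structure and, crucially, the orientation match: the anomaly trivialization chosen for $\T^* X$ via the square root $\cO(-1)$ of $\cO(-2)$ should be compatible with the canonical orientation $o^{\can}_{\T^*[-1](Y/B)/B}$ of \cref{ex:ocan}. This is a diagram chase through \cref{lm:BunGMcotangent} and the construction in \cref{thm:periodanomalyfree}.

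Granting this, write $g = \pi^X_1$ for the first projection $\Bun_G^X(\Sigma) \to \Bun_G(\Sigma)$ — wait, more precisely let $\rho\colon \Bun_G^X(\Sigma) \to \Bun_G(\Sigma)$ be the structure map and $\tau\colon \Bun_G^M(\Sigma) = \T^*[-1](\Bun_G^X(\Sigma)/\Bun_G(\Sigma)) \to \Bun_G^X(\Sigma)$ the projection, so that $\pi = \rho\circ\tau$. Then \cref{thm:dimensionalreduction} gives $\tau_!\pi^\varphi \cong \rho^*[\dim(\Bun_G^X(\Sigma)/\Bun_G(\Sigma))]$. Now unwind \cref{def:induction}: using \cref{thm:miraculousduality} the induction functor is determined by
\[
\R\Gamma_c(\Bun_{G_2}(\Sigma), \ind_M^{G_1\to G_2}(-)\otimes -) \cong \R\Gamma_c(\Bun_G^M(\Sigma), \pi^\varphi(-\boxtimes-))[\dim\Bun_{G_2}(\Sigma)].
\]
Apply the projection formula along $\tau$ and its pushforward to $\Bun_G(\Sigma) = \Bun_{G_1}(\Sigma)\times\Bun_{G_2}(\Sigma)$, together with $\pi = \rho\circ\tau$ and the factorization $\rho = (\pi^X_1, \pi^X_2)$, to rewrite $\R\Gamma_c(\Bun_G^M(\Sigma), \pi^\varphi(-\boxtimes-)) \cong \R\Gamma_c(\Bun_G^X(\Sigma), \tau_!\pi^\varphi(\rho^*(-\boxtimes-))) \cong \R\Gamma_c(\Bun_G^X(\Sigma), (\pi^X_1)^*(-)\otimes(\pi^X_2)^*(-))[\dim(\Bun_G^X(\Sigma)/\Bun_G(\Sigma))]$. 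Pushing forward along $\pi^X_2$ and using the projection formula and base change against $\Bun_{G_2}(\Sigma)\to\pt$, this becomes $\R\Gamma_c(\Bun_{G_2}(\Sigma), (\pi^X_2)_!(\pi^X_1)^*(-)\otimes-)$ up to the shift $\dim(\Bun_G^X(\Sigma)/\Bun_G(\Sigma))$ and bookkeeping of the $\dim\Bun_{G_2}(\Sigma)$ shift, which cancels against the shift inherent in the self-duality pairing (care is needed because the pairing in \cref{thm:miraculousduality} uses $\R\Gamma_c$, not $\R\Gamma$, so the $\dim\Bun_{G_2}(\Sigma)$ factors are already accounted for). Comparing with the defining property of $\ind_M^{G_1\to G_2}$ and using that $\sfP$ is right adjoint to the inclusion (so it is invisible inside the pairing against objects of $\bD_{\Nilp}$), one reads off $\ind_M^{G_1\to G_2}(-) \cong \sfP((\pi^X_2)_!(\pi^X_1)^*(-))[\dim(\Bun_G^X(\Sigma)/\Bun_{G_1}(\Sigma))]$.

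The main obstacle will be the first step: establishing cleanly that $\Bun_G^M(\Sigma)$ for $M = \T^*X$ really is $\T^*[-1](\Bun_G^X(\Sigma)/\Bun_G(\Sigma))$ \emph{with matching orientations}. The symplectic identification should follow formally from AKSZ functoriality applied to the fact that $[\T^*X/(G\times\Gm)]$ is the (twisted) $0$-shifted cotangent of $[X/(G\times\Gm)]$ over $\B(G\times\Gm)$, but tracking the orientation data through the Weil restriction — in particular checking that the canonical orientation produced on the relative $(-1)$-shifted cotangent bundle agrees with the one coming from the anomaly trivialization built in \cref{sect:cotangentperiods} via $\cO(-1)$ — requires care, likely invoking the functoriality of the canonical orientation $o^{\can}$ under AKSZ as developed in \cite{KKPS1}. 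A secondary but purely routine point is the precise shift bookkeeping between $\dim(\Bun_G^X(\Sigma)/\Bun_G(\Sigma))$, $\dim(\Bun_G^X(\Sigma)/\Bun_{G_1}(\Sigma))$ and $\dim\Bun_{G_2}(\Sigma)$, which reconciles because $\Bun_G(\Sigma) = \Bun_{G_1}(\Sigma)\times\Bun_{G_2}(\Sigma)$ gives $\dim(\Bun_G^X(\Sigma)/\Bun_{G_1}(\Sigma)) = \dim(\Bun_G^X(\Sigma)/\Bun_G(\Sigma)) + \dim\Bun_{G_2}(\Sigma)$.
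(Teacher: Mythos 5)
Your proposal follows essentially the same route as the paper: identify $\Bun_G^M(\Sigma) \cong \T^*[-1](\Bun_G^X(\Sigma)/\Bun_G(\Sigma))$ via the compatibility of Weil restriction with (twisted) cotangent bundles (the paper cites \cite[Proposition 3.9]{CalaqueSafronov} for exactly this), apply the dimensional reduction theorem of \cref{thm:dimensionalreduction} to $\tau_!\pi^\varphi$, and then unwind \cref{def:induction} via the projection formula and the shift bookkeeping identity $\dim(\Bun_G^X(\Sigma)/\Bun_{G_1}(\Sigma)) = \dim(\Bun_G^X(\Sigma)/\Bun_G(\Sigma)) + \dim\Bun_{G_2}(\Sigma)$, which you state correctly (despite the slightly misleading word ``cancels''). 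You also rightly flag the orientation comparison as the delicate point of the first step; the paper is terse about this, asserting only compatibility of the symplectic structures.

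One inaccuracy worth flagging: the step that replaces $(\pi_2^X)_!(\pi_1^X)^*\cF_1$ by $\sfP((\pi_2^X)_!(\pi_1^X)^*\cF_1)$ inside the pairing is not a formal consequence of $\sfP$ being a right adjoint to the inclusion $\bD_{\Nilp}\hookrightarrow\bD$. The adjunction controls $\Hom$ out of $\bD_{\Nilp}$, whereas what is needed is that the pairing $\R\Gamma_c(\Bun_{G_2}(\Sigma), -\otimes\cG)$, for $\cG$ microsupported on $\Nilp$, factors through $\sfP$ in the first variable. This is a genuine theorem about the nilpotent singular support category, namely \cite[Proposition 3.4.6]{AGKRRV2}, which the paper invokes at this point. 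So the step itself is correct but your justification for it does not suffice on its own.
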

\begin{proof}
Our first goal is to establish an isomorphism $\Bun_G^M(\Sigma)\cong \T^*[-1](\Bun_G^X(\Sigma)/\Bun_G(\Sigma))$ compatibly with the relative exact $(-1)$-shifted symplectic structures over $\Bun_G(\Sigma)$. For this, recall that $\Bun_G^M(\Sigma)$ is obtained from the exact $\cO(-2)$-twisted symplectic structure on $[M/(G\times\Gm)]\rightarrow \B(G\times\Gm)$ by first pulling it back along $\ev\colon \Bun_G(\Sigma)\times\Sigma\rightarrow \B(G\times\Gm)$ and then applying a Weil restriction along $p_1\colon \Bun_G(\Sigma)\times\Sigma\rightarrow \Bun_G(\Sigma)$.

By definition $[M/(G\times\Gm)]\cong \T^*_{\cO(-2)} ([X/(G\times\Gm)]/\B(G\times\Gm))$, where we consider the twisted cotangent bundle by a line bundle as in \cite[Definition 3.1]{CalaqueSafronov}. Therefore,
\[\ev^*[M/(G\times\Gm)]\cong \T^*_{\cO_{\Bun_G(\Sigma)}\boxtimes K_\Sigma}(\ev^*[X/(G\times\Gm)]/\B(G\times\Gm)).\]
Since $p_1\colon \Bun_G(\Sigma)\times \Sigma\rightarrow \Bun_G(\Sigma)$ carries a $K_\Sigma[1]$-orientation by \cite[Proposition 2.20]{CalaqueSafronov}, by \cite[Proposition 3.9]{CalaqueSafronov} we get an isomorphism
\begin{align*}
\Res_{p_1}\T^*_{\cO_{\Bun_G(\Sigma)}\boxtimes K_\Sigma}(\ev^*[X/(G\times\Gm)]/\B(G\times\Gm))&\cong \T^*[-1](\Res_{p_1} \ev^*[X/(G\times\Gm)]/\Bun_G(\Sigma))\\
&\cong \T^*[-1](\Bun_G^X(\Sigma)/\Bun_G(\Sigma)).
\end{align*}

Let $\pi^X\colon \Bun_G^M(\Sigma)\rightarrow \Bun_G^X(\Sigma)$ and $\cF_i\in\bD_{\Nilp}(\Bun_{G_i}(\Sigma))$. By the dimensional reduction theorem, \cref{thm:dimensionalreduction}, we have
\[\pi^X_! \pi^\varphi(\cF_1\boxtimes \cF_2)\cong (\pi^X_1\times \pi^X_2)^*(\cF_1\boxtimes \cF_2)[\dim(\Bun_G^X(\Sigma)/\Bun_G(\Sigma))].\]
Therefore,
\begin{align*}
\R\Gamma_c(\Bun_G^M(\Sigma), \pi^\varphi(\cF_1\boxtimes \cF_2))&\cong \R\Gamma_c(\Bun_G^X(\Sigma), (\pi^X_1)^*\cF_1\otimes (\pi^X_2)^*\cF_2)[\dim(\Bun_G^X(\Sigma)/\Bun_G(\Sigma))] \\
&\cong \R\Gamma_c(\Bun_{G_2}(\Sigma), ((\pi^X_2)_! (\pi^X_1)^*\cF_1)\otimes \cF_2)[\dim(\Bun_G^X(\Sigma)/\Bun_G(\Sigma))] \\
&\cong \R\Gamma_c(\Bun_{G_2}(\Sigma), \sfP((\pi^X_2)_! (\pi^X_1)^*\cF_1)\otimes \cF_2)[\dim(\Bun_G^X(\Sigma)/\Bun_G(\Sigma))],
\end{align*}
where we have used the projection formula in the second line and \cite[Proposition 3.4.6]{AGKRRV2} in the third line (where we note that $\sfP(\cF_2)\cong\cF_2$ since $\cF_2$ has nilpotent singular support. The claim then follows from the definition of the induction functor.
\end{proof}

\begin{example}\label{ex:Eisensteinperiod}
Let $G$ be a connected reductive group, $P\subset G$ a parabolic subgroup, $L$ the Levi factor and $U\subset P$ the unipotent radical. Consider the trivial $\Gm$-action on $G$ and $L$. Consider the $G\times L$-space $X=G/U$ equipped with the trivial $\Gm$-action. In this case $\Bun_G^X(\Sigma) \cong \Bun_P(\Sigma)$ is the moduli stack of $P$-bundles. The correspondence \eqref{eq:BunXcorrespondence} in this case is
\[
\xymatrix{
& \Bun_P(\Sigma) \ar_{\pi_L}[dl] \ar^{\pi_G}[dr] & \\
\Bun_L(\Sigma) && \Bun_G(\Sigma).
}
\]
Applying \cref{prop:cotangentperiod} we obtain that
\[\ind^{L\rightarrow G}_{\T^*(G/U)}(-)=\mathrm{Eis}_!(-)\coloneqq \sfP((\pi_G)_! (\pi_L)^*(-))[\dim(\Bun_P(\Sigma))-\dim(\Bun_L(\Sigma))]\]
is the functor of Eisenstein series and
\[\ind^{G\rightarrow L}_{\T^*(G/U)}=\mathrm{CT}_!(-)\coloneqq \sfP((\pi_L)_! (\pi_G)^*(-))[\dim(\Bun_P(\Sigma))-\dim(\Bun_G(\Sigma))]\]
is the constant term functor, where we note that the projector $\sfP$ can be omitted in both functors, see \cite[Section 1.3.5]{FaergemanHayash} and \cite[Proposition 1.4.2]{GaitsgoryRaskin}.
\end{example}

Let us now relate the induction functor to period sheaves as defined in \cite{BZSV}. Consider the case $G_1=\pt$, so that $G=G_2$ and recall the notion of an eigenmeasure from \cite[Section 3.8]{BZSV}.

\begin{definition}
Consider a scheme $X$ with a $G\times\Gm$-action as before. An \defterm{eigenmeasure} on $X$ is a volume form $\vol_X$ on $X$ which has weight $\gamma\in\Z$ with respect to the $\Gm$-action and such that $G$ acts on $\vol_X$ via a character $\eta\colon G\rightarrow \Gm$.
\end{definition}

An eigenmeasure $\vol_X$ identifies
\begin{equation}\label{eq:volX}
\vol_X\colon \cO^G(\eta)\otimes \cO^{\Gm}(\gamma)\xrightarrow{\sim} K_X
\end{equation}
as $G\times\Gm$-equivariant line bundles on $X$, where $\cO^G(\eta)$ is the trivial line bundle corresponding to the one-dimensional $G$-representation $\eta$ and $\cO^{\Gm}(\gamma)$ is the trivial line bundle corresponding to the one-dimensional $\Gm$-representation with weight $\gamma$.

\begin{proposition}\label{prop:BZSVcomparison}
Consider the pair $(X, G)$ as above together with an eigenmeasure $\vol_X$. The induction functor satisfies
\[\ind^{\pt\rightarrow G}_{\T^* X}(\bu)\cong \cP^{\norm}_X,\]
where on the right we have the normalized period sheaf as defined in \cite[Section 10.4]{BZSV}.
\end{proposition}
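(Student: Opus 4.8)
The plan is to specialize \cref{prop:cotangentperiod} to the case $G_1 = \pt$, $G_2 = G$ and to match the resulting expression with the construction of $\cP^{\norm}_X$ in \cite[Section 10.4]{BZSV}. Since $\Bun_{\pt}(\Sigma) = \pt$, the projection $\pi^X_1$ of \eqref{eq:BunXcorrespondence} is the structure morphism, so $(\pi^X_1)^*(\bu) = \bu_{\Bun_G^X(\Sigma)}$ and \cref{prop:cotangentperiod} reads
\[
\ind^{\pt\rightarrow G}_{\T^*X}(\bu)\cong \sfP\big((\pi^X_2)_!\,\bu_{\Bun_G^X(\Sigma)}\big)[\dim \Bun_G^X(\Sigma)],
\]
so the task reduces to identifying the right-hand side with the normalized period sheaf.

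First I would recall the relevant construction from \cite{BZSV}: starting from $X$ equipped with its eigenmeasure, one forms the moduli stack of ``$K_\Sigma^{1/2}$-twisted'' sections of $X$ over $\Bun_G(\Sigma)$ — equivalently, twisted maps $\Sigma\to[X/(G\times\Gm)]$ lying over the $\Gm$-bundle $K_\Sigma^{-1/2}$ — which is by construction the derived stack $\Bun_G^X(\Sigma)$ of \cref{sect:cotangentperiods}; the ``$\rho$-shift'' built into the \cite{BZSV} definition is precisely this $K_\Sigma^{1/2}$-twist. The unnormalized period sheaf is the $!$-pushforward of the constant sheaf along $\pi^X_2$, and $\cP^{\norm}_X$ is obtained from it by a cohomological shift together with a tensor twist by a graded line bundle on $\Bun_G^X(\Sigma)$, the eigenmeasure being the datum that trivializes this twist.

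Matching the two is then a matter of bookkeeping. The identification of the underlying mapping stacks is tautological; the shift $[\dim \Bun_G^X(\Sigma)]$ is the one appearing in \cite[Section 10.4]{BZSV}; and the twisting line bundle is trivialized by the eigenmeasure via \eqref{eq:volX}, since the $G\times\Gm$-equivariant identification $\vol_X\colon \cO^G(\eta)\otimes\cO^{\Gm}(\gamma)\xrightarrow{\sim}K_X$, transported along $\Sigma$ through the Serre-duality formula of \cref{lm:BunGMcotangent} and the Grothendieck--Riemann--Roch statement of \cref{cor:GRR}, yields exactly the trivialization of the determinant line on $\Bun_G^X(\Sigma)$ used on the \cite{BZSV} side, with the weight $\gamma$ and the character $\eta$ contributing the same shift and central twist to both constructions. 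It remains to remove the projector $\sfP$: using the identification $\Bun_G^M(\Sigma)\cong\T^*[-1](\Bun_G^X(\Sigma)/\Bun_G(\Sigma))$ from the proof of \cref{prop:cotangentperiod} together with \cref{rmk:GaiottoLagrangian}, the singular support of $(\pi^X_2)_!\bu$ is the classical locus of this $(-1)$-shifted cotangent stack, which maps into the global nilpotent cone $\Nilp$ (the Higgs field of any such point being nilpotent), so $\sfP$ acts as the identity; alternatively this is the nilpotent singular support statement for period sheaves already recorded in \cite{BZSV}.

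The main obstacle is the normalization bookkeeping in the last two steps. Our normalization originates from the symplectic volume form \eqref{eq:symplecticvolumeform} on $M=\T^*X$ and the canonical anomaly trivialization built at the start of \cref{sect:cotangentperiods} from the square root $\cO(-1)$ of $\cO(-2)$; to invoke \cref{prop:cotangentperiod} (hence \cref{thm:dimensionalreduction}) one must know this orientation is the canonical orientation $o^{\can}$ of the $(-1)$-shifted cotangent stack (\cref{ex:ocan}), and one must check that under the identification of mapping stacks this square-root datum is the one used in \cite{BZSV}. Tracking the eigenmeasure's character $\eta\colon G\to\Gm$ through both constructions — it shears each side by the same central twist — is the most delicate point, but is ultimately a routine, if lengthy, compatibility verification.
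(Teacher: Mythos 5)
Your overall strategy matches the paper's: specialize \cref{prop:cotangentperiod} to $G_1=\pt$, $G_2=G$, obtaining
\[
\ind^{\pt\rightarrow G}_{\T^*X}(\bu)\cong \sfP\bigl((\pi^X_2)_!\bu_{\Bun_G^X(\Sigma)}\bigr)[\dim \Bun_G^X(\Sigma)],
\]
and then match this with the BZSV definition. But the matching itself is where the content lives, and you replace it with a gesture.

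The substantive gap is the dimension bookkeeping. The paper's proof is almost entirely an explicit computation: applying \cref{lm:BunGMcotangent} and \cref{cor:GRR} to the eigenmeasure identification \eqref{eq:volX} yields
\[
\dim\bigl(\Bun_G^X(\Sigma)/\Bun_G(\Sigma)\bigr)= (g-1)(\gamma-\dim X)-\deg_\eta,\qquad
\dim\Bun_G(\Sigma)=(g-1)\dim G,
\]
so that the shift becomes $(g-1)(\dim G+\gamma-\dim X)-\deg_\eta$, which is precisely the normalization appearing in the BZSV definition of $\cP^{\norm}_X$. Your proposal asserts ``the shift $[\dim\Bun_G^X(\Sigma)]$ is the one appearing in [BZSV]'' and that ``tracking the eigenmeasure's character $\eta$\ldots\ is a routine compatibility verification,'' but this is exactly the computation one must do; without it nothing is checked, since the GRR formula is how $\gamma$, $\eta$, and the $K_\Sigma^{1/2}$-twist actually enter the shift.

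Separately, the paragraph about removing $\sfP$ is both wrong and unnecessary. Wrong: the singular support of $(\pi_2^X)_!\bu$ is a closed conic Lagrangian in $\T^*\Bun_G(\Sigma)$; it is not ``the classical locus of $\T^*[-1](\Bun_G^X(\Sigma)/\Bun_G(\Sigma))$,'' which is a stack over $\Bun_G^X(\Sigma)$, and the image of the moment map $\mu\colon\Bun_G^M(\Sigma)\rightarrow\T^*\Bun_G(\Sigma)$ is in general not contained in $\Nilp$ (already for the Iwasawa--Tate case $G=\Gm$, $X=\bA^1$, the fiberwise moment map $(q,p)\mapsto qp$ has image all of $\bA^1$, not $\{0\}$). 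Unnecessary: the paper leaves $\sfP$ in place and simply observes that the resulting expression agrees with the BZSV definition; no claim is made — and none is needed — that $(\pi_2^X)_!\bu$ already has nilpotent singular support.

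Your point about the orientation — that the anomaly trivialization constructed in \cref{sect:cotangentperiods} must agree with the canonical orientation of $\T^*[-1](\Bun_G^X(\Sigma)/\Bun_G(\Sigma))$ in order to invoke dimensional reduction — is a legitimate compatibility to keep in mind, though it is really a remark about the proof of \cref{prop:cotangentperiod} rather than about \cref{prop:BZSVcomparison}.
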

\begin{proof}
Consider the degree map
\[\deg_\eta\colon \Bun_G(\Sigma)\xrightarrow{\eta} \Bun_{\Gm}(\Sigma)\xrightarrow{\deg}\Z,\]
where the first morphism sends a $G$-torsor $P\rightarrow \Sigma$ to the line bundle $P\times^{\Gm, \eta} \bA^1$. If we denote by $\ev\colon \Bun_G^X(\Sigma)\times \Sigma\rightarrow [X/(G\times\Gm)]$ the tautological evaluation morphism and by $p\colon \Bun_G^X(\Sigma)\times \Sigma\rightarrow \Bun_G^X(\Sigma)$ the projection on the first factor, by \cref{lm:BunGMcotangent} we have
\[\bL_{\Bun_G^X(\Sigma)/\Bun_G(\Sigma)}\cong p_*((\cO_{\Bun_G^X(\Sigma)}\boxtimes K_\Sigma)\otimes \ev^* \Omega^1_X)[1].\]
Using \eqref{eq:volX} the Grothendieck--Riemann--Roch formula computes the relative dimension of $\Bun_G^X(\Sigma)\rightarrow \Bun_G(\Sigma)$ as
\begin{align*}
-&\int_\Sigma (1 + c_1(\Sigma)/2 + \dots)(\dim(X) + c_1(\Sigma)(\gamma/2-\dim(X)) + c_1(P\times^{\Gm, \eta} \bA^1)+\dots) \\
&= (g-1)(\gamma-\dim(X)) - \deg_\eta.
\end{align*}
An analogous computation gives
\[\dim(\Bun_G(\Sigma)) = (g-1)\dim(G),\]
so we conclude that
\[\dim(\Bun_G^X(\Sigma)) = (g-1)(\dim(G) + \gamma - \dim(X)) - \deg_\eta.\]
Combined with \cref{prop:cotangentperiod} we get that
\[\ind^{\pt\rightarrow G}_{\T^* X}(\bu) \cong \sfP((\pi_2^X)_!\bu_{\Bun_G^X(\Sigma)})[(g-1)(\dim(G) + \gamma - \dim(X)) - \deg_\eta],\]
which coincides with the definition of the normalized period sheaf.
\end{proof}

\subsection{Whittaker case}

In this section we analyze a particular example of induction functor giving rise to the Whittaker functor. Let $G$ be a connected reductive group with Lie algebra $\g$, $B\subset G$ a Borel subgroup, $N\subset B$ its unipotent radical and $T\subset B$ the maximal torus. Let $\psi\colon N\rightarrow \Ga$ be a nondegenerate character; we denote by the same letter the induced character of its Lie algebra. Let $2\rho^\vee\colon \Gm\rightarrow T$ be the cocharacter given by the sum of the positive coroots. There is a natural $T$-action on $N$. We refer to the induced $\Gm$-action on $N$ via $2\rho^\vee$ as the canonical $\Gm$-action on $N$. With respect to that action $\psi\colon N\rightarrow \Ga$ has weight $2$.

Consider $G$ with its natural left $G$-action and a right $N$-action. We get that $\T^* G$ is a $G\times N$-Hamiltonian space. Let $M=\T^*G /\!/_\psi N$ be the $N$-Hamiltonian reduction of $\T^* G$ at the moment map value specified by the character $\psi$. Consider the $\Gm$-action on $\T^* G$ defined as the product of the Hamiltonian $\Gm$-action induced by the right $\Gm$-action on $G$ via $2\rho^\vee$ and the weight $2$ action along the fibers of $\T^* G$. It descends to a $\Gm$-action on $M$ which scales the symplectic structure with weight $2$. $M$ also carries a residual Hamiltonian $G$-action, so we are in the setting of \cref{sect:periodsetup}.

\begin{remark}
Consider the moment map $M\rightarrow \g^*$. Then $[M/G]\rightarrow [\g^*/G]$ identifies with the Kostant section.
\end{remark}

\begin{remark}
The underlying classical stack of $\Bun_G^M(\Sigma)$ is isomorphic to the base of the Hitchin fibration. The corresponding $0$-shifted Lagrangian morphism $\mu\colon \Bun_G^M(\Sigma)\rightarrow \T^* \Bun_G(\Sigma)$ to the moduli space of Higgs bundles from \cref{rmk:GaiottoLagrangian} identifies with the Hitchin section.
\end{remark}

Consider the projection $M\rightarrow X=G/N$ which identifies $M$ with the twisted cotangent bundle of $X$. Using the exact sequence
\[0\longrightarrow \Omega^1_X|_M\otimes \cO(-2)\longrightarrow \T_M\longrightarrow \T_X|_M\longrightarrow 0\]
of $G\times \Gm$-equivariant vector bundles on $M$ we construct an anomaly trivialization for the triple $(G, \Sigma, M)$ as in \cref{sect:cotangentperiods}.

Define the moduli stack of $K^{1/2}_\Sigma$-twisted $N$-bundles on $\Sigma$ by the Cartesian diagram
\[
\xymatrix{
\Bun_N^{K^{1/2}}(\Sigma) \ar[r] \ar[d] & \Bun_{N\rtimes \Gm}(\Sigma) \ar[d] \\
\pt \ar^{K^{-1/2}_\Sigma}[r] & \Bun_{\Gm}(\Sigma)
}
\]
Equivalently, it is the moduli stack of $B$-bundles whose underlying $T$-bundle is $\rho^\vee(K_\Sigma)$, the $T$-bundle induced from $K^{1/2}_\Sigma$ via the homomorphism $2\rho^\vee\colon\Gm\rightarrow T$.

\begin{example}
Suppose that $G$ has semisimple rank $1$, so that $\psi\colon N\rightarrow \Ga$ is an isomorphism. In this case $\Bun_N^{K^{1/2}}(\Sigma)$ identifies with the moduli stack of $K_\Sigma$-torsors, where we consider $K_\Sigma\rightarrow \Sigma$ as a group scheme with respect to the addition along the fibers.
\end{example}

The morphism $\psi$ in general allows us to forget a $K^{1/2}_\Sigma$-twisted $N$-bundle to a $K_\Sigma$-torsor. In particular, we get a natural morphism
\[\psi\colon \Bun^{K^{1/2}}_N(\Sigma)\longrightarrow \rH^1(\Sigma, K_\Sigma)\cong \bA^1\]
which, by abuse of notation, we also denote by $\psi$. Let
\[g\colon \Bun^{K^{1/2}}_N(\Sigma)\longrightarrow \Bun_G(\Sigma)\]
be the morphism induced by the homomorphism $N\rightarrow G$, where we note that the $\Gm$-action given by $2\rho^\vee$ on $G$ is inner, so it is trivial on $\B G$. Denote by
\[d = \dim(\Bun^{K^{1/2}}_N(\Sigma))_{\rho^\vee(K_\Sigma)} - \dim(\Bun_G(\Sigma))_{\rho^\vee(K_\Sigma)}\]
the relative dimension of $g$ at $K^{1/2}_\Sigma\in \Bun^{K^{1/2}}_N(\Sigma)$.

For a $\Gm$-action on $\bA^1$ of weight $n$ denote by $[\bA^1/_n\Gm]$ the corresponding quotient stack. For the $\Gm$-action on $\Bun^{K^{1/2}}_N(\Sigma)$ induced by the natural automorphisms of $K^{1/2}_\Sigma$ the morphism $\psi$ has weight $2$ and $g$ is $\Gm$-equivariant. Therefore, we have a correspondence
\[
\xymatrix{
[\bA^1/_2\Gm] & [\Bun^{K^{1/2}}_N(\Sigma)/\Gm] \ar_-{\overline{\psi}}[l] \ar^-{\overline{g}}[r] & \Bun_G(\Sigma).
}
\]

We will now identify the induction functor for $M$ with the Whittaker functor defined as in \cite{NadlerTaylor} and the Whittaker sheaf defined as in \cite{BZSV}.

\begin{theorem}\label{thm:Whittakerperiod}
There are natural isomorphisms
\begin{align*}
\ind^{G\rightarrow \pt}_M(-)&\cong \R\Gamma_c(\Bun_N^{K^{1/2}}(\Sigma), \phi_\psi g^*(-))[d]\\
&\cong \R\Gamma_c([\Bun_N^{K^{1/2}}(\Sigma)/\Gm], \overline{g}^*(-)\otimes \overline{\psi}^*\exp_2)[d]
\end{align*}
and
\[\ind^{\pt\rightarrow G}_M(\bu)\cong \sfP(\overline{g}_! \overline{\psi}^*\exp_2)[\dim(\Bun^{K^{1/2}}_N(\Sigma))_{\rho^\vee(K_\Sigma)}].\]
\end{theorem}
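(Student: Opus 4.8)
The plan is to reduce the whole statement to the first displayed isomorphism, which I would obtain by identifying $\ind^{G\rightarrow\pt}_M$ with a microstalk on $\Bun_G(\Sigma)$, and then transport the answer through the self-duality of \cref{thm:miraculousduality}. First I would examine the moment map $\mu\colon \Bun_G^M(\Sigma)\to \T^*\Bun_G(\Sigma)$ attached to the relative exact $(-1)$-shifted symplectic structure of \cref{prop:BunGMsymplectic}; by \cref{rmk:GaiottoLagrangian} and the remarks above it is the Hitchin section, whose image meets the global nilpotent cone $\Nilp$ exactly over the point $0$ of the Hitchin base, and under $(\Bun_G^M(\Sigma))^{\cl}\cong(\text{Hitchin base})$ this point is the twisted reduction $\rho^\vee(K_\Sigma)$. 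Hence, for $\cF\in\bD_{\Nilp}(\Bun_G(\Sigma))$, the microsupport estimate \cref{prop:microsupportestimate} forces $\supp\pi^\varphi(\cF)\subset \mu^{-1}(\Nilp)=\{\rho^\vee(K_\Sigma)\}$, and (using $\dim\Bun_\pt(\Sigma)=0$) \cref{def:induction} gives $\ind^{G\rightarrow\pt}_M(\cF)\cong \R\Gamma_c(\Bun_G^M(\Sigma),\pi^\varphi(\cF))$, a complex whose formation is concentrated in an arbitrarily small neighbourhood of $\rho^\vee(K_\Sigma)$.

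To compute this I would apply the Darboux presentation \cref{prop:Darboux} near $\rho^\vee(K_\Sigma)$, which together with \cref{thm:perversepullback}(1),(3) writes $\pi^\varphi(\cF)$ locally as the restriction of $\phi_f(g_U^\dag\cF)$ for a smooth chart $g_U$; evaluating at $\rho^\vee(K_\Sigma)$ and using \cref{prop:microstalk} and \cref{prop:microstalkpullback} identifies $\ind^{G\rightarrow\pt}_M(-)$, up to an explicit shift governed by the triple Maslov index of the Hitchin section against $\Nilp$, with the microstalk functor $\mu_{p_0}$ at the smooth point $p_0=\mu(\rho^\vee(K_\Sigma))\in\Nilp$ lying over the regular nilpotent. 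Combined with the comparison between microstalks at $p_0$ and the Whittaker functional established in \cite{NadlerTaylor}, this yields $\ind^{G\rightarrow\pt}_M(\cF)\cong\R\Gamma_c(\Bun_N^{K^{1/2}}(\Sigma),\phi_\psi g^*(\cF))[d]$. For the second isomorphism, $g$ is equivariant for the $\Gm$-action induced by $2\rho^\vee$, which is trivial on $\Bun_G(\Sigma)$, so $g^*(\cF)$ is monodromic and $\psi$ has weight $2$; then \cref{prop:exponentialvanishing} and \cref{prop:expproperties} (with \cref{prop:psi from psiuni} to treat all values of $\psi$ over the algebraically closed $R$) rewrite $\R\Gamma_c(\Bun_N^{K^{1/2}}(\Sigma),\phi_\psi g^*(\cF))$ as $\R\Gamma_c([\Bun_N^{K^{1/2}}(\Sigma)/\Gm],\overline{g}^*(\cF)\otimes\overline{\psi}^*\exp_2)$.

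For the last assertion, the perverse pullback along $\pi$ is intrinsic to $M$ and independent of the chosen factorisation of $G$, so by \cref{def:induction} and the case just treated one has, for $\cG\in\bD_{\Nilp}(\Bun_G(\Sigma))$,
\[
\R\Gamma_c(\Bun_G(\Sigma),\ind^{\pt\rightarrow G}_M(\bu)\otimes\cG)\cong \R\Gamma_c(\Bun_G^M(\Sigma),\pi^\varphi(\cG))[\dim\Bun_G(\Sigma)],
\]
which by the first part and the projection formula equals $\R\Gamma_c(\Bun_G(\Sigma),\cG\otimes\overline{g}_!\overline{\psi}^*\exp_2)[d+\dim\Bun_G(\Sigma)]$. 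Since the pairing on $\bD_{\Nilp}(\Bun_G(\Sigma))$ only sees the image of $\sfP$ (as in \cite[Proposition 3.4.6]{AGKRRV2}), the self-duality \cref{thm:miraculousduality} then identifies $\ind^{\pt\rightarrow G}_M(\bu)$ with $\sfP(\overline{g}_!\overline{\psi}^*\exp_2)$ shifted by $d+\dim\Bun_G(\Sigma)=\dim(\Bun_N^{K^{1/2}}(\Sigma))_{\rho^\vee(K_\Sigma)}$, as claimed.

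I expect the main obstacle to be the middle step: proving that the stalk of the perverse pullback at $\rho^\vee(K_\Sigma)$ genuinely computes the microstalk $\mu_{p_0}$ with the correct shift, and matching this microstalk with the Whittaker functional. This requires checking that the Hitchin section meets $\Nilp$ cleanly at $p_0$ with controlled excess (so that \cref{prop:microstalk} applies), and a careful reconciliation of the orientation on $\Bun_G^M(\Sigma)$ coming from the anomaly trivialization, the Maslov-index normalisation of the microstalk, and the shift $[d]$ occurring in the statement of \cite{NadlerTaylor}.
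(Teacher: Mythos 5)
Your overall strategy---support estimate to localize the stalk, reduce to a microstalk, match it with the Whittaker functional via \cite{NadlerTaylor}, then use self-duality for the third isomorphism---is the same skeleton the paper uses. But you have correctly flagged, and left unfilled, precisely the part of the argument that carries the technical weight.

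The two concrete pieces missing are these. First, before any microstalk computation, the paper identifies the relative exact $(-1)$-shifted symplectic structure on $\Bun_G^M(\Sigma)\rightarrow\Bun_G(\Sigma)$ with that of the relative derived critical locus $\R\Crit_{\Bun_N^{K^{1/2}}(\Sigma)/\Bun_G(\Sigma)}(\psi)$, by recognizing $[M/G]\rightarrow\B G$ as the twisted cotangent bundle of $\B N\rightarrow\B G$ along $d\psi$. Without this, there is no way to see why the data $(\Bun_N^{K^{1/2}}(\Sigma),\psi,g)$ should appear at all, and the perverse pullback $\pi^\varphi$ has no explicit description in terms of $\phi_\psi$ to feed into the support and microstalk arguments. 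Second, \cref{prop:microstalk} is stated only for complex \emph{manifolds}, and the comparison \cite[Theorem 3.3.4]{NadlerTaylor} between microstalks and the Whittaker functional is formulated on specific level-structure moduli $\Bun_G^{K^{1/2}}(\Sigma,nx)$, which are schemes (after restricting to an open $U$). The paper therefore passes through the whole diagram \cite[(3.11)]{NadlerTaylor}, smooth descent along $p$ and $p'$, and the clean-intersection results \cite[Propositions 3.3.2, 3.3.3]{NadlerTaylor}. Invoking a generic Darboux chart from \cref{prop:Darboux} does not by itself show that the resulting microstalk can be matched with the one appearing in \cite{NadlerTaylor}; you need that specific diagram to align the two. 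Finally, the shift ambiguity inherent in the (non-canonical) microstalk identification is not tracked through Maslov indices at all in the paper---it is resolved in one line by the observation that both sides commute with Verdier duality, so the shifts are forced to agree. You should borrow that trick rather than trying to compute the shift directly, as the latter requires reconciling the orientation normalization with the Maslov index normalization, which is delicate.

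Your treatment of the second isomorphism (via \cref{prop:exponentialvanishing} and the contracting $\Gm$-action, descending along the affine bundle $p'$) and of the third (via \cref{thm:miraculousduality} and the projection formula) matches the paper.
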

\begin{proof}
We may identify the exact $0$-shifted symplectic structure on $[M/G]\rightarrow \B G$ with the relative twisted cotangent bundle of $\B N\rightarrow \B G$, where the twist is by the differential of $\psi\colon \B N\rightarrow \B\Ga$. Analogously to the proof of \cref{prop:cotangentperiod} this implies that the exact $(-1)$-shifted symplectic structure on $\Bun_G^M(\Sigma)\rightarrow \Bun_G(\Sigma)$ is identified with the one on the relative derived critical locus
\[\R\Crit_{\Bun_N^{K^{1/2}}(\Sigma)/\Bun_G(\Sigma)}(\psi)\longrightarrow \Bun_G(\Sigma).\]

For a natural number $n$ and a point $x\in \Sigma$ consider the following diagram from \cite[(3.11)]{NadlerTaylor}:
\[
\xymatrix{
\Bun_N^{K^{1/2}}(\Sigma, nx) \ar^{i'}[r] \ar^{p'}[d] & \Bun_G^{K^{1/2}}(\Sigma, nx) \ar^{p}[d] \\
\Bun_N^{K^{1/2}}(\Sigma) \ar^{i}[r] \ar_{g}[dr] & \Bun^{K^{1/2}}_{G, N}(\Sigma, nx) \ar^{h}[d] \\
& \Bun_G(\Sigma),
}
\]
where the square is Cartesian and the individual spaces are as follows:
\begin{itemize}
    \item $\Bun^{K^{1/2}}_{G, N}(\Sigma, nx)$ is the moduli stack of $G$-bundles on $\Sigma$ with a $B$-reduction on the $n$-th order neighborhood $D_n(x)$ of $x$ whose underlying $T$-bundle is $2\rho^\vee(K^{1/2}_\Sigma)$.
    \item $\Bun_G^{K^{1/2}}(\Sigma, nx)$ is the moduli stack of $G$-bundles on $\Sigma$ with a reduction of the $G$-bundle on $D_n(x)$ to $K^{1/2}_\Sigma$ via the homomorphism $\Gm\xrightarrow{2\rho^\vee} T\subset G$.
    \item $\Bun_N^{K^{1/2}}(\Sigma, nx)$ is the moduli stack of $B$-bundles on $\Sigma$ together with an isomorphism of the underlying $T$-bundle to $2\rho^\vee(K^{1/2}_\Sigma)$ and a $T$-reduction on $D_n(x)$.
\end{itemize}
The morphism $h$ is smooth and $p$ and $p'$ are smooth affine bundles of dimension $n\dim N$. The whole diagram carries a $\Gm$-action given by the obvious $\Gm$-action on $K^{1/2}_\Sigma$. Let $\psi'\colon \Bun^{K^{1/2}}_N(\Sigma, nx)\rightarrow \bA^1$ be the pullback of $\psi$, which has weight $2$. Moreover, it is shown in \cite[Section 3.1]{NadlerTaylor} that we may choose $n$ large enough with the following properties:
\begin{itemize}
    \item For an open substack $U\subset \Bun_G(\Sigma)$ containing the image of $\Bun_N^{K^{1/2}}(\Sigma)$ the stack $\Bun_G^{K^{1/2}}(\Sigma, nx)|_U$ is a scheme.
    \item $i$ and $i'$ are closed immersions.
\end{itemize}
Let $\pi\colon \Bun_G^M(\Sigma)=\R\Crit_g(\psi)\rightarrow \Bun_G(\Sigma)$, $\nu\colon \R\Crit_i(\psi)\rightarrow \Bun_{G, N}^{K^{1/2}}(\Sigma, nx)$ and $\nu'\colon \R\Crit_{i'}(\psi')\rightarrow \Bun_G^{K^{1/2}}(\Sigma, nx)$, so that we have a Cartesian diagram
\[
\xymatrix{
\R\Crit_{i'}(\psi') \ar^{\overline{p}}[d] \ar^-{\nu'}[r] & \Bun_G^{K^{1/2}}(\Sigma, nx) \ar^{p}[d] \\
\R\Crit_i(\psi) \ar^-{\nu}[r] & \Bun_{G, N}^{K^{1/2}}(\Sigma, nx)
}
\]
Let
\[\tilde{\Nilp} = \Nilp\times_{\Bun_G(\Sigma)} \Bun_{G, N}^{K^{1/2}}(\Sigma, nx)\subset \T^*\Bun_{G, N}^{K^{1/2}}(\Sigma, nx)\]
and
\[\Nilp' = \Nilp\times_{\Bun_G(\Sigma)} \Bun_G^{K^{1/2}}(\Sigma, nx)\subset \T^* \Bun_G^{K^{1/2}}(\Sigma, nx)\]
be the pullbacks of the global nilpotent cones. For $\cF\in\bD_{\Nilp}(\Bun_G(\Sigma))$ we have an isomorphism
\[\gamma_h\colon \R\Gamma_c(\R\Crit_i(\psi), \nu^\varphi h^\dag \cF)\xrightarrow{\sim} \R\Gamma_c(\R\Crit_g(\psi), \pi^\varphi\cF).\]
By \cite[Proposition 3.3.2]{NadlerTaylor} $\R\Crit_i(\psi)$ and $\tilde{\Nilp}$ intersect in $\T^*\Bun^{K^{1/2}}_{G, N}(\Sigma, nx)$ at a single point $c\in\R\Crit_i(\psi)$. Therefore, by \cref{prop:microsupportestimate} we see that $\nu^\varphi h^\dag \cF$ is supported at $c$ and hence
\[\R\Gamma_c(\R\Crit_i(\psi), \nu^\varphi h^\dag \cF)\cong (\nu^\varphi h^\dag \cF)_c.\]
Let $c'\in\R\Crit_{i'}(\psi')$ be any preimage of $c$. Then
\[(\nu^\varphi h^\dag \cF)_c\cong (\overline{p}^\dag \nu^\varphi h^\dag \cF)_{c'}[-n\dim N]\xleftarrow[\sim]{\alpha_{p, \overline{p}}} ((\nu')^\varphi p^\dag h^\dag \cF)_{c'}[-n\dim N].\]
We may find an open subset $U \subset \Bun_G^{K^{1/2}}(\Sigma, nx)$ containing the image of $c'$ together with a vector bundle $E\rightarrow U$ and a section $s$ whose zero locus coincides with $\Bun_N^{K^{1/2}}(\Sigma, nx)|_U\subset U$. Let $s^\vee\colon E^\vee\rightarrow \bA^1$ the corresponding cosection and $r\colon E^\vee\rightarrow U$ the projection. Moreover, we may assume that $\psi'|_U\colon \Bun_N^{K^{1/2}}(\Sigma, nx)|_U\rightarrow \bA^1$ extends to a function $\tilde{\psi}$ on $U$. As in \cref{lem-crit-locus-vs-shifted-cotangent} we identify $\R\Crit_{i'}(\psi'|_U)\cong \R\Crit_{E^\vee/U}(r^\ast\tilde{\psi} + s^\vee)$. Thus, we obtain a critical chart for $\nu'\colon \R\Crit_{i'}(\psi')\rightarrow \Bun_G^{K^{1/2}}(\Sigma, nx)$ near $c'\in\R\Crit_{i'}(\psi')$ given by the smooth morphism $r\colon E^\vee\rightarrow U$ equipped with the function $\tilde{\psi} + s^\vee$. Therefore,
\[((\nu')^\varphi p^\dag h^\dag \cF)_{c'}[-n\dim N]\cong \phi_{r^\ast \tilde{\psi} + s^\vee}(r^\dag (p^\dag h^\dag \cF)|_U)_{c'}[-n\dim N].\]
Let $\cF' = (p^\dag h^\dag \cF)|_U\in\bD_{\Nilp'}(U)$. By \cite[Proposition 3.3.3]{NadlerTaylor} the graph $\Gamma_{d(r^\ast \tilde{\psi} + s^\vee)}$ intersects $\Nilp'$ in $\T^* \Bun_G^{K^{1/2}}(\Sigma, nx)$ cleanly with excess $e=2n\dim N$ (note that this is the \emph{real} dimension of the intersection). By \cref{prop:microstalk} we get that $(\phi_{r^\ast \tilde{\psi} + s^\vee} r^\dag \cF')_{c'}[-n\dim N]$ computes the microstalk of $r^\dag\cF'$. By smooth functoriality of microstalks (\cref{prop:microstalkpullback}) it is naturally isomorphic to the microstalk of $p^\dag h^\dag \cF$ at $c'$. Thus, we obtain a natural isomorphism
\[\mu_{c'}(p^\dag h^\dag\cF)\cong \R\Gamma_c(\R\Crit_g(\psi), \pi^\varphi\cF).\]
By \cite[Theorem 3.3.4]{NadlerTaylor} 
\[\R\Gamma_c(\Bun_N^{K^{1/2}}(\Sigma), \phi_\psi g^*(-))[d]\colon \bD_{\Nilp}(\Bun_G(\Sigma))\longrightarrow \Mod_R\]
is naturally isomorphic to a shifted microstalk functor of $p^\dag h^\dag \cF$. But since both functors commute with Verdier duality, the shifts coincide. Thus, we obtain a natural isomorphism
\[\R\Gamma_c(\Bun_N^{K^{1/2}}(\Sigma), \phi_\psi g^*(-))[d]\cong \R\Gamma_c(\Bun_G^M(\Sigma), \pi^\varphi(-)).\]
This finishes the construction of the first isomorphism.

The $\Gm$-action on $\Bun^{K^{1/2}}_N(\Sigma, nx)$ is contracting and $\psi'\colon \Bun^{K^{1/2}}_N(\Sigma, nx)\rightarrow \bA^1$ has weight $2$. Thus, applying \cref{prop:exponentialvanishing}(2) and descending the isomorphism along $p'$ (which is a smooth affine bundle) we get an isomorphism
\[\R\Gamma_c(\Bun_N^{K^{1/2}}(\Sigma), \phi_\psi g^*(-))[d]\cong \R\Gamma_c([\Bun_N^{K^{1/2}}(\Sigma)/\Gm], \overline{g}^*(-)\otimes \overline{\psi}^*\exp_2)\]
which establishes the second isomorphism.

The third isomorphism follows immediately from above and the definition of the induction functor.
\end{proof}

\begin{remark}
Let us relate the above Whittaker sheaf $\overline{g}_! \overline{\psi}^*\exp_2\in\bD(\Bun_G(\Sigma))$ to the one defined in \cite[Section 2.5]{NadlerYun}. The standing assumption in \cite{NadlerYun} is that $2\rho^\vee\colon\Gm\rightarrow T$ is the square of a cocharacter $\rho^\vee\colon \Gm\rightarrow T$. In that case the original $\Gm$-action on $\Bun^{K^{1/2}}_N(\Sigma)$ admits a square root. Thus, we have a diagram
\[
\xymatrix{
& [\Bun_N^{K^{1/2}}(\Sigma)/_2\Gm] \ar^-{\overline{\psi}}[r] \ar[d] \ar_{\overline{g}}[dl] & [\bA^1/_2\Gm] \ar^{s_2}[d] \\
\Bun_G(\Sigma) & [\Bun_N^{K^{1/2}}(\Sigma)/_1\Gm] \ar^-{\tilde{\psi}}[r] \ar_-{\tilde{g}}[l] & [\bA^1/_1\Gm] \\
}
\]
where the square is Cartesian. Therefore, using the isomorphism $(s_2)_!\exp_2\cong \exp_1$ from \ref{prop:expproperties}(3) and base change we get
\[\overline{g}_! \overline{\psi}^*\exp_2\cong \tilde{g}_! \tilde{\psi}^* \exp_1.\]
\end{remark}

\subsection{Symplectic representations}

In this section we restrict to the case when $G_1=\pt$, so that $G=G_2$ with the trivial $\Gm$-action and $M$ is a symplectic $G$-representation with the $\Gm$-action on $M$ by scaling. As an example, consider $X$ to be a $G$-representation with the $\Gm$-action by scaling. The induced Hamiltonian $\Gm$-action on $M=\T^* X=X\oplus X^\vee$ is given by weight $1$ on $X$ and weight $-1$ on $X^\vee$. Therefore, the natural $\Gm$-action on $M$ described in \cref{sect:cotangentperiods} is given by weight $1$ on both $X$ and $X^\vee$. As our first observation, we show that the symplectic nature of the construction of periods encodes the categorification of the functional equation for $L$-functions.

\begin{proposition}\label{prop:categorifiedfunctionalequation}
Let $X$ be a $G$-representation and let $X^\vee$ be its dual. Consider the scaling $\Gm$-actions on both. Then there is an isomorphism
\[\cP^{\norm}_X\cong \cP^{\norm}_{X^\vee}\in\bD_{\Nilp}(\Bun_G(\Sigma)).\]
\end{proposition}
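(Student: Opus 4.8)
The plan is to deduce this from the identification $M = \T^*X$ with the natural $\Gm$-action, together with the symplectic nature of the construction.  By \cref{prop:BZSVcomparison}, both $\cP^{\norm}_X$ and $\cP^{\norm}_{X^\vee}$ agree with the induction functors $\ind^{\pt\to G}_{\T^*X}(\bu)$ and $\ind^{\pt\to G}_{\T^*X^\vee}(\bu)$ respectively (once we fix compatible eigenmeasures -- the standard volume form on $X$ and its dual on $X^\vee$, which have equal $\Gm$-weights $\gamma = \dim X$ and trivial $G$-character in the relevant sense, so the normalizing shifts match).  Thus it suffices to produce an isomorphism of Hamiltonian $G\times\Gm$-spaces $\T^*X \cong \T^*X^\vee$ compatible with the chosen $\Gm$-actions and anomaly trivializations, and then invoke the fact that the induction functor depends only on the oriented relative exact $(-1)$-shifted symplectic structure on $\Bun_G^M(\Sigma)\to\Bun_G(\Sigma)$.

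First I would write down the Fourier-type isomorphism.  The canonical pairing identifies $\T^*X = X\oplus X^\vee$ with $\T^*X^\vee = X^\vee \oplus X$ by the swap map $(x,\xi)\mapsto (\xi, -x)$ (the sign making it symplectic), and this is $G$-equivariant.  As observed in \cref{sect:cotangentperiods} and at the start of this subsection, the natural $\Gm$-action on $\T^*X$ relevant to the period construction has weight $1$ on \emph{both} summands $X$ and $X^\vee$; the same holds for $\T^*X^\vee$.  Hence the swap map is also $\Gm$-equivariant, and it intertwines the exact symplectic forms $\omega = d\alpha$ since $\alpha = \iota_v\omega$ is determined by the $\Gm$-action.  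Therefore the swap induces an equivalence of triples $(G,\Sigma,\T^*X)\cong (G,\Sigma,\T^*X^\vee)$ in the sense of \cref{sect:periodsetup}.

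Next I would check that this equivalence carries one anomaly trivialization to the other.  Both anomaly trivializations are constructed the same way (via the square root $\cO(-1)$ of $\cO(-2)$ applied to the exact sequence $0\to\Omega^1_X|_M\otimes\cO(-2)\to\T_M\to\T_X|_M\to 0$), and the swap map identifies the analogous sequence for $X^\vee$ with the one for $X$ up to reordering the two pieces $X$ and $X^\vee$; since the construction is symmetric in these pieces the trivializations agree.  Concretely one observes that $c_2^{G\times\Gm}(\T_{\T^*X}) = c_2^{G\times\Gm}(\T_{\T^*X^\vee})$ under the swap and that the chosen nullhomotopies correspond.  Granting this, \cref{def:induction} gives
\[
    \R\Gamma_c(\Bun_G(\Sigma), \ind^{\pt\to G}_{\T^*X}(\bu)\otimes -)\cong \R\Gamma_c(\Bun_G^{\T^*X}(\Sigma),\pi^\varphi(-))[\dim\Bun_G(\Sigma)]
\]
and the same for $X^\vee$, with the two right-hand sides identified via the equivalence $\Bun_G^{\T^*X}(\Sigma)\cong\Bun_G^{\T^*X^\vee}(\Sigma)$ over $\Bun_G(\Sigma)$ coming from the AKSZ construction applied to the swap.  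Since both $\cP^{\norm}_X,\cP^{\norm}_{X^\vee}\in\bD_{\Nilp}(\Bun_G(\Sigma))$, \cref{thm:miraculousduality} lets us conclude they are isomorphic.

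The main obstacle I expect is the bookkeeping of shifts and the verification that the \emph{anomaly trivializations} (not just the bare symplectic structures) correspond under the swap -- this requires unwinding the construction in the proof of \cref{prop:BZSVcomparison} and \cref{sect:cotangentperiods} carefully, including checking that the Grothendieck--Riemann--Roch shift $(g-1)(\gamma-\dim X)-\deg_\eta$ is genuinely symmetric in $X$ and $X^\vee$ (it is, since $\gamma=\dim X = \dim X^\vee$ and $\eta$ is trivial for the scaling action).  A secondary subtlety is making the identification $\Bun_G^{\T^*X}(\Sigma)\cong\Bun_G^{\T^*X^\vee}(\Sigma)$ functorial enough to transport the perverse pullback $\pi^\varphi$; this should follow formally from the functoriality of the AKSZ / Weil restriction construction in \cite{CalaqueSafronov} applied to the swap equivalence, together with the compatibility of perverse pullback with oriented symplectic equivalences.
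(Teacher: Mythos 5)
Your proposal is correct and follows essentially the same route as the paper's proof: identify $\T^*X$ and $\T^*X^\vee$ via the symplectic swap map (the paper uses $(q,p)\mapsto(-p,q)$, you use the inverse $(x,\xi)\mapsto(\xi,-x)$; either works), observe this intertwines the relevant $\Gm$-actions, transport the oriented relative exact $(-1)$-shifted symplectic structure on $\Bun_G^M(\Sigma)\to\Bun_G(\Sigma)$ to conclude $\ind_{\T^*X}\cong\ind_{\T^*X^\vee}$, and apply \cref{prop:BZSVcomparison} on both ends. Your explicit attention to matching anomaly trivializations and the symmetry of the GRR shift is a more careful unwinding of what the paper leaves implicit in its phrase ``preserves the relative d-critical structures'', but it does not constitute a different argument.
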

\begin{proof}
Let $M=\T^* X=X\oplus X^\vee$ and $M'=\T^* X^\vee=X\oplus X^\vee$. We have a natural symplectomorphism $M\cong M'$ given by $(q, p)\mapsto (-p, q)$ which intertwines the $\Gm$-actions. In particular, there is an isomorphism between $\Bun_G^M(\Sigma)\rightarrow \Bun_G(\Sigma)$ and $\Bun_G^{M'}(\Sigma)\rightarrow \Bun_G(\Sigma)$ which preserves the relative d-critical structures over $\Bun_G(\Sigma)$. Therefore, we get a natural isomorphism of the induction functors $\ind_M\cong \ind_{M'}$. The claim then follows from the combination of isomorphisms
\[\cP^{\norm}_X\cong \ind_M(\pt)\cong \ind_{M'}(\pt) \cong \cP^{\norm}_{X^\vee},\]
where the first and the third isomorphisms are established in \cref{prop:BZSVcomparison}.
\end{proof}

\begin{remark}
For $X=\bA^1$ equipped with the scaling $G=\Gm$-action (the Iwasawa--Tate case in the terminology of \cite{BZSV}) the above isomorphism was also established in \cite[Proposition 6.5.1]{FengWang} and \cite[Lemma 7.3.6]{BravermanGaitsgory}.
\end{remark}

\begin{remark}
The above isomorphism is an example of the Fourier--Kashiwara isomorphism from \cref{thm-Kashiwara-Fourier} using that $\Bun_G^X(\Sigma)\rightarrow \Bun_G(\Sigma)$ is a perfect complex and $\Bun_G^{X^\vee}(\Sigma)\rightarrow \Bun_G(\Sigma)$ is a shift of its dual.
\end{remark}

Let us make a comment about about anomaly for symplectic $G$-representations. Let $\pi\colon [M/(G\times\Gm)]\rightarrow \B(G\times\Gm)$ be the projection. By $\bA^1$-homotopy invariance of \'etale cohomology we get that
\[c_2^{G\times\Gm}(\T_M\otimes \cO(1))\sim \pi^* c_2^{G\times\Gm}(\T_{M, 0}\otimes \cO(1)).\]
By assumption $\T_{M, 0}\otimes \cO(1)$ has weight $2$ as a $\Gm$-representation. Since $\cO(2)$ has a canonical square root given by $\cO(1)$, we get
\[c_2^{G\times\Gm}(\T_{M, 0}\otimes \cO(1)) \sim c_2^G(M).\]
In particular, the orientation gerbe of $\Bun_G^M(\Sigma)\rightarrow \Bun_G(\Sigma)$ is pulled back from the gerbe $\int_\Sigma \ev^* c_2^G(M)$ on $\Bun_G(\Sigma)$ (where $\ev\colon \Bun_G(\Sigma)\times \Sigma\rightarrow \B G$ is the evaluation morphism) and the period sheaf $\cP^{\symp}_M=\ind^{\pt\rightarrow G}_M(\bu)$ in this case always makes sense as a \emph{twisted} ind-constructible complex
\[\cP^{\symp}_M\in \bD^{\int_\Sigma \ev^* c_2^G(M)}_{\Nilp}(\Bun_G(\Sigma)).\]
Choose a Borel subgroup $B\subset G$ and let $T=B/[B, B]$. Let $\Lambda^\vee=\Hom(\Gm, T)$ be the coweight lattice and $W$ the Weyl group. By \cite[Theorem 3.2.6]{GaitsgoryLysenko} the maps
\[\rH^4_{\et}(\B G, \mu_2^{\otimes 2})\longrightarrow \rH^4_{\et}(\B B, \mu_2^{\otimes 2})\xleftarrow{\sim} \rH^4_{\et}(\B T, \mu_2^{\otimes 2})\]
determine an inclusion from $\rH^4_{\et}(\B G, \mu_2^{\otimes 2})$ to the group of $W$-invariant quadratic forms $\Lambda^\vee\rightarrow \Z/2\Z$. Thus, we get the following:
\begin{itemize}
    \item There is an anomaly trivialization if, and only if, the quadratic form $\Lambda^\vee\rightarrow \Z/2\Z$ associated to the symplectic $G$-representation $M$ is zero.
    \item The anomaly trivialization is unique if $\rH^3_{\et}(\B G, \mu_2^{\otimes 2})\cong \Ext^1(\pi_1(G), \mu_2) = 0$.
\end{itemize}

For instance, for $G=\mathrm{Sp}_{2n}$ and $M$ the standard $2n$-dimensional symplectic $G$-representation the corresponding gerbe $\int_\Sigma \ev^* c_2^G(M)$ on $\Bun_G(\Sigma)$ is nontrivial; its total space is the moduli stack of metaplectic $G$-bundles on $\Sigma$. We expect the corresponding period sheaf $\cP^{\symp}_M$ to be isomorphic to the theta sheaf as defined in \cite{LysenkoTheta}.

\printbibliography

\end{document}